\documentclass[a4paper,10pt]{article}
\usepackage{amssymb,amsmath,amsthm}
\usepackage{fullpage}
\usepackage{hyperref}
\usepackage{eucal}
\usepackage{mathrsfs}
\usepackage{color}
\usepackage{stackrel}
\usepackage{graphicx}
\newcommand{\ie}{\emph{i.e.}}
\newcommand{\eg}{\emph{e.g.}}
\newcommand{\cf}{\emph{cf.}}

\newcommand{\Real}{\mathbb{R}}
\newcommand{\Com}{\mathbb{C}}
\newcommand{\Nat}{\mathbb{N}}
\newcommand{\Int}{\mathbb{Z}}
\newcommand{\Sphere}{\mathbb{S}}
\newcommand{\sgn}{\mathop{\mathrm{sgn}}\nolimits}

\newcommand{\dist}{\mathrm{dist}}

\newcommand{\sii}{L^2}

\newcommand{\der}{\mathrm{d}}

\renewcommand{\Re}{\mathrm{Re}\,}
\renewcommand{\Im}{\mathrm{Im}\,}
\newtheorem{thm}{Theorem}[section] 
\newtheorem{Theorem}[thm]{Theorem}
\newtheorem{Lemma}[thm]{Lemma}
\newtheorem{Proposition}[thm]{Proposition}
\newtheorem{Corollary}[thm]{Corollary}

\theoremstyle{definition}
\newtheorem{Remark}[thm]{Remark}
\numberwithin{equation}{section}

\def\OMIT#1{}
\usepackage{cancel}
\usepackage[normalem]{ulem}
\definecolor{DarkGreen}{rgb}{0,0.5,0.1} 

\newcommand\soutD{\bgroup\markoverwith
{\textcolor{DarkGreen}{\rule[.5ex]{2pt}{1pt}}}\ULon}
\newcommand\soutP{\bgroup\markoverwith
{\textcolor{blue}{\rule[.5ex]{2pt}{1pt}}}\ULon}

\newcommand{\Hm}[1]{\leavevmode{\marginpar{\tiny%
$\hbox to 0mm{\hspace*{-0.5mm}$\leftarrow$\hss}%
\vcenter{\vrule depth 0.1mm height 0.1mm width \the\marginparwidth}%
\hbox to
0mm{\hss$\rightarrow$\hspace*{-0.5mm}}$\\\relax\raggedright #1}}}

\begin{document}
%
\title{\textbf{\LARGE Quantitative uniform resolvent estimates}}
\author{Piero D'Ancona,$^a$ \
J\'er\'emy Faupin,$^b$ \ and \
David Krej\v{c}i\v{r}{\'\i}k\,$^c$}
\date{\small 
\begin{quote}
\emph{
\begin{itemize}
\item[$a)$]  
Department of Mathematics ``Guido Castelnuovo'', 
University of Rome ``La Sapienza'',
Piazzale Aldo Moro 5, 00185 Rome, Italy;
dancona@mat.uniroma1.it.%
\item[$b)$]
Universit{\'e} de Lorraine, CNRS, IECL, F-57000 Metz, France;
jeremy.faupin@univ-lorraine.fr.%
\item[$c)$] 
Department of Mathematics, Faculty of Nuclear Sciences and 
Physical Engineering, Czech Technical University in Prague, 
Trojanova 13, 12000 Prague 2, Czechia;
david.krejcirik@fjfi.cvut.cz.%
\end{itemize}
}
\end{quote}
{\small 24 June 2026}}
\maketitle
%

%
\begin{abstract}
\noindent
We derive quantitative uniform resolvent estimates
for Schr\"odinger operators on the half-line 
with inverse-square potentials,
which provide a sharp behaviour in the limit of large coupling.
Our approach is based on a matrix representation of
the boundary value of a weighted resolvent.  
  The partial wave decomposition then turns these one-dimensional
  channel estimates into explicit weighted resolvent estimates for
  the Laplacian, its inverse-square potential perturbations
  and for the magnetic Laplacian with an Aharonov--Bohm potential.
  We also obtain exact
  Simon-type identities for the imaginary parts of the weighted
  resolvents of these operators.
\medskip
\begin{itemize}
\item[\textbf{Keywords:}]
Uniform resolvent estimates; Bessel operators; Hankel transform;
inverse square potentials; Aharonov--Bohm potential; 
Kato smoothness.
\item[\textbf{MSC (2020):}]
Primary 35J10, 35P25; Secondary 33C10, 47A10, 47A55, 81Q10.
\end{itemize}
\end{abstract}
%
 
\section{Introduction}

Let $-\Delta$ be the self-adjoint realisation 
of the Laplacian in $L^2(\Real^d)$ with $d\ge 3$.
The resolvent cannot be bounded uniformly near 
its spectrum $[0,\infty)$, since
\begin{equation}\label{sa}
  \|(-\Delta-z)^{-1}\|_{L^2(\Real^d)\to L^2(\Real^d)}
  =
  \frac{1}{\dist(z,[0,\infty))} .
\end{equation}
However, it is also well known that a uniform 
resolvent estimate can be achieved when the Laplacian
is reconsidered as an operator between weighted spaces.
For instance, one has
\begin{equation}\label{uniform}
  \sup_{z\in\Com\setminus[0,\infty)}
  \left\|
    |x|^{-1}(-\Delta-z)^{-1}|x|^{-1}
  \right\|_{L^2(\Real^d)\to L^2(\Real^d)}
  <
  \infty .
\end{equation}
Here the weight $|x|^{-1}$ is scale invariant for the Laplacian 
and it is naturally associated with the classical Hardy inequality.  
Estimate~\eqref{uniform} is 
behind the limiting absorption principle and
Kato smoothness and smoothing estimates 
\cite{Kato_1966,Kato-Yajima_1989,Simon_1992}.  
It has a perturbative
interpretation as well: Kato's abstract theory and the
Birman--Schwinger principle convert such bounds into
spectral stability statements for small
(possibly non-self-adjoint) perturbations 
subordinated to the same critical weight 
\cite{Kato_1966,Frank_2011,FKV,HK2}.

More specifically,
potentials of the form $c\,|x|^{-2}$ 
with $c \in \Com$ 
represent critical perturbations of the Laplacian,
in view of the optimal Hardy inequality
$-\Delta \geq c_d \, |x|^{-2}$ with $c_d := (d-2)^2/4$.
The related critical electromagnetic Helmholtz,
Strichartz and smoothing estimates, 
as well as spectral bounds, have been intensively 
studied in recent years
\cite{Burq-Planchon-Stalker-TahvildarZadeh_2003,
Burq-Planchon-Stalker-TahvildarZadeh_2004,
Barcelo-Vega-Zubeldia_2013,Bui-DAncona-Duong-Li-Ly_2017,
Bouclet-Mizutani_2018,Mizutani-Zhang-Zheng_2020}.  
What is more, even if the classical Hardy inequality
fails in two dimensions, there exist magnetic Hardy inequalities
\cite{Laptev-Weidl_1999,CK}
and the inverse-square potential is particularly relevant 
for the singular Aharonov--Bohm field 
\cite{Pankrashkin-Richard_2011,K13,Fanelli-Zhang-Zheng_2023}.

The purpose of this paper is to make~\eqref{uniform} quantitative
and to identify the mechanism behind the constants. 
Let us now explain the main idea of this paper
and present our main results.

As a first step, we note that
a standard maximum principle reduction and scaling invariance (see Section \ref{subsec:prelim} below for the detailed argument) yield
\begin{equation}\label{uniform-reduc}
\sup_{z\in\Com\setminus[0,\infty)}
  \left\|
    |x|^{-1}(-\Delta-z)^{-1}|x|^{-1}
  \right\|_{L^2(\Real^d)\to L^2(\Real^d)}
=  \left\|
    |x|^{-1}(-\Delta-(1\pm i0))^{-1}|x|^{-1}
  \right\|_{L^2(\Real^d)\to L^2(\Real^d)}
   .
\end{equation}
In this way, the problem is reduced to limiting values
at a single point of the positive real axis.

\subsection{Reduction to one-dimensional Bessel operators}
The main insight of this paper is that
the mechanism to compute the right-hand side of~\eqref{uniform-reduc} 
is one-dimensional. Indeed, after a decomposition into spherical
harmonics, every angular momentum channel is governed by a Bessel
operator
\begin{equation}\label{Bessel}
  h_\nu
  :=
  -\partial_r^2+\frac{\nu^2-1/4}{r^2}
  \qquad\hbox{in}\qquad 
  L^2((0,\infty)) =: \sii
  \,,
\end{equation}
introduced as the Friedrichs extension
of the operator initially defined on
$C_0^\infty((0,\infty))$. 
The central object of this paper
is therefore the weighted boundary value
\begin{equation}\label{channel}
  A_\nu^\pm
  :=
  r^{-1}(h_\nu-(1\pm i0))^{-1}r^{-1}.
\end{equation}

The spectral theory of~$h_\nu$,
including the Hankel transform to diagonalise it, is
classical; see, for example, \cite{Derezinski-Richard_2017}.  
The
new point here is different.  We compute $A_\nu^\pm$ as an explicit
operator matrix in a basis adapted to the Hankel transform.  This
matrix separates the threshold vector, the low energy part, and the
high energy part.  It gives lower bounds, the large order asymptotic law, 
and the sharp imaginary
part discovered by Simon in the Laplacian setting \cite{Simon_1992}.

\subsection{Multidimensional examples beyond the Laplacian}
In this picture, the relevant parameter~$\nu$ 
becomes the smallest effective Bessel order,
which depends on the particular model considered. 

The canonical example 
from~\eqref{uniform-reduc}
is the Laplacian~$-\Delta$ 
in $\sii(\Real^d)$ with $d \geq 3$ defined on $H^2(\Real^d)$.
In this case,
the orders are
\begin{equation*}
  \nu_{\ell,d} := \ell+\frac d2-1,
  \qquad \ell \in \Nat := \{0,1,2,\dots\} .
\end{equation*}

Another example is
the inverse square Hamiltonian 
\begin{equation}\label{inverse}
  H_{d,c} := -\Delta+c\,|x|^{-2}
  \qquad \mbox{in} \qquad
  \sii(\Real^d)
\end{equation}
with $d\ge2$ and real $c>-c_d=-(d-2)^2/4$. 
We introduce it as the Friedrichs extension of this operator
initially defined on $C_0^\infty(\Real^d\setminus\{0\})$.
In this case, replacing~$-\Delta$ by~$H_{d,c}$ in~\eqref{uniform-reduc},
the orders become
\begin{equation*}
  \nu_{\ell,d,c}
  :=
  \sqrt{\nu_{\ell,d}^2+c} .
\end{equation*}

Finally, we consider the two-dimensional magnetic Hamiltonian
\begin{equation}\label{AB.operator}
  -\Delta_\alpha := (-i\nabla-A_\alpha)^2
  \qquad \mbox{in} \qquad
  \sii(\Real^2)
\end{equation}
with the Aharonov--Bohm potential 
$A_\alpha (x) := \alpha \, (-x_2,x_1) \, |x|^{-2}$,
where $\alpha \in \Real$, 
introduced as the Friedrichs extension of this operator
initially defined on $C_0^\infty(\Real^2\setminus\{0\})$.
In this case,
\begin{equation*}
  \nu_{m,\alpha}
  := |m-\alpha|,
  \qquad m \in \Int.
\end{equation*}

The applications below are consequences of this single channel principle.
 
\subsection{Main results for the one-dimensional Bessel operators}
The main methodological novelty of this paper is 
Theorem~\ref{Thm.matrix} below,
in which we reveal a suitable matrix representation of~$A_\nu^\pm$
introduced in~\eqref{channel}.
It would be too cumbersome to present it in the introduction.
Let us only mention that it is based on an orthonormal basis 
adapted to the Hankel transform.
Instead, we collect here the main results about the norm of~$A_\nu^\pm$,
which we establish in this paper 
based particularly on the crucial Theorem~\ref{Thm.matrix}.
\begin{Theorem}[Bessel operators]\label{Thm.new}
Given any positive~$\nu$, let $A_\nu^\pm$ be as in~\eqref{channel}.
Then
\begin{enumerate}
\item[\emph{(i)}] 
$
\displaystyle
  \|\Im A_\nu^\pm\|_{\sii \to \sii} 
  = \frac{\pi}{4\nu} 
$,
\item[\emph{(ii)}] 
$
\displaystyle
  \|A_\nu^\pm\|_{\sii \to \sii}  
  = \frac{1}{\nu^2} 
$
\quad if \quad $0<\nu\le\frac12$,
\item[\emph{(iii)}] 
$
\displaystyle
  \|A_\nu^\pm\|_{L^2\to L^2}
  =
  \frac{1}{\nu}
  +
  O(\nu^{-3/2})
$  
\quad as \quad
$
  \nu\to\infty
$,
\item[\emph{(iv)}] 
$
\displaystyle
  \|A_\nu^\pm\|_{L^2\to L^2}\le\mathcal{C}(\nu)
$,
\quad where
\begin{equation}\label{eq:def-Cnu}
  \mathcal{C}(\nu)
  :=
  \begin{cases}
     1/\nu^{2},
      & 0<\nu\le\frac12, \\
     2/\nu ,
      & \frac12<\nu\le\frac1{\sqrt3}, \\
      1/\sqrt{\nu^{2}-\frac 14},
      & \frac1{\sqrt3}<\nu ,
  \end{cases}
\end{equation}
\end{enumerate}
\end{Theorem}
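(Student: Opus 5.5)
We plan to work with the explicit kernel of $A_\nu^\pm$ and to conjugate it into a multiplication-type form through the Mellin transform, since $r^{-1}(h_\nu-k^2)^{-1}r^{-1}$ is homogeneous of degree $-1$ in $(r,s)$ once the spectral parameter is fixed. The Green function of $h_\nu-(1\pm i0)$ is
\[
G^\pm(r,s)=\pm\tfrac{i\pi}{2}\sqrt{rs}\,J_\nu(r_<)H^{(1,2)}_\nu(r_>),
\]
so the kernel of $A_\nu^\pm$ is $(rs)^{-1}G^\pm(r,s)$.

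\textbf{Part (i).} The imaginary part is $\Im A_\nu^\pm=\pm\frac{\pi}{2}\,\phi\otimes\phi$ up to sign, with
\[
\phi(r)=r^{-1/2}J_\nu(r),
\]
because $\Im(J_\nu H_\nu^{(1)})=J_\nu(r)J_\nu(s)$. This operator has rank one, so its norm is $\frac{\pi}{2}\|\phi\|^2$, and
\[
\|\phi\|^2=\int_0^\infty J_\nu(r)^2\,\frac{\der r}{r}=\frac{1}{2\nu}
\]
by the Weber--Schafheitlin formula. This gives exactly $\pi/(4\nu)$. This is the ``threshold vector'' of the matrix representation in Theorem~\ref{Thm.matrix}, and I expect that theorem to present $A_\nu^\pm$ as the real part $\Re A_\nu$ plus this rank-one imaginary term.

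\textbf{Parts (ii) and (iv).} Via the Hankel transform, $\Re A_\nu$ becomes a principal-value integral operator. I would bound it by Schur's test, or express it as a Mellin multiplier: the Mellin symbol of $r^{-1}(h_\nu+\kappa^2)^{-1}r^{-1}$ at $\kappa=0$ is $m(\tau)=\bigl(\nu^2+\tau^2\bigr)^{-1}$ on the critical line. This already shows that $\|r^{-1}h_\nu^{-1}r^{-1}\|=1/\nu^2$, which is the Hardy constant. For (ii), the lower bound $\|A_\nu^\pm\|\ge 1/\nu^2$ follows by dilation, since $A$ at energy $0$ is a strong limit of rescaled $A^\pm$. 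The upper bound for $\nu\le1/2$ should use that the potential $(\nu^2-\tfrac14)r^{-2}$ is non-positive. The plan is to use the matrix of Theorem~\ref{Thm.matrix}: bound the low-energy block (Hankel variable $k<1$) by the Hardy part $1/\nu^2$ and the high-energy block by $1/\sqrt{\nu^2-1/4}$, a Hardy-type bound for $h_\nu-1$ seen through the shifted weight. We then combine the two blocks, including the rank-one coupling, by an elementary $2\times2$ norm estimate. The three regimes in $\mathcal C(\nu)$ arise from which bound dominates and from the degenerate case $\nu^2\le 1/4$, where the high-energy bound fails and one instead bounds it crudely by $2/\nu$.

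\textbf{Part (iii).} The leading term is $1/\nu$. The upper bound follows from (iv), since $(\nu^2-\tfrac14)^{-1/2}=1/\nu+O(\nu^{-3})$. For the lower bound, we test $A_\nu^\pm$ against functions concentrated near the turning point $r\approx\nu$, in a window of width $\nu^{1/2}$ (or $\nu^{1/3}$ in Airy scaling). There the WKB kernel gives $\langle f,\Re A f\rangle\approx\nu^{-1}\|f\|^2$ up to errors of order $\nu^{-3/2}$, which produces exactly the stated error term.

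\textbf{Main obstacle.} I expect the hardest step to be the sharp high-energy block bound $1/\sqrt{\nu^2-1/4}$, together with controlling the cross terms between the blocks so that no constant is lost. I would first try to diagonalise that block by the Mellin transform and check monotonicity of the resulting symbol.
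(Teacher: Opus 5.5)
Your part (i) is exactly the paper's argument. Your lower bound in (ii) is a valid alternative to the paper's logarithmic trial state in Proposition~\ref{prop:lower-bound-Bessel-small-nu}: dilates of $A_\nu^\pm$ converge (weakly suffices) to $r^{-1}h_\nu^{-1}r^{-1}$, whose Mellin symbol $(\nu^2+\tau^2)^{-1}$ gives norm $\nu^{-2}$, and the norm is weakly lower semicontinuous.

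The upper bounds in (ii) and (iv), however, cannot come out of your block plan. First, the Hardy constant does not belong to the low-energy block. In the basis of Theorem~\ref{Thm.matrix}, the $E_\nu^0$ block is diagonal with norm $\frac{1}{4(\nu+1)}$. The constant $\nu^{-2}$ sits at high Hankel frequencies, in $r^{-1}h_\nu^{-1}\mathbf{1}_{h_\nu\ge1}r^{-1}$, which vanishes on $E_\nu^0$ and has $\nu^{-2}$ at the top of its essential spectrum (Remark~\ref{rk:ess-spectrum}). The $E_\nu^\infty$ block of $A_\nu^\pm$ is the compression of that operator plus the positive diagonal $\frac{\delta_{\ell\ell'}}{4\ell(\nu+\ell)}$, so its norm is at least $\nu^{-2}$. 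Hence your ``crude bound $2/\nu$'' for it is false when $\nu<\frac12$.

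Second, $e_{\nu,0}$ is coupled to both $E_\nu^0$ and $E_\nu^\infty$ by nonzero entries ($a_{0,\pm1}\neq0$). So any estimate that replaces the blocks by their norms in a $2\times2$ matrix gives a number strictly larger than $\nu^{-2}$, and the equality in (ii) is out of reach. For large $\nu$ the same procedure gives about $1.044/\nu$ (block norms $\frac{\pi}{4\nu}$, $\frac{\pi}{4\sqrt3\,\nu}$, $\frac{1}{4\nu}$), which exceeds $\mathcal C(\nu)$. Also, at energy $1$ the weighted resolvent is not dilation invariant, so the Mellin transform diagonalises no block.

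The missing idea is the paper's multiplier identity, Lemma~\ref{lm:multipliers-Bessel}. It combines the multipliers $r\psi'+\frac12\psi$, $\psi$, $r\psi$ with the gauge $\psi^-=e^{-i\sqrt{z_1}\sgn(z_2)r}\psi$. One then uses either $\|\partial_r\psi^-\|^2=\|h_\nu^{1/2}\psi^-\|^2-(\nu^2-\frac14)\|\psi/r\|^2$ (this is where the shift $-\frac14$ comes from) or $-\partial_r^2\le\max(1,(4\nu^2)^{-1})h_\nu$. This yields a quadratic inequality whose discriminant gives $(\nu^2-\frac14)^{-1/2}$, respectively $\max(\nu^{-2},2\nu^{-1})$, on the rays $|\Im z|=\varepsilon\Re z$ as $\varepsilon\to0$. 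Proposition~\ref{prop:epsilon-Bessel} and Lemma~\ref{lem:Hardy-Bessel} handle the remaining $z$.

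For (iii), the upper half inherits the gap in (iv), and the lower half rests on a false estimate. By Lemmas~\ref{lm:asymptotics} and~\ref{lm:asymptotics2}, together with $\|r^{-1}h_\nu^{-1}\mathbf{1}_{h_\nu\ge1}r^{-1}\|=\nu^{-2}$, one has $\Re A_\nu^\pm=\nu^{-1}\Re C^\pm+O(\nu^{-3/2})$ in operator norm. The Hermitian part of $\tilde C^+=\frac i4\int_0^t$ is $f\mapsto\frac i8\int_0^{2\pi}\sgn(t-s)f(s)\,\der s$, whose eigenvalues are $\frac{1}{2(2n+1)}$, $n\in\Int$. Hence $\|\Re A_\nu^\pm\|=\frac{1}{2\nu}+O(\nu^{-3/2})$, and no trial function can satisfy $\langle f,\Re A_\nu^\pm f\rangle\approx\nu^{-1}\|f\|^2$.

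The size $1/\nu$ is a non-normal effect. It appears only in $\|A_\nu^\pm f\|$, through the interplay of the rank-one imaginary part with the real part (the Volterra structure of $C^\pm$). The near-maximisers are spread over the whole outgoing region $r\gtrsim\nu$, like $e_{\nu,0}$, asymptotically a third of whose mass lies in $r\ge2\nu$. They are not concentrated in an Airy or $\nu^{1/2}$ window at the turning point. The paper obtains both halves of (iii) at once from $A_\nu^\pm=r^{-1}h_\nu^{-1}\mathbf 1_{h_\nu\ge1}r^{-1}+B_\nu^\pm$, with $\|B_\nu^\pm-\nu^{-1}C^\pm\|_{\mathrm{HS}}=O(\nu^{-3/2})$ and $\|C^\pm\|=1$.
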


Part~(i) is an analogue of the Simon-type identity~\cite{Simon_1992}
for the Laplacian, see~\eqref{Simon} below. 
Part~(ii) demonstrates an interesting fact that an exact identity 
can be derived also for the full complex value of~$A_\nu^\pm$, 
provided that the argument~$\nu$ is sufficiently small.
For all the other values of~$\nu$,
we still have explicit lower bounds  
(see Theorem~\ref{Thm.lower}
and Proposition~\ref{prop:lower-bound-Bessel-small-nu}).
For large values of~$\nu$, part~(iii) gives a sharp lower bound. 
Part~(iv) is the main result in the spirit of this paper
to derive quantitative uniform resolvent estimates.
 
\subsection{Main results for the multidimensional examples}
We next state the consequences for the full weighted resolvent.
For the free Laplacian, the Bessel channel estimates imply the
following explicit bound.
\begin{Theorem}[Laplacian]\label{thm:main_Laplacian}
  For every $d\ge3$,
  \begin{equation}\label{eq:main_Laplacian}
    \sup_{z\in\Com\setminus[0,\infty)}
    \left\|
      |x|^{-1}(-\Delta-z)^{-1}|x|^{-1}
    \right\|
    \le
    \begin{cases}
      4, & d=3, \\
      \displaystyle
      \frac{2}{\sqrt{(d-1)(d-3)}}, & d\ge4 .
    \end{cases}
  \end{equation}
\end{Theorem}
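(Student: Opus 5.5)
We reduce the problem via~\eqref{uniform-reduc} to bounding the boundary value at $z=1\pm i0$. We then use the partial wave decomposition to pass to the Bessel channels, and finish with Theorem~\ref{Thm.new}(iv).

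First, we write $L^2(\Real^d)=\bigoplus_{\ell\in\Nat} L^2((0,\infty),r^{d-1}\der r)\otimes \mathcal{H}_\ell$, where $\mathcal{H}_\ell$ are the spherical harmonics of degree~$\ell$. We conjugate by the unitary map $u\mapsto r^{(d-1)/2}u$ onto $L^2((0,\infty),\der r)=\sii$. On the $\ell$-th channel, $-\Delta$ becomes $h_{\nu_{\ell,d}}$ with $\nu_{\ell,d}=\ell+\frac d2-1$. The operator domain $H^2$ restricted to each channel agrees with the Friedrichs extension of $h_\nu$ for $\nu\ge \frac12$; this holds for all $\ell$ when $d\ge3$, since $\nu_{0,3}=\frac12$. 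The weight $|x|^{-1}$ is radial, so it commutes with this decomposition and acts as multiplication by $r^{-1}$ in each channel. For $z\notin[0,\infty)$ we therefore get an orthogonal direct sum
\[
|x|^{-1}(-\Delta-z)^{-1}|x|^{-1}\cong\bigoplus_{\ell}\bigl(r^{-1}(h_{\nu_{\ell,d}}-z)^{-1}r^{-1}\bigr)\otimes I_{\mathcal{H}_\ell}.
\]
Its norm is the supremum over~$\ell$ of the channel norms. Taking $z\to1\pm i0$ (weakly, consistent with~\eqref{uniform-reduc}), the left side of~\eqref{eq:main_Laplacian} equals $\sup_\ell \|A^\pm_{\nu_{\ell,d}}\|$.

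Next, we apply Theorem~\ref{Thm.new}(iv) to each channel, which gives $\|A^\pm_{\nu}\|\le\mathcal{C}(\nu)$. The key observation is that $\mathcal{C}$ is nonincreasing in~$\nu$:
\begin{itemize}
\item on $(0,\frac12]$ it is $\nu^{-2}$, which equals $4$ at $\frac12$;
\item on $(\frac12,\frac1{\sqrt3}]$ it is $2/\nu<4$, which equals $2\sqrt3$ at $\frac1{\sqrt3}$;
\item on $(\frac1{\sqrt3},\infty)$ it is $(\nu^2-\frac14)^{-1/2}$, which tends to $2\sqrt3$ from below at $\frac1{\sqrt3}$ and decreases afterwards.
\end{itemize}
Hence the supremum over~$\ell$ is attained at the lowest channel $\ell=0$, namely $\nu_{0,d}=\frac d2-1$.

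It remains to evaluate $\mathcal{C}$ at this order. For $d=3$ we have $\nu=\frac12$, so the bound is $\mathcal{C}(\frac12)=4$. For $d\ge4$ we have $\nu\ge1>\frac1{\sqrt3}$, so the bound is $(\nu^2-\frac14)^{-1/2}$ with $\nu^2-\frac14=\frac{(d-2)^2-1}{4}=\frac{(d-1)(d-3)}4$. This gives $2/\sqrt{(d-1)(d-3)}$.

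The main obstacle is justifying the passage from the uniform supremum to the channelwise boundary values. Concretely, two points need checking. First, \eqref{uniform-reduc} must hold channel by channel, or at least the boundary value must commute with the direct sum. Second, the operator norm of the orthogonal sum must equal the supremum of the channel norms, not merely be bounded by it. For the inequality direction we need, the upper bound suffices. We get it from the direct-sum decomposition at each fixed $z\notin[0,\infty)$, combined with the single-channel maximum principle/scaling reduction giving $\|r^{-1}(h_\nu-z)^{-1}r^{-1}\|\le\|A^\pm_\nu\|$ for each~$\nu$.
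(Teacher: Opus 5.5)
Your proposal is correct and follows the paper's proof essentially step for step. You use the partial wave reduction of Proposition~\ref{prop:Laplacian->Bessel} to the supremum of the channel norms, the channel bound $\mathcal{C}(\nu)$ of Theorem~\ref{thm:main_Bessel} (equivalently Theorem~\ref{Thm.new}(iv) together with $\|A_\nu^\pm\|=\sup_z\|r^{-1}(h_\nu-z)^{-1}r^{-1}\|$), and the monotonicity of $\mathcal{C}$ to reduce to the lowest channel $\nu_{0,d}=\frac d2-1$. Your detour through the boundary values $A_\nu^\pm$ is harmless but unnecessary, since Theorem~\ref{thm:main_Bessel} already bounds the supremum over $z$ directly in each channel.
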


As a byproduct, we obtain the following monotonicity property. 
\begin{Proposition}[Dimensional monotonicity]\label{Prop.monotonicity}
  For each $z\in\Com\setminus[0,\infty)$, the maps
  \begin{equation*}
    d\mapsto
    \left\|
      |x|^{-1}(-\Delta-z)^{-1}|x|^{-1}
    \right\|_{L^2(\Real^{2d+1})\to L^2(\Real^{2d+1})}
  \end{equation*}
  and
  \begin{equation*}
    d\mapsto
    \left\|
      |x|^{-1}(-\Delta-z)^{-1}|x|^{-1}
    \right\|_{L^2(\Real^{2d+2})\to L^2(\Real^{2d+2})}
  \end{equation*}
  are nonincreasing on $\{1,2,\dots\}$.
\end{Proposition}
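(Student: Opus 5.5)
The plan is to use the partial wave decomposition to write the weighted resolvent of $-\Delta$ in $\Real^n$ as an orthogonal direct sum of one-dimensional channel operators. Set $u = r^{(n-1)/2} f_\ell(r)\, Y_\ell(\omega)$, with $Y_\ell$ a spherical harmonic of degree $\ell$. This conjugation is unitary from $L^2(\Real^n)$ onto $\bigoplus_\ell L^2((0,\infty))\otimes \mathcal{H}_\ell$, where $\mathcal{H}_\ell$ is the space of spherical harmonics of degree $\ell$. Under it, $-\Delta$ becomes $\bigoplus_\ell h_{\nu_{\ell,n}}\otimes I$, and the weight $|x|^{-1}$ becomes multiplication by $r^{-1}$ in each channel. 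Hence, for each fixed $z\in\Com\setminus[0,\infty)$,
\begin{equation*}
  \left\| |x|^{-1}(-\Delta-z)^{-1}|x|^{-1} \right\|_{L^2(\Real^n)\to L^2(\Real^n)}
  = \sup_{\ell\in\Nat} N(\nu_{\ell,n},z),
  \qquad
  N(\nu,z) := \left\| r^{-1}(h_\nu-z)^{-1}r^{-1}\right\|_{\sii\to\sii}.
\end{equation*}
This is the norm of a block-diagonal operator, so it equals the supremum of the block norms. The decomposition is stated for $d\ge3$ in the paper, but it holds verbatim for every $n\ge 3$, which covers $2d+1$ and $2d+2$ with $d\ge1$.

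Next, the set of orders. In dimension $n$ they are $\nu_{\ell,n}=\ell+\frac n2-1$, $\ell\in\Nat$. Passing from $n$ to $n+2$ shifts every order by one:
\begin{equation*}
  \{\nu_{\ell,n+2}:\ell\in\Nat\} = \{\nu_{\ell,n}:\ell\ge1\}\subset\{\nu_{\ell,n}:\ell\in\Nat\}.
\end{equation*}
Therefore the supremum for $n+2$ is taken over a subset of the channels for $n$, and so
\begin{equation*}
  \left\| |x|^{-1}(-\Delta-z)^{-1}|x|^{-1}\right\|_{L^2(\Real^{n+2})}
  \le \left\| |x|^{-1}(-\Delta-z)^{-1}|x|^{-1}\right\|_{L^2(\Real^{n})}
\end{equation*}
for every fixed $z$. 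Taking $n=2d+1$ and $n=2d+2$ gives both monotonicity claims, since increasing $d$ by one increases the dimension by two. The argument never needs monotonicity of $N(\nu,z)$ in $\nu$; only the nesting of the index sets matters, which is why the statement is restricted to dimensions of fixed parity.

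The only step needing care is the first one, the exact unitary equivalence including the domains. The operator $-\Delta$ on $H^2(\Real^n)$ must correspond, channel by channel, to the Friedrichs extension $h_{\nu}$ used in the paper. For $\nu\ge1$ this is automatic, since $h_\nu$ is essentially self-adjoint on $C_0^\infty((0,\infty))$. For $n=3$, $\ell=0$ one has $\nu=\frac12$, so $h_{1/2}=-\partial_r^2$ with the Dirichlet condition, which matches regularity at the origin. For $n\ge4$ all orders satisfy $\nu\ge1$. With this checked, the resolvent and the bounded multiplication by $r^{-1}$ commute with the decomposition: the weighted resolvent is bounded by \eqref{uniform}, and it can be handled on a dense set and then extended. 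This gives the claimed identity for the norm.
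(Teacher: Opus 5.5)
Your proposal is correct and follows the paper's proof. The paper likewise uses the partial wave decomposition (Proposition~\ref{prop:Laplacian->Bessel}) to write the weighted resolvent norm in $\Real^n$ as $\sup_{\ell\ge0}\|r^{-1}(h_{\nu_{\ell,n}}-z)^{-1}r^{-1}\|$, and then obtains monotonicity directly from the shift $\nu_{\ell,n+2}=\nu_{\ell+1,n}$, i.e.\ from the nesting of the index sets, with no monotonicity of the channel norms in $\nu$. One small slip: multiplication by $r^{-1}$ is not bounded; what the argument needs is only that this weight is radial, so it acts channel by channel, and that the composite weighted resolvent is bounded.
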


We emphasise that our approach provides not only an explicit constant 
in~\eqref{eq:main_Laplacian}, but also a new proof 
of the very fact~\eqref{uniform}.
Actually, we give two arguments, the alternative being the method of multipliers,
which has been used by other authors, too.

Except for the well-known three-dimensional case,
we are not aware of attempts to quantify 
the uniform resolvent estimate~\eqref{uniform}.
(An exception is the crude estimate obtained 
by Cossetti and one of the present authors in~\cite{CK4}.) 
Be all as it may be, we note that the same bound~\eqref{eq:main_Laplacian}
can be deciphered from the proof of
\cite[Thm.~3]{Burq-Planchon-Stalker-TahvildarZadeh_2004}
(after quantifying the generic constants appearing there),
which is close to our multiplier approach
(see also \cite[Thm.~1]{Burq-Planchon-Stalker-TahvildarZadeh_2003}).
We also note that a similar bound with a worse constant 
appears in \cite[Thm.~4]{Mochizuki_2010a} 
(see also \cite[Thm.~1]{Mochizuki_2010b}).

The constant of Theorem~\ref{thm:main_Laplacian}
is sharp in three dimensions:
\begin{equation}\label{eq:sharpdim3}
  \sup_{z\in\Com\setminus[0,\infty)}
  \left\|
    |x|^{-1}(-\Delta-z)^{-1}|x|^{-1}
  \right\|_{L^2(\Real^3)\to L^2(\Real^3)}
  =
  4 .
\end{equation}
This follows from an explicit lower bound 
that we also derive (Proposition~\ref{pro:opt3D}).
We leave as an open problem whether 
our estimate~\eqref{eq:main_Laplacian} is sharp for $d \geq 4$,
 but we prove that it is sharp in the limit of high dimensions 
 (Proposition \ref{prop:asymptotics-d}):
\begin{equation*}
\sup_{z\in\Com\setminus[0,\infty)}
  \left\|
    |x|^{-1}(-\Delta-z)^{-1}|x|^{-1}
  \right\|_{L^2(\Real^d)\to L^2(\Real^d)}=\frac{2}{d} + O(d^{-3/2})
  \qquad \mbox{as} \qquad d\to\infty.
\end{equation*}

Sharp results can be obtained also for the imaginary part
of the operators. In particular, 
we recover Simon's identity~\cite{Simon_1992}
\begin{equation}\label{Simon} 
  \sup_{z\in\Com\setminus[0,\infty)}
  \left\|
    |x|^{-1}\Im(-\Delta-z)^{-1}|x|^{-1}
  \right\|
  =
  \frac{\pi}{2(d-2)}.
\end{equation}
In our approach, Simon's identity is not isolated but just
a special case of analogous exact identities for
the other critical models.
Indeed, \eqref{Simon} is the first member of a family governed only
by the lowest effective angular momentum, as the following
results indicate.
\begin{Theorem}[Exact imaginary parts]\label{thm:imaginary-critical}
  Given $d\ge2$ and $c>-c_d$, 
  let~$H_{d,c}$ be the inverse square Hamiltonian defined in~\eqref{inverse}.
  Then
  \begin{equation}\label{eq:inverse square-imaginary}
    \sup_{z\in\Com\setminus[0,\infty)}
    \left\|
      |x|^{-1}\Im(H_{d,c}-z)^{-1}|x|^{-1}
    \right\|
    =
    \frac{\pi}{4\nu_{0,d,c}} .
  \end{equation}
  Moreover, for the two-dimensional Aharonov--Bohm Hamiltonian
  $-\Delta_\alpha$ defined in~\eqref{AB.operator}, 
  if $\alpha\notin\Int$, then
  \begin{equation}\label{eq:AB-imaginary}
    \sup_{z\in\Com\setminus[0,\infty)}
    \left\|
      |x|^{-1}\Im(-\Delta_\alpha-z)^{-1}|x|^{-1}
    \right\|
    =
    \frac{\pi}{4\,\dist(\alpha,\Int)} .
  \end{equation}
\end{Theorem}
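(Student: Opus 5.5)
The plan is to reduce both identities to Theorem~\ref{Thm.new}(i) through a partial wave decomposition and then optimise over the channels. Everything is invariant under rotations, and the weight $|x|^{-1}$ is radial. So for $H_{d,c}$ we decompose $L^2(\Real^d)=\bigoplus_{\ell}L^2((0,\infty))\otimes\mathcal{H}_\ell$, where $\mathcal{H}_\ell$ are the spherical harmonics of degree~$\ell$. We then use the unitary map $u\mapsto r^{(d-1)/2}u$ onto $L^2((0,\infty))$. Under this map, $H_{d,c}$ restricted to the $\ell$-th channel becomes the Bessel operator $h_{\nu_{\ell,d,c}}$, and the Friedrichs extension on $C_0^\infty(\Real^d\setminus\{0\})$ corresponds to the Friedrichs extension in each channel. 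Since $r^{(d-1)/2}$ commutes with multiplication by $r^{-1}$, the weighted resolvent $|x|^{-1}(H_{d,c}-z)^{-1}|x|^{-1}$ is unitarily equivalent to the direct sum $\bigoplus_\ell r^{-1}(h_{\nu_{\ell,d,c}}-z)^{-1}r^{-1}\otimes I_{\mathcal{H}_\ell}$. Taking imaginary parts commutes with this decomposition, because $\Im T=(T-T^*)/(2i)$ and the unitary map is block diagonal. Hence the norm of the imaginary part equals $\sup_\ell\|r^{-1}\Im(h_{\nu_\ell}-z)^{-1}r^{-1}\|$.

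For the Aharonov--Bohm operator we argue the same way with Fourier modes $e^{im\theta}$, $m\in\Int$. The substitution $u\mapsto r^{1/2}u$ turns $-\Delta_\alpha$ on the $m$-th mode into $h_{|m-\alpha|}$. Every order is positive because $\alpha\notin\Int$, and the smallest order is $\min_m|m-\alpha|=\dist(\alpha,\Int)$.

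Next we reduce to the limit $z=1\pm i0$. I would repeat the argument of Section~\ref{subsec:prelim} for the imaginary part. By the scaling $x\mapsto\lambda x$ the problem reduces to $|z|=1$, since $|x|^{-2}$ and $h_\nu$ scale together. We then need that $\|r^{-1}\Im(h_\nu-z)^{-1}r^{-1}\|$, as a function of $z$ in the upper half-plane, is maximised at the boundary value $z=1+i0$. I would obtain this from the spectral representation. In Hankel variables, $\Im(h_\nu-z)^{-1}$ is multiplication by the Poisson kernel $\Im z/|k^2-z|^2$. For $z=e^{i\theta}$ this is a positive average over dilations of the boundary values $\Im(h_\nu-(s+i0))^{-1}=\pi\delta(h_\nu-s)$ with total mass at most one. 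Scaling makes each $\|r^{-1}\pi\delta(h_\nu-s)r^{-1}\|$ independent of $s$. The triangle inequality then gives domination by the boundary value, and approaching the axis gives equality. The lower half-plane follows by complex conjugation, since $\Im$ of the adjoint only changes sign. This gives
\[
\sup_{z}\bigl\||x|^{-1}\Im(H-z)^{-1}|x|^{-1}\bigr\|=\sup_{\nu}\|\Im A_\nu^\pm\|=\sup_\nu\frac{\pi}{4\nu},
\]
where the middle equality uses Theorem~\ref{Thm.new}(i) and $\nu$ ranges over the orders of the model.

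Finally, $\pi/(4\nu)$ is decreasing in~$\nu$, and $\nu_{\ell,d,c}=\sqrt{(\ell+d/2-1)^2+c}$ is increasing in~$\ell$. So the supremum is attained at $\ell=0$, which gives $\pi/(4\nu_{0,d,c})$, and $c>-c_d$ guarantees $\nu_{0,d,c}>0$. For the Aharonov--Bohm case the supremum is attained at the $m$ nearest to $\alpha$, which gives $\pi/(4\dist(\alpha,\Int))$.

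I expect the main obstacle to be the rigorous reduction to the boundary point for the imaginary part. This means justifying that the positive-averaging argument is valid in operator norm, and that the limit $z\to 1+i0$ is attained as a supremum rather than merely bounded. The latter requires weak convergence of the weighted imaginary parts to $\Im A_\nu^\pm$ on a dense set, combined with lower semicontinuity of the norm. If Section~\ref{subsec:prelim} already establishes this reduction for general self-adjoint operators homogeneous of degree $-2$, I would simply invoke it, since it applies verbatim to $H_{d,c}$ and $-\Delta_\alpha$.
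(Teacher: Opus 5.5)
Your proposal is correct and follows the paper's route. The same partial wave decomposition reduces both identities to the one-channel statement $\sup_z\|r^{-1}\Im(h_\nu-z)^{-1}r^{-1}\|=\pi/(4\nu)$ (Proposition~\ref{prop:main_Bessel_Imaginary}). Monotonicity of $\nu_{\ell,d,c}$ in $\ell$, respectively minimality of $|m-\alpha|$ at the integer nearest to $\alpha$, then selects the lowest order.

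The only difference is in the reduction to $z=1\pm i0$. The paper simply appeals to the maximum principle and scaling. Your Poisson-kernel averaging of the rank-one boundary values, whose norms are $s$-independent by scaling, gives the bound $\pi/(4\nu)$ at every $z$ directly, and equality then needs only weak convergence. This is a more direct justification, since $z\mapsto\Im(h_\nu-z)^{-1}$ is harmonic rather than holomorphic.
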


Coming back to the uniform resolvent estimates,
for the inverse square family we obtain the following result.
\begin{Theorem}[Inverse square potentials]\label{thm:inverse square}
  Given $d\ge2$  and $c>-c_d$, let $H_{d,c}$ be as in~\eqref{inverse}.
  Then
  \begin{equation}\label{eq:inverse square-bound}
    \sup_{z\in\Com\setminus[0,\infty)}
    \left\|
      |x|^{-1}(H_{d,c}-z)^{-1}|x|^{-1}
    \right\|
    \le
    \mathcal{C}(\nu_{0,d,c}),
  \end{equation}
 where $\mathcal{C}(\nu)$ is defined in \eqref{eq:def-Cnu}. 
 If $0<\nu_{0,d,c}\le1/2$, then equality holds in
  \eqref{eq:inverse square-bound}.
\end{Theorem}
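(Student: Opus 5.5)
The plan is to reduce Theorem~\ref{thm:inverse square} to the single-channel bounds of Theorem~\ref{Thm.new} via the partial wave decomposition. Write $x=r\omega$ with $r>0$, $\omega\in\Sphere^{d-1}$, and use the unitary map $u\mapsto r^{(d-1)/2}u$. This identifies $\sii(\Real^d)$ with $\bigoplus_{\ell}\bigl(\sii\otimes \mathcal{H}_\ell\bigr)$, where $\mathcal{H}_\ell$ is the space of spherical harmonics of degree~$\ell$; for $d=2$ one uses the Fourier modes $e^{im\theta}$ and $\nu=|m|$. On the $\ell$-th channel, $H_{d,c}$ acts as $-\partial_r^2+(\nu_{\ell,d,c}^2-1/4)r^{-2}$. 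One must check that the Friedrichs extension of $H_{d,c}$ from $C_0^\infty(\Real^d\setminus\{0\})$ decomposes into the Friedrichs extensions $h_{\nu_{\ell,d,c}}$ of \eqref{Bessel}. This holds because the quadratic form decomposes orthogonally, $C_0^\infty((0,\infty))\otimes\mathcal{H}_\ell$ is a form core in each channel, and the closure of a direct sum of forms is the direct sum of the closures. The weight $|x|^{-1}=r^{-1}$ commutes with this decomposition. Hence for every $z\notin[0,\infty)$,
\[
|x|^{-1}(H_{d,c}-z)^{-1}|x|^{-1}\cong\bigoplus_{\ell}\, r^{-1}(h_{\nu_{\ell,d,c}}-z)^{-1}r^{-1}\otimes I_{\mathcal{H}_\ell},
\]
so the norm equals $\sup_\ell\|r^{-1}(h_{\nu_{\ell,d,c}}-z)^{-1}r^{-1}\|$.

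Next comes the reduction to the boundary value, exactly as in \eqref{uniform-reduc} (Section~\ref{subsec:prelim}). Dilation invariance of $h_\nu$ and of $r^{-1}$ shows that the channel norm depends only on $\arg z$. A maximum (Phragm\'en--Lindel\"of) principle on the upper and lower half-planes then gives
\[
\sup_{z}\|r^{-1}(h_\nu-z)^{-1}r^{-1}\|=\max_\pm\|A_\nu^\pm\|.
\]
The function $z\mapsto\langle r^{-1}f,(h_\nu-z)^{-1}r^{-1}g\rangle$ is bounded and analytic, its values on the negative axis are dominated by those approached from either side, and the limits as $z\to 1\pm i0$ exist. I will take this step from the preliminaries. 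Theorem~\ref{Thm.new}(iv) then gives each channel norm bounded by $\mathcal{C}(\nu_{\ell,d,c})$.

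The remaining work, and the main point to verify, is monotonicity in $\ell$: $\nu_{\ell,d,c}$ is increasing in $\ell$, so we need $\mathcal{C}$ to be nonincreasing on $(0,\infty)$. On each piece this is clear. At $\nu=1/2$, $1/\nu^2=4=2/\nu$. At $\nu=1/\sqrt3$, $2/\nu=2\sqrt3$ and $1/\sqrt{1/3-1/4}=\sqrt{12}=2\sqrt3$. So $\mathcal{C}$ is continuous and decreasing, and $\sup_\ell\mathcal{C}(\nu_{\ell,d,c})=\mathcal{C}(\nu_{0,d,c})$, which proves \eqref{eq:inverse square-bound}. The hypothesis $c>-c_d$ guarantees $\nu_{0,d,c}>0$, so Theorem~\ref{Thm.new} applies in every channel.

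For equality when $0<\nu_{0,d,c}\le1/2$, restrict to the $\ell=0$ channel of radial functions, which is a reducing subspace. There the operator is unitarily equivalent to $r^{-1}(h_{\nu_0}-z)^{-1}r^{-1}$, whose supremum over $z$ equals $\|A_{\nu_0}^\pm\|=1/\nu_0^2=\mathcal{C}(\nu_0)$ by Theorem~\ref{Thm.new}(ii) and the reduction above. This gives the matching lower bound. The only delicate point I anticipate is justifying the identification of the Friedrichs extensions channelwise, in particular in the regime $\nu_{0,d,c}<1$, where several self-adjoint realisations exist; the form-core argument above settles it.
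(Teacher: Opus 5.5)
Your proposal is correct and follows the paper's proof. You reduce to channels via the spherical-harmonic decomposition, bound each channel by $\mathcal{C}(\nu_{\ell,d,c})$ using monotonicity of $\ell\mapsto\nu_{\ell,d,c}$ and of $\mathcal{C}$, and get the lower bound $1/\nu_{0,d,c}^2$ from the lowest channel. The paper cites Proposition~\ref{prop:lower-bound-Bessel-small-nu} directly for that lower bound, which is what underlies the part~(ii) of Theorem~\ref{Thm.new} you invoke, and your checks that $\mathcal{C}$ is continuous at $1/2$ and $1/\sqrt3$ and that the Friedrichs extension splits channelwise fill in details the paper only asserts.
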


Finally, the same channel analysis yields the Aharonov--Bohm
estimate in two dimensions.  Here the lowest effective order is the
distance from the magnetic flux to the integers.
\begin{Theorem}[Aharonov--Bohm potentials]\label{Thm.AB}
  Given $\alpha\notin\Int$, 
  let $-\Delta_\alpha$ be as in~\eqref{AB.operator}.
  Then
  \begin{equation}\label{AB}
    \sup_{z\in\Com\setminus[0,\infty)}
    \left\|
      |x|^{-1}(-\Delta_\alpha-z)^{-1}|x|^{-1}
    \right\|
    =
    \frac{1}{\dist(\alpha,\Int)^2} .
  \end{equation}
\end{Theorem}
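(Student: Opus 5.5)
The plan is to reduce the two-dimensional problem to the Bessel channels and then apply Theorem~\ref{Thm.new}(ii) to the channel of smallest order. Three steps are needed: reduce to one spectral point, decompose into angular modes, and read off the norm channel by channel.

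\emph{Reduction to the boundary value at $z=1$.} The first step repeats the maximum principle and scaling reduction behind~\eqref{uniform-reduc} (Section~\ref{subsec:prelim}) with $-\Delta$ replaced by $-\Delta_\alpha$. This works because $-\Delta_\alpha$ is homogeneous of degree $-2$ under dilations, and $|x|^{-1}$ is homogeneous of the complementary degree. It gives
\[
\sup_{z\in\Com\setminus[0,\infty)}
\left\| |x|^{-1}(-\Delta_\alpha-z)^{-1}|x|^{-1}\right\|
=\left\| |x|^{-1}(-\Delta_\alpha-(1\pm i0))^{-1}|x|^{-1}\right\|.
\]

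\emph{Angular decomposition.} In polar coordinates we expand $u(r,\theta)=\sum_{m\in\Int} r^{-1/2}u_m(r)\,e^{im\theta}/\sqrt{2\pi}$. This map is unitary from $\sii(\Real^2)$ onto $\bigoplus_m \sii$. The Friedrichs extension $-\Delta_\alpha$ then becomes $\bigoplus_m h_{\nu_{m,\alpha}}$ with $\nu_{m,\alpha}=|m-\alpha|$. Since $\alpha\notin\Int$, all these orders are positive. For each channel the Friedrichs extension of the form on $C_0^\infty((0,\infty))$ coincides with the $h_\nu$ of~\eqref{Bessel}. The radial weight $|x|^{-1}=r^{-1}$ commutes with the decomposition. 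Hence
\[
|x|^{-1}(-\Delta_\alpha-(1\pm i0))^{-1}|x|^{-1}\cong\bigoplus_{m\in\Int}A^\pm_{\nu_{m,\alpha}},
\qquad
\left\|\bigoplus_m A^\pm_{\nu_{m,\alpha}}\right\|=\sup_{m}\|A^\pm_{\nu_{m,\alpha}}\|.
\]
The boundary value passes through the direct sum because it is taken channelwise as a weak limit, and the channels are uniformly bounded by Theorem~\ref{Thm.new}(iv).

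\emph{Identifying the supremum.} Set $\nu_*:=\dist(\alpha,\Int)=\min_m\nu_{m,\alpha}\in(0,\tfrac12]$. For the minimizing channel, Theorem~\ref{Thm.new}(ii) gives exactly $\|A^\pm_{\nu_*}\|=\nu_*^{-2}$, which yields the lower bound. For the upper bound we must show that every other channel has norm at most $\nu_*^{-2}$. By Theorem~\ref{Thm.new}(iv), $\|A^\pm_\nu\|\le\mathcal C(\nu)$, and $\mathcal C$ is nonincreasing. On $(0,\tfrac12]$ it equals $\nu^{-2}$. On $(\tfrac12,1/\sqrt3]$ it equals $2/\nu\le 4$. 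Beyond that it is $(\nu^2-\tfrac14)^{-1/2}$, which at $1/\sqrt3$ equals $\sqrt{12}<4$. So for every $m$, $\mathcal C(\nu_{m,\alpha})\le\mathcal C(\nu_*)=\nu_*^{-2}$, because $\nu_{m,\alpha}\ge\nu_*$ and $\nu_*^{-2}\ge4$.

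The main obstacle I expect is the justification in the first two steps. One part is that the reduction~\eqref{uniform-reduc} applies verbatim to the Aharonov--Bohm operator. The other is that the direct sum of the channel boundary values represents the boundary value of the full weighted resolvent. For the second part I would argue first at $z=1\pm i\varepsilon$, where everything is a bounded diagonal operator. I would then pass to the limit $\varepsilon\to0$ using the uniform channel bounds together with the strong convergence in each channel, and conclude that the norm of the limit equals the supremum over channels. The inequality $\mathcal C(\nu)\le \mathcal C(\nu_*)$ is elementary, as checked above.
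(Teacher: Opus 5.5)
Your argument is, at the level of the channels, the paper's proof. It uses the same decomposition into $h_{|m-\alpha|}$ and the same upper bound via $\mathcal{C}(\nu_{m,\alpha})\le\mathcal{C}(\nu_*)=\nu_*^{-2}$. (Your monotonicity argument is the one the paper uses for inverse-square potentials, and is cleaner than its case-by-case check in the Aharonov--Bohm proof.) It also uses the same sharp value $\nu_*^{-2}$ in the minimizing channel.

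Where you differ is the first step, and there the paper's organisation is better. Write $T(z):=|x|^{-1}(-\Delta_\alpha-z)^{-1}|x|^{-1}$ and $T_m(z):=r^{-1}(h_{\nu_{m,\alpha}}-z)^{-1}r^{-1}$. The paper never forms a boundary value of the two-dimensional operator. It decomposes at each fixed $z\in\Com\setminus[0,\infty)$ to get $\|T(z)\|=\sup_m\|T_m(z)\|$, which is \eqref{deduce.bis}, and then interchanges $\sup_z$ and $\sup_m$. The upper bound follows at once from $\|T_m(z)\|\le\mathcal{C}(\nu_{m,\alpha})$ (Theorem~\ref{thm:main_Bessel}). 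The lower bound follows from Proposition~\ref{prop:lower-bound-Bessel-small-nu} in the minimizing channel. All limiting-absorption questions thus stay inside the one-channel results, and no maximum principle for $-\Delta_\alpha$ is needed.

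Your route instead has to prove $\sup_z\|T(z)\|=\|T(1\pm i0)\|$, and your sketch does not give the half that the upper bound needs. The maximum principle and scaling, which would first have to be established for $-\Delta_\alpha$, give $\sup_z\|T(z)\|=\lim_{\varepsilon\to0}\|T(1\pm i\varepsilon)\|$. But weak or strong convergence $T(1\pm i\varepsilon)\to\bigoplus_m A^\pm_{\nu_{m,\alpha}}$ only yields $\|\bigoplus_m A^\pm_{\nu_{m,\alpha}}\|\le\liminf_{\varepsilon\to0}\|T(1\pm i\varepsilon)\|$. That inequality serves the lower bound only; the reverse would require norm convergence. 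The natural fix is to bound $\|T(1\pm i\varepsilon)\|=\sup_m\|T_m(1\pm i\varepsilon)\|\le\sup_m\sup_z\|T_m(z)\|$. That is exactly the paper's fixed-$z$ argument, and it makes both your first step and the passage to the limit in the direct sum superfluous.
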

\noindent
For $\alpha \in \Int$, $-\Delta_\alpha$ is unitarily equivalent 
to $-\Delta$ in $\sii(\Real^2)$ and there is no uniform resolvent
estimate for the Laplacian in two dimensions.

\subsection{Applications}
\subsubsection*{Kato smoothing and evolution control}
Let $H=-\Delta$ or $H_{d,c}$ or $-\Delta_{\alpha}$.
The exact imaginary part identities 
\eqref{Simon}, \eqref{eq:inverse square-imaginary}
and~\eqref{eq:AB-imaginary}
give sharp smoothness constants in Kato's theorem
\cite{Kato_1966,DAncona_2015}.
Indeed, denoting
\begin{equation*}
  B_H
  :=
  \sup_{z\in\Com\setminus[0,\infty)}
  \left\||x|^{-1}\Im(H-z)^{-1}|x|^{-1}\right\|
\end{equation*}
we have, for every $u \in L^2(\Real^d)$,
\begin{equation*}
  \int_\Real
  \left\||x|^{-1} e^{-itH}u\right\|_{L^2(\Real^d)}^2\,\der t
  \le
  2B_H\|u\|_{L^2(\Real^d)}^2 .
\end{equation*}
In the cases covered by~\eqref{Simon} and
Theorem~\ref{thm:imaginary-critical},
this gives the explicit constants
\begin{equation*}
  2B_{-\Delta}=\frac{\pi}{d-2},
  \qquad
  2B_{H_{d,c}}=\frac{\pi}{2\nu_{0,d,c}},
  \qquad
  2B_{-\Delta_\alpha}
  =
  \frac{\pi}{2\,\dist(\alpha,\Int)} .
\end{equation*}

The full weighted resolvent bounds 
of Theorems~\ref{thm:main_Laplacian},
\ref{thm:inverse square} and~\ref{Thm.AB}
give the corresponding
supersmooth estimates, which are equivalent to the inhomogeneous 
smoothing estimate below~\cite{DAncona_2015}.  
Namely, with
\begin{equation}\label{CH}
  C_H
  :=
  \sup_{z\in\Com\setminus[0,\infty)}
  \left\||x|^{-1}(H-z)^{-1}|x|^{-1}\right\|,
\end{equation}
Kato's retarded estimate yields
\begin{equation*}
  \left\|
    |x|^{-1}\int_0^t e^{-i(t-s)H} F(s)\,\der s
  \right\|_{L^2(\Real;L^2(\Real^d)))}
  \le
  2\pi C_H \||x|F\|_{L^2(\Real;L^2(\Real^d)))} .
\end{equation*}
Here $C_H$ is bounded by the constants in
Theorems~\ref{thm:main_Laplacian}, \ref{thm:inverse square},
and~\ref{Thm.AB}; in the Aharonov--Bohm case (Theorem \ref{Thm.AB}), $C_H$ is exactly
$\dist(\alpha,\Int)^{-2}$.

\subsubsection*{Spectral enclosures and stability}
Let again $H$ be one of the self-adjoint model operators above.
Consider a possibly complex-valued perturbation $V:\Real^d \to \Com$
such that $|x|^2 \, V\in L^\infty(\Real^d)$.  
The Birman--Schwinger principle~\cite{HK2}
gives the following necessary condition 
for an eigenvalue $z\in\Com \setminus [0,\infty)$ 
of a closed realisation~$H_V$ of~$H+V$ to exist:
\begin{equation*}
  1
  \le
  \| |x|^2 V \|_{L^\infty(\Real^d)}
  \left\||x|^{-1}(H-z)^{-1}|x|^{-1}\right\|.
\end{equation*}
Equivalently,
\begin{equation*}
  \sigma_{\rm p}(H_V) \setminus [0,\infty)
  \subset
  \left\{
    z\in\Com \setminus [0,\infty) :
    \left\||x|^{-1}(H-z)^{-1}|x|^{-1}\right\|
    \ge
    \||x|^2V\|_{L^\infty(\Real^d)}^{-1}
  \right\}.
\end{equation*}

In particular,
a sufficient condition for the absence of eigenvalues of~$H$
in $\Com \setminus [0,\infty)$
reads
\begin{equation}\label{smallness}
  C_H \, \||x|^2V\|_{L^\infty(\Real^d)}<1,
\end{equation}
where~$C_H$ is defined in~\eqref{CH}.
What is more, the same smallness condition~\eqref{smallness}
ensures the total absence of eigenvalues
(including eigenvalues in $[0,\infty)$) \cite[Corol.~2]{HK2}.
In fact, $H$~is similar to~$-\Delta$ via bounded
and boundedly invertible transformation~\cite{Kato_1966}.

Theorems~\ref{thm:main_Laplacian}, \ref{thm:inverse square},
and~\ref{Thm.AB} make 
the sufficient condition~\eqref{smallness} quantitative.
For instance, in the Aharonov--Bohm case this smallness condition
reads $\||x|^2V\|_{L^\infty(\Real^d)}<\dist(\alpha,\Int)^2$.

\subsection{Structure of the paper}
The paper is organised as follows.  
In Section~\ref{sec:matrix},
we begin with the matrix representation~$A_\nu^\pm$ 
and its immediate consequences;
in particular, we establish parts~(i) and~(iii) of Theorem~\ref{Thm.new}.  
More subtle one-channel estimates,
including parts~(ii) and~(iv) of Theorem~\ref{Thm.new},
are established in Section~\ref{sec:Bessel}.
The applications to the Laplacian, 
inverse square potentials and the Aharonov--Bohm magnetic fields
are performed in Section~\ref{Sec.all}. 
In the final Section~\ref{Sec.multipliers},
we present an alternative way to produce 
the uniform resolvent estimate for the Laplacian 
(Theorem~\ref{thm:main_Laplacian})
via the method of multipliers.

\section{The matrix computation for one Bessel channel}\label{sec:matrix}
Given any $\nu>0$, recall the definition~\eqref{Bessel}
of the Bessel operator~$h_\nu$ from the introduction.
We are interested in the boundary values~$A_\nu^\pm$
of the weighted resolvent at the positive energy~$1$ 
introduced in~\eqref{channel}.
By a scaling invariance and the maximum principle 
(see Section \ref{subsec:prelim} below), the norm of
$r^{-1}(h_\nu-(\lambda\pm i0))^{-1}r^{-1}$ is independent of
$\lambda>0$ and equals $\sup_{z\in\Com\setminus[0,\infty)}
  \|
    r^{-1}(h_\nu-z)^{-1} r^{-1}
  \|_{L^2\to L^2}$.
  So it is indeed enough to understand~\eqref{channel}.

In this section, we decompose $A^\pm_\nu$ in a suitable
orthonormal basis of~$L^2$, and then deduce from it
both a lower bound and the asymptotic behaviour as $\nu\to\infty$
of $\|A_\nu^\pm\|_{L^2\to L^2}$. 
More specifically, the main result of this section 
is the following theorem.
\begin{Theorem}\label{Thm.matrix}
Let $\nu>0$. There is an orthonormal basis $(e_{\nu,\ell})_{\ell\in\mathbb{Z}}$ of $L^2$ such that $A^\pm_\nu$ decomposes in this basis as follows:
Denoting 
$
  a_{\ell\ell'} 
  := \langle e_{\nu,\ell}, A^\pm_\nu e_{\nu,\ell'} \rangle_{\sii}
$,
\begin{align*}
(a_{\ell\ell'})_{\ell,\ell'\in\mathbb{Z}} = \begin{pmatrix}
\ddots & \vdots & \vdots & \vdots & \vdots & \vdots & \reflectbox{$\ddots$} \\
\dots & a_{-2,-2} & 0 & a_{-2,0} & 0 & 0 & \dots \\
\dots & 0 & a_{-1,-1} & a_{-1,0} & 0 & 0 & \dots \\
\dots & a_{0,-2} & a_{0,-1} & a_{0,0} & a_{0,1} & a_{0,2} & \dots \\
\dots & 0 & 0 & a_{1,0} & a_{1,1} & a_{1,2} & \dots \\
\dots & 0 & 0 & a_{2,0} & a_{2,1} & a_{2,2} & \dots \\
\reflectbox{$\ddots$} & \vdots & \vdots & \vdots & \vdots & \vdots & \ddots
\end{pmatrix}
,
\end{align*}
where for $\ell,\ell'\ge1$,
\begin{align*}
& a_{0,0} = \pm i\frac{\pi}{4\nu} + \frac{1}{4\nu^2}, \\
& a_{-\ell,\ell'} = a_{\ell',-\ell} = 0, \\
& a_{-\ell,-\ell'} =  -\frac{\delta_{\ell\ell'}}{4\ell(\nu+\ell)}, \\
& a_{0,-\ell} = a_{-\ell,0} = \frac{(-1)^{\ell}}{4\ell(\nu+\ell)}\sqrt{1+\frac{2\ell}{\nu}}, \\
& a_{0,\ell} = a_{\ell,0} = -\frac{1}{4\ell(\nu+\ell)}\left[(-1)^{\ell}-\frac{\ell!}{(\nu)_\ell}\right]\sqrt{1+\frac{2\ell}{\nu}} ,\\
& a_{\ell,\ell'} = \frac{1}{4\ell\ell'}\frac{\max(\ell,\ell')! \, \Gamma(\nu+\min(\ell,\ell'))}{(\min(\ell,\ell')-1)! \, \Gamma(\nu+\max(\ell,\ell')+1)}\sqrt{1+\frac{2\ell}{\nu}}\sqrt{1+\frac{2\ell'}{\nu}},
\end{align*}
where $(\nu)_\ell=\nu(\nu+1)\cdots(\nu+\ell-1)$ is the Pochammer symbol.
\end{Theorem}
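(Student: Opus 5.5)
The plan is to diagonalise $h_\nu$ with the Hankel transform and read off an explicit orthonormal basis from the imaginary part of $A_\nu^\pm$.

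\emph{Setup.} Let $\mathcal{H}_\nu$ denote the unitary (and involutive) Hankel transform
\[
  (\mathcal{H}_\nu f)(k)=\int_0^\infty \sqrt{kr}\,J_\nu(kr)f(r)\,\der r ,
\]
which satisfies $\mathcal{H}_\nu h_\nu\mathcal{H}_\nu=k^2$. Then, at least on a dense set of $f,g$ and in the sense of quadratic forms,
\[
  \langle g,A_\nu^\pm f\rangle=\lim_{\varepsilon\downarrow0}\int_0^\infty
  \frac{\overline{\mathcal{H}_\nu(r^{-1}g)(k)}\,\mathcal{H}_\nu(r^{-1}f)(k)}{k^2-1\mp i\varepsilon}\,\der k .
\]
By Sokhotski--Plemelj this splits into a principal value part plus $\pm i\frac{\pi}{2}\overline{\mathcal{H}_\nu(r^{-1}g)(1)}\,\mathcal{H}_\nu(r^{-1}f)(1)$. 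The imaginary part is therefore rank one, with kernel $\frac{\pi}{2}J_\nu(r)J_\nu(r')(rr')^{-1/2}$. Since $\int_0^\infty J_\nu(r)^2\,r^{-1}\,\der r=1/(2\nu)$, this yields the threshold vector
\[
  e_{\nu,0}:=\sqrt{2\nu}\,r^{-1/2}J_\nu(r),
\]
and the value $\Im a_{0,0}=\pm\pi/(4\nu)$. It also explains why every off-diagonal entry is real.

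\emph{Choice of basis.} The remaining basis vectors should separate the low energies $k<1$ from the high energies $k>1$ of $\mathcal{H}_\nu(r^{-1}\cdot)$.

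For $\ell\ge1$ I would take the Neumann-series family
\[
  e_{\nu,\ell}:=\sqrt{2(\nu+2\ell)}\,r^{-1/2}J_{\nu+2\ell}(r).
\]
This family is orthonormal by the classical identity $\int_0^\infty J_{\nu+2n}J_{\nu+2m}\,r^{-1}\,\der r=\delta_{nm}/(2(\nu+2n))$. The normalising factors produce exactly the ratios $\sqrt{1+2\ell/\nu}$ that appear in the statement. Moreover, the Weber--Schafheitlin integral shows that $\mathcal{H}_\nu(r^{-1}e_{\nu,\ell})$ is supported in $k\in(0,1]$ and is given there by a Jacobi-type polynomial in $k^2$ times a power of $k$. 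Together with $e_{\nu,0}$, these vectors should span the subspace whose weighted Hankel transform lives in $[0,1]$.

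For $\ell\ge1$ I would define $e_{\nu,-\ell}$ as the orthonormal family with weighted transform supported in $[1,\infty)$. It can be obtained from the low-energy family by the inversion $k\mapsto1/k$, which is compatible with the weight $r^{-1}$, and by orthogonalising against $e_{\nu,0}$.

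Completeness of the whole system then reduces to completeness of Jacobi polynomials on $[0,1]$ together with its image under inversion. Any vector orthogonal to all $e_{\nu,\ell}$ would have weighted transform vanishing on both intervals, hence it is zero.

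\emph{Computing the entries.} The vanishing pattern comes from disjointness of supports. For $\ell,\ell'\ge1$, $e_{\nu,-\ell}$ and $e_{\nu,\ell'}$ have transforms supported in $[1,\infty)$ and $[0,1]$ respectively, so no principal value singularity couples them. After the explicit integration I expect these cross terms $a_{-\ell,\ell'}$ to vanish. The $k>1$ block should come out diagonal, with entries $-1/(4\ell(\nu+\ell))$, because $1/(k^2-1)$ becomes a regular weight after the inversion.

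All remaining real entries are Beta-type integrals of the form
\[
  \mathrm{p.v.}\int_0^\infty\frac{k^{\alpha}(1-k^2)^{\beta}_{\pm}}{k^2-1}\,\der k .
\]
I would evaluate them through hypergeometric summation (Gauss and Chu--Vandermonde). This is where the Gamma ratios, the Pochhammer symbol $(\nu)_\ell$ and the alternating signs $(-1)^\ell$ arise. The value $\Re a_{0,0}=1/(4\nu^2)$ can be cross-checked directly as $\langle e_{\nu,0},(\Re A_\nu)\,e_{\nu,0}\rangle$ computed in $r$-space, via the known kernel $\frac{\pi}{2}J_\nu(r_<)Y_\nu(r_>)$.

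\emph{Main obstacle.} I expect the hardest step to be the explicit principal value integrals for $a_{0,\ell}$ and $a_{\ell,\ell'}$, together with the correct construction of the high-energy vectors $e_{\nu,-\ell}$ so that the $k>1$ block is genuinely diagonal. My first attempt would be to write every entry as a finite sum of Mellin--Barnes Beta integrals and close each sum with Chu--Vandermonde. I would verify the resulting formulae for $\ell=1,2$ symbolically before attempting the general induction.
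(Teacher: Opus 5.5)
Your vectors are, up to relabelling, the paper's. The Neumann vectors $\sqrt{2(\nu+2\ell)}\,r^{-1/2}J_{\nu+2\ell}$ span the low-energy part. If the inversion is taken to be $\hat f(k)\mapsto k\,\hat f(1/k)$, it maps $\hat\varphi_{\nu,\ell}$ exactly onto the paper's $\hat\psi_{\nu,\ell}$; this map is an involution, unitary for $\langle\hat f,h_\nu\hat g\rangle_{\sii}$, and fixes $\hat e_{\nu,0}$, so no further orthogonalisation is needed.

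The genuine problem is that you attach the two blocks to the wrong indices and predict the wrong block to be diagonal. As written, your computation cannot yield the stated matrix. Suppose $f\neq0$ and $\hat f=\mathcal{H}_\nu[r^{-1}f]$ is supported in $[1,\infty)$. Then $\langle f,(\Re A_\nu^\pm)f\rangle_{\sii}=\int_1^\infty|\hat f(k)|^2(k^2-1)^{-1}\,\der k>0$. So no high-energy block can have the negative diagonal $-1/(4\ell(\nu+\ell))$. That diagonal comes from the Neumann vectors, which must therefore carry the negative indices, while the high-energy vectors carry the positive ones. Concretely, with your $e_{\nu,1}=\sqrt{2(\nu+2)}\,r^{-1/2}J_{\nu+2}$ one gets $a_{1,1}=-\frac{1}{4(\nu+1)}$ and $a_{0,1}=-\frac{1}{4(\nu+1)}\sqrt{1+2/\nu}$. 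The statement requires $a_{1,1}=\frac{\nu+2}{4\nu(\nu+1)}$ and $a_{0,1}=\frac{1}{4\nu}\sqrt{1+2/\nu}$. Also, $e_{\nu,0}$ does not lie in the low-energy subspace: its weighted transform has the tail $(2\nu)^{-1/2}k^{-\nu+1/2}$ on $(1,\infty)$.

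What is missing is the mechanism that makes a block diagonal; ``$1/(k^2-1)$ becomes a regular weight after the inversion'' is not it. The map $\psi\mapsto\mathcal{H}_\nu[r^{-1}\psi]$ is unitary from $\sii$ onto $\hat{\mathcal{Q}}_\nu$ with inner product $\langle\hat f,h_\nu\hat g\rangle_{\sii}$, not onto $L^2(\der k)$. This is the geometry in which orthonormality and completeness must be checked, and it also produces diagonality. A weighted eigenvalue relation for $h_\nu$, with the same weight $(k^2-1)^{-1}$ as the form of $\Re A_\nu^\pm$, converts that form into the $\hat{\mathcal{Q}}_\nu$ inner product, that is, into the $\sii$ inner product of the basis vectors. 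On $(0,1)$ one has $h_\nu\hat\varphi_{\nu,\ell}=4\ell(\ell+\nu)(1-k^2)^{-1}\hat\varphi_{\nu,\ell}$, which gives $-\delta_{\ell\ell'}/(4\ell(\nu+\ell))$ at once. On $(1,\infty)$ one only has $h_\nu\hat\psi_{\nu,\ell}=4\ell(\ell+\nu)k^{-2}(k^2-1)^{-1}\hat\psi_{\nu,\ell}$. Equivalently, after your inversion $q=1/k$ the weight $(k^2-1)^{-1}$ becomes $q^{-2}(1-q^2)^{-1}=q^{-2}+(1-q^2)^{-1}$, not $(1-q^2)^{-1}$. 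That block is therefore full; it becomes diagonal only after subtracting $r^{-1}h_\nu^{-1}\mathbf{1}_{h_\nu\ge1}r^{-1}$. Its entries are $2^{-\nu-2}\int_{-1}^1(1-x)^{\nu-1}(1+x)P^{(\nu,1)}_{\ell-1}(x)P^{(\nu,1)}_{\ell'-1}(x)\,\der x$. They are computed by replacing the lower-degree polynomial by its value at $x=1$ (the remainder is killed by orthogonality) and using one tabulated Jacobi integral; this is where the Gamma ratios come from.

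Finally, the principal value integrals you single out as the main obstacle do not occur. Every basis vector other than $e_{\nu,0}$ has transform vanishing at $k=1$. In pairings with $e_{\nu,0}$, the factor $1-k^{\pm2}$ cancels the denominator. So only $\Re a_{0,0}=1/(4\nu^2)$ requires a separate computation.
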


Theorem~\ref{Thm.matrix} will be proven below.
Before, we discuss its consequences.

\subsection{A lower bound to the norm of
\texorpdfstring{$A_\nu^\pm$}{Apm}}
In what follows, $J_\nu$, $Y_\nu$ are the usual Bessel functions of the first and second kind, respectively.
Moreover, $H_\nu^{(1)}=J_\nu+iY_\nu$ and $H_\nu^{(2)}=J_\nu-iY_\nu$
are the Hankel function of the first and second kind, respectively. 
To shorten the notations, we denote
%
$
p_\nu(r) := r^{-1/2}J_\nu(r),
$
%
and define the distinguished vector 
\begin{equation}\label{distinguished}
  e_{\nu,0} := (2\nu)^{1/2} \, p_\nu 
  = (2\nu)^{1/2}r^{-1/2}J_\nu(r)
  \,.
\end{equation}

First, we are concerned with the imaginary part of~$A_\nu^\pm$,
establishing part~(i) of Theorem~\ref{Thm.new}.
\begin{Proposition}\label{prop:main_Bessel_Imaginary}
Let $\nu>0$. One has
\begin{equation}\label{eq:main_Bessel_Imaginary}
\sup_{z\in\mathbb{C}\setminus[0,\infty)}
\left\| r^{-1} \Im (h_\nu - z)^{-1} r^{-1}\right\|_{L^2 \to L^2}
=\frac{\pi}{4\nu}.
\end{equation}
\end{Proposition}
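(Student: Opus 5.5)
The plan is to first reduce the supremum over $z$ to the single boundary value $z=1\pm i0$, and then compute $\Im A_\nu^\pm$ explicitly as a rank-one operator. For the reduction, note that $\Im(h_\nu-z)^{-1} = (\Im z)\,|h_\nu-z|^{-2}$ is, for $\Im z>0$, a nonnegative operator. By the spectral theorem it equals $\int \frac{\Im z}{(\lambda-\Re z)^2+(\Im z)^2}\,\der E(\lambda)$, which is a Poisson-kernel average of the spectral density. Using the Hankel transform, $h_\nu$ is diagonalised by the generalised eigenfunctions $(\lambda r)^{1/2}J_\nu(\sqrt\lambda\, r)$. The weighted spectral density $r^{-1}\frac{\der E}{\der\lambda}r^{-1}$ at energy $\lambda$ is therefore, up to the normalising constant $\frac12$ coming from $\der\lambda = 2k\,\der k$, the rank-one operator $\frac{1}{2}|r^{-1}\phi_\lambda\rangle\langle r^{-1}\phi_\lambda|$ with $\phi_\lambda(r)=r^{1/2}J_\nu(\sqrt\lambda r)$. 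Scaling $r\mapsto r/\sqrt\lambda$ shows that this family is unitarily equivalent to a fixed rank-one operator independent of $\lambda$, with the same norm $\frac{1}{2}\|p_\nu\|^2$ for all $\lambda>0$. Since the Poisson kernel has total mass $\pi$ and the integrand is positive with $\lambda$-independent norm, we get $\|\Im (h_\nu-z)^{-1}\|_{\text{weighted}}\le \pi\cdot \frac12\|p_\nu\|^2$, uniformly in $z$. The case $\Im z<0$ follows by taking adjoints. Equality is attained in the limit $z\to 1\pm i0$, by Stone's formula, or equivalently by the reduction \eqref{uniform-reduc} adapted to $h_\nu$ as in Section~\ref{subsec:prelim}.

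Next, compute $\|p_\nu\|_{L^2}^2=\int_0^\infty J_\nu(r)^2\,r^{-1}\,\der r = \frac{1}{2\nu}$. This is the classical Weber--Schafheitlin integral, and it is consistent with the normalisation in \eqref{distinguished} making $e_{\nu,0}$ a unit vector. The limiting operator is then
\begin{equation*}
\Im A_\nu^\pm = \pm\frac{\pi}{2}\,|p_\nu\rangle\langle p_\nu| = \pm\frac{\pi}{4\nu}\,|e_{\nu,0}\rangle\langle e_{\nu,0}|,
\end{equation*}
whose norm is $\frac{\pi}{4\nu}$. This also matches Theorem~\ref{Thm.matrix}. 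There all entries other than $a_{0,0}$ are real, and $\Im a_{0,0}=\pm\pi/(4\nu)$, so $\Im A_\nu^\pm$ is the rank-one projection onto $e_{\nu,0}$ times $\pm\pi/(4\nu)$. Given Theorem~\ref{Thm.matrix}, I would in fact read off this identity directly: the real symmetric entries contribute nothing to the imaginary part. Combined with the first paragraph, this shows the supremum equals the value at $1\pm i0$.

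The main obstacle is making the Poisson-averaging step rigorous. One must justify passing from the spectral measure to weighted rank-one densities via the Hankel transform, and check that the norm of the Poisson average is bounded by the average of the norms. The latter is immediate for positive operators: for $f\in L^2$, $\langle f,\Im(\cdot)f\rangle=\int P_z(\lambda)\,\frac12|\langle r^{-1}\phi_\lambda,f\rangle|^2\,\der\lambda\le \pi\sup_\lambda\frac12\|r^{-1}\phi_\lambda\|^2\|f\|^2$. To avoid domain issues I would first work with $f\in C_0^\infty((0,\infty))$ and then conclude by density.
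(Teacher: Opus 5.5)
Your proof is correct, but the key reduction step takes a different route from the paper.

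\textbf{The paper's route.} The paper invokes the maximum principle and scaling of Section~\ref{subsec:prelim} to reduce to $z=1\pm i0$. It then writes the boundary-value Green kernel \eqref{eq:Green_integral_kernel} in terms of $J_\nu$, $H_\nu^{(1)}$ and $H_\nu^{(2)}$. Since $J_\nu,Y_\nu$ are real, the imaginary part of the weighted kernel is $\pm\frac{\pi}{2}(rR)^{-1/2}J_\nu(r)J_\nu(R)$, i.e.\ the rank-one operator \eqref{rank-one}.

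\textbf{Your route.} You need neither the boundary kernel nor the maximum principle. For $\Im z>0$, $\Im(h_\nu-z)^{-1}=\Im z\,|h_\nu-z|^{-2}$ is nonnegative. Combined with the Hankel spectral representation, this expresses the weighted quadratic form as a Poisson average of the rank-one densities $\frac12|u_{\sqrt\lambda}p_\nu\rangle\langle u_{\sqrt\lambda}p_\nu|$, each of norm $\frac{1}{4\nu}$. That gives the upper bound for every $z$ at once. It even gives the sharper bound $\frac{1}{4\nu}\int_0^\infty P_z(\lambda)\,\der\lambda<\frac{\pi}{4\nu}$, so the supremum is never attained.

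\textbf{What each buys.} Your argument is more self-contained than the paper's reduction. Proposition~\ref{prop:epsilon-Bessel} is formulated for the analytic function $z\mapsto\langle\phi,r^{-1}(h_\nu-z)^{-1}r^{-1}\psi\rangle$, whereas $\Im(h_\nu-z)^{-1}$ is only harmonic in $z$. The paper's reduction also tacitly uses that the norms converge to the norm of the boundary value. In exchange, the paper's route produces the explicit kernel it reuses for the real part (Proposition~\ref{prop:main_Bessel_Real-2}) and for the matrix representation.

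\textbf{Lower bound.} You should spell out the Stone-formula step. For $f\in C_0^\infty((0,\infty))$, the density $\lambda\mapsto\frac12|\langle r^{-1}\phi_\lambda,f\rangle|^2$ is bounded and continuous. Hence $\langle f,r^{-1}\Im(h_\nu-(1+i\varepsilon))^{-1}r^{-1}f\rangle\to\frac{\pi}{2}|\langle p_\nu,f\rangle|^2$. Letting $f\to p_\nu/\|p_\nu\|$ in $L^2$ then gives $\sup\ge\frac{\pi}{4\nu}$.

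\textbf{Drop the shortcut via Theorem~\ref{Thm.matrix}.} In the paper, the fact that only $a_{0,0}$ has nonzero imaginary part, i.e.\ $\Im A_\nu^\pm=\pm\frac{\pi}{4\nu}(0\oplus1\oplus0)$, is derived from \eqref{rank-one}. That is the very proposition you are proving, so reading the identity off the matrix would be circular. Your Poisson argument does not need it.
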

\begin{proof}
By the maximum principle and scaling 
(see Section~\ref{subsec:prelim} below) it is enough to compute 
the norm of the boundary values at $1\pm i0$.  
The integral kernel of~$h_\nu$ at $1 \pm i0$ reads
\begin{equation}\label{eq:Green_integral_kernel}
\begin{aligned}
  (h_\nu-(1+i0))^{-1}(r,R) 
  &=
 \frac{i\pi}{2} (rR)^{1/2}
\begin{cases}
 J_\nu(r) H_\nu^{(1)}(R) & \mbox{if}\quad  r<R, \\
 J_\nu(R) H_\nu^{(1)}(r) & \mbox{if}\quad  R<r ,
\end{cases}
  \\
   (h_\nu-(1-i0))^{-1}(r,R)
   &=
   -\frac{i\pi}{2} (rR)^{1/2}
   \begin{cases}
   J_\nu(r) H_\nu^{(2)}(R) & \mbox{if}\quad  r<R, \\
   J_\nu(R) H_\nu^{(2)}(r) & \mbox{if}\quad  R<r .
\end{cases}
\end{aligned}   
\end{equation}
Since $J_\nu$, $Y_\nu$ are real-valued,  
one has
\begin{equation*}
  \Im A_\nu^\pm (r,R) = \pm
  \frac{\pi}{2}(rR)^{-1/2}J_\nu(r)J_\nu(R).
\end{equation*}
Thus the imaginary part is the rank-one operator
\begin{equation}\label{rank-one}
  \pm\Im A_\nu^\pm
  =
  \frac{\pi}{2}
  |r^{-1/2}J_\nu\rangle\langle r^{-1/2}J_\nu|  = \frac{\pi}{2}
  |p_\nu\rangle\langle p_\nu| = \frac{\pi}{4\nu} 
  |e_{\nu,0}\rangle\langle e_{\nu,0}| ,
\end{equation}
where we have used  
$\|p_\nu\|_{\sii}^2=\|r^{-1/2}J_\nu\|_{L^2}^2=(2\nu)^{-1}$. 
This proves~\eqref{eq:main_Bessel_Imaginary}.
\end{proof}

As for the real part of~$A_\nu^\pm$, we have the following result.
\begin{Proposition}\label{prop:main_Bessel_Real-2}
Let $\nu>0$.  One has
%
\begin{equation}\label{real.identity}
  \left\langle
    e_{\nu,0},
    \Re A_\nu^\pm e_{\nu,0}
  \right\rangle_{L^2}
  =
  \frac{1}{4\nu^2}.
\end{equation}
\end{Proposition}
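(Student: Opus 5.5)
The plan is to compute the quadratic form in the spectral representation of $h_\nu$ given by the Hankel transform, rather than from the Green kernel \eqref{eq:Green_integral_kernel}. Put $f:=r^{-1}e_{\nu,0}=(2\nu)^{1/2}r^{-3/2}J_\nu(r)$, so that
\[
\langle e_{\nu,0},\Re A_\nu^\pm e_{\nu,0}\rangle=\Re\langle f,(h_\nu-(1\pm i0))^{-1}f\rangle .
\]
The Hankel transform $(\mathcal H g)(k):=\int_0^\infty (kr)^{1/2}J_\nu(kr)\,g(r)\,\der r$ is unitary on $L^2$ and diagonalises $h_\nu$ as multiplication by $k^2$. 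Hence the boundary value of the resolvent form is
\[
\mathrm{p.v.}\!\int_0^\infty\frac{|\mathcal H f(k)|^2}{k^2-1}\,\der k\;\pm\; i\pi\,\frac{|\mathcal Hf(1)|^2}{2}.
\]
The real part is the principal value term. As a consistency check, the imaginary part should reproduce \eqref{rank-one}, since $\langle e_{\nu,0},p_\nu\rangle^2=(2\nu)^{-1}$.

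The key step is to compute $\mathcal Hf$ explicitly. We have
\[
\mathcal Hf(k)=(2\nu)^{1/2}k^{1/2}\int_0^\infty J_\nu(kr)J_\nu(r)\,\frac{\der r}{r}.
\]
The Weber--Schafheitlin discontinuous integral gives $\int_0^\infty J_\nu(kr)J_\nu(r)\,r^{-1}\der r=(2\nu)^{-1}\min(k,k^{-1})^{\nu}$. Therefore
\[
|\mathcal Hf(k)|^2=\frac{1}{2\nu}\,k\,\min(k,k^{-1})^{2\nu}.
\]
As a sanity check, $\int_0^\infty|\mathcal Hf|^2\,\der k=\|r^{-1}e_{\nu,0}\|^2=2\nu\int r^{-3}J_\nu^2\,\der r$, which is finite because $\nu>0$. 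This step is where I expect the main obstacle: justifying the use of this conditionally convergent integral identity, and checking that $f\in L^2$ lies in the form domain. If needed, I would regularise by inserting $e^{-\varepsilon r}$ and letting $\varepsilon\to0$.

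Next, split the principal value integral at $k=1$:
\[
\frac{1}{2\nu}\,\mathrm{p.v.}\left[\int_0^1\frac{k^{1+2\nu}}{k^2-1}\,\der k+\int_1^\infty\frac{k^{1-2\nu}}{k^2-1}\,\der k\right].
\]
In the second integral substitute $k=1/t$. It becomes $\int_0^1 t^{2\nu-1}(1-t^2)^{-1}\,\der t$, and symmetric principal value cutoffs $|k-1|>\varepsilon$ map to cutoffs that agree to first order. The two pieces then combine into
\[
\int_0^1\frac{t^{2\nu-1}-t^{2\nu+1}}{1-t^2}\,\der t=\int_0^1 t^{2\nu-1}\,\der t=\frac{1}{2\nu}.
\]
The singularities at $t=1$ cancel exactly, so the principal value is in fact an ordinary integral. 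Multiplying by the prefactor $\frac{1}{2\nu}$ gives $\frac1{4\nu^2}$, which is \eqref{real.identity}. The sign $\pm$ plays no role, because both boundary values share the same principal value part.
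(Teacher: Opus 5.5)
Your argument is correct and reaches $\frac{1}{4\nu^2}$ by a genuinely different route from the paper. The paper stays in position space. It pairs $p_\nu$ with the real part \eqref{eq:int_kernel_real} of the Green kernel, which gives a double integral of $J_\nu^2/r$ against $\int_r^\infty J_\nu Y_\nu R^{-1}\,\der R$. It then evaluates this with tabulated Mellin transforms of $r^{-1}J_\nu^2$ and $r^{-1}J_\nu Y_\nu$, Mellin--Parseval, and a residue at $s=1-2\nu$.

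You instead work in the Hankel representation. There everything reduces to the Weber--Schafheitlin formula and an elementary principal value that collapses to $\int_0^1 t^{2\nu-1}\,\der t$ under $k\mapsto 1/k$. Your $\mathcal H f$ is exactly the paper's $\hat e_{\nu,0}$, via \eqref{eq:hankel_r32Jnu}. Your route is more elementary and gives the imaginary part $\pm i\pi/(4\nu)$ at the same time via Sokhotski--Plemelj. It is also the same mechanism the paper uses later for the other matrix entries (Lemmas~\ref{lm:orthog-pnu1}, \ref{lm:orthog-pnu2}, \ref{lm:Bnu-e_0}). The paper's route has the advantage of working only with $e_{\nu,0}\in\sii$ and an absolutely convergent kernel pairing, so it never needs to give meaning to $r^{-1}e_{\nu,0}$.

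One claim must be corrected: your sanity check is false for $0<\nu\le1$.
\begin{itemize}
\item Near $r=0$ one has $r^{-3}J_\nu(r)^2\sim c\,r^{2\nu-3}$, so $f=r^{-1}e_{\nu,0}\in\sii$ only when $\nu>1$.
\item Equivalently, your own formula gives $\int_1^\infty|\mathcal Hf|^2\,\der k=(2\nu)^{-1}\int_1^\infty k^{1-2\nu}\,\der k=\infty$ for $\nu\le1$. For instance, at $\nu=\frac12$ one has $\mathcal Hf(k)=\min(k,1)$.
\item So in this range your first display is not literally an $L^2$ pairing. The issue is not whether $f$ lies in a form domain.
\end{itemize}

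The repair is routine, using the unitary $\mathcal U_\nu:\sii\to\hat{\mathcal Q}_\nu$.
\begin{itemize}
\item For fixed $z=1\pm i\varepsilon$ one has $k^2/|k^2-z|\le C_z$.
\item Lemma~\ref{Lem:csq_Hardy}, applied in the $k$-variable, gives $\int_0^\infty|\hat\varphi|^2k^{-2}\,\der k\le\nu^{-2}\|\hat\varphi\|_{\hat{\mathcal Q}_\nu}^2$.
\item Hence \eqref{eq:Hankel50} extends by continuity from $\{\varphi: r^{-1}\varphi\in\sii\}$ to all $\varphi\in\sii$, with $\mathcal H_\nu[r^{-1}\varphi]$ replaced by $\mathcal U_\nu\varphi$.
\item Approximating $e_{\nu,0}$ by $\mathbf 1_{(1/n,\infty)}e_{\nu,0}$ identifies $\mathcal U_\nu e_{\nu,0}$ with your pointwise Weber--Schafheitlin integral.
\item That integral is absolutely convergent for every $k>0$: the integrand is $O(r^{2\nu-1})$ at $0$ and $O(r^{-2})$ at $\infty$. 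So no $e^{-\varepsilon r}$ regularisation is needed.
\item Then let $\varepsilon\to0$ exactly as you do. Plemelj applies because $|\hat e_{\nu,0}|^2$ is Lipschitz near $k=1$ and $|\hat e_{\nu,0}|^2/k^2=O(k^{-1-2\nu})$ at infinity.
\end{itemize}
With this correction, your PV computation, including the first-order agreement of the cutoffs under $k\mapsto1/k$, is complete.
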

\begin{proof}
From~\eqref{eq:Green_integral_kernel} we deduce
\begin{equation}\label{eq:int_kernel_real}
  \Re A_\nu^\pm(r,R) =
 - \frac{\pi}{2} (rR)^{-1/2}
\begin{cases}
 J_\nu(r) Y_\nu(R) & \mbox{if}\quad r<R, \\
 J_\nu(R) Y_\nu(r) & \mbox{if}\quad R<r .
\end{cases}
\end{equation}
Hence
\begin{align*}
&\left\langle
  p_\nu, A_\nu^\pm p_\nu
\right\rangle_{L^2}
=
-\pi
\int_0^\infty
  \frac{J_\nu(r)^2}{r}
  \int_r^\infty
    \frac{Y_\nu(R)J_\nu(R)}{R}\,\mathrm{d}R\,\mathrm{d}r.
\label{eq:bound-im-real2}
\end{align*}
We compute the last integral by the Mellin transform.  First,
\begin{align*}
\mathcal{M}\{r^{-1}J_\nu(r)^2\}(s)
=
\int_0^\infty r^{s-2}J_\nu(r)^2\,\mathrm{d}r 
2^{s-2}
\frac{
  \Gamma(2-s)\Gamma\big(\nu-\frac12+\frac{s}{2}\big)}
{\Gamma\big(\frac32-\frac{s}2\big)^2
 \Gamma\big(\nu+\frac32-\frac{s}2\big)}
\end{align*}
for $1-2\nu<\mathrm{Re}(s)<2$, see \cite[6.574.2]{GR}.
Likewise
\begin{align*}
\mathcal{M}\{r^{-1}J_\nu(r)Y_\nu(r)\}(s)
&=
\int_0^\infty r^{s-2}J_\nu(r)Y_\nu(r)\,\mathrm{d}r \\
&=
-\frac{2^{s-1}}{\pi}
\cos\left(\frac{(s-1)\pi}{2}\right)
\frac{
  \Gamma(\nu-\frac12+\frac{s}2)
  \Gamma(-\frac12+\frac{s}{2})
  \Gamma(2-s)}
{\Gamma(\nu+\frac32-\frac{s}2)
 \Gamma(\frac32-\frac{s}2)}
\end{align*}
for $0<\Re s<1$, see \cite[2.13.15.5]{PBM}.  
Therefore
\begin{align*}
&\mathcal{M}
\left\{
  \int_r^\infty
  \frac{Y_\nu(R)J_\nu(R)}{R}\,\mathrm{d}R
\right\}(s) 
=
-\frac{2^s}{\pi s}
\cos\left(\frac{s\pi}{2}\right)
\frac{
  \Gamma(\nu+\frac{s}2)
  \Gamma(\frac{s}{2})
  \Gamma(1-s)}
{\Gamma(\nu+1-\frac{s}2)\Gamma(1-\frac{s}2)}
\end{align*}
for $-1<\Re s<0$.  
Parseval's formula gives, for $0<c<1$,
\begin{align*}
&-\pi
\int_0^\infty
  \frac{J_\nu(r)^2}{r}
  \int_r^\infty
    \frac{Y_\nu(R)J_\nu(R)}{R}\,\mathrm{d}R\,\mathrm{d}r
=
\frac{1}{2i\pi}
\int_{c-i\infty}^{c+i\infty}
\frac{1}{(1-s)^2}
\frac{1}{
  \big(\nu+\frac12-\frac{s}2\big)
  \big(\nu-\frac12+\frac{s}2\big)}
\,\mathrm{d}s .
\end{align*}
The residue at $s=-2\nu+1$ equals $1/(8\nu^3)$.
This proves the desired identity after recalling 
the relationship~\eqref{distinguished}.
\end{proof}

The imaginary part identity from Proposition \ref{prop:main_Bessel_Imaginary} gives a lower bound for $\|A_\nu^\pm\|_{L^2\to L^2}$.
Indeed, for all $\varphi\in L^2$, one has
\begin{align*}
\|A_\nu^\pm\|_{L^2\to L^2}\ge\big|\langle\varphi, A_\nu^\pm \varphi\rangle_{L^2}\big|&\ge\big|\Im\langle\varphi, A_\nu^\pm \varphi\rangle_{L^2}\big|=\big|\langle\varphi, \big[ \Im A_\nu^\pm \big]\varphi\rangle_{L^2}\big|.
\end{align*}
Taking the supremum over $\varphi\in L^2$ such that $\|\varphi\|_{L^2}=1$ and using Proposition \ref{prop:main_Bessel_Imaginary} gives
\begin{equation*}
\|A_\nu^\pm\|_{L^2\to L^2}\ge\frac{\pi}{4\nu}.
\end{equation*}
Now we are in a position to improve this bound. Indeed,
from the matrix representation 
of Theorem~\ref{Thm.matrix}
one can compute $\|A_\nu^\pm e_{\nu,0}\|_{L^2}$, 
as the following theorem shows.
\begin{Theorem}\label{Thm.lower}
Let $\nu>0$. One has
\begin{align*}
\|A_\nu^\pm\|_{L^2\to L^2}^2\ge\big\|A_\nu^\pm e_{\nu,0}\big\|_{L^2}^2=\frac{\pi^2}{12\nu^2}+\frac{1}{8\nu^4}.
\end{align*}
\end{Theorem}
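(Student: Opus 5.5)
Since $(e_{\nu,\ell})_{\ell\in\Int}$ from Theorem~\ref{Thm.matrix} is an orthonormal basis, Parseval gives
\begin{equation*}
  \|A_\nu^\pm e_{\nu,0}\|_{L^2}^2=\sum_{\ell\in\Int}|a_{\ell,0}|^2
  =|a_{0,0}|^2+\sum_{\ell\ge1}|a_{-\ell,0}|^2+\sum_{\ell\ge1}|a_{\ell,0}|^2 .
\end{equation*}
The inequality $\|A_\nu^\pm\|^2\ge\|A_\nu^\pm e_{\nu,0}\|^2$ is immediate because $\|e_{\nu,0}\|=1$. So the task is to evaluate this series in closed form. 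The first term is explicit: $|a_{0,0}|^2=\frac{\pi^2}{16\nu^2}+\frac{1}{16\nu^4}$. Both the target value and this term are free of $\Gamma$-functions, so the off-diagonal series must collapse to $\frac{\pi^2}{48\nu^2}+\frac{1}{16\nu^4}$.

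Summing the series directly from the formulas of Theorem~\ref{Thm.matrix} looks awkward because of the ratios $\ell!/(\nu)_\ell$. I would therefore avoid that and compute $\|A_\nu^\pm e_{\nu,0}\|^2$ another way, then use Parseval only as a consistency check. Write $A_\nu^\pm=\Re A_\nu^\pm\pm i\,\Im A_\nu^\pm$. By~\eqref{rank-one}, $\Im A_\nu^\pm e_{\nu,0}=\pm\frac{\pi}{4\nu}e_{\nu,0}$. Since $\Re A^\pm_\nu$ and $\Im A^\pm_\nu$ are symmetric with real kernels and $e_{\nu,0}$ is real, we get
\begin{equation*}
  \|A_\nu^\pm e_{\nu,0}\|^2=\|\Re A_\nu^\pm e_{\nu,0}\|^2+\frac{\pi^2}{16\nu^2}.
\end{equation*}
The cross term is $\pm 2\,\Im\langle \Re A e_0, i\,\Im A e_0\rangle$, which reduces to a real multiple of $\langle\Re A e_0,e_0\rangle$ with a purely imaginary coefficient and hence contributes nothing to the squared norm. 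It therefore remains to prove
\begin{equation*}
  \|\Re A_\nu^\pm e_{\nu,0}\|^2=\frac{\pi^2}{48\nu^2}+\frac{1}{8\nu^4},
\end{equation*}
or equivalently $\|\Re A_\nu^\pm p_\nu\|^2=\frac{\pi^2}{96\nu^3}+\frac{1}{16\nu^5}$.

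To compute $\Re A_\nu^\pm p_\nu$, I would use the kernel~\eqref{eq:int_kernel_real}:
\begin{equation*}
  (\Re A p_\nu)(r)=-\frac{\pi}{2}r^{-1/2}\Big[Y_\nu(r)\int_0^r \frac{J_\nu^2}{R}\,\der R+J_\nu(r)\int_r^\infty\frac{J_\nu Y_\nu}{R}\,\der R\Big].
\end{equation*}
Both inner integrals have closed forms. The identity $\int_0^r J_\nu^2/R\,\der R=\frac{1}{2\nu}\big(1-J_0(r)^2-2\sum_{k=1}^{\nu} J_k(r)^2\big)$ only holds for integer $\nu$, so in general I would instead use the Wronskian-type antiderivatives
\begin{equation*}
  \frac{\der}{\der r}\frac{r\,(J_\nu'\,C_\nu-J_\nu\,C_\nu')}{2\nu}\qquad\text{for } C_\nu\in\{J_\nu,Y_\nu\},
\end{equation*}
derived from the Bessel equation. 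This expresses $\Re A p_\nu$ in terms of products of $J_\nu$, $Y_\nu$ and their derivatives. The squared norm then becomes a Mellin–Parseval integral of exactly the kind handled in the proof of Proposition~\ref{prop:main_Bessel_Real-2}. One evaluates it by residues: the pole at $s=1-2\nu$ should give the $\nu^{-5}$ term, and a double pole combined with the $\cos$ factors should produce the $\pi^2\nu^{-3}$ term.

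The main obstacle is this last evaluation, namely getting the exact constant $\pi^2/(96\nu^3)$. My first attempt would be an alternative route. Since $e_{\nu,0}$ spans $\operatorname{Ran}\Im A$, write $\Re A e_0=\frac{1}{4\nu^2}e_0+f$ with $f\perp e_0$. Then $\|f\|^2=\sum_{\ell\ne0}|a_{\ell,0}|^2$, and one can try to identify $f$ through the Hankel transform. In Hankel space, $r^{-1}$ acts as the convolution-type operator whose kernel is known in closed form, so $f$ becomes an explicit function whose norm is an elementary integral producing $\pi^2/48$, likely via a $\sum 1/\ell^2$ or $\int\log$ identity.
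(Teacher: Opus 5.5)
\textbf{There is a genuine gap.} Your reduction is correct and is the paper's first step. Since $e_{\nu,0}$ is real and $\Re A_\nu^\pm$, $\Im A_\nu^\pm$ have real symmetric kernels, the cross term $2\,\Re\langle \Re A_\nu^\pm e_{\nu,0},\, i\,\Im A_\nu^\pm e_{\nu,0}\rangle$ is the real part of a purely imaginary number. Hence $\|A_\nu^\pm e_{\nu,0}\|^2=\|\Re A_\nu^\pm e_{\nu,0}\|^2+\frac{\pi^2}{16\nu^2}$. (As you wrote it, $\pm2\,\Im\langle\cdot,i\,\cdot\rangle$ equals a nonzero multiple of $\langle \Re A_\nu^\pm e_{\nu,0},e_{\nu,0}\rangle=\frac1{4\nu^2}$, so the justification is garbled, but the conclusion is right.)

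The problem is that the remaining identity $\|\Re A_\nu^\pm e_{\nu,0}\|^2=\frac{\pi^2}{48\nu^2}+\frac1{8\nu^4}$ is the entire content of the theorem, and you never prove it. Your main route also starts from a false formula. For two Bessel functions of the \emph{same} order, $r(J_\nu' C_\nu-J_\nu C_\nu')$ is identically $0$ when $C_\nu=J_\nu$, and is the Wronskian constant $-2/\pi$ when $C_\nu=Y_\nu$. So its derivative vanishes; it is not an antiderivative of $J_\nu C_\nu/r$. The identity $\frac{\der}{\der r}\big[r(u'v-uv')\big]=(\mu^2-\nu^2)\,uv/r$ needs distinct orders $\mu\neq\nu$. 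In the limit $\mu\to\nu$ the antiderivative becomes $\frac{r}{2\nu}\big(\dot J_\nu' C_\nu-\dot J_\nu C_\nu'\big)$, where $\dot J_\nu=\partial_\nu J_\nu$ is the derivative in the \emph{order}. Consequently $\Re A_\nu^\pm p_\nu$ is not built from $J_\nu$, $Y_\nu$ and their $r$-derivatives. Moreover, $\|\Re A_\nu^\pm p_\nu\|^2$ is quartic in such functions, not the bilinear pairing evaluated by Mellin--Parseval in Proposition~\ref{prop:main_Bessel_Real-2}. Your predicted residue contributions are therefore unsupported, and the Hankel-space alternative contains no actual computation.

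The missing idea is that the series you chose to avoid is elementary to sum; this is exactly what the paper does. By Theorem~\ref{Thm.matrix},
\[
\|\Re A_\nu^\pm e_{\nu,0}\|^2=\frac1{16\nu^4}+\sum_{\ell\ge1}\big(a_{-\ell,0}^2+a_{\ell,0}^2\big).
\]
The identity $\frac{1+2\ell/\nu}{\ell^2(\nu+\ell)^2}=\frac1{\nu^2}\big(\frac1{\ell^2}-\frac1{(\nu+\ell)^2}\big)$ reduces this to three sums:
\begin{itemize}
\item The plain sum equals $\frac{\pi^2}{6}-\psi^{(1)}(\nu+1)$.
\item The alternating sum $S_\nu$, weighted by $\ell!/(\nu)_\ell$, satisfies $S_{\nu+1}-S_\nu=(\nu+1)^{-2}$ and $S_{\nu+k}\to0$ as $k\to\infty$. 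Hence $S_\nu=-\psi^{(1)}(\nu+1)$.
\item The sum weighted by $(\ell!/(\nu)_\ell)^2$ telescopes to $\nu^{-2}$, because
\[
\Big(\frac1{\ell^2}-\frac1{(\nu+\ell)^2}\Big)\Big(\frac{\ell!}{(\nu)_\ell}\Big)^2=\Big(\frac{(\ell-1)!}{(\nu)_\ell}\Big)^2-\Big(\frac{\ell!}{(\nu)_{\ell+1}}\Big)^2 .
\]
\end{itemize}
Assembling, $\|\Re A_\nu^\pm e_{\nu,0}\|^2=\frac1{16\nu^4}+\frac1{8\nu^2}\big(\frac{\pi^2}{6}-\psi^{(1)}(\nu+1)\big)+\frac{\psi^{(1)}(\nu+1)}{8\nu^2}+\frac1{16\nu^4}$. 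The trigamma terms cancel and leave $\frac{\pi^2}{48\nu^2}+\frac1{8\nu^4}$, as required.
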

\begin{proof}
Since $e_{\nu,0}(r)=(2\nu)^{\frac12}r^{-\frac12}J_\nu(r)$ is real-valued, 
we can write
$
  A_\nu^\pm e_{\nu,0}= (\Re A_\nu^\pm) e_{\nu,0} + i (\Im A_\nu^\pm ) e_{\nu,0}
$.
This implies
\begin{align*}
 & \big \| A_\nu^\pm \big\|_{L^{2}\to L^{2}}^{2} \ge \big\|A_\nu^\pm e_{\nu,0}\big\|_{L^2}^2 =  \big\|(\Re A_\nu^\pm) e_{\nu,0}\big\|_{L^2}^{2}+\big\|(\Im A_\nu^\pm ) e_{\nu,0}\big\|_{L^2}^{2} = \big\|(\Re A_\nu^\pm) e_{\nu,0}\big\|_{L^2}^{2}+\frac{\pi^2}{16\nu^2},
\end{align*}
where we have used~\eqref{rank-one} in the last equality. 
It remains to compute $\big\|(\Re A_\nu^\pm) e_{\nu,0}\big\|_{L^2}^{2}$. From the matrix representation of Theorem~\ref{Thm.matrix} and Parseval's identity, it follows that
\begin{align}
\big\|(\Re A_\nu^\pm) e_{\nu,0}\big\|_{L^2}^{2} 
&= \frac{1}{16\nu^4} + \frac{1}{16} \sum_{\ell\ge1} \frac{1}{\ell^2(\nu+\ell)^2}\Big(1+\frac{2\ell}{\nu}\Big)+ \frac{1}{16}\sum_{\ell\ge1} \frac{1}{\ell^2(\nu+\ell)^2}\Big((-1)^{\ell}-\frac{l!}{(\nu)_\ell}\Big)^2\Big(1+\frac{2\ell}{\nu}\Big) \notag \\
&= \frac{1}{16\nu^4} + \frac{1}{8\nu^2} \sum_{\ell\ge1} \Big(\frac{1}{\ell^2}-\frac{1}{(\nu+\ell)^2}\Big)-\frac{1}{8\nu^2} \sum_{\ell\ge1} (-1)^\ell\Big(\frac{1}{\ell^2}-\frac{1}{(\nu+\ell)^2}\Big)\frac{\ell!}{(\nu)_\ell} \notag\\
&\quad+\frac{1}{16\nu^2} \sum_{\ell\ge1} \Big(\frac{1}{\ell^2}-\frac{1}{(\nu+\ell)^2}\Big)\Big(\frac{\ell!}{(\nu)_\ell}\Big)^2. \label{eq:norm-Atildepnu}
\end{align}
We compute the three sums on the right-hand side of~\eqref{eq:norm-Atildepnu}. 
For the first sum, by definition of the trigamma function, we have
\begin{align}\label{eq:norm-Atildepnu2}
\sum_{\ell\ge1} \Big(\frac{1}{\ell^2}-\frac{1}{(\nu+\ell)^2}\Big)=\frac{\pi^2}{6}-\psi^{(1)}(\nu)+\frac{1}{\nu^2}.
\end{align}
To compute the second sum, we define
\begin{align*}
S_\nu := \sum_{\ell\ge1} (-1)^\ell \Big(\frac{1}{\ell^2}-\frac{1}{(\nu+\ell)^2}\Big)\frac{\ell!}{(\nu)_\ell},
\end{align*}
and then compute
\begin{align*}
S_{\nu+1}-S_{\nu}&=\sum_{\ell=1}^\infty(-1)^\ell\frac{\ell!}{(\nu)_{\ell}}\Big[ -\frac{1}{\ell(\nu+\ell)}+\frac{\ell+1}{(\nu+\ell)(\nu+\ell+1)^2}+\frac{1}{(\nu+\ell)^2(\nu+\ell+1)} \Big]=\frac{1}{(\nu+1)^2},
\end{align*}
which yields
\begin{align}\label{eq:norm-Atildepnu3}
S_\nu=\sum_{k\ge0}(S_{\nu+k}-S_{\nu+k+1})=-\sum_{k\ge0}\frac{1}{(\nu+k+1)^2}=\frac{1}{\nu^2}-\psi^{(1)}(\nu).
\end{align}
The third sum in \eqref{eq:norm-Atildepnu} simplifies to
\begin{align}\label{eq:norm-Atildepnu4}
\sum_{\ell\ge1} \Big(\frac{1}{\ell^2}-\frac{1}{(\nu+\ell)^2}\Big)\Big(\frac{\ell!}{(\nu)_\ell}\Big)^2=\sum_{\ell\ge1}\Big(\Big(\frac{(\ell-1)!}{(\nu)_\ell}\Big)^2 - \Big(\frac{\ell!}{(\nu)_{\ell+1}}\Big)^2\Big)=\frac{1}{\nu^2}.
\end{align}
Inserting \eqref{eq:norm-Atildepnu2}--\eqref{eq:norm-Atildepnu4} into \eqref{eq:norm-Atildepnu}, the result follows.
\end{proof}

\subsection{Asymptotics of the norm of
\texorpdfstring{$A_\nu^\pm$}{Apm} 
as \texorpdfstring{$\nu\to\infty$}{nutoinfty}}
It is not clear to us how to compute an exact expression of $\|A_\nu^\pm\|_{L^2\to L^2}$ from the matrix representation of 
Theorem~\ref{Thm.matrix}. However, we can obtain the following asymptotics as $\nu\to\infty$, establishing part~(iii) of Theorem~\ref{Thm.new}.
\begin{Theorem}\label{thm:asymptotics}
Let $\nu > 0$. One has
\begin{align*}
\big\| A_\nu^\pm \big\|_{L^2\to L^2} = \frac{1}{\nu} 
+ O(\nu^{-3/2}) 
  \qquad \mbox{as} \qquad \quad \nu\to\infty.
\end{align*}
\end{Theorem}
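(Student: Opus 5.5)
We use the matrix representation of Theorem~\ref{Thm.matrix} and split $A_\nu^\pm$ into three blocks according to the basis $(e_{\nu,\ell})_{\ell\in\Int}$: the negative block $\ell,\ell'\le -1$, the positive block $\ell,\ell'\ge1$, and the couplings with $e_{\nu,0}$.

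\emph{Lower bound.} We test $A_\nu^\pm$ on a single vector. The natural choice is $e_{\nu,1}$, where $a_{1,1}=\frac{1}{4}\frac{\Gamma(\nu+1)}{\Gamma(\nu+2)}(1+\frac2\nu)=\frac{\nu+2}{4\nu(\nu+1)}\approx\frac1{4\nu}$. This is too small, so we use instead a spread-out vector $\varphi=\sum_{\ell=1}^{L}c_\ell e_{\nu,\ell}$ with $L\sim\nu$. For $\ell,\ell'\ll\nu$, Stirling gives
\[
\frac{\max! \,\Gamma(\nu+\min)}{(\min-1)!\,\Gamma(\nu+\max+1)}\approx \min\cdot \frac{\max!}{\min!}\,\nu^{\min-\max-1},
\]
which decays quickly off the diagonal. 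For $\ell\sim\nu$, however, the factors $\sqrt{1+2\ell/\nu}$ and the Pochhammer ratios smooth out, and the entries behave like $\frac{1}{\nu}K(\ell/\nu,\ell'/\nu)\cdot\frac1\nu$ for an explicit kernel $K$ on $(0,\infty)^2$. Indeed, writing $\ell=\nu x$, we have $\Gamma(\nu+m)/\Gamma(\nu+M+1)\approx \nu^{-1}\dots$, and with $m!$ and $\Gamma(\nu+m)$ both of size $\nu$ this becomes a Riemann sum. We therefore expect the positive block, after rescaling by $\nu$, to converge to an integral operator on $L^2((0,\infty))$ of norm $1$. Taking $c_\ell=\nu^{-1/2}f(\ell/\nu)$ for a near-maximiser $f$ then gives $\|A_\nu^\pm\|\ge \frac1\nu-O(\nu^{-3/2})$, with the Riemann-sum error of relative size $\nu^{-1/2}$ coming from the region $\ell\lesssim\sqrt\nu$.

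As an alternative, and in line with the paper's style, we could avoid the matrix for the lower bound and work in position space. As $\nu\to\infty$, the leading term of $h_\nu$ on $r\sim\nu$ is dominated by $\nu^2/r^2$, so $A_\nu^\pm\approx r^{-1}(\nu^2r^{-2})^{-1}r^{-1}=\nu^{-2}$ for $r\ll\nu$. That gives norm $\nu^{-2}$, which is too small. The scale $r\sim\nu$, the turning point, is where the norm $1/\nu$ arises, which again indicates that the matrix approach is the right one.

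\emph{Upper bound.} We bound the three blocks separately. The negative block is diagonal with norm $\sup_{\ell}\frac{1}{4\ell(\nu+\ell)}\le \frac1{4\nu}$. The coupling column $(a_{0,\ell})_{\ell\ne0}$ has $\ell^2$-norm computed exactly in the proof of Theorem~\ref{Thm.lower}: the sums there are $O(\nu^{-2}\log\nu)$ or better, since $\psi^{(1)}(\nu)\sim1/\nu$ gives $\frac{1}{8\nu^2}(\pi^2/6-\psi^{(1)})=O(\nu^{-2})$. So the column has norm $O(\nu^{-1})$ with constant $\sqrt{\pi^2/48}<1$, and $|a_{00}|=O(\nu^{-1})$. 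This is not automatically negligible. The cleanest approach is to note that the full norm satisfies $\|A\|\le \max(\|P\|,\|N\|)+\|\text{coupling}\|+|a_{00}|$, but this is too crude, since it gives $\frac1\nu+\frac{c}{\nu}$. We must instead exploit the fact that the coupling to $e_{\nu,0}$ sits on a different ``scale''. The column entries $a_{0,\ell}\approx\frac{1}{4\ell\nu}\sqrt{2\ell/\nu}$ are concentrated at small $\ell$, where the positive block has small norm (about $\frac1{4\nu}$). A Schur test with weight $w_0=\epsilon$, $w_\ell=$ (adapted) should therefore give $\|A\|\le\frac1\nu+O(\nu^{-3/2})$.

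\emph{Positive block.} We prove $\|P\|\le\frac1\nu+O(\nu^{-3/2})$ via a Schur test with weights $w_\ell=(\ell/\nu)^{-1/2}$ or similar, evaluating the row sums $\sum_{\ell'}a_{\ell\ell'}w_{\ell'}$ through the telescoping and Gamma-function identities already used in \eqref{eq:norm-Atildepnu4}. The main obstacle is precisely this step: identifying the limiting kernel, showing that its norm is exactly $1$ with a matching Schur weight, and controlling the error terms uniformly to order $\nu^{-3/2}$.
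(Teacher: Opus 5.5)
\textbf{The lower-bound construction cannot work, because the block it uses is too small.} The $E_\nu^\infty$ block of $A_\nu^\pm$ has norm about $\frac{1}{4\nu}$, not $\frac1\nu$. By Proposition~\ref{prop:matrix2} it equals $\mathrm{diag}\big(\frac{1}{4\ell(\nu+\ell)}\big)_{\ell\ge1}$ plus the compression of $r^{-1}h_\nu^{-1}\mathbf{1}_{h_\nu\ge1}r^{-1}$, which has norm $\nu^{-2}$. The entries confirm this. The Gamma ratio in $a_{\ell,\ell'}$ is $\prod_{j=m}^{n}\frac{j}{\nu+j}$, so for $\ell\approx x\nu$ the entries are $O(\nu^{-2})$ and decay like $\big(\frac{x}{1+x}\big)^{|\ell-\ell'|}$. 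That is geometric decay in $|\ell-\ell'|$, not in $|\ell-\ell'|/\nu$, so there is no limiting kernel $K(\ell/\nu,\ell'/\nu)$ and no Riemann sum. Now add the coupling to $e_{\nu,0}$: the column $(a_{0,\ell})_{\ell\ge1}$ has $\ell^2$-norm $\frac{1}{4\nu}\big(\frac{\pi^2}{6}+\psi^{(1)}(\nu)\big)^{1/2}$, and $a_{-\ell,\ell'}=0$. Then every unit $\varphi\in E_\nu^\infty$ satisfies $\|A_\nu^\pm\varphi\|\le\frac{\sqrt{1+\pi^2/6}}{4\nu}(1+o(1))\approx\frac{0.41}{\nu}$, so no choice of $(c_\ell)$ reaches $\frac1\nu$. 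The value $\frac1\nu$ lives at \emph{bounded} indices: $\nu a_{0,0}\to\pm\frac{i\pi}{4}$, $\nu a_{k,k}\to\frac{1}{4k}$, and $\nu a_{0,k}\to\frac{(-1)^{k+1}}{4k}$ for $k\in\Int\setminus\{0\}$. It is produced by the interaction of $e_{\nu,0}$ with \emph{both} $E_\nu^0$ and $E_\nu^\infty$, through entries decaying only like $1/|k|$. A near-maximiser must therefore spread over all $e_{\nu,k}$, $k\in\Int$.

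\textbf{The upper bound is not established, and a Schur test cannot establish it.} You leave the positive-block Schur test as the open main obstacle. It also conflicts with your coupling step, which uses that this block is only about $\frac{1}{4\nu}$. More fundamentally, no Schur test in this basis can give the constant $1$. Any bound a Schur test produces also bounds the entrywise-modulus matrix $|A_\nu^\pm|$. The entrywise limits above then give $\liminf_{\nu\to\infty}\nu\,\||A_\nu^\pm|\|\ge\||C^\pm|\|$. Here $|C^\pm|$ is the arrowhead matrix with corner $\frac\pi4$, diagonal $\frac{1}{4|k|}$ and border $\frac{1}{4|k|}$. Its top eigenvalue solves $\lambda-\frac\pi4=2\sum_{k\ge1}\frac{(4k)^{-2}}{\lambda-(4k)^{-1}}$. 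At $\lambda=1$ the right-hand side equals $\frac{3\ln2-\pi/2}{2}\approx0.254$, which exceeds $1-\frac\pi4\approx0.215$, so $\||C^\pm|\|>1$. The sharp constant depends on sign cancellations that taking moduli destroys: the alternating couplings, the opposite signs of the two diagonal blocks, and the imaginary corner.

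\textbf{The missing idea is the paper's decomposition.}
\begin{enumerate}
\item Subtract $r^{-1}h_\nu^{-1}\mathbf{1}_{h_\nu\ge1}r^{-1}$, whose norm is exactly $\nu^{-2}$, to obtain the sparse operator $B_\nu^\pm$ of Proposition~\ref{prop:matrix2}.
\item Show $\|B_\nu^\pm-\nu^{-1}C^\pm\|_{\mathrm{HS}}^2\le\frac{3\psi^{(1)}(\nu+1)}{8\nu^2}=O(\nu^{-3})$ for the $\nu$-independent $C^\pm$.
\item Compute $\|C^\pm\|=1$ exactly: the map $e_{\nu,k}\mapsto(-1)^k(2\pi)^{-1/2}e^{ikt}$ turns $C^+$ into the Volterra operator $\varphi\mapsto\frac i4\int_0^t\varphi$ on $L^2((0,2\pi))$.
\end{enumerate}
Then $\big|\|A_\nu^\pm\|-\frac1\nu\big|\le\nu^{-2}+\|B_\nu^\pm-\nu^{-1}C^\pm\|_{\mathrm{HS}}$ gives both bounds at once, with no test vectors or weights.
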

\begin{Remark}
We will show more precisely that, for all $\nu>0$,
\begin{align}\label{eq:asymptotics-bound}
\Big| \big\| A_\nu^\pm \big\|_{L^2\to L^2} - \frac{1}{\nu} \Big| \le \frac{\sqrt{3\psi^{(1)}(\nu+1)}}{2\sqrt{2}\nu}+\frac{1}{\nu^2},
\end{align}
where $\psi^{(1)}$ is the trigamma function (in particular $\psi^{(1)}(\nu)\sim\frac{1}{\nu}$ as $\nu\to\infty$), without trying to optimise the remainder term.
\end{Remark}
We prove Theorem \ref{thm:asymptotics} thanks to a suitable matrix representation of the auxiliary operator
\begin{equation}\label{operator.B}
B_\nu^\pm:=A_\nu^\pm-r^{-1}h_\nu^{-1}\mathbf{1}_{h_\nu\ge1}r^{-1}.
\end{equation}
\begin{Remark}\label{rk:ess-spectrum}
The operator $r^{-1}h_\nu^{-1}\mathbf{1}_{h_\nu\ge1}r^{-1}$
is self-adjoint and its spectrum reads
\begin{align}\label{eq:spectrum_aux}
\sigma\big( r^{-1}h_\nu^{-1}\mathbf{1}_{h_\nu\ge1}r^{-1} \big) = \sigma_{\mathrm{ess}}\big( r^{-1}h_\nu^{-1}\mathbf{1}_{h_\nu\ge1}r^{-1} \big ) = [0,\nu^{-2}].
\end{align}
Consequently,
\begin{equation}\label{eq:rk4}
\|r^{-1}h_\nu^{-1}\mathbf{1}_{h_\nu\ge1}r^{-1}\|_{L^2\to L^2}=\nu^{-2}.
\end{equation}
This follows from the integral kernel
\begin{equation*}
 h_\nu^{-1}(r,R) =
 \frac{1}{2\nu} 
\begin{cases}
 r^{\nu+\frac12}R^{-\nu+\frac12} 
 & \mbox{if}\quad \quad r<R \,, \\
 r^{-\nu+\frac12}R^{\nu+\frac12}  
 & \mbox{if}\quad \quad R<r \,.
\end{cases}
\end{equation*}
In more detail,
let $\mathcal{D}:L^2((0,\infty))\to L^2(\mathbb{R})$ be the
unitary operator
$
  (\mathcal{D}\varphi)(x)
  =
  e^{x/2}\varphi(e^x).
$  
Under this transform, $r^{-1}h_\nu^{-1}r^{-1}$ is unitarily
equivalent to the operator with kernel
$(2\nu)^{-1}e^{-\nu|x-y|}$.  After Fourier transform, this becomes
the multiplier $(\xi^2+\nu^2)^{-1}$.  This implies that
\begin{align*}
\sigma\big( r^{-1}h_\nu^{-1}r^{-1} \big) = \sigma_{\mathrm{ess}}\big( r^{-1}h_\nu^{-1}r^{-1} \big ) = [0,\nu^{-2}],
\end{align*}
and that $\|r^{-1}h_\nu^{-1}r^{-1}\|=\nu^{-2}$.  By scaling, the
cutoff operator with $\mathbf{1}_{h_\nu\ge1}$ is unitarily
equivalent to the cutoff at $\varepsilon>0$.  Letting
$\varepsilon\to0$ and using strong convergence, 
we arrive at~\eqref{eq:spectrum_aux}
by a standard spectral convergence results
(see, \eg, \cite[Thm.~VIII.24]{RS1}).
\end{Remark}

Now we are in a position to present 
the matrix representation of~$B_\nu^\pm$.
\begin{Proposition}\label{prop:matrix2}
Let $\nu>0$. In the orthonormal basis $(e_{\nu,\ell})_{\ell\in\mathbb{Z}}$ 
of~$L^2$ from Theorem~\ref{Thm.matrix}, the operator $B^\pm_\nu$ decomposes as follows: 
Denoting 
$
  b_{\ell\ell'} := 
  \langle e_{\nu,\ell}, B_\nu^\pm e_{\nu,\ell'} \rangle
$,
\begin{align*}
(b_{\ell\ell'})_{\ell,\ell'\in\mathbb{Z}} = \begin{pmatrix}
\ddots & \vdots & \vdots & \vdots & \vdots & \vdots & \reflectbox{$\ddots$} \\
\dots & b_{-2,-2} & 0 & b_{-2,0} & 0 & 0 & \dots \\
\dots & 0 & b_{-1,-1} & b_{-1,0} & 0 & 0 & \dots \\
\dots & b_{0,-2} & b_{0,-1} & b_{0,0} & b_{0,1} & b_{0,2} & \dots \\
\dots & 0 & 0 & b_{1,0} & b_{1,1} & 0 & \dots \\
\dots & 0 & 0 & b_{2,0} & 0 & b_{2,2} & \dots \\
\reflectbox{$\ddots$} & \vdots & \vdots & \vdots & \vdots & \vdots & \ddots
\end{pmatrix}
,
\end{align*}
where for $\ell,\ell'\ge1$,
\begin{align*}
& b_{0,0} = \pm i\frac{\pi}{4\nu} , \\
& b_{-\ell,\ell'} = b_{\ell',-\ell} = 0, \\
& b_{-\ell,-\ell'} = -\frac{\delta_{\ell\ell'}}{4\ell(\nu+\ell)}, \\
& b_{0,-\ell} = b_{-\ell,0} = \frac{(-1)^{\ell}}{4\ell(\nu+\ell)}\sqrt{1+\frac{2\ell}{\nu}}, \\
& b_{0,\ell} = b_{\ell,0} = \frac{(-1)^{\ell+1}}{4\ell(\nu+\ell)}\sqrt{1+\frac{2\ell}{\nu}} ,\\
& b_{\ell,\ell'} = \frac{\delta_{\ell\ell'}}{4\ell(\nu+\ell)}.
\end{align*}
\end{Proposition}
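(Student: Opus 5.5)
Since $B_\nu^\pm = A_\nu^\pm - C_\nu$ with $C_\nu := r^{-1}h_\nu^{-1}\mathbf{1}_{h_\nu\ge1}r^{-1}$, and Theorem~\ref{Thm.matrix} already gives the matrix $(a_{\ell\ell'})$ of $A_\nu^\pm$, the plan is to compute $c_{\ell\ell'} := \langle e_{\nu,\ell}, C_\nu e_{\nu,\ell'}\rangle$ in the same basis and set $b_{\ell\ell'} = a_{\ell\ell'} - c_{\ell\ell'}$. Comparing the two matrices entry by entry, the proposition amounts to the following claims. First, $c_{\ell\ell'} = 0$ whenever $\ell\le0$ or $\ell'\le0$. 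Second, $c_{\ell,\ell'}$ for $\ell,\ell'\ge1$ equals $a_{\ell\ell'}$ minus $\delta_{\ell\ell'}/(4\ell(\nu+\ell))$. One consistency check supports the first claim: the entries $a_{-\ell,\cdot}$ and $a_{\cdot,-\ell}$ are unchanged in $(b_{\ell\ell'})$, and so are $\Im a_{00}$ and $a_{0,\ell}$ except for the Pochhammer term. That suggests the negative-index vectors and $e_{\nu,0}$ are spectrally supported in $\{h_\nu<1\}$ after multiplication by $r^{-1}$. I would organise the proof around this property.

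Concretely, I would use the construction of the basis from the proof of Theorem~\ref{Thm.matrix}. There, $e_{\nu,\ell}$ are built via the Hankel transform $\mathcal{H}_\nu$, which diagonalises $h_\nu$ as multiplication by $k^2$. I expect $r^{-1}e_{\nu,\ell}$ to have Hankel transform supported in $k<1$ for $\ell\le0$ and in $k>1$ for $\ell\ge1$, possibly up to a rank-one correction involving $e_{\nu,0}$. For $\ell\le 0$ the vanishing $\mathbf{1}_{h_\nu\ge1} r^{-1}e_{\nu,\ell}=0$ is then immediate. For $e_{\nu,0}=(2\nu)^{1/2}r^{-1/2}J_\nu$, the standard Weber--Schafheitlin discontinuous integral $\int_0^\infty J_\nu(kr)J_\nu(r)r^{-1}\,\der r = \nu^{-1}k^{\nu}/2\cdot\mathbf 1_{k<1}+\dots$ shows that the Hankel transform of $r^{-1}e_{\nu,0}$ is proportional to $k^{\nu-1}\mathbf 1_{k<1}$ (plus $k^{-\nu-1}\mathbf 1_{k>1}$ terms). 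I must check carefully which piece survives. If the transform were $\propto k^{\nu}\mathbf{1}_{k<1}$ on the nose, then $c_{00}=0$, consistent with $\Re b_{00}=0$ only if $\Re a_{00}=1/(4\nu^2)$ is absorbed elsewhere. Because of this discrepancy, I would instead compute $c_{\ell\ell'}$ directly from the Hankel side:
\[
c_{\ell\ell'}=\int_1^\infty k^{-2}\,\overline{\widehat{(r^{-1}e_{\nu,\ell})}(k)}\,\widehat{(r^{-1}e_{\nu,\ell'})}(k)\,\der k.
\]
I would evaluate these integrals using the explicit (Jacobi/Laguerre-type) formulas for the basis functions, doing the $\ell=\ell'=0$ entry, the $(0,\ell)$ entries and the $(\ell,\ell')$ block separately.

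For the $(\ell,\ell')$ block with $\ell,\ell'\ge1$, the target is
\[
c_{\ell\ell'}=a_{\ell\ell'}-\frac{\delta_{\ell\ell'}}{4\ell(\nu+\ell)} .
\]
Since $\Im A_\nu^\pm$ is rank one along $e_{\nu,0}$, on this block $a_{\ell\ell'}$ is real. The identity should reduce to a Mellin/Beta integral producing exactly the Gamma ratio in Theorem~\ref{Thm.matrix}, with the diagonal correction arising from the principal-value part of $A_\nu^\pm$ restricted to $k>1$. The mixed entries $(0,\ell)$ should similarly yield
\[
c_{0\ell}=\frac{1}{4\ell(\nu+\ell)}\,\frac{\ell!}{(\nu)_\ell}\sqrt{1+\frac{2\ell}{\nu}},
\]
and the $(0,0)$ entry should yield $c_{00}=1/(4\nu^2)$.

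I expect these Hankel-side integrals, especially the off-diagonal Gamma ratios, to be the main obstacle. As a fallback, I would bypass them with an orthogonality argument. Write $B_\nu^\pm = r^{-1}\big[(h_\nu-1\mp i0)^{-1}-h_\nu^{-1}\mathbf 1_{h_\nu\ge1}\big]r^{-1}$. Its symbol on $k>1$ is $1/(k^2-1)-1/k^2=1/(k^2(k^2-1))$, which is much more regular, and this may directly produce the diagonal structure $\delta_{\ell\ell'}/(4\ell(\nu+\ell))$ in the basis. Finally, I would confirm the result against the norm of $C_\nu$ from Remark~\ref{rk:ess-spectrum}, namely $\|C_\nu\|=\nu^{-2}$: the computed block $(c_{\ell\ell'})$ must have norm at most $\nu^{-2}$.
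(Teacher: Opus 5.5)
Your overall frame, $B_\nu^\pm=A_\nu^\pm-C_\nu$ with the matrix of $A_\nu^\pm$ already known, is sound. There is a genuine gap, however: the claim you organise the proof around is false at index $0$, and the substantive entries are never actually computed.

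The claim that $c_{\ell\ell'}=0$ whenever $\ell\le0$ or $\ell'\le0$ fails for $\ell=0$. By \eqref{eq:hankel_r32Jnu}, the Hankel transform of $r^{-3/2}J_\nu$ is $\frac1{2\nu}\big(k^{\nu+1/2}\mathbf 1_{(0,1]}+k^{-\nu+1/2}\mathbf 1_{(1,\infty)}\big)$; your exponents $k^{\nu-1}$, $k^{-\nu-1}$ are off. So $\mathcal U_\nu e_{\nu,0}=\hat e_{\nu,0}$ has the tail $(2\nu)^{-1/2}k^{-\nu+1/2}$ on $(1,\infty)$. That tail is the ``discrepancy'' you ran into. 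It gives $c_{00}=\frac1{2\nu}\int_1^\infty k^{-2\nu-1}\,\der k=\frac1{4\nu^2}$ in one line, hence $b_{00}=\pm i\frac{\pi}{4\nu}$, and it makes $c_{0\ell}\neq0$. The correct vanishing statement is for indices $\le-1$. It follows from \eqref{eq:Hankel_pnu}: $\mathcal H_\nu[r^{-1}e_{\nu,-\ell}]$ is a multiple of $\hat\varphi_{\nu,\ell}$, which is supported in $(0,1]$, so $b=a$ on every entry with a negative index.

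For $\ell,\ell'\ge1$ and for the $(0,\ell)$ entries, you only list targets obtained by subtracting the claimed $b$ from the known $a$. Your fallback stops at ``may directly produce the diagonal structure''. What makes it work is not that $1/(k^2(k^2-1))$ is ``more regular''. It is that this is exactly the weight in Lemma~\ref{lm:hnu-hatPsil}: on $(1,\infty)$, $h_\nu\hat\psi_{\nu,\ell}=\frac{4\ell(\ell+\nu)}{k^2(k^2-1)}\hat\psi_{\nu,\ell}$. Since $\hat\psi_{\nu,\ell}$ vanishes at $k=1$, the $\pm i0$ limit is harmless, and
\[
\big\langle\psi_{\nu,\ell'},B_\nu^\pm\psi_{\nu,\ell}\big\rangle_{L^2}
=\int_1^\infty\frac{\hat\psi_{\nu,\ell'}(k)\,\hat\psi_{\nu,\ell}(k)}{k^2(k^2-1)}\,\der k
=\frac{\big\langle\hat\psi_{\nu,\ell'},\hat\psi_{\nu,\ell}\big\rangle_{\hat{\mathcal Q}_\nu}}{4\ell(\nu+\ell)}
=\frac{\big\langle\psi_{\nu,\ell'},\psi_{\nu,\ell}\big\rangle_{L^2}}{4\ell(\nu+\ell)} .
\]
By Lemma~\ref{lm:orthog-psi} and the normalisation of $e_{\nu,\ell}$, this is $\delta_{\ell\ell'}/(4\ell(\nu+\ell))$, and no Gamma ratios ever appear. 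Your primary route through $c_{\ell\ell'}$ leads to $2^{-\nu-3}\int_{-1}^1(1-x)^{\nu-1}(1+x)^2P^{(\nu,1)}_{\ell'-1}P^{(\nu,1)}_{\ell-1}\,\der x$. Writing $1+x=2-(1-x)$ shows this is just the $A$-integral minus the above orthogonality integral, so it is not a separate obstacle once you see this mechanism.

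Likewise, the mixed entry reduces to
$\langle\psi_{\nu,\ell},B_\nu^\pm p_\nu\rangle=\frac1{2\nu}\int_1^\infty k^{-2\nu-3}P^{(\nu,1)}_{\ell-1}(1-2k^{-2})\,\der k=\frac{2^{-\nu-3}}{\nu}\int_{-1}^1(1-x)^{\nu}P^{(\nu,1)}_{\ell-1}(x)\,\der x=\frac{(-1)^{\ell+1}}{4\nu(\nu+\ell)}$
by \cite[7.391.2]{GR}. You assert this value (in the form of $c_{0\ell}$) but do not evaluate it.
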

\begin{Remark}
Proposition \ref{prop:matrix2} implies by direct computation that, for all $\nu>0$, the operator $B_\nu^\pm$ is Hilbert--Schmidt with norm
\begin{align*}
\big\|B_\nu^\pm\big\|_{\mathrm{HS}}^2&=\sum_{\ell\in\mathbb{Z}}\big\|B_\nu^\pm e_{\nu,\ell}\big\|_{L^2}^2\\
&=\frac{\pi^2}{16\nu^2}+\sum_{\ell\ge1} \frac{1+\frac{2\ell}{\nu}}{8\ell^2(\nu+\ell)^2}+\sum_{\ell\ge1} \frac{1+\frac{\ell}{\nu}}{4\ell^2(\nu+\ell)^2} = \frac{\pi^2}{8\nu^2} - \frac{\psi^{(1)}(\nu+1)}{8\nu^2}-\frac{\psi(\nu+1)+\gamma}{4\nu^3},
\end{align*}
where as before, $\psi^{(1)}$ is the trigamma function, $\psi$ is the digamma function and $\gamma$ is Euler's constant. Combined with Remark \ref{rk:ess-spectrum}, this in turn implies that
\begin{align*}
\sigma_{\mathrm{ess}}\big(A_\nu^\pm\big)=[0,\nu^{-2}],
\end{align*}
for all $\nu>0$.
\end{Remark}

Proposition~\ref{prop:matrix2} will be proven below, 
after the proof of Theorem~\ref{Thm.matrix}. 
In order to deduce Theorem~\ref{thm:asymptotics} 
from the former,
we first identify a matrix $C^\pm$, independent of $\nu$, such that $\| B_\nu^\pm - \nu^{-1}C^\pm\|_{L^2\to L^2}$ is negligible compared to $\nu^{-1}$ as $\nu\to\infty$. 
Let~$C^\pm$ be the operator in~$\sii$,
whose matrix $(c_{\ell,\ell'})_{\ell,\ell'\in\mathbb{Z}}$
with respect to the orthonormal basis $(e_{\nu,\ell})_{\ell\in\mathbb{Z}}$ 
reads as follows:
\begin{align*}
& c_{0,0} = \pm i\frac{\pi}{4} , \\
& c_{-\ell,\ell'} = c_{\ell',-\ell} = 0, \\
& c_{-\ell,-\ell'} = -\frac{\delta_{\ell\ell'}}{4\ell}, \\
& c_{0,-\ell} = c_{-\ell,0} = \frac{(-1)^{\ell}}{4\ell}, \\
& c_{0,\ell} = c_{\ell,0} = \frac{(-1)^{\ell+1}}{4\ell} ,\\
& c_{\ell,\ell'} = \frac{\delta_{\ell\ell'}}{4\ell}.
\end{align*}
\begin{Lemma}\label{lm:asymptotics}
Let $\nu>0$. One has
\begin{align*}
\big\|B_\nu^\pm-\nu^{-1}C^\pm\big\|^2_{\mathrm{HS}}\le\frac{3\psi^{(1)}(\nu+1)}{8\nu^2}.
\end{align*}
\end{Lemma}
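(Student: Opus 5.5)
The plan is to compute the Hilbert--Schmidt norm entrywise. Proposition~\ref{prop:matrix2} gives the matrix of $B_\nu^\pm$ and the definition gives the matrix of $C^\pm$, both in the same orthonormal basis $(e_{\nu,\ell})_{\ell\in\Int}$. So
\[
\big\|B_\nu^\pm-\nu^{-1}C^\pm\big\|_{\mathrm{HS}}^2=\sum_{\ell,\ell'\in\Int}\big|b_{\ell\ell'}-\nu^{-1}c_{\ell\ell'}\big|^2 .
\]
Both matrices vanish outside the diagonal, the zeroth row and the zeroth column, so only these entries need to be estimated.

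First, the $(0,0)$ entry cancels exactly: $b_{0,0}-\nu^{-1}c_{0,0}=\pm i\pi/(4\nu)\mp i\pi/(4\nu)=0$. Next, the diagonal entries with $\ell\ge1$ give
\[
\Big|-\frac{1}{4\ell(\nu+\ell)}+\frac{1}{4\ell\nu}\Big|=\Big|\frac{1}{4\ell(\nu+\ell)}-\frac{1}{4\ell\nu}\Big|=\frac{1}{4\nu(\nu+\ell)} .
\]
This holds for both the $(-\ell,-\ell)$ and the $(\ell,\ell)$ entries.

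The only step needing care is the four off-diagonal entries $(0,\pm\ell)$ and $(\pm\ell,0)$. Each has modulus
\[
\frac{1}{4\ell}\Big|\frac{\sqrt{1+2\ell/\nu}}{\nu+\ell}-\frac1\nu\Big|
=\frac{1}{4\ell\,\nu(\nu+\ell)}\Big|\sqrt{\nu^2+2\ell\nu}-(\nu+\ell)\Big| .
\]
Rationalising the numerator gives
\[
\Big|\sqrt{\nu^2+2\ell\nu}-(\nu+\ell)\Big|=\frac{\ell^2}{\sqrt{\nu^2+2\ell\nu}+\nu+\ell}\le\ell ,
\]
since the denominator exceeds $\ell$. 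Hence each of these four entries is also bounded by $1/(4\nu(\nu+\ell))$.

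Summing, each $\ell\ge1$ contributes at most six terms of size $1/(16\nu^2(\nu+\ell)^2)$. Using $\sum_{\ell\ge1}(\nu+\ell)^{-2}=\psi^{(1)}(\nu+1)$, this yields
\[
\big\|B_\nu^\pm-\nu^{-1}C^\pm\big\|_{\mathrm{HS}}^2\le\frac{6}{16\nu^2}\,\psi^{(1)}(\nu+1)=\frac{3\psi^{(1)}(\nu+1)}{8\nu^2},
\]
which is the claim. The only mild obstacle is the rationalisation bound for the off-diagonal entries; everything else is direct bookkeeping from Proposition~\ref{prop:matrix2}.
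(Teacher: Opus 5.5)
Your proposal is correct and follows essentially the same route as the paper. The paper also computes the Hilbert--Schmidt norm entrywise in the basis $(e_{\nu,\ell})_{\ell\in\mathbb{Z}}$: the $(0,0)$ entry cancels, the two diagonal entries $(\pm\ell,\pm\ell)$ give exactly $\frac{1}{16\nu^2(\nu+\ell)^2}$ each, and the four off-diagonal entries are controlled by rationalising. The only difference is that the paper keeps the exact factor $\frac{(\ell/\nu)^2}{((1+2\ell/\nu)^{1/2}+1+\ell/\nu)^2}$ before bounding it by $1$, which is the same estimate as your bound $\frac{\ell^2}{\sqrt{\nu^2+2\ell\nu}+\nu+\ell}\le\ell$.
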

\begin{proof}
A direct computation gives
\begin{align*}
&\big\|B_\nu^\pm-\nu^{-1}C^\pm\big\|_{\mathrm{HS}}^2=\big\|(B_\nu^\pm-\nu^{-1}C^\pm)e_{\nu,0}\big\|_{L^2}^2+\sum_{\ell\in\mathbb{Z}\setminus\{0\}}\big\|(B_\nu^\pm-\nu^{-1}C^\pm)e_{\nu,\ell}\big\|_{L^2}^2\\
&=2\sum_{\ell\ge1}\Big(\frac{1}{4\ell(\nu+\ell)}-\frac{1}{4\nu\ell}\Big)^2+4\sum_{\ell\ge1}\Big(\frac{(1+\frac{2\ell}{\nu})^{\frac12}}{4\ell(\nu+\ell)}-\frac{1}{4\nu\ell}\Big)^2\\
&=\frac{1}{8\nu^2}\sum_{\ell\ge1}\frac{1}{(\nu+\ell)^2}+\frac{1}{4\nu^2}\sum_{\ell\ge1}\frac{1}{(\nu+\ell)^2}\frac{(\frac{\ell}{\nu})^2}{((1+\frac{2\ell}{\nu})^\frac12+1+\frac{\ell}{\nu})^2}\\
&\le\frac{3}{8\nu^2}\sum_{\ell\ge1}\frac{1}{(\nu+\ell)^2},
\end{align*}
which proves the result.
\end{proof}
\begin{Lemma}\label{lm:asymptotics2}
We have
\begin{align*}
\big\| C^\pm \big\|_{L^2\to L^2}=1.
\end{align*}
\end{Lemma}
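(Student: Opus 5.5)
We reduce $C^\pm$ to a block form, turn the computation of $\|C^\pm\|$ into a scalar secular equation, and evaluate that equation in closed form. Write $x=a\,e_{\nu,0}+\sum_{\ell\ge1}(b_\ell e_{\nu,-\ell}+c_\ell e_{\nu,\ell})$. Put $s_\ell:=(-1)^\ell$ and substitute $u_\ell:=s_\ell b_\ell$, $v_\ell:=s_\ell c_\ell$. The sign substitution is unitary, so we may take $\sum|u_\ell|^2+\sum|v_\ell|^2+|a|^2=\|x\|^2$. Reading off the matrix entries then gives
\begin{equation*}
\|C^\pm x\|^2=\Big|\pm\tfrac{i\pi}{4}a+\sum_{\ell\ge1}\frac{u_\ell-v_\ell}{4\ell}\Big|^2+\sum_{\ell\ge1}\frac{|u_\ell-a|^2+|v_\ell-a|^2}{16\ell^2}.
\end{equation*}
All entries of $C^\pm$ are square summable, since they are $O(1/\ell)$, so $C^\pm$ is Hilbert--Schmidt. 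Its norm is therefore the square root of the largest eigenvalue $\mu$ of $(C^\pm)^*C^\pm$, and that eigenvalue is attained.

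\emph{Step 1: reduce to two unknowns.} Change variables to $p_\ell:=(u_\ell-v_\ell)/\sqrt2$ and $q_\ell:=(u_\ell+v_\ell)/\sqrt2$. Since $|u_\ell-a|^2+|v_\ell-a|^2=|p_\ell|^2+|q_\ell-\sqrt2 a|^2$, the quadratic form splits as
\begin{equation*}
\Big|\pm\tfrac{i\pi}{4}a+\sum_\ell\tfrac{\sqrt2\,p_\ell}{4\ell}\Big|^2+\sum_\ell\frac{|p_\ell|^2}{16\ell^2}+\sum_\ell\frac{|q_\ell-\sqrt2a|^2}{16\ell^2}.
\end{equation*}
The eigenvalue equation $(C^\pm)^*C^\pm x=\mu x$ can now be solved coordinatewise in $p_\ell$ and $q_\ell$. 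Set $\beta:=\pm\frac{i\pi}{4}a+\sum_\ell\frac{\sqrt2p_\ell}{4\ell}$. The equations give $p_\ell=\frac{4\sqrt2\,\ell\,\beta}{16\ell^2\mu-1}$ and $q_\ell=\frac{-\sqrt2 a}{16\ell^2\mu-1}$. Inserting these into the equations for $a$ and for $\beta$ leaves a $2\times2$ linear system in $(a,\beta)$. Its coefficients are the series
\begin{equation*}
\sum_{\ell\ge1}\frac{1}{16\ell^2\mu-1}\qquad\text{and}\qquad \sum_{\ell\ge1}\frac{1}{\ell^2(16\ell^2\mu-1)}.
\end{equation*}
Both can be evaluated in closed form through $\sum_{\ell\ge1}\frac{1}{\ell^2-y^2}=\frac{1-\pi y\cot(\pi y)}{2y^2}$ with $y=1/(4\sqrt\mu)$, together with $\sum_\ell\ell^{-2}=\pi^2/6$.

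\emph{Step 2: the secular equation at $\mu=1$.} For $\mu=1$ we have $y=1/4$ and $\cot(\pi/4)=1$, so every trigonometric term becomes rational in $\pi$. I expect the determinant of the $2\times2$ system to vanish there; the purely imaginary coefficient $\pm i\pi/4$ has to combine with the real coefficients to cancel the $\pi$ terms. A vanishing determinant shows that $1$ is an eigenvalue of $(C^\pm)^*C^\pm$, hence $\|C^\pm\|\ge1$. As a cross-check one can exhibit the eigenvector explicitly and verify $\|C^\pm x\|=\|x\|$ directly.

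\emph{Step 3: the upper bound, which is the main obstacle.} The remaining task is to show that the determinant has no zero with $\mu>1$. In that range $y<1/4$, so every denominator $16\ell^2\mu-1$ is positive. I would first verify that the determinant, viewed as a function of $\mu\in(1,\infty)$, is real and of fixed sign, and in particular that it tends to a nonzero limit as $\mu\to\infty$. If monotonicity is hard to establish, the fallback is a direct quadratic-form estimate. The $q$-part is at most $\frac{|a|^2}{8}\sum_\ell\frac{1}{\ell^2}\cdot\frac{16\ell^2}{16\ell^2-1}$ after optimizing $q$. The $(a,p)$-part is bounded by Cauchy--Schwarz with weights chosen according to the eigenvector found in Step 2. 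The two contributions are then combined, with the $\pi^2$ constants matching because of the identity at $y=1/4$.
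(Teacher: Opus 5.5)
Your reduction is correct: the quadratic form, the splitting into $p_\ell,q_\ell$, and the formulas for $p_\ell$ and $q_\ell$ all check out. It gives a genuinely different proof from the paper's. As written, however, Steps~2 and~3 are only announced (``I expect'', ``I would verify''), so the decisive part is missing. It closes in a few lines.

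Put $S(\mu):=\sum_{\ell\ge1}(16\ell^2\mu-1)^{-1}$; your second series and $\pi^2/6$ recombine into $S$ by partial fractions. The remaining equations are $(1-2S)\beta=\pm\frac{i\pi}{4}a$ and $\mp\frac{i\pi}{4}\beta=\mu(1-2S)a$. So for $\mu\notin\{(4\ell)^{-2}\}_{\ell\ge1}$ a nonzero eigenvector exists iff $\mu(1-2S(\mu))^2=\pi^2/16$. Your cotangent formula with $y=1/(4\sqrt\mu)$ gives $1-2S=\pi y\cot(\pi y)$. Hence the secular equation is simply $\cot^2(\pi y)=1$, i.e.\ $\mu=(2n+1)^{-2}$, which is exactly the spectrum of $(\tilde C^+)^*\tilde C^+$ found in the paper.

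For $\mu>1$ one has $0<\pi y<\pi/4$, hence $\cot(\pi y)>1$ and there is no root. So Step~3 is not an obstacle at all. Up to normalisation the determinant is $\frac{\pi^2}{16}(1-\cot^2(\pi y))$, which is unbounded as $\mu\to\infty$ rather than tending to a finite nonzero limit as you guessed. At $\mu=1$ you get $1-2S=\pi/4$, hence $\beta=\pm ia$, and an explicit eigenvector in $\ell^2$, since $p_\ell=O(1/\ell)$ and $q_\ell=O(\ell^{-2})$.

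Your fallback also works and is even shorter. By Leibniz's series, $\sum_{\ell\ge1}\frac{2}{16\ell^2-1}=\sum_{\ell\ge1}\bigl(\frac{1}{4\ell-1}-\frac{1}{4\ell+1}\bigr)=1-\frac{\pi}{4}$. So the termwise bound $\frac{|q_\ell-\sqrt2a|^2}{16\ell^2}\le|q_\ell|^2+\frac{2|a|^2}{16\ell^2-1}$ gives a $q$-part at most $\|q\|^2+(1-\frac\pi4)|a|^2$. Cauchy--Schwarz, first with weights $1-\frac{1}{16\ell^2}$ on $p$ and then on the two terms of $\beta$, gives $|\beta|^2+\sum_\ell\frac{|p_\ell|^2}{16\ell^2}\le\frac\pi4|a|^2+\|p\|^2$. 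Adding the two bounds yields $\|C^\pm x\|\le\|x\|$ directly.

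The paper proceeds differently. It conjugates $C^\pm$ by $e_{\nu,k}\mapsto(-1)^k(2\pi)^{-1/2}e^{ikt}$ and uses the sawtooth series $\sum_{k\neq0}e^{ikt}/k=i(\pi-t)$ to identify $\tilde C^+$ as $\frac i4$ times the Volterra operator $\varphi\mapsto\int_0^t\varphi$ on $(0,2\pi)$. It then reads off the singular values from $-16\mu\varphi''=\varphi$ with $\varphi'(0)=\varphi(2\pi)=0$. That route explains conceptually why the answer is $1$ (the Volterra operator on an interval of length $2\pi$ has norm $4$), but it requires recognising the kernel.

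Your route never leaves the sequence space and uses only the structure ``diagonal plus coupling through $e_{\nu,0}$''. It replaces the sawtooth series by the partial-fraction expansion of $\cot$, or, for the upper bound alone, by Leibniz's series. Both routes produce the full list of singular values $(2n+1)^{-1}$, consistent with $\|C^\pm\|_{\mathrm{HS}}^2=\pi^2/8=\sum_{n\ge0}(2n+1)^{-2}$.
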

\begin{proof}
First, we observe that the definition of the operator $C^\pm$ reduces to
\begin{align*}
& c_{0,0}=\pm i\frac{\pi}{4} , \qquad  c_{k,k'}=\frac{\delta_{kk'}}{4k}, \qquad c_{0,k}=c_{k,0}=\frac{(-1)^{k+1}}{4k}, 
\end{align*}
for $k,k'\in\mathbb{Z}\setminus\{0\}$. We then introduce the unitary operator 
\begin{align*}
W: L^2((0,\infty)) \ni e_{\nu,k} \mapsto \tilde e_k \in L^2((0,2\pi)), 
\qquad \text{where}\qquad 
\forall k\in\mathbb{Z}, \quad \tilde e_k(t) := (-1)^k(2\pi)^{-\frac12} e^{ikt}.
\end{align*}
In the orthonormal basis $(\tilde e_k)_{k\in\mathbb{Z}}$ of $L^2((0,2\pi))$, the operator 
$ \tilde C := WCW^*
\cong (\tilde c_{\ell,\ell'})_{\ell,\ell'\in\mathbb{Z}}
$ 
decomposes as
\begin{align*}
& \tilde c_{0,0}=\pm i\frac{\pi}{4} , \qquad \tilde c_{k,k'}=\frac{\delta_{kk'}}{4k}, \qquad \tilde c_{0,k}=\tilde c_{k,0}=-\frac{1}{4k}.
\end{align*}
Moreover, its integral kernel is
\begin{align*}
\tilde C^\pm(t,s)&=\frac{1}{2\pi}\sum_{j,k\in\mathbb{Z}}\tilde c_{k,j}e^{-ijs}e^{ikt}\\
&= \frac{1}{2\pi}\Big(\tilde c_{0,0}-\frac14\sum_{k\in\mathbb{Z}\setminus\{0\}}\frac{e^{-iks}}{k}-\frac14\sum_{k\in\mathbb{Z}\setminus\{0\}}\frac{e^{ikt}}{k}+\frac14\sum_{k\in\mathbb{Z}\setminus\{0\}}\frac{e^{ik(t-s)}}{k}\Big).
\end{align*}
Let us consider, for instance, the~$+$ case. Then
$\tilde c_{0,0}=i\frac{\pi}{4}$. Using that $\sum_{k\in\mathbb{Z}\setminus\{0\}}\frac{e^{ikt}}{k}=i(\pi-t)$ for $t\in(0,2\pi)$, we obtain
\begin{align*}
\tilde C^+(t,s)
&= \frac{1}{2\pi}\Big(i\frac{\pi}{4}+\frac{i}4(\pi-s)-\frac{i}4(\pi-t)+\frac{i}4\big((\pi-t+s)\mathbf{1}_{t\ge s}-(\pi-s+t)\mathbf{1}_{t<s}\big)\Big)=\frac{i}{4}\mathbf{1}_{t\ge s}.
\end{align*}
Hence we have shown that, for all $\varphi\in L^2((0,2\pi))$, $\tilde C^+$ is given by
\begin{align*}
\forall t\in (0,2\pi), \qquad (\tilde C^+ \varphi)(t)=\frac{i}{4}\int_0^t\varphi(s)\mathrm{d}s.
\end{align*}
The norm of this Volterra operator is standard to compute.  Let
$\varphi\in L^2((0,2\pi))$ be a normalised eigenstate,
associated to a real eigenvalue $\mu$, of
$(\tilde C^+)^*\tilde C^+$.  We have
\begin{align*}
\forall t\in (0,2\pi), \qquad ((\tilde C^+)^* \tilde C^+ \varphi)(t)=\frac{1}{16} \int_t^{2\pi}\int_0^s\varphi(s') \, \mathrm{d}s'\mathrm{d}s=\mu f(t).
\end{align*}
Hence $\varphi$ is a solution to $-16\mu \varphi''=\varphi$ with the boundary conditions $\varphi'(0)=0$, $\varphi(2\pi)=0$. This implies that the eigenvalues of $(\tilde C^+)^* \tilde C^+$ are $(2n+1)^{-2}$, 
with $n \in \Nat$, 
and therefore $\|\tilde C^+\|_{L^2\to L^2}=1$. Since $W$ is unitary, this also shows that $\|C^+\|_{L^2\to L^2}=1$. One can argue in the same way for $C^-$.
\end{proof}

Now we can conclude the proof of Theorem \ref{thm:asymptotics}.
\begin{proof}[Proof of Theorem \ref{thm:asymptotics}]
It suffices to write, using Lemma \ref{lm:asymptotics2},
\begin{align*}
\Big| \big\| A_\nu^\pm \big\|_{L^2\to L^2} - \frac{1}{\nu} \Big| &= \Big| \big\| A_\nu^\pm \big\|_{L^2\to L^2} - \big\| \nu^{-1}C^\pm\big\|_{L^2\to L^2} \Big| \\
&\le \big\| A_\nu^\pm -B_\nu^\pm + B_\nu^\pm - \nu^{-1} C^\pm \big\|_{L^2\to L^2} \\
&\le \big\|r^{-1}h_\nu^{-1}\mathbf{1}_{h_\nu\ge1}r^{-1}\big\|_{L^2\to L^2} + \big\| B_\nu^\pm - \nu^{-1} C^\pm \big\|_{\mathrm{HS}}, 
\end{align*}
and then apply \eqref{eq:rk4} and Lemma \ref{lm:asymptotics}.
\end{proof}

\subsection{The Hankel transform}
In the remainder of this section, 
we establish Theorem~\ref{Thm.matrix} and Proposition~\ref{prop:matrix2}.
The main ingredient in the proof is
the usual Hankel transform of order~$\nu$:
\begin{align*}
\mathcal{H}_\nu[\varphi](k)
:=\int_0^\infty \sqrt{kr} J_\nu(kr) \varphi(r) \, \mathrm{d} r,
\end{align*}
which extends to a unitary operator on~$L^2$ 
such that $\mathcal{H}_\nu^{-1}=\mathcal{H}_\nu$. It can further be extended to a suitable space of distributions (similarly as the Fourier transform). 

It is well known that the Hankel transform of order~$\nu$ diagonalises 
the Bessel operator~$h_\nu$, in the sense that for any bounded measurable function $f:\mathbb{R}\to\mathbb{R}$ and $\varphi,\psi\in L^2$,
\begin{align}\label{eq:Hankel5a}
\langle\varphi,f(h_\nu)\psi\rangle_{\sii} 
= \langle \mathcal{H}_\nu[\varphi] , f(k^2) \mathcal{H}_\nu[\psi] \rangle_{\sii}.
\end{align}
In particular, for all $\varepsilon>0$ and $\varphi,\psi\in \sii$ 
such that $r^{-1}\varphi,r^{-1}\psi\in L^2$,
\begin{align}\label{eq:Hankel50}
\big\langle\varphi,r^{-1} (h_\nu - (1\pm i\varepsilon))^{-1} r^{-1} \psi\big\rangle_{L^2} =  \int_0^\infty 
\frac{1}{k^2-(1\pm i\varepsilon)} 
\, \overline{\mathcal{H}_\nu[r^{-1}\varphi](k)} 
\, \mathcal{H}_\nu[r^{-1}\psi](k) \, \mathrm{d}k.
\end{align}

Let
\begin{align*}
\hat{\mathcal{Q}}_\nu:=
\big\{ \hat\varphi\in L^2_{\mathrm{loc}}((0,\infty)) \, : \, \hat\varphi' \in L^2((0,\infty)) \big\} = \dot{H}^1_0((0,\infty)),
\end{align*}
equipped with the scalar product defined by
\begin{align*}
\langle\hat\varphi,\hat\psi\rangle_{\hat{\mathcal{Q}}_\nu}:=\langle\hat\varphi,h_\nu\hat\psi\rangle_{L^2},
\end{align*}
for any $\hat\varphi,\hat\psi\in\hat{\mathcal{Q}}_\nu$. 
We introduce the unitary operator
\begin{align*}
\mathcal{U}_\nu : L^2 \ni \psi \mapsto \mathcal{H}_\nu[r^{-1}\psi] \in \hat{\mathcal{Q}}_\nu,
\end{align*}
whose inverse is given by
$\mathcal{U}_\nu^{-1}\hat\psi=r\mathcal{H}_\nu[\hat\psi]$,
for any $\hat\psi\in\hat{\mathcal{Q}}_\nu$.  The facts that
$\mathcal{U}_\nu$ is well defined and unitary follow from the
properties of the Hankel transform, since
\begin{align*}
\langle\varphi,\psi\rangle_{L^2}=\langle r^{-1}\varphi,r^2r^{-1}\psi\rangle_{L^2}=\langle\mathcal{H}_\nu[r^{-1}\varphi],h_\nu\mathcal{H}_\nu[r^{-1}\psi]\rangle_{L^2}=\langle\mathcal{H}_\nu[r^{-1}\varphi],\mathcal{H}_\nu[r^{-1}\psi]\rangle_{\hat{\mathcal{Q}}_\nu}.
\end{align*}

We shall use the classical identity
\begin{equation}\label{eq:hankel_r32Jnu}
  \mathcal{H}_\nu[r^{-3/2}J_\nu](k)
  =
  \frac{1}{2\nu}
  \left(
    k^{\nu+1/2}\mathbf{1}_{(0,1]}(k)
    +
    k^{-\nu+1/2}\mathbf{1}_{(1,\infty)}(k)
  \right),
\end{equation}
which follows from \cite[6.573.2]{GR}.

Let 
\begin{align*}
\hat e_{\nu,0}(k):=(2\nu)^{-\frac12} \big[ k^{\nu+\frac12}\mathbf{1}_{(0,1]}(k)+k^{-\nu+\frac12}\mathbf{1}_{(1,\infty)}(k)\big].
\end{align*}
Note that $\hat e_{\nu,0}=(2\nu)^{\frac12}\mathcal{H}_\nu[r^{-\frac32}J_\nu]=(2\nu)^{\frac12}\mathcal{U}_\nu(r^{-\frac12}J_\nu)$, see \eqref{eq:hankel_r32Jnu}. In particular $\hat e_{\nu,0}\in\hat{\mathcal{Q}}_\nu$ and $\|\hat e_{\nu,0}\|^2_{\hat{\mathcal{Q}}_\nu}=2\nu\|r^{-\frac12}J_\nu\|_{L^2}^2=1$. We have the following orthogonal decomposition of $\hat{\mathcal{Q}}_\nu$ (equipped with $\langle\cdot,\cdot\rangle_{\hat{\mathcal{Q}}_\nu}$):
\begin{align*}
\hat{\mathcal{Q}}_\nu = \hat{\mathcal{Q}}_\nu^0 \oplus \mathrm{Span}(\hat e_{\nu,0}) \oplus \hat{\mathcal{Q}}_\nu^\infty,
\end{align*}
where
\begin{align*}
\hat{\mathcal{Q}}_\nu^0:=\{\hat\varphi\in\hat{\mathcal{Q}}_\nu\, : \, \mathrm{supp}(\hat\varphi)\subset(0,1] \text{ and }\hat\varphi(1)=0\}, \quad \hat{\mathcal{Q}}_\nu^\infty:=\{\hat\varphi\in\hat{\mathcal{Q}}_\nu\, : \, \mathrm{supp}(\hat\varphi)\subset[1,\infty) \text{ and }\hat\varphi(1)=0\}.
\end{align*}
Observe that any $\hat\varphi\in\hat{\mathcal{Q}}_\nu$ decomposes in this direct sum as
\begin{align*}
\hat\varphi(k) = \big( \hat\varphi(k) - \hat\varphi(1)k^{\nu+\frac12} \big)\mathbf{1}_{(0,1)}(k) + \hat\varphi(1)\hat e_{\nu,0}(k) + \big( \hat\varphi(k) - \hat\varphi(1)k^{-\nu+\frac12} \big)\mathbf{1}_{(1,\infty)}(k).
\end{align*}

Using the unitary operator $\mathcal{U}_\nu$, the orthogonal decomposition (with respect to the~$L^2$ scalar product) in~$L^2$ becomes 
 \begin{equation*}
    \sii 
    =
    E_\nu^0
    \oplus
    \operatorname{Span}(e_{\nu,0})
    \oplus
    E_\nu^\infty 
  \end{equation*} 
with
%
$E_\nu^\#:=\mathcal{U}_\nu^{-1}\hat{\mathcal{Q}}_\nu^\#$
%
and 
%
$
e_{\nu,0}:=\mathcal{U}_\nu^{-1}[\hat e_{\nu,0}]=(2\nu)^{\frac12}p_\nu=(2\nu)^{\frac12}r^{-\frac12}J_\nu.
$
From the proof of Proposition~\ref{prop:main_Bessel_Imaginary},
we know
that in this orthogonal decomposition we have
\begin{align*}
\Im A_\nu^\pm 
= \pm \frac{\pi}{4\nu} ( 0 \oplus 1 \oplus 0 ).
\end{align*}
Moreover, Proposition~\ref{prop:main_Bessel_Real-2}
yields the real part of the coefficient~$a_{00}$ 
through the identity~\eqref{real.identity}.

\subsection{The orthonormal basis at low energies}
Our first task is to construct an orthogonal basis of $E_\nu^0$
where the matrix elements of $\Re A_\nu^\pm$ can be computed
explicitly.

For all integer $\ell\ge1$, we introduce the notation 
\begin{align}\label{eq:def_varphinu}
\hat \varphi_{\nu,\ell}(k):= k^{\nu+\frac12}(1-k^2) P_{\ell-1}^{(\nu,1)}(1-2k^2)\mathbf{1}_{(0,1)}(k),
\end{align}
where $P_{\ell-1}^{(\nu,1)}$ is the usual Jacobi polynomial. We will use the following lemma.
\begin{Lemma}\label{lm:hnu-hatPhil0}
Let $\nu>0$. Then, for all $\ell\ge1$, $\hat \varphi_{\nu,\ell} \in \hat{\mathcal{Q}}_\nu^0$. Moreover, for all $0<k<1$,
\begin{align}\label{eq:hnu-hatPhil0}
(h_\nu\hat \varphi_{\nu,\ell})(k)=\frac{4\ell(\ell+\nu)}{1-k^2} \hat \varphi_{\nu,\ell}(k).
\end{align}
\end{Lemma}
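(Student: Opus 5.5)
The plan is to reduce everything to the Jacobi differential equation via the substitution $x=1-2k^2$. I will treat the two claims separately: membership in $\hat{\mathcal{Q}}_\nu^0$, and the eigenvalue-type identity \eqref{eq:hnu-hatPhil0}.

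\emph{Membership.} Write $\hat\varphi_{\nu,\ell}(k)=k^{\nu+\frac12}(1-k^2)q(k)$ with $q(k):=P_{\ell-1}^{(\nu,1)}(1-2k^2)$, a polynomial. The support of $\hat\varphi_{\nu,\ell}$ lies in $(0,1]$, and it vanishes at $k=1$ because of the factor $1-k^2$. It is continuous on $(0,\infty)$. Its derivative on $(0,1)$ is $O(k^{\nu-\frac12})$ near $0$, which is square integrable since $\nu>0$, and it is bounded near $1$. Hence $\hat\varphi_{\nu,\ell}'\in L^2$ and $\hat\varphi_{\nu,\ell}\in\hat{\mathcal{Q}}_\nu^0$.

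\emph{The identity.} On $(0,1)$, set $\hat\varphi=k^{\nu+\frac12}g(k)$ with $g(k):=(1-k^2)q(k)$. Since $k^{\nu+\frac12}$ solves $h_\nu u=0$, a direct computation gives
\[
h_\nu\hat\varphi=-k^{\nu+\frac12}\Big(g''+\frac{2\nu+1}{k}g'\Big).
\]
Next change variables to $x=1-2k^2$, so that $g(k)=G(x)$. Using $\frac{d}{dk}=-4k\frac{d}{dx}$ and $k^2=\frac{1-x}{2}$, one gets
\[
g''+\frac{2\nu+1}{k}g'=8(1-x)G''-8(\nu+1)G'.
\]
It therefore suffices to show
\[
-8\big[(1-x)G''-(\nu+1)G'\big]=\frac{8\ell(\ell+\nu)}{1+x}\,G,
\]
where $G(x)=\frac{1+x}{2}P(x)$, $P:=P^{(\nu,1)}_{\ell-1}$, and $1-k^2=\frac{1+x}{2}$.

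Substituting $G=\frac{1+x}{2}P$, the left-hand side becomes
\[
-4\big[(1-x)(1+x)P''+\big(2(1-x)-(\nu+1)(1+x)\big)P'-(\nu+1)P\big].
\]
Note that $2(1-x)-(\nu+1)(1+x)=1-\nu-(\nu+3)x$, which is exactly $\beta-\alpha-(\alpha+\beta+2)x$ for $\alpha=\nu$, $\beta=1$. The Jacobi equation then gives
\[
(1-x^2)P''+\big(1-\nu-(\nu+3)x\big)P'=-(\ell-1)(\ell+\nu+1)P.
\]
So the left-hand side equals $4\big[(\ell-1)(\ell+\nu+1)+\nu+1\big]P=4\ell(\ell+\nu)P$. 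On the other hand, $\frac{8\ell(\ell+\nu)}{1+x}G=4\ell(\ell+\nu)P$, so the two sides agree. Multiplying back by $k^{\nu+\frac12}$ yields \eqref{eq:hnu-hatPhil0}.

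The main obstacle is keeping the change of variables free of algebra slips. Its key structural point is that the factor $(1-k^2)$ shifts the Jacobi parameter to $\beta=1$, and that the constant term $(\nu+1)P$ produced by this factor combines with $(\ell-1)(\ell+\nu+1)$ into $\ell(\ell+\nu)$.
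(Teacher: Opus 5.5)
Your proof is correct and follows essentially the paper's route: the substitution $x=1-2k^2$ reduces the claim to the Jacobi equation for $P^{(\nu,1)}_{\ell-1}$. You reach exactly the paper's bracket $(1-x^2)P''+\big(2(1-x)-(\nu+1)(1+x)\big)P'-(\nu+1)P$, and then use $(\ell-1)(\ell+\nu+1)+(\nu+1)=\ell(\ell+\nu)$ as the paper does. The only differences are that you first factor out the zero-energy solution $k^{\nu+\frac12}$ of $h_\nu$, which avoids the paper's longer differentiation of the powers $(1-x)^{\frac\nu2-\frac14}$, and that your check of membership in $\hat{\mathcal{Q}}_\nu^0$ is more explicit than the paper's one-line remark.
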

\begin{proof}
The definition \eqref{eq:def_varphinu} shows that
$\hat \varphi_{\nu,\ell}\in \hat{\mathcal{Q}}_\nu^0$.
We now prove \eqref{eq:hnu-hatPhil0}.  Let $0<k<1$.
To shorten the notation, set $P:=P_{\ell-1}^{(\nu,1)}$.
We compute, setting $x=1-2k^{2}$,
\begin{align}
& \Big(-\frac{\der^2}{\der k^2}+\frac{\nu^2-\frac14}{k^2} \Big) k^{\nu+\frac12}(1-k^{2})P(1-2k^{2}) \notag \\
&= \Big(\frac{1-x}{2}\Big)^{-\frac12} \Big(-16 \, \Big(\frac{1-x}{2}\Big)^2\frac{\der^2}{\der x^2}+12 \, \Big(\frac{1-x}{2}\Big)\frac{\der}{\der x} + (\nu^2-\mbox{$\frac14$}) \Big) \Big(\frac{1-x}{2}\Big)^{\frac\nu2-\frac14} \Big(\frac{1+x}{2}\Big) P(x) \notag\\
&=2^{-\frac\nu2+\frac34} (1-x)^{-\frac12} \Big(-2(1-x)^2\frac{\der^2}{\der x^2}+3(1-x)\frac{\der}{\der x} + \frac12 (\nu^2-\mbox{$\frac14$}) \Big) (1-x)^{\frac\nu2-\frac14} (1+x) P(x).\label{eq:hnu-hatPsil1}
\end{align}
Now,
\begin{align*}
&\frac{\der}{\der x} \big[ (1-x)^{\frac\nu2-\frac14} (1+x) P(x) \big] =(1-x)^{\frac\nu2-\frac14}(1+x)P'+\big( (1-x)^{\frac\nu2-\frac14} 
- (\mbox{$\frac\nu2-\frac14$})(1-x)^{\frac\nu2-\frac54}(1+x) \big)P
\end{align*}
and
\begin{align*}
\frac{\der^2}{\der x^2} \big[ (1-x)^{\frac\nu2-\frac14} (1+x) P(x) \big] 
&=(1-x)^{\frac\nu2-\frac14}(1+x)P''+\big(2(1-x)^{\frac\nu2-\frac14}
- \mbox{$(\nu-\frac12)$} (1-x)^{\frac\nu2-\frac54}(1+x)\big)P'\\
&\quad+\big(
\mbox{$\frac14(\nu-\frac12)(\nu-\frac52)$}
(1-x)^{\frac\nu2-\frac94}(1+x)
-\mbox{$(\nu-\frac12)$} (1-x)^{\frac\nu2-\frac54}\big)P.
\end{align*}
Thus
\begin{align*}
& \Big(-2(1-x)^2\frac{\der^2}{\der x^2}
+3(1-x)\frac{\der}{\der x} 
+ \mbox{$\frac12 (\nu^2-\frac14)$} \Big) (1-x)^{\frac\nu2-\frac14} (1+x) P(x) \\
&=-2(1-x)^{\frac\nu2+\frac74}(1+x)P''\\
&\quad+\big[-4(1-x)^{\frac\nu2+\frac74}+2(\nu-\mbox{$\frac12$})(1-x)^{\frac\nu2+\frac34}(1+x)+3(1-x)^{\frac\nu2+\frac34}(1+x)\big]P'\\
&\quad+\big[-\mbox{$\frac12(\nu-\frac12)$}
(\nu-\mbox{$\frac52$})(1-x)^{\frac\nu2+\frac74}(1+x)+2(\nu-\mbox{$\frac12$})(1-x)^{\frac\nu2+\frac34} \\
&\qquad\quad+3(1-x)^{\frac\nu2+\frac34} 
- 3(\mbox{$\frac\nu2-\frac14$})(1-x)^{\frac\nu2-\frac14}(1+x)
+\mbox{$\frac12(\nu^2-\frac14)$}(1-x)^{\frac\nu2-\frac14}(1+x)\big]P \\
&=-2(1-x)^{\frac\nu2+\frac74}(1+x)P''\\
&\quad+\big[-4(1-x)^{\frac\nu2+\frac74}+2(\nu+1)(1-x)^{\frac\nu2+\frac34}(1+x)\big]P'\\
&\quad+2(\nu+1)(1-x)^{\frac\nu2+\frac34}P \\
&=-2(1-x)^{\frac\nu2+\frac34} \Big[ (1-x^2) P'' + \big[2 (1-x)  -(\nu+1)(1+x)\big]P'-(\nu+1)P\Big] \\
&=-2(1-x)^{\frac\nu2+\frac34} \Big[ (1-x^2) P'' + \big[1-\nu - ( 3+\nu )x\big]P'-(\nu+1)P\Big] \\
&=-2(1-x)^{\frac\nu2+\frac34} \big[ -(\ell-1)(\ell+\nu+1) - (\nu+1) \big] P,
\end{align*} 
where in the last equality we have used that the Jacobi polynomial $P$ satisfies the differential equation $(1-x^2)y''+(1-\nu-(\nu+3)x)y'=-(\ell-1)(\ell+\nu+1)y$. Inserting the last equality into \eqref{eq:hnu-hatPsil1}, and using that $(\ell-1)(\ell+\nu+1) + (\nu+1)=\ell(\ell+\nu)$ we obtain
\begin{align*}
\big[h_\nu \hat \varphi_{\nu,\ell}\big](k) &=2^{-\frac\nu2+\frac74} (1-x)^{\frac\nu2+\frac14} \ell(\ell+\nu)  P(x)\\
&=4\ell(\ell+\nu)  k^{\nu+\frac12}  P(1-2k^{2})\\
&=4 (1-k^{2})^{-1}\ell(\ell+\nu) \hat \varphi_{\nu,\ell}(k).
\end{align*}
This concludes the proof of the lemma.
\end{proof}

As above we use the notations $p_\nu(r)=r^{-\frac12}J_\nu(r)$ and we set $\tilde p_\nu:=(2\nu)^{\frac12} \, p_\nu=e_{\nu,0}$.
\begin{Lemma}
Let $\nu>0$. The sequence  $(e_{\nu,-\ell})_{\ell\ge1}$ defined by 
\begin{align*}
\forall \ell\ge 1, \qquad e_{\nu,-\ell} := \tilde p_{\nu+2\ell}, 
\end{align*}
is an orthonormal basis of $E_\nu^0$.
\end{Lemma}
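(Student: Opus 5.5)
The plan is to prove three things: orthonormality, that each $e_{\nu,-\ell}$ lies in $E_\nu^0$, and completeness. Orthonormality is checked directly in $L^2$; membership and completeness are checked on the Hankel side via the unitary map $\mathcal{U}_\nu$, where $E_\nu^0$ corresponds to $\hat{\mathcal{Q}}_\nu^0$.

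\emph{Orthonormality.} I would use the Weber--Schafheitlin type integral (e.g.\ \cite[6.574.2]{GR}):
\begin{equation*}
\int_0^\infty J_\mu(r)J_{\mu'}(r)\,\frac{\der r}{r}
=\frac{2\sin\big(\frac{\pi(\mu-\mu')}{2}\big)}{\pi(\mu^2-\mu'^2)},
\qquad
\int_0^\infty J_\mu(r)^2\,\frac{\der r}{r}=\frac{1}{2\mu}.
\end{equation*}
With $\mu=\nu+2\ell$ and $\mu'=\nu+2\ell'$, the difference $\mu-\mu'$ is an even integer, so the sine vanishes whenever $\ell\ne\ell'$. Hence $\langle \tilde p_{\nu+2\ell},\tilde p_{\nu+2\ell'}\rangle_{L^2}=\delta_{\ell\ell'}$. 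The same formula with $\mu'=\nu$ gives $\langle \tilde p_{\nu+2\ell},e_{\nu,0}\rangle_{L^2}=0$, which is consistent with the claim.

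\emph{Membership in $E_\nu^0$.} I would compute $\mathcal{U}_\nu(p_{\nu+2\ell})=\mathcal{H}_\nu[r^{-3/2}J_{\nu+2\ell}]$ with the discontinuous Weber--Schafheitlin integral $\int_0^\infty J_{\nu+2\ell}(r)J_\nu(kr)\,r^{-1}\,\der r$.
\begin{itemize}
\item For $k>1$ this integral carries a factor $\sin(\pi\ell)$, so it vanishes.
\item For $0<k<1$ it equals $\frac{\Gamma(\nu+\ell)}{\ell!\,\Gamma(\nu+1)}\,k^\nu\,{}_2F_1(\nu+\ell,-\ell;\nu+1;k^2)$, which is $k^\nu$ times a polynomial of degree $\ell$ in $k^2$.
\item By Chu--Vandermonde, ${}_2F_1(\nu+\ell,-\ell;\nu+1;1)=(1-\ell)_\ell/(\nu+1)_\ell=0$ for $\ell\ge1$.
\end{itemize}
Multiplying by $\sqrt k$, we get $\mathcal{U}_\nu(\tilde p_{\nu+2\ell})(k)=k^{\nu+\frac12}(1-k^2)Q_{\ell-1}(k^2)\mathbf{1}_{(0,1)}(k)$, where $Q_{\ell-1}$ has exact degree $\ell-1$. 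Such a function has derivative in $L^2$, is supported in $(0,1]$ and vanishes at $1$, so it lies in $\hat{\mathcal{Q}}_\nu^0$.

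\emph{Completeness.} By the triangular structure in the degree, $\mathrm{Span}\{\mathcal{U}_\nu e_{\nu,-\ell}:1\le\ell\le L\}$ equals $\mathrm{Span}\{\hat\varphi_{\nu,\ell}:1\le \ell\le L\}$, with $\hat\varphi_{\nu,\ell}$ as in \eqref{eq:def_varphinu}. Both are the space of functions $k^{\nu+\frac12}(1-k^2)Q(k^2)$ with $\deg Q\le L-1$. It therefore suffices to show that the $\hat\varphi_{\nu,\ell}$ span a dense subspace of $\hat{\mathcal{Q}}_\nu^0$.

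Take $\psi\in\hat{\mathcal{Q}}_\nu^0$ orthogonal to all $\hat\varphi_{\nu,\ell}$ in $\langle\cdot,\cdot\rangle_{\hat{\mathcal{Q}}_\nu}$. Since $\hat\varphi_{\nu,\ell}$ is smooth on $(0,1)$ and vanishes at $k=1$, I integrate by parts and apply Lemma~\ref{lm:hnu-hatPhil0}. This gives $0=\langle\psi,h_\nu\hat\varphi_{\nu,\ell}\rangle_{L^2}=4\ell(\ell+\nu)\int_0^1\overline{\psi(k)}\,k^{\nu+\frac12}P^{(\nu,1)}_{\ell-1}(1-2k^2)\,\der k$. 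The Jacobi polynomials $P^{(\nu,1)}_{\ell-1}$, $\ell\ge1$, span all polynomials, so $\int_0^1\overline{\psi(k)}\,k^{\nu+\frac12}k^{2j}\,\der k=0$ for every $j\ge0$.

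Since $\psi$ is absolutely continuous with $\psi'\in L^2$, we have $\psi\in L^2((0,1))$, so $\psi\,k^{\nu+\frac12}\in L^1((0,1))$. The Weierstrass theorem in the variable $t=k^2$ then forces $\psi=0$ on $(0,1)$. As $\psi$ is supported in $(0,1]$, $\psi=0$, and the orthonormal family is complete.

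\emph{Main obstacle.} The step I expect to need the most care is the boundary-term justification in the integration by parts for general $\psi\in\hat{\mathcal{Q}}_\nu^0$. At $k=1$ both functions vanish, so there is no issue. At $k=0$, one has $\hat\varphi_{\nu,\ell}=O(k^{\nu+1/2})$ and $\hat\varphi'_{\nu,\ell}=O(k^{\nu-1/2})$, while $\psi(k)=O(k^{1/2})$ from $\psi'\in L^2$. Hence the boundary term $\psi\hat\varphi_{\nu,\ell}'$ is $O(k^\nu)\to0$ because $\nu>0$. The explicit Weber--Schafheitlin evaluation of the Hankel transform is the other delicate point: it requires checking convergence conditions and the vanishing at $k=1$ carefully.
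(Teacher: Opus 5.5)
Your proof is correct. The skeleton matches the paper's: pass to the Hankel side with $\mathcal U_\nu$, evaluate the transform by the discontinuous Weber--Schafheitlin integral \cite[6.574.1]{GR}, and close with Lemma~\ref{lm:hnu-hatPhil0} plus density of polynomials. Each step, however, is carried out differently.

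The paper identifies the image exactly, $\mathcal H_\nu[r^{-1}p_{\nu+2\ell}]=\hat\varphi_{\nu,\ell}/(2\ell)$, by passing through $P_\ell^{(\nu,-1)}$ and Szeg\H{o}'s relation $P_\ell^{(\nu,-1)}(x)=\frac{\nu+\ell}{\ell}\frac{x+1}{2}P_{\ell-1}^{(\nu,1)}(x)$. It then gets orthonormality in the $\hat{\mathcal Q}_\nu$ inner product from Lemma~\ref{lm:hnu-hatPhil0} and the Jacobi orthogonality relation. Completeness is asserted by citing completeness of $(P^{(\nu,1)}_{\ell-1})_{\ell\ge1}$ in $L^2((-1,1),(1-x)^\nu(1+x)\,\der x)$.

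You instead obtain orthonormality directly in $L^2$ from \cite[6.574.2]{GR}, which the paper only mentions in its remark. You get the vanishing at $k=1$ from Chu--Vandermonde, and you need the image only up to a triangular change of basis.

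Your annihilator argument is more explicit than the paper's one-line completeness step. You integrate by parts with the boundary terms at $0$ and $1$ checked, then apply Weierstrass to the $L^1$ function $t\mapsto\psi(\sqrt t)\,t^{\nu/2-1/4}$. Spelled out, the paper's step also requires checking that a $\hat{\mathcal Q}_\nu$-orthogonal $\psi$ gives an element of the weighted space, which uses $\psi(1)=0$ and $|\psi(k)|^2\le(1-k)\|\psi'\|_{L^2}^2$. Your version needs only integrability.

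What the paper's route buys is the exact formula \eqref{eq:Hankel_pnu} and the Gram computation \eqref{eq:orthog_pnu}. These are reused immediately for the matrix entries in Lemmas~\ref{lm:psiell-Re-psiell'} and~\ref{lm:orthog-pnu1}. Your triangular identification suffices for this lemma but would not yield those entries.

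One harmless slip: for $0<k<1$ the constant should be $\frac{\Gamma(\nu+\ell)}{2\,\ell!\,\Gamma(\nu+1)}$. You dropped the factor $2^{-\lambda}$ with $\lambda=1$, which does not matter since you only use the shape of the transform.
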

\begin{Remark}
We mention that the fact that $(\tilde p_{\nu+2\ell})_{\ell\ge1}$ is  an orthonormal family in~$L^2$ follows, \eg, from~\cite[6.574.2]{GR}. 
Likewise, the fact that $r^{-1}\tilde p_{\nu+2\ell}\in L^2$ 
for any $\ell\ge1$ follows from \cite[6.574.2]{GR} 
(since $\nu>0$ and $\ell\ge1$). These two well-known facts also follow from the following proof. The main point of the lemma is that $(\tilde p_{\nu+2\ell})_{\ell\ge1}$ is a \emph{basis} of $E_\nu^0$.
\end{Remark}
\begin{proof}
We compute $\mathcal{U}_\nu[\tilde p_{\nu+2\ell}]=\mathcal{H}_\nu[r^{-1}\tilde p_{\nu+2\ell}]$. To this end, we write (see \cite[6.574.1]{GR})
\begin{align*}
\mathcal{H}_\nu[r^{-1}p_{\nu+2\ell}](k)=k^{\frac12}\int_0^\infty r^{-1}J_\nu(kr)J_{\nu+2\ell}(r)\mathrm{d}r&=\frac{k^{\nu+\frac12}\Gamma(\nu+\ell)}{2\Gamma(\ell+1)\Gamma(\nu+1)} \mathbf{F}(\nu+\ell,-\ell;\nu+1;k^2)\mathbf{1}_{(0,1)}(k) \\
&=\frac{k^{\nu+\frac12}}{2(\nu+\ell)} P_\ell^{(\nu,-1)}(1-2k^2)\mathbf{1}_{(0,1)}(k),
\end{align*}
where $\mathbf{F}$ and $P_\ell^{(\nu,-1)}$ stand for the usual Hypergeometric function and Jacobi polynomial, respectively. In particular we see that $\mathrm{supp}\, \mathcal{H}_\nu[r^{-1}p_{\nu+2\ell}] \subset (0,1]$ and $\mathcal{H}_\nu[r^{-1}p_{\nu+2\ell}](1)=0$ for any $\ell\ge1$.

Now, using the relation $P_\ell^{(\nu,-1)}(x)=\frac{\nu+\ell}{\ell}\frac{x+1}{2}P_{\ell-1}^{(\nu,1)}(x)$ (see \cite[(4.22.2)]{Szego}), we obtain
\begin{align}\label{eq:Hankel_pnu}
\mathcal{H}_\nu[r^{-1}p_{\nu+2\ell}](k)=\frac{k^{\nu+\frac12}(1-k^2)}{2\ell} P_{\ell-1}^{(\nu,1)}(1-2k^2)\mathbf{1}_{(0,1)}(k)=\frac{\hat \varphi_{\nu,\ell}(k)}{2\ell}.
\end{align}
For any $\ell,\ell'\ge1$, we can then write
\begin{align}
\big\langle\mathcal{U}_\nu p_{\nu+2\ell},\mathcal{U}_\nu p_{\nu+2\ell'}\big\rangle_{\mathcal{Q}_\nu}&=\big\langle\mathcal{H}_\nu[r^{-1}p_{\nu+2\ell}],\mathcal{H}_\nu[r^{-1}p_{\nu+2\ell}']\big\rangle_{\mathcal{Q}_\nu}\notag\\
&=\frac{1}{4\ell\ell'}\int_0^1 \hat \varphi_{\nu,\ell}(k) [h_\nu\hat \varphi_{\nu,\ell'}](k)\mathrm{d}k\notag\\
&=\frac{\ell'+\nu}{\ell}\int_0^1 \hat \varphi_{\nu,\ell}(k) (k^2-1)^{-1}\hat \varphi_{\nu,\ell'}(k)\mathrm{d}k\notag\\
&=\frac{\ell'+\nu}{\ell}\int_0^1 k^{2\nu+1} (k^2-1) P_{\ell-1}^{(\nu,1)}(1-2k^2) P_{\ell'-1}^{(\nu,1)}(1-2k^2) \mathrm{d}k\notag\\
&=\frac{1}{2^{\nu+3}} \frac{\ell'+\nu}{\ell}\int_{-1}^1 (1-x)^\nu(1+x)P_{\ell-1}^{(\nu,1)}(x) P_{\ell'-1}^{(\nu,1)}(x) \mathrm{d}x\notag\\
&=\frac{\delta_{\ell\ell'}}{2(2\ell+\nu)}, \label{eq:orthog_pnu}
\end{align}
where the last equality follows from the well-known orthogonality relation for Jacobi polynomials (see, \eg, \cite[7.391.1]{GR}). Inserting the normalisation $\tilde p_{\nu+2\ell}=\sqrt{2(\nu+2\ell)} p_{\nu+2\ell}$, we deduce that $(\mathcal{U}_\nu \tilde p_{\nu+2\ell})_{\ell\ge1}$ is an orthonormal family in $\hat{\mathcal{Q}}_\nu^0$, and therefore that $(\tilde p_{\nu+2\ell})_{\ell\ge1}$ is an orthonormal family in $E_\nu^0$.

Since in addition it is well known that the family 
of Jacobi polynomials $(P_{\ell-1}^{(\nu,1)})_{\ell\ge1}$ forms an orthogonal basis of $L^2((-1,1))$ for the weight $(1-x)^\nu(1+x)$, we can conclude from the previous computation that $(\tilde p_{\nu+2\ell})_{\ell\ge1}$ is indeed an orthonormal basis of $E_\nu^0$.
\end{proof}
\begin{Lemma}\label{lm:psiell-Re-psiell'}
Let $\nu>0$. Let $\ell,\ell'\ge1$. Then
\begin{align*}
\big\langle e_{\nu,-\ell}, (\Re A_\nu^\pm ) e_{\nu,-\ell'} \big\rangle_{L^2} = -\frac{\delta_{\ell\ell'}}{4\ell(\nu+\ell)}.
\end{align*}
\end{Lemma}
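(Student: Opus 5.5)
The plan is to compute the matrix element directly in the Hankel picture, using formula~\eqref{eq:Hankel50} together with the explicit Hankel transforms~\eqref{eq:Hankel_pnu} of the basis vectors. By the Remark preceding the previous lemma, $r^{-1}e_{\nu,-\ell}\in L^2$ for every $\ell\ge1$, so~\eqref{eq:Hankel50} applies for each $\varepsilon>0$ with $\varphi=e_{\nu,-\ell}$ and $\psi=e_{\nu,-\ell'}$. Since $e_{\nu,-\ell}=\sqrt{2(\nu+2\ell)}\,p_{\nu+2\ell}$, identity~\eqref{eq:Hankel_pnu} gives
\begin{equation*}
  \mathcal{H}_\nu[r^{-1}e_{\nu,-\ell}]
  =\frac{\sqrt{2(\nu+2\ell)}}{2\ell}\,\hat\varphi_{\nu,\ell}.
\end{equation*}
This function is real-valued and supported in $(0,1]$. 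We then obtain
\begin{equation*}
  \big\langle e_{\nu,-\ell}, r^{-1}(h_\nu-(1\pm i\varepsilon))^{-1}r^{-1}e_{\nu,-\ell'}\big\rangle
  =\frac{\sqrt{2(\nu+2\ell)}\sqrt{2(\nu+2\ell')}}{4\ell\ell'}
  \int_0^1\frac{\hat\varphi_{\nu,\ell}(k)\,\hat\varphi_{\nu,\ell'}(k)}{k^2-(1\pm i\varepsilon)}\,\mathrm{d}k .
\end{equation*}

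The first key step is passing to the limit $\varepsilon\to0$. By~\eqref{eq:def_varphinu}, each $\hat\varphi_{\nu,\ell}$ contains the factor $(1-k^2)$. The product $\hat\varphi_{\nu,\ell}\hat\varphi_{\nu,\ell'}$ therefore vanishes to second order at $k=1$, and the quotient by $|k^2-1\mp i\varepsilon|\ge|k^2-1|$ is bounded uniformly in $\varepsilon$ on $(0,1)$. Dominated convergence then gives the limit
\begin{equation*}
  \int_0^1\frac{\hat\varphi_{\nu,\ell}\hat\varphi_{\nu,\ell'}}{k^2-1}\,\mathrm{d}k ,
\end{equation*}
which is real. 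This confirms that there is no contribution to the imaginary part, consistent with $\Im A_\nu^\pm$ being supported on $e_{\nu,0}$. Because the basis vectors are real, the limit is exactly the matrix element of $\Re A_\nu^\pm$. I expect this justification, namely identifying the matrix element of the boundary value $A_\nu^\pm$ with the $\varepsilon\to0$ limit of the regularised pairings, to be the only delicate point. It follows from weak convergence of $r^{-1}(h_\nu-(1\pm i\varepsilon))^{-1}r^{-1}$, which is implicit in the definition of $A_\nu^\pm$.

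The second step evaluates the remaining integral using Lemma~\ref{lm:hnu-hatPhil0}. That lemma gives $(k^2-1)^{-1}\hat\varphi_{\nu,\ell'}=-\big(4\ell'(\ell'+\nu)\big)^{-1}h_\nu\hat\varphi_{\nu,\ell'}$ on $(0,1)$. Hence the integral equals
\begin{equation*}
  -\frac{1}{4\ell'(\ell'+\nu)}\langle\hat\varphi_{\nu,\ell},\hat\varphi_{\nu,\ell'}\rangle_{\hat{\mathcal Q}_\nu}.
\end{equation*}
The orthogonality computation~\eqref{eq:orthog_pnu}, which is the Jacobi orthogonality relation, gives
\begin{equation*}
  \langle\hat\varphi_{\nu,\ell},\hat\varphi_{\nu,\ell'}\rangle_{\hat{\mathcal Q}_\nu}
  =4\ell\ell'\cdot\frac{\delta_{\ell\ell'}}{2(2\ell+\nu)} .
\end{equation*}
Keeping track of the sign of $(1-k^2)$ versus $(k^2-1)$, the integral is $-\ell\,\delta_{\ell\ell'}/\big(2(2\ell+\nu)(\ell+\nu)\big)$.

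Finally, multiplying by the prefactor $2(\nu+2\ell)/(4\ell^2)$ (taking $\ell=\ell'$) yields $-\delta_{\ell\ell'}/\big(4\ell(\nu+\ell)\big)$, which is the claim.
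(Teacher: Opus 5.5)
Your proposal is correct and follows essentially the paper's own argument. You write the regularised pairing through the Hankel transforms \eqref{eq:Hankel_pnu}, pass to $\varepsilon\to0$ by dominated convergence using the factor $(1-k^2)^2$, and evaluate the resulting real integral with the Jacobi orthogonality relation \eqref{eq:orthog_pnu}. The only cosmetic difference is that you route the last step explicitly through Lemma~\ref{lm:hnu-hatPhil0} and the $\hat{\mathcal{Q}}_\nu$ inner product. The paper instead reduces the integrand to $k^{2\nu+1}(1-k^2)P_{\ell-1}^{(\nu,1)}(1-2k^2)P_{\ell'-1}^{(\nu,1)}(1-2k^2)$ and cites \eqref{eq:orthog_pnu} directly, which was itself derived through that same lemma.
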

\begin{proof}
For all $\varepsilon>0$, using \eqref{eq:Hankel5a} and \eqref{eq:Hankel_pnu}, we have
\begin{align*}
&\big\langle p_{\nu+2\ell},r^{-1} \Re (h_\nu - (1\pm i\varepsilon))^{-1}  r^{-1} p_{\nu+2\ell'} \big\rangle_{L^2} \\
&= \Re \int_0^1 \frac{1}{k^2-(1\pm i\varepsilon)} \overline{\mathcal{H}_\nu[r^{-1} p_{\nu+2\ell}](k)} \mathcal{H}_\nu[r^{-1} p_{\nu+2\ell'}](k) \mathrm{d}k\\
&= \frac{1}{4\ell\ell'} \Re \int_0^1 \frac{k^{2\nu+1}(1-k^2)^2}{k^2-(1\pm i\varepsilon)} P_{\ell-1}^{(\nu,1)}(1-2k^2) P_{\ell'-1}^{(\nu,1)}(1-2k^2) \mathrm{d}k.
\end{align*}
Letting $\varepsilon\to0$ (using Lebesgue's dominated convergence theorem) 
yields
\begin{align*}
&\lim_{\varepsilon\to0}\big\langle p_{\nu+2\ell},r^{-1} \Re (h_\nu - (1\pm i\varepsilon))^{-1} r^{-1} p_{\nu+2\ell'} \big\rangle_{L^2} \\
&= -\frac{1}{4\ell\ell'} \int_0^1 k^{2\nu+1}(1-k^2) P_{\ell-1}^{(\nu,1)}(1-2k^2) P_{\ell'-1}^{(\nu,1)}(1-2k^2) \mathrm{d}k.
\end{align*}
Using \eqref{eq:orthog_pnu}, we then obtain
\begin{align*}
&\lim_{\varepsilon\to0}\big\langle p_{\nu+2\ell},r^{-1} \Re (h_\nu - (1\pm i\varepsilon))^{-1}  r^{-1} p_{\nu+2\ell'} \big\rangle = -\frac{\delta_{\ell\ell'}}{8\ell(2\ell+\nu)(\ell+\nu)} .
\end{align*}
Inserting the normalisation $e_{\nu,-\ell}=\tilde p_{\nu+2\ell}=\sqrt{2(\nu+2\ell)} p_{\nu+2\ell}$, the result follows.
\end{proof}

We also need to compute the matrix elements of~$\Re A_\nu^\pm$ 
between the basis elements $e_{\nu,-\ell}$, $\ell\ge1$, 
of~$E_\nu^0$ and $e_{\nu,0}$.
\begin{Lemma}\label{lm:orthog-pnu1}
Let $\nu>0$ and $\ell\ge1$. Then
\begin{align*}
\big\langle e_{\nu,\ell}, (\Re A_\nu^\pm ) e_{\nu,0} \big\rangle_{L^2} = \frac{(-1)^\ell}{4\ell(\nu+\ell)}\Big(1+\frac{2\ell}{\nu}\Big)^{\frac12} .
\end{align*}
\end{Lemma}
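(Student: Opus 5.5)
Although the statement is written with $e_{\nu,\ell}$, the value on the right-hand side is the entry $a_{0,-\ell}$ of Theorem~\ref{Thm.matrix}, and the surrounding text concerns the low-energy vectors. I therefore read it as a statement about $e_{\nu,-\ell}=\tilde p_{\nu+2\ell}$ and prove it in that form.

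The plan is to follow the proof of Lemma~\ref{lm:psiell-Re-psiell'}: compute the pairing of the unnormalised functions $p_{\nu+2\ell}$ and $p_\nu$ in Hankel variables via \eqref{eq:Hankel50}, let $\varepsilon\to0$, and normalise at the end. Both Hankel transforms are already known:
\begin{itemize}
\item by \eqref{eq:Hankel_pnu}, $\mathcal{H}_\nu[r^{-1}p_{\nu+2\ell}]=\hat\varphi_{\nu,\ell}/(2\ell)$, which is supported in $(0,1]$;
\item by \eqref{eq:hankel_r32Jnu}, $\mathcal{H}_\nu[r^{-1}p_\nu]=\mathcal{H}_\nu[r^{-3/2}J_\nu]=(2\nu)^{-1}k^{\nu+1/2}$ on $(0,1]$.
\end{itemize}
All these transforms are real, so taking the real part commutes with the pairing. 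For $\varepsilon>0$ we get
\begin{equation*}
\big\langle p_{\nu+2\ell}, r^{-1}\Re(h_\nu-(1\pm i\varepsilon))^{-1}r^{-1}p_\nu\big\rangle
=\frac{1}{4\nu\ell}\,\Re\int_0^1\frac{k^{2\nu+1}(1-k^2)}{k^2-(1\pm i\varepsilon)}\,P_{\ell-1}^{(\nu,1)}(1-2k^2)\,\der k .
\end{equation*}
The factor $(1-k^2)$ from $\hat\varphi_{\nu,\ell}$ cancels the singularity at $k=1$. The integrand is therefore uniformly bounded, and by dominated convergence the limit equals
\begin{equation*}
-\frac{1}{4\nu\ell}\int_0^1 k^{2\nu+1}P_{\ell-1}^{(\nu,1)}(1-2k^2)\,\der k .
\end{equation*}

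The only real computation, and the main obstacle, is evaluating this Jacobi integral in closed form. Set $x=1-2k^2$, so that $k\,\der k=-\der x/4$ and $k^{2\nu}=2^{-\nu}(1-x)^\nu$. The integral becomes $2^{-\nu-2}\int_{-1}^1(1-x)^\nu P_{n}^{(\nu,1)}(x)\,\der x$ with $n=\ell-1$. I would evaluate it by expanding the polynomial in powers of $(1-x)/2$:
\begin{equation*}
P_n^{(\nu,1)}(x)=\frac{(\nu+1)_n}{n!}\sum_{j=0}^n\frac{(-n)_j(n+\nu+2)_j}{(\nu+1)_j\,j!}\Big(\frac{1-x}{2}\Big)^j .
\end{equation*}
Then:
\begin{enumerate}
\item Integrate term by term using $\int_{-1}^1(1-x)^{\nu+j}\,\der x=2^{\nu+j+1}/(\nu+j+1)$.
\item Rewrite $(\nu+1)/(\nu+1+j)=(\nu+1)_j/(\nu+2)_j$. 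The remaining sum collapses to ${}_2F_1(-n,n+\nu+2;\nu+2;1)$.
\item Apply Chu--Vandermonde: ${}_2F_1(-n,n+\nu+2;\nu+2;1)=(-n)_n/(\nu+2)_n=(-1)^n n!/(\nu+2)_n$.
\end{enumerate}
Collecting factors, the original $k$-integral equals $(-1)^{\ell-1}/\big(2(\nu+\ell)\big)$. Hence the limiting pairing is $(-1)^\ell/\big(8\nu\ell(\nu+\ell)\big)$.

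Finally I insert the normalisations $e_{\nu,-\ell}=\sqrt{2(\nu+2\ell)}\,p_{\nu+2\ell}$ and $e_{\nu,0}=\sqrt{2\nu}\,p_\nu$. These contribute the factor $2\sqrt{\nu(\nu+2\ell)}=2\nu\sqrt{1+2\ell/\nu}$, which gives exactly
\begin{equation*}
\frac{(-1)^\ell}{4\ell(\nu+\ell)}\Big(1+\frac{2\ell}{\nu}\Big)^{1/2}.
\end{equation*}
The equality $a_{-\ell,0}=a_{0,-\ell}$ follows because $\Re A_\nu^\pm$ is symmetric with a real kernel, by \eqref{eq:int_kernel_real}.
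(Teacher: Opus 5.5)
Your proof is correct and follows the paper's own argument step for step: the same Hankel-side pairing of $p_{\nu+2\ell}$ with $p_\nu$, the same dominated-convergence limit, and the same normalisation. Your reading of $e_{\nu,\ell}$ as $e_{\nu,-\ell}$ is also exactly what the paper's proof does. The only difference is that you evaluate $\int_{-1}^1(1-x)^\nu P_{\ell-1}^{(\nu,1)}(x)\,\der x=2^{\nu+1}(-1)^{\ell-1}/(\nu+\ell)$ via the hypergeometric expansion and Chu--Vandermonde, whereas the paper uses the reflection $P_{\ell-1}^{(\nu,1)}(-x)=(-1)^{\ell-1}P_{\ell-1}^{(1,\nu)}(x)$ together with the tabulated formula \cite[7.391.4]{GR}; both routes give the same value.
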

\begin{proof}
We proceed as in the proof of Lemma \ref{lm:psiell-Re-psiell'}. For all $\varepsilon>0$, using \eqref{eq:Hankel5a} and \eqref{eq:Hankel_pnu}, we have
\begin{align*}
&\big\langle p_{\nu+2\ell},r^{-1} \Re (h_\nu - (1\pm i\varepsilon))^{-1}  r^{-1} p_{\nu} \big\rangle_{L^2} \\
&= \Re \int_0^1 \frac{1}{k^2-(1\pm i\varepsilon)} \overline{\mathcal{H}_\nu[r^{-1} p_{\nu+2\ell}](k)} \mathcal{H}_\nu[r^{-1} p_{\nu}](k) \mathrm{d}k\\
&= \frac{1}{4\nu\ell} \Re \int_0^1 \frac{k^{2\nu+1}(1-k^2)}{k^2-(1\pm i\varepsilon)} P_{\ell-1}^{(\nu,1)}(1-2k^2) \mathrm{d}k.
\end{align*}
Letting $\varepsilon\to0$ 
(using Lebesgue's dominated convergence theorem) yields
\begin{align*}
&\lim_{\varepsilon\to0}\big\langle p_{\nu+2\ell},r^{-1} \Re (h_\nu - (1\pm i\varepsilon))^{-1} r^{-1} p_{\nu} \big\rangle_{L^2} \\
&= -\frac{1}{4\nu\ell} \int_0^1 k^{2\nu+1} P_{\ell-1}^{(\nu,1)}(1-2k^2)  \mathrm{d}k\\
&= -\frac{2^{-\nu-2}}{4\nu\ell} \int_{-1}^1 (1-x)^\nu P_{\ell-1}^{(\nu,1)}(x)  \mathrm{d}x=\frac{(-1)^\ell}{8\nu\ell(\nu+\ell)},
\end{align*}
where the last equality follows from the change of variables $x\mapsto -x$, the identity $P_{\ell-1}^{(\nu,1)}(-x)=(-1)^{\ell-1}P_{\ell-1}^{(1,\nu)}(x)$ and \cite[7.391.4]{GR}. Inserting the normalisation $e_{\nu,-\ell} = \tilde p_{\nu+2\ell}=\sqrt{2(\nu+2\ell)} p_{\nu+2\ell}$, the result follows.
\end{proof}

\subsection{The orthonormal basis at high energies}
Now we turn to the construction of an orthonormal basis 
to decompose~$A_\nu^\pm$ in the high energy region~$E_\nu^\infty$. 
In this region, it is not clear to us how to define a suitable orthonormal basis with an explicit expression, as in the case of the basis $(\tilde p_{\nu+2\ell})_{\ell\ge1}$ of $E_\nu^0$. Nevertheless the Hankel transforms of the basis elements (multiplied by $r^{-1}$) that we construct remain explicit, with a form close to that of the previous section. First, for all integer $\ell\ge1$ and $\nu>0$, we set
\begin{align}\label{eq:def_hatPsil}
\hat\psi_{\nu,\ell}(k):=k^{-\nu+\frac12}(1-k^{-2})P_{\ell-1}^{(\nu,1)}(1-2k^{-2})\mathbf{1}_{(1,\infty)}(k).
\end{align}
\begin{Lemma}\label{lm:hnu-hatPsil}
Let $\nu>0$. Then, for all $\ell\ge1$, $\hat\psi_{\nu,\ell}\in\hat{\mathcal{Q}}_\nu^\infty$. Moreover, for all $k>1$,
\begin{align}\label{eq:hnu-hatPsil}
(h_\nu\hat\psi_{\nu,\ell})(k)=\frac{4\ell(\ell+\nu)}{ k^{2} (k^2-1)} \hat\psi_{\nu,\ell}(k).
\end{align}
\end{Lemma}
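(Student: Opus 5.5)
The natural route is to reduce Lemma~\ref{lm:hnu-hatPsil} to Lemma~\ref{lm:hnu-hatPhil0} through the inversion $k\mapsto 1/k$, which exchanges $(0,1)$ and $(1,\infty)$. The starting observation is that $\hat\psi_{\nu,\ell}(k)=k\,\hat\varphi_{\nu,\ell}(1/k)$ for $k>1$. Indeed, with $\kappa=1/k$ we have $\kappa^{\nu+\frac12}(1-\kappa^2)P^{(\nu,1)}_{\ell-1}(1-2\kappa^2)=k^{-\nu-\frac12}(1-k^{-2})P^{(\nu,1)}_{\ell-1}(1-2k^{-2})$, and multiplying by $k$ gives exactly \eqref{eq:def_hatPsil}. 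So I would first establish the transformation rule for $h_\nu$ under the map $(Tf)(k):=k\,f(1/k)$.

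\textbf{Step 1 (Kelvin-type identity).} For smooth $f$ on $(0,\infty)$, write $g(k)=kf(1/k)$ and compute. The first derivative is $g'(k)=f(1/k)-k^{-1}f'(1/k)$. Differentiating again, $g''(k)=-k^{-2}f'(1/k)+k^{-2}f'(1/k)+k^{-3}f''(1/k)=k^{-3}f''(1/k)$. The potential term is $(\nu^2-\frac14)k^{-2}g(k)=(\nu^2-\frac14)k^{-3}\,\kappa^{-2}f(\kappa)$ with $\kappa=1/k$. Adding the two pieces gives
\begin{equation*}
(h_\nu Tf)(k)=k^{-3}(h_\nu f)(1/k).
\end{equation*}

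\textbf{Step 2 (eigen-relation).} Apply this with $f=\hat\varphi_{\nu,\ell}$, which is smooth on $(0,1)$, and use \eqref{eq:hnu-hatPhil0} at $\kappa=1/k\in(0,1)$. This gives
\begin{equation*}
(h_\nu\hat\psi_{\nu,\ell})(k)=k^{-3}\,\frac{4\ell(\ell+\nu)}{1-k^{-2}}\,\hat\varphi_{\nu,\ell}(1/k)=\frac{4\ell(\ell+\nu)}{k^2(k^2-1)}\,k\,\hat\varphi_{\nu,\ell}(1/k).
\end{equation*}
The right-hand side is exactly \eqref{eq:hnu-hatPsil}.

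\textbf{Step 3 (membership in $\hat{\mathcal{Q}}_\nu^\infty$).} The support of $\hat\psi_{\nu,\ell}$ lies in $[1,\infty)$, and the factor $(1-k^{-2})$ forces $\hat\psi_{\nu,\ell}(1)=0$. Hence the function is continuous and piecewise $C^1$ with no jump at $k=1$. It remains to check $\hat\psi_{\nu,\ell}'\in L^2$. As $k\to\infty$, $P^{(\nu,1)}_{\ell-1}(1-2k^{-2})$ tends to the constant $P^{(\nu,1)}_{\ell-1}(1)$, so $\hat\psi_{\nu,\ell}=O(k^{-\nu+\frac12})$ and its derivative is $O(k^{-\nu-\frac12})$, which is square integrable at infinity precisely because $\nu>0$. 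Near $k=1$ everything is bounded.

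This last verification is the only point needing care: one must check that the $\dot H^1_0$ definition of $\hat{\mathcal{Q}}_\nu$ admits this decay. If finiteness of the form $\langle\hat\psi_{\nu,\ell},h_\nu\hat\psi_{\nu,\ell}\rangle$ is needed as well, the same asymptotics give an integrand of order $k^{-2\nu-1}$ at infinity, which is again integrable for $\nu>0$. Alternatively, $T$ can be checked to preserve the $h_\nu$-form, since Step 1 and the substitution $\kappa=1/k$ give $\int_1^\infty \overline{Tf}\,h_\nu Tf\,\der k=\int_0^1\bar f\,h_\nu f\,\der\kappa$, so finiteness transfers from Lemma~\ref{lm:hnu-hatPhil0}.
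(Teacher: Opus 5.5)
Your proof is correct, and it takes a different, more structural route than the paper. The paper rewrites $h_\nu$ in the variable $x=1-2k^{-2}$ as a multiple of $(1-x)$ times the same second-order operator in $x$ that appeared inside the proof of Lemma~\ref{lm:hnu-hatPhil0}. It then reuses an intermediate identity from that proof, and says only that membership in $\hat{\mathcal{Q}}_\nu^\infty$ follows from the definition.

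You instead isolate why that computation can be recycled. The involution $(Tf)(k)=k\,f(1/k)$ intertwines $h_\nu$ with itself up to the factor $k^{-3}$ and sends $\hat\varphi_{\nu,\ell}$ to $\hat\psi_{\nu,\ell}$. So Lemma~\ref{lm:hnu-hatPhil0} enters only through its statement, and no Jacobi-polynomial algebra is repeated. The paper's substitution $x=1-2(1/k)^2$ is this same inversion written in coordinates.

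Your viewpoint also gives more than the eigen-relation. $T$ preserves the $h_\nu$-form, exchanges $\hat{\mathcal{Q}}_\nu^0$ and $\hat{\mathcal{Q}}_\nu^\infty$, and fixes $\hat e_{\nu,0}$. Hence the orthogonality relations of Lemma~\ref{lm:orthog-psi} follow directly from \eqref{eq:Hankel_pnu} and \eqref{eq:orthog_pnu}: both families have $\hat{\mathcal{Q}}_\nu$-Gram matrix $\frac{2\ell^2}{2\ell+\nu}\delta_{\ell\ell'}$. This explains why the high-energy basis mirrors the low-energy one. The paper's route, in exchange, is a self-contained calculation that needs no auxiliary operator.

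Finally, your explicit check that $\hat\psi_{\nu,\ell}'=O(k^{-\nu-\frac12})$ is square integrable at infinity is exactly where $\nu>0$ is used. It makes precise a point the paper leaves implicit.
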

\begin{proof}
Let $\ell\ge0$. Since $\nu>0$, the fact that $\hat\psi_{\nu,\ell}\in \hat{\mathcal{Q}}_\nu^\infty$ directly follows from the definition \eqref{eq:def_hatPsil}.

To prove \eqref{eq:hnu-hatPsil}, we argue as in the proof of Lemma \ref{lm:hnu-hatPhil0}. Let $k>1$. To shorten the notations, 
set $P:=P_{\ell-1}^{(\nu,1)}$. We compute, setting $x=1-2k^{-2}$,
\begin{align}
&\big[h_\nu \hat\psi_{\nu,\ell}\big](k) 
= \Big(-\frac{\der^2}{\der k^2}+\frac{\nu^2
-\frac14}{k^2} \Big) k^{-\nu+\frac12}(1-k^{-2})P(1-2k^{-2}) \notag \\
&= \Big(-16\,\Big(\frac{1-x}{2}\Big)^3\frac{\der^2}{\der x^2}+12\,\Big(\frac{1-x}{2}\Big)^2\frac{d}{dx} + \Big(\frac{1-x}{2}\Big) (\nu^2-\mbox{$\frac14$}) \Big) \Big(\frac{1-x}{2}\Big)^{\frac\nu2-\frac14} \Big(\frac{1+x}{2}\Big) P(x) \notag\\
&=2^{-\frac\nu2-\frac34} (1-x) \Big(-2(1-x)^2\frac{\der^2}{\der x^2}+3(1-x)\frac{d}{dx} + \frac12 (\nu^2-\mbox{$\frac14$}) \Big) (1-x)^{\frac\nu2-\frac14} (1+x) P(x) .\label{eq:hnu-hatPsil1-a}
\end{align}
Now, from the computation in the proof of Lemma \ref{lm:hnu-hatPhil0}, we have 
\begin{align*}
& \Big(-2(1-x)^2\frac{\der^2}{\der x^2}+3(1-x)\frac{\der}{\der x} 
+ \frac12 (\nu^2-\mbox{$\frac14$}) \Big) (1-x)^{\frac\nu2-\frac14} (1+x) P(x) \\
&=2\ell(\ell+\nu)(1-x)^{\frac\nu2+\frac34}  P(x).
\end{align*} 
Inserting this equality into \eqref{eq:hnu-hatPsil1-a}, we obtain
\begin{align*}
\big[h_\nu \hat\psi_{\nu,\ell}\big](k) &=2^{-\frac\nu2+\frac14}\ell(\ell+\nu) (1-x)^{\frac\nu2+\frac74} P(x)\\
&=4 \ell(\ell+\nu) k^{-\nu-\frac72}  P(1-2k^{-2})\\
&=4\ell(\ell+\nu) k^{-4} (1-k^{-2})^{-1}k^{-\nu+\frac12} (1-k^{-2})  P(1-2k^{-2}) \\
&=4\ell(\ell+\nu) k^{-2} (k^2-1)^{-1}\hat\psi_{\nu,\ell}(k).
\end{align*}
This proves the lemma.
\end{proof}
Now we set
\begin{align*}
\psi_{\nu,\ell}:=\mathcal{U}_\nu^{-1}\hat\psi_{\nu,\ell}.
\end{align*}
\begin{Lemma}\label{lm:orthog-psi}
Let $\nu>0$. Then for all $\ell\ge1$, $\psi_{\nu,\ell}\in E_\nu^\infty$ and for all $\ell,\ell'\ge1$,
\begin{align*}
\langle \psi_{\nu,\ell'} , \psi_{\nu,\ell}\rangle_{L^2} = \frac{2\ell^2}{2\ell+\nu} \delta_{\ell\ell'}.
\end{align*}
\end{Lemma}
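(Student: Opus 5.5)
The plan is to mirror the computation \eqref{eq:orthog_pnu} for the low energy family, now working on $(1,\infty)$ with the eigenvalue-type identity of Lemma~\ref{lm:hnu-hatPsil}.

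\emph{Membership.} By Lemma~\ref{lm:hnu-hatPsil}, $\hat\psi_{\nu,\ell}\in\hat{\mathcal{Q}}_\nu^\infty$. Since $\mathcal{U}_\nu$ is unitary from $L^2$ onto $\hat{\mathcal{Q}}_\nu$ and $E_\nu^\infty=\mathcal{U}_\nu^{-1}\hat{\mathcal{Q}}_\nu^\infty$ by definition, we get $\psi_{\nu,\ell}\in E_\nu^\infty$.

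\emph{Reduction to the form on $(1,\infty)$.} By unitarity,
\[
\langle\psi_{\nu,\ell'},\psi_{\nu,\ell}\rangle_{L^2}=\langle\hat\psi_{\nu,\ell'},\hat\psi_{\nu,\ell}\rangle_{\hat{\mathcal{Q}}_\nu}=\int_1^\infty\hat\psi_{\nu,\ell'}(k)\,(h_\nu\hat\psi_{\nu,\ell})(k)\,\der k .
\]
The second equality needs a justification that the form is given by $\langle\hat\psi_{\nu,\ell'},h_\nu\hat\psi_{\nu,\ell}\rangle$ with no boundary terms. The functions are real, $\hat\psi_{\nu,\ell}$ is $C^\infty$ on $(1,\infty)$, it vanishes at $k=1$ (through the factor $1-k^{-2}$), and it decays like $k^{-\nu+1/2}$ with derivative $O(k^{-\nu-1/2})$ at infinity. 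Integrating by parts, $\int_1^\infty(\hat\psi_{\ell'}'\hat\psi_\ell'+(\nu^2-\frac14)k^{-2}\hat\psi_{\ell'}\hat\psi_\ell)\,\der k$ picks up only the boundary terms $[\hat\psi_{\ell'}\hat\psi_\ell']$. These vanish at $1$ because $\hat\psi_{\ell'}(1)=0$, and at $\infty$ because the product is $O(k^{-2\nu})$ with $\nu>0$. This boundary/decay verification is the only delicate point. I would also check that $h_\nu\hat\psi_{\nu,\ell}$ computed classically on $(1,\infty)$ is the right object: the kink at $k=1$ does not matter, since we only use the quadratic form.

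\emph{Change of variables and Jacobi orthogonality.} Inserting \eqref{eq:hnu-hatPsil}, the inner product equals
\[
4\ell(\ell+\nu)\int_1^\infty k^{-2\nu+1}(1-k^{-2})^2\,k^{-2}(k^2-1)^{-1}\,P_{\ell-1}^{(\nu,1)}(1-2k^{-2})\,P_{\ell'-1}^{(\nu,1)}(1-2k^{-2})\,\der k .
\]
Simplify $k^{-2}(k^2-1)^{-1}(1-k^{-2})^2=k^{-4}(1-k^{-2})$, so the weight is $k^{-2\nu-3}(1-k^{-2})$. Substitute $x=1-2k^{-2}$: then $k^{-2}=(1-x)/2$, $\der x=4k^{-3}\,\der k$, $1-k^{-2}=(1+x)/2$, and $k^{-2\nu}=((1-x)/2)^{\nu}$. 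The integral becomes
\[
2^{-\nu-3}\int_{-1}^1(1-x)^\nu(1+x)\,P_{\ell-1}^{(\nu,1)}(x)\,P_{\ell'-1}^{(\nu,1)}(x)\,\der x .
\]
This is exactly the integral in \eqref{eq:orthog_pnu}. Comparing with that computation gives
\[
2^{-\nu-3}\int_{-1}^1\cdots\,\der x=\frac{\delta_{\ell\ell'}}{2(2\ell+\nu)}\cdot\frac{\ell}{\ell+\nu}.
\]
Multiplying by $4\ell(\ell+\nu)$ yields $\frac{2\ell^2}{2\ell+\nu}\delta_{\ell\ell'}$, as claimed. For $\ell\neq\ell'$ the symmetric use of \eqref{eq:hnu-hatPsil} is harmless, because orthogonality of the Jacobi polynomials already kills the integral.
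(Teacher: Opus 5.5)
Your proof is correct and follows the paper's argument. Both use the unitarity of $\mathcal{U}_\nu$ to reduce to $\int_1^\infty \hat\psi_{\nu,\ell'}\,h_\nu\hat\psi_{\nu,\ell}\,\der k$, then insert \eqref{eq:hnu-hatPsil}, substitute $x=1-2k^{-2}$, and apply the Jacobi orthogonality relation for the weight $(1-x)^\nu(1+x)$. Your explicit check that the boundary terms vanish at $k=1$ and at $k=\infty$ (the latter using $\nu>0$) is a justification the paper leaves implicit.
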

\begin{proof}
By the definition of $\mathcal{U}_\nu$ and $\psi_{\nu,\ell}$,
we have $\psi_{\nu,\ell}\in E_\nu^\infty$ for all
$\ell\ge1$, and
\begin{align}
\langle \psi_{\nu,\ell'} , \psi_{\nu,\ell}\rangle_{L^2} &= \big\langle \hat\psi_{\nu,\ell'} , \hat\psi_{\nu,\ell} \big\rangle_{\hat{\mathcal{Q}}_\nu}=\int_1^\infty \hat\psi_{\nu,\ell'}(k) \big[h_\nu\hat\psi_{\nu,\ell}\big](k)\, \mathrm{d}k.
\end{align}
Therefore, using \eqref{eq:hnu-hatPsil} 
and the change of variables $x=1-2k^{-2}$,
we get
\begin{align}
\langle \psi_{\nu,\ell'} , \psi_{\nu,\ell}\rangle_{L^2} &=4\ell(\ell+\nu) \int_1^\infty k^{-2} (k^2-1)^{-1} \hat\psi_{\nu,\ell'}(k) \hat\psi_{\nu,\ell}(k) \mathrm{d}k \notag\\
&=4\ell(\ell+\nu) 2^{-\nu-3} \int_{-1}^1 (1-x)^\nu (1+x) P^{(\nu,1)}_{\ell'-1}(x) P^{(\nu,1)}_{\ell-1}(x) \mathrm{d}x \notag\\
&=2\ell(\ell+\nu) \frac{\ell}{(2\ell+\nu)(\ell+\nu)} \delta_{\ell\ell'}. \label{eq:orthog-psi}
\end{align}
Here the last equality follows from the well-known identity for Jacobi polynomials (see, \eg, \cite[7.391.1]{GR}).%
\end{proof}
Setting for $\ell\ge1$,
\begin{align*}
e_{\nu,\ell}:=\Big(\frac{2\ell+\nu}{2\ell^2}\Big)^{\frac12}\psi_{\nu,\ell},
\end{align*}
the previous lemma provides an orthonormal basis for $E_\nu^\infty$.
\begin{Lemma}
Let $\nu>0$. 
The sequence
$(e_{\nu,\ell})_{\ell\ge1}$ is an orthonormal basis of $E_\nu^\infty$.
\end{Lemma}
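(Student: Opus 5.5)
The statement has two parts. Orthonormality of $(e_{\nu,\ell})_{\ell\ge1}$ is immediate from Lemma~\ref{lm:orthog-psi}. Membership $e_{\nu,\ell}\in E_\nu^\infty$ is also already recorded there. So the only thing to prove is \emph{completeness} in $E_\nu^\infty$. Since $\mathcal{U}_\nu$ is unitary from $L^2$ onto $\hat{\mathcal{Q}}_\nu$ and maps $E_\nu^\infty$ onto $\hat{\mathcal{Q}}_\nu^\infty$, it suffices to show the following: if $\hat\varphi\in\hat{\mathcal{Q}}_\nu^\infty$ satisfies $\langle\hat\psi_{\nu,\ell},\hat\varphi\rangle_{\hat{\mathcal{Q}}_\nu}=0$ for all $\ell\ge1$, then $\hat\varphi=0$.

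To exploit this orthogonality, I would move $h_\nu$ onto $\hat\psi_{\nu,\ell}$ and use Lemma~\ref{lm:hnu-hatPsil}. The justification is an integration by parts on $(1,\infty)$:
\begin{equation*}
\langle\hat\psi_{\nu,\ell},\hat\varphi\rangle_{\hat{\mathcal{Q}}_\nu}=\int_1^\infty \hat\psi_{\nu,\ell}(k)\,\overline{\big[h_\nu\hat\varphi\big](k)}\,\der k ,
\end{equation*}
or more safely, written as the quadratic form $\int(\hat\psi'\bar{\hat\varphi}'+(\nu^2-\frac14)k^{-2}\hat\psi\bar{\hat\varphi})$ and integrated by parts the other way.

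The boundary terms need checking. At $k=1$ both functions vanish, since $\hat\varphi(1)=0$ by definition of $\hat{\mathcal{Q}}_\nu^\infty$ and $\hat\psi_{\nu,\ell}(1)=0$. At infinity, $\hat\psi_{\nu,\ell}\sim c\,k^{-\nu+1/2}$ and $\hat\psi_{\nu,\ell}'\sim c'k^{-\nu-1/2}$. For $\hat\varphi$, the condition $\hat\varphi'\in L^2$ together with $\hat\varphi(1)=0$ gives $|\hat\varphi(k)|\le C\sqrt{k}$. Hence the boundary term $\hat\psi'\bar{\hat\varphi}$ is $O(k^{-\nu})\to0$.

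With the boundary terms gone, the orthogonality becomes
\begin{equation*}
0=4\ell(\ell+\nu)\int_1^\infty \frac{\hat\psi_{\nu,\ell}(k)\overline{\hat\varphi(k)}}{k^2(k^2-1)}\,\der k
=c_\nu\,\ell(\ell+\nu)\int_{-1}^1 (1-x)^{\nu}(1+x)\,P^{(\nu,1)}_{\ell-1}(x)\,\overline{g(x)}\,\der x .
\end{equation*}
Here I substitute $x=1-2k^{-2}$, exactly as in the proof of Lemma~\ref{lm:orthog-psi}, and set
\begin{equation*}
g(x):=\frac{\hat\varphi(k)}{(1-k^{-2})\,k^{-\nu+1/2}}, \qquad k=k(x).
\end{equation*}

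The key step is to check that $g$ lies in $L^2((-1,1),(1-x)^\nu(1+x)\,\der x)$. Once that holds, completeness of the Jacobi polynomials $(P^{(\nu,1)}_{\ell-1})_{\ell\ge1}$ in that weighted space, already used for $E_\nu^0$, forces $g=0$, hence $\hat\varphi=0$. Reversing the change of variables, the weighted norm of $g$ equals, up to a constant,
\begin{equation*}
\int_1^\infty \frac{|\hat\varphi(k)|^2}{k^2(k^2-1)}\,\der k .
\end{equation*}

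I expect this estimate to be the main obstacle, because $\hat\varphi$ is only known through $\hat\varphi'\in L^2$, and both endpoints need separate control.
\begin{itemize}
\item[] \emph{Near $k=1$:} since $\hat\varphi(1)=0$, Cauchy--Schwarz gives $|\hat\varphi(k)|^2\le (k-1)\|\hat\varphi'\|_{L^2}^2$. This cancels the factor $(k^2-1)^{-1}$, leaving an integrable function.
\item[] \emph{Near $k=\infty$:} the bound $|\hat\varphi(k)|^2\le Ck$ gives an integrand $O(k^{-3})$, which is integrable.
\end{itemize}
A Hardy-type inequality would give the same conclusion. The same pointwise bounds also justify the integration by parts used above.

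Finally, to make the argument fully rigorous I would approximate $\hat\varphi$ by compactly supported smooth functions in $(1,\infty)$; these are dense in $\hat{\mathcal{Q}}_\nu^\infty$. Alternatively, one can simply note that the form identity holds because $\hat\psi_{\nu,\ell}$ is smooth on $(1,\infty)$, with the decay already established.
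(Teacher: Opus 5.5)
Your proposal is correct and follows the paper's route. Orthonormality comes from Lemma~\ref{lm:orthog-psi}, and completeness is obtained by transporting via $x=1-2k^{-2}$ to the completeness of $(P^{(\nu,1)}_{\ell-1})_{\ell\ge1}$ in $L^2((-1,1),(1-x)^\nu(1+x)\,\der x)$. You also supply the details the paper's one-line proof leaves implicit: the integration by parts with vanishing boundary terms, and the check that $g$ lies in the weighted space. The only slip is your first display, which puts $h_\nu$ on $\hat\varphi$ instead of on $\hat\psi_{\nu,\ell}$; the computation you actually carry out afterwards is the correct one.
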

\begin{proof}
Lemma \ref{lm:orthog-psi} shows that $(e_{\nu,\ell})_{\ell\ge1}$ is an orthonormal family in $E_\nu^\infty$. The fact that it is an orthonormal basis follows from the proof of Lemma \ref{lm:orthog-psi} (see \eqref{eq:orthog-psi}) and the well-known property that the family of Jacobi polynomials $(P_{\ell-1}^{(\nu,1)})_{\ell\ge1}$ is an orthonormal basis of $L^2((-1,1))$ for the weight $(1-x)^\nu(1+x)$.
\end{proof}
\
Next we have to compute the matrix elements of~$\Re A_\nu^\pm$ 
in the orthonormal basis of~$E_\nu^\infty$ we just constructed. 
Unfortunately, it does not yield a diagonal matrix 
as in the case of~$E_\nu^0$.
\begin{Lemma}\label{lm:matrix-psi}
Let $\nu>0$. Then for all $\ell,\ell'\ge1$,
\begin{align*}
\big\langle e_{\nu,\ell'},(\Re A_\nu^\pm) e_{\nu,\ell} \big\rangle_{L^2}=\frac{1}{4\ell\ell'}\frac{n!\,\Gamma(\nu+m)}{(m-1)!\,\Gamma(\nu+n+1)}\Big(1+\frac{2\ell}{\nu}\Big)^{\frac12}\Big(1+\frac{2\ell'}{\nu}\Big)^{\frac12},
\end{align*}
where $n:=\max(\ell,\ell')$ and $m:=\min(\ell,\ell')$.
\end{Lemma}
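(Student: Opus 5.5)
We follow the pattern of Lemmas~\ref{lm:psiell-Re-psiell'} and~\ref{lm:orthog-pnu1}. By definition of $\mathcal{U}_\nu$ we have $\mathcal{H}_\nu[r^{-1}\psi_{\nu,\ell}]=\hat\psi_{\nu,\ell}$, which is real and supported in $[1,\infty)$. So by~\eqref{eq:Hankel50}, for every $\varepsilon>0$,
\begin{equation*}
\big\langle \psi_{\nu,\ell'},r^{-1}\Re(h_\nu-(1\pm i\varepsilon))^{-1}r^{-1}\psi_{\nu,\ell}\big\rangle_{L^2}
=\Re\int_1^\infty \frac{\hat\psi_{\nu,\ell'}(k)\hat\psi_{\nu,\ell}(k)}{k^2-(1\pm i\varepsilon)}\,\der k .
\end{equation*}
The product $\hat\psi_{\nu,\ell'}\hat\psi_{\nu,\ell}$ contains the factor $(1-k^{-2})^2$, which vanishes to second order at $k=1$. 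This cancels the singularity of $(k^2-1)^{-1}$. At infinity the integrand decays like $k^{-2\nu-1}$. Dominated convergence therefore gives the limit
\begin{equation*}
\int_1^\infty k^{-2\nu-1}\,\frac{(1-k^{-2})^2}{k^2-1}\,P_{\ell-1}^{(\nu,1)}(1-2k^{-2})P_{\ell'-1}^{(\nu,1)}(1-2k^{-2})\,\der k .
\end{equation*}
We simplify $(1-k^{-2})^2/(k^2-1)=k^{-2}(1-k^{-2})$ and substitute $x=1-2k^{-2}$, so that $k^{-2}=(1-x)/2$ and $\der k=\tfrac14(1-x)^{-3/2}2^{-1/2}\,\der x$ up to the exact constant. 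The integral becomes
\begin{equation*}
c_\nu\int_{-1}^1 (1-x)^{\nu}\,\frac{1+x}{1-x}\,P_{\ell-1}^{(\nu,1)}(x)P_{\ell'-1}^{(\nu,1)}(x)\,\der x
= c_\nu\int_{-1}^1 (1-x)^{\nu-1}(1+x)\,P_{\ell-1}^{(\nu,1)}(x)P_{\ell'-1}^{(\nu,1)}(x)\,\der x ,
\end{equation*}
with $c_\nu$ an explicit power of $2$. This computation has to be checked carefully, by comparison with the weight $(1-x)^\nu(1+x)$ that appeared in Lemma~\ref{lm:orthog-psi}, where the extra factor $k^{-2}(k^2-1)^{-1}$ from $h_\nu$ produced exactly that weight. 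Here the exponent of $(1-x)$ drops by one. This is precisely why the matrix is not diagonal.

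The main step is evaluating the mixed Jacobi integral
\begin{equation*}
I_{m,n}:=\int_{-1}^1(1-x)^{\nu-1}(1+x)P_{m-1}^{(\nu,1)}(x)P_{n-1}^{(\nu,1)}(x)\,\der x ,\qquad m\le n .
\end{equation*}
The first approach is to expand $P_{n-1}^{(\nu,1)}$ in the basis $P_j^{(\nu-1,1)}$ via the connection formula
\begin{equation*}
P_{j}^{(\nu,1)}=\frac{1}{2j+\nu+2}\Big[(j+\nu+2)P_j^{(\nu-1,1)}+(j+1)P_{j-1}^{(\nu-1,1)}\Big]
\end{equation*}
(Szeg\H{o}~(4.5.4)). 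A cleaner alternative is to write
\begin{equation*}
(1-x)^{\nu}(1+x)P_{n-1}^{(\nu,1)}\propto \frac{\der^{n-1}}{\der x^{n-1}}\Big[(1-x)^{\nu+n-1}(1+x)^{n}\Big]
\end{equation*}
by Rodrigues' formula. Dividing by $(1-x)$ does not commute with the derivatives, so instead we keep $P_{n-1}^{(\nu,1)}$ and expand $P_{m-1}^{(\nu,1)}(x)/(1-x)$, which is not a polynomial. The better route is to use that $P_{n-1}^{(\nu,1)}$ is orthogonal to all polynomials of degree $<n-1$ for the weight $(1-x)^\nu(1+x)$. Then write
\begin{equation*}
P_{m-1}^{(\nu,1)}(x)=P_{m-1}^{(\nu,1)}(1)+(1-x)Q(x),\qquad \deg Q=m-2<n-1 .
\end{equation*}
The $Q$ part yields $\int(1-x)^{\nu}(1+x)QP_{n-1}=0$, so
\begin{equation*}
I_{m,n}=P_{m-1}^{(\nu,1)}(1)\int_{-1}^1(1-x)^{\nu-1}(1+x)P_{n-1}^{(\nu,1)}(x)\,\der x .
\end{equation*}
When $m=n$ the $Q$ term is nonzero only if $\deg Q\ge n-1$, which does not occur, so the same formula holds. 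Here $P_{m-1}^{(\nu,1)}(1)=(\nu+1)_{m-1}/(m-1)!$. The remaining integral is a Beta-type integral. Using Rodrigues' formula and integrating by parts $n-1$ times against $(1-x)^{-1}$, or using the table entry \cite[7.391]{GR}, gives a closed form with a ratio $\Gamma(\nu)\,\Gamma(n+1)/\Gamma(\nu+n+1)$-type factor up to powers of $2$.

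Finally we assemble the pieces. We multiply by the normalisations from Lemma~\ref{lm:orthog-psi},
\begin{equation*}
e_{\nu,\ell}=\Big(\frac{2\ell+\nu}{2\ell^2}\Big)^{\frac12}\psi_{\nu,\ell},
\end{equation*}
and track the powers of $2$ and the Pochhammer ratios. The product $(\nu+1)_{m-1}\cdot(\text{integral})$ should collapse to
\begin{equation*}
\frac{n!\,\Gamma(\nu+m)}{(m-1)!\,\Gamma(\nu+n+1)}
\end{equation*}
times simple factors. The factors $\sqrt{2\ell+\nu}\sqrt{2\ell'+\nu}/\nu$ then combine into $(1+2\ell/\nu)^{1/2}(1+2\ell'/\nu)^{1/2}$, together with $1/(4\ell\ell')$. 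We expect the Jacobi integral $I_{m,n}$, with its weight shifted by $(1-x)^{-1}$, and the constant bookkeeping to be the main obstacle. The reduction to $P_{m-1}^{(\nu,1)}(1)$ times a single integral is the key trick that makes it tractable.
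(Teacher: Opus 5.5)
Your proposal is correct and follows the paper's proof. The steps coincide: the same dominated-convergence limit and substitution $x=1-2k^{-2}$ produce $2^{-\nu-2}\int_{-1}^1(1-x)^{\nu-1}(1+x)P_{\ell'-1}^{(\nu,1)}P_{\ell-1}^{(\nu,1)}\,\der x$; the same splitting $P_{m-1}^{(\nu,1)}(x)=P_{m-1}^{(\nu,1)}(1)+(1-x)Q(x)$ lets orthogonality kill the $Q$ part; \cite[7.391.4]{GR} then gives $2^{\nu+1}\Gamma(\nu)\,n!/\Gamma(\nu+n+1)$ for the remaining integral, and the normalisation of $e_{\nu,\ell}$ finishes. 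The one slip is in your displayed limit: the product $\hat\psi_{\nu,\ell'}\hat\psi_{\nu,\ell}$ carries $k^{-2\nu+1}$, so the integrand should be $k^{-2\nu-3}(k^2-1)P_{\ell'-1}^{(\nu,1)}P_{\ell-1}^{(\nu,1)}$, with $\der k=2^{-1/2}(1-x)^{-3/2}\der x$; with $k^{-2\nu-1}$ as written, the weight would come out as $(1-x)^{\nu}(1+x)$ and the matrix would wrongly be diagonal.
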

\begin{proof}
Using Lebesgue's dominated convergence theorem and the change of variables $x=1-2k^{-2}$ as before, we have
\begin{align*}
&\big\langle \psi_{\nu,\ell'},r^{-1} \Re (h_\nu - (1\pm i0))^{-1} r^{-1} \psi_{\nu,\ell} \big\rangle_{L^2}\\
&=\lim_{\varepsilon\to0} \int_1^\infty\frac{\hat\psi_{\nu,\ell'}(k)\hat\psi_{\nu,\ell} (k)}{k^2-(1\pm i\varepsilon)} \mathrm{d}k\\
&=\int_1^\infty k^{-2\nu-3}(k^2-1) P_{\ell'-1}^{(\nu,1)}(1-2k^{-2}) P_{\ell-1}^{(\nu,1)}(1-2k^{-2}) \, \mathrm{d}k\\
&=2^{-\nu-2} \int_{-1}^1 (1-x)^{\nu-1} (1+x) P^{(\nu,1)}_{\ell'-1}(x) P^{(\nu,1)}_{\ell-1}(x) \, \mathrm{d}x.
\end{align*}
To evaluate the integral, let $n:=\max(\ell,\ell')$, $m=\min(\ell,\ell')$. If $m=1$, the result follows from \cite[7.391.4]{GR}. If $m\ge2$, write $P^{(\nu,1)}_{m}(x)=P^{(\nu,1)}_{m-1}(1)+(1-x)R_{m-2}(x)$, where $R_{m-2}$ is some polynomial of degree $m-2$. Since $m-2<n-1$, the orthogonality of Jacobi polynomials implies that
\begin{align*}
\int_{-1}^1 (1-x)^{\nu-1} (1+x) P^{(\nu,1)}_{n-1}(x) P^{(\nu,1)}_{m}(x) \, \mathrm{d}x &= \int_{-1}^1 (1-x)^{\nu-1} (1+x) P^{(\nu,1)}_{n-1}(x) P^{(\nu,1)}_{m-1}(1) \, \mathrm{d}x \\
&=\binom{m-1+\nu}{m-1} \int_{-1}^1 (1-x)^{\nu-1} (1+x) P^{(\nu,1)}_{n-1}(x) \, \mathrm{d}x.
\end{align*}
Applying \cite[7.391.4]{GR}, we obtain
\begin{align*}
\big\langle \psi_{\nu,\ell'} ,r^{-1} \Re (h_\nu - (1\pm i0))^{-1} r^{-1}\psi_{\nu,\ell}\big\rangle_{L^2}&=\frac12\binom{m-1+\nu}{m-1}\frac{\Gamma(\nu)\Gamma(n+1)}{\Gamma(\nu+n+1)}=\frac{1}{2\nu}\frac{n!\,\Gamma(\nu+m)}{(m-1)!\,\Gamma(\nu+n+1)}.
\end{align*}
We conclude from the normalisation $e_{\nu,\ell}=\frac{\sqrt{2\ell+\nu}}{\ell\sqrt{2}}\psi_{\nu,\ell}$.
\end{proof}
As in the case of $E_\nu^0$, we finally need to compute the matrix elements of $\Re A_\nu^\pm$ between the basis elements $e_{\nu,\ell}$ of $E_\nu^\infty$ and $e_{\nu,0}=\tilde p_\nu$.
\begin{Lemma}\label{lm:orthog-pnu2}
Let $\nu>0$ and $\ell\ge1$. Then
\begin{align*}
\big\langle e_{\nu,\ell} ,(\Re A_\nu^\pm) e_{\nu,0} \big\rangle_{L^2} = \frac{1}{4\ell(\nu+\ell)}\Big((-1)^{\ell}-\frac{\ell!}{(\nu)_\ell}\Big)\Big(1+\frac{2\ell}{\nu}\Big)^{\frac12} .
\end{align*}
\end{Lemma}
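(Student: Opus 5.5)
The plan is to compute the matrix element on the Hankel side, as in Lemmas~\ref{lm:orthog-pnu1} and~\ref{lm:matrix-psi}. By \eqref{eq:Hankel50}, with $\mathcal{U}_\nu\psi_{\nu,\ell}=\hat\psi_{\nu,\ell}$ and $\mathcal{U}_\nu p_\nu=(2\nu)^{-1}\big(k^{\nu+\frac12}\mathbf{1}_{(0,1]}+k^{-\nu+\frac12}\mathbf{1}_{(1,\infty)}\big)$ from \eqref{eq:hankel_r32Jnu}, we get
\begin{equation*}
\big\langle \psi_{\nu,\ell}, r^{-1}\Re(h_\nu-(1\pm i\varepsilon))^{-1}r^{-1}p_\nu\big\rangle_{L^2}
=\frac{1}{2\nu}\,\Re\int_1^\infty \frac{\hat\psi_{\nu,\ell}(k)\,k^{-\nu+\frac12}}{k^2-(1\pm i\varepsilon)}\,\mathrm{d}k .
\end{equation*}
Only $k>1$ contributes, because $\hat\psi_{\nu,\ell}$ is supported in $[1,\infty)$. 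The factor $(1-k^{-2})$ in $\hat\psi_{\nu,\ell}$ cancels the singularity, since $(1-k^{-2})/(k^2-1)=k^{-2}$. The integrand is therefore dominated uniformly in $\varepsilon$ by $Ck^{-2\nu-1}|P_{\ell-1}^{(\nu,1)}(1-2k^{-2})|$, which is integrable on $(1,\infty)$ for $\nu>0$. Dominated convergence then lets $\varepsilon\to0$ and gives the limit
\begin{equation*}
\frac{1}{2\nu}\int_1^\infty k^{-2\nu-1}P_{\ell-1}^{(\nu,1)}(1-2k^{-2})\,\mathrm{d}k
=\frac{2^{-\nu-2}}{\nu}\int_{-1}^1(1-x)^{\nu-1}P_{\ell-1}^{(\nu,1)}(x)\,\mathrm{d}x,
\end{equation*}
after the substitution $x=1-2k^{-2}$. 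The power of $2$ should be rechecked in the write-up.

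The key observation is that the weight $(1-x)^{\nu-1}$ with no $(1+x)$ factor is not directly covered by the formulas used earlier. I would split $1=\tfrac12(1+x)+\tfrac12(1-x)$, which gives
\begin{equation*}
\int_{-1}^1(1-x)^{\nu-1}P_{\ell-1}^{(\nu,1)}
=\tfrac12\int_{-1}^1(1-x)^{\nu-1}(1+x)P_{\ell-1}^{(\nu,1)}+\tfrac12\int_{-1}^1(1-x)^{\nu}P_{\ell-1}^{(\nu,1)} .
\end{equation*}
The first integral is exactly the $m=1$ case of the computation in the proof of Lemma~\ref{lm:matrix-psi}, via \cite[7.391.4]{GR}. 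It equals, up to the powers of $2$ recorded there, $\Gamma(\nu)\,\ell!/\Gamma(\nu+\ell+1)$, which produces the term $\ell!/\big((\nu+\ell)(\nu)_\ell\big)$. The second integral is the one already evaluated in the proof of Lemma~\ref{lm:orthog-pnu1}, using $P_{\ell-1}^{(\nu,1)}(-x)=(-1)^{\ell-1}P_{\ell-1}^{(1,\nu)}(x)$ and \cite[7.391.4]{GR}. It is proportional to $(-1)^{\ell}/(\nu+\ell)$ and produces the alternating term. Adding the two contributions should give $\frac{1}{8\nu\ell(\nu+\ell)}\big[(-1)^\ell-\ell!/(\nu)_\ell\big]\cdot 2\ell^2/\ell$-type prefactors, which I would track carefully.

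Finally I would insert the normalisations $e_{\nu,0}=(2\nu)^{1/2}p_\nu$ and $e_{\nu,\ell}=\big(\tfrac{2\ell+\nu}{2\ell^2}\big)^{1/2}\psi_{\nu,\ell}$. Their product is $\nu^{1/2}(2\ell+\nu)^{1/2}/\ell=(1+2\ell/\nu)^{1/2}\,\nu/\ell$, which should yield the factor $(1+\tfrac{2\ell}{\nu})^{1/2}/(4\ell(\nu+\ell))$ in the statement. Since all functions involved are real, no imaginary part enters. The main obstacle is purely bookkeeping: getting the powers of $2$ and the sign conventions of the two Jacobi integrals consistent. In particular, the relative minus sign between $(-1)^\ell$ and $\ell!/(\nu)_\ell$ must come out right. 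I would cross-check the final formula at $\ell=1$, where $P_0^{(\nu,1)}\equiv1$ and the integral $\int_{-1}^1(1-x)^{\nu-1}\,\mathrm{d}x=2^\nu/\nu$ is elementary. That case should give $\frac{1}{4(\nu+1)}(-1-\tfrac1\nu)(1+\tfrac2\nu)^{1/2}=-\frac{1}{4\nu}(1+\tfrac2\nu)^{1/2}$.
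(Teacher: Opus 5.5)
Your method is the paper's. You pass to the Hankel side and note that only $k>1$ contributes. The identity $(1-k^{-2})/(k^2-1)=k^{-2}$ removes the pole, so dominated convergence applies. The substitution $x=1-2k^{-2}$ then gives $\frac{2^{-\nu-2}}{\nu}\int_{-1}^1(1-x)^{\nu-1}P_{\ell-1}^{(\nu,1)}(x)\,\mathrm{d}x$, and your power of $2$ is right; after that you normalise. The only difference is in evaluating this Jacobi integral. The paper does it in one step with \cite[7.391.2]{GR}. You split $1=\frac12(1+x)+\frac12(1-x)$ and reuse the integrals from Lemmas~\ref{lm:matrix-psi} and~\ref{lm:orthog-pnu1}. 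That split is correct and stays within formulas the paper already uses.

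The gap is the conclusion: your computation, carried out, gives the negative of the displayed formula. You get
\[
\int_{-1}^1(1-x)^{\nu-1}(1+x)P_{\ell-1}^{(\nu,1)}\,\mathrm{d}x=\frac{2^{\nu+1}\,\ell!}{(\nu+\ell)(\nu)_\ell},
\qquad
\int_{-1}^1(1-x)^{\nu}P_{\ell-1}^{(\nu,1)}\,\mathrm{d}x=\frac{(-1)^{\ell-1}\,2^{\nu+1}}{\nu+\ell},
\]
hence $\langle\psi_{\nu,\ell},(\Re A_\nu^\pm)p_\nu\rangle=\frac{1}{4\nu(\nu+\ell)}\big(\frac{\ell!}{(\nu)_\ell}-(-1)^\ell\big)$. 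This is exactly the intermediate value in the paper's proof. Multiplying by the positive factor $\frac{\nu}{\ell}(1+\frac{2\ell}{\nu})^{1/2}$ yields
\[
\big\langle e_{\nu,\ell},(\Re A_\nu^\pm)e_{\nu,0}\big\rangle=-\frac{1}{4\ell(\nu+\ell)}\Big((-1)^\ell-\frac{\ell!}{(\nu)_\ell}\Big)\Big(1+\frac{2\ell}{\nu}\Big)^{\frac12}.
\]
This minus-sign version is the entry $a_{\ell,0}$ of Theorem~\ref{Thm.matrix}. It is also the sign compatible with $b_{\ell,0}$ in Proposition~\ref{prop:matrix2}, since $b_{\ell,0}=a_{\ell,0}-\frac{\ell!}{4\ell(\nu+\ell)(\nu)_\ell}(1+\frac{2\ell}{\nu})^{1/2}$. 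So the lemma as displayed has a sign typo, and the paper's proof actually establishes the minus-sign version.

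Your $\ell=1$ check would have caught this had you computed it rather than read it off the target. For $\ell=1$ the integrand is $\frac{1}{2\nu}k^{-2\nu-1}>0$ and the unnormalised element is $\frac{1}{4\nu^2}$. The normalised value is therefore $+\frac{1}{4\nu}(1+\frac{2}{\nu})^{1/2}$, not the $-\frac{1}{4\nu}(1+\frac{2}{\nu})^{1/2}$ you predicted. As written, the proposal asserts an identity that its own computation refutes. Keep the method, but state and prove the minus-sign version.
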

\begin{proof}
We proceed again as in the proof of Lemma \ref{lm:psiell-Re-psiell'}. For all $\varepsilon>0$, using \eqref{eq:Hankel5a} and \eqref{eq:Hankel_pnu}, we have
\begin{align*}
&\big\langle \psi_{\nu,\ell},r^{-1} \Re (h_\nu - (1\pm i\varepsilon))^{-1}  r^{-1} p_{\nu} \big\rangle_{L^2} \\
&= \Re \int_1^\infty \frac{1}{k^2-(1\pm i\varepsilon)} \hat\psi_\nu^{(\ell)}(k) \mathcal{H}_\nu[r^{-1} p_{\nu}](k) \, \mathrm{d}k\\
&= \frac{1}{2\nu} \Re \int_1^\infty \frac{k^{-2\nu+1}(1-k^{-2})}{k^2-(1\pm i\varepsilon)} P_{\ell-1}^{(\nu,1)}(1-2k^{-2}) \, \mathrm{d}k.
\end{align*}
Letting $\varepsilon\to0$ 
(using Lebesgue's dominated convergence theorem) yields
\begin{align*}
&\lim_{\varepsilon\to0} \big\langle \psi_{\nu,\ell},r^{-1} \Re (h_\nu - (1\pm i\varepsilon))^{-1}  r^{-1} p_{\nu} \big\rangle_{L^2} \\
&= \frac{1}{2\nu} \int_1^\infty k^{-2\nu-1} P_{\ell-1}^{(\nu,1)}(1-2k^{-2})  \,\mathrm{d}k\\
&= \frac{2^{-\nu-2}}{\nu} \int_{-1}^1 (1-x)^{\nu-1} P_{\ell-1}^{(\nu,1)}(x)  \,\mathrm{d}x =-\frac{1}{4\nu(\nu+\ell)}\Big((-1)^{\ell}-\frac{\ell!}{(\nu)_\ell}\Big),
\end{align*}
where the last equality follows from \cite[7.391.2]{GR}. 
From the normalisations $e_{\nu,0} = \tilde p_{\nu}=\sqrt{2\nu} p_{\nu}$ and $e_{\nu,\ell}=\frac{\sqrt{2\ell+\nu}}{\ell\sqrt{2}}\psi_{\nu,\ell}$, the result follows.
\end{proof}

\subsection{Proof of Theorem~\ref{Thm.matrix} 
and Proposition~\ref{prop:matrix2}}\label{Sec.Proproofs}
Theorem~\ref{Thm.matrix} is a consequence of the results obtained in the previous sections.
\begin{proof}[Proof of Theorem~\ref{Thm.matrix}]
It suffices to combine Propositions \ref{prop:main_Bessel_Imaginary} and \ref{prop:main_Bessel_Real-2}, together with Lemmas \ref{lm:psiell-Re-psiell'}, \ref{lm:orthog-pnu1}, \ref{lm:matrix-psi} and \ref{lm:orthog-pnu2}.
\end{proof}

Now we turn to the proof of Proposition \ref{prop:matrix2} 
for the operator~$B_\nu^\pm$ defined in~\eqref{operator.B}.
Clearly, if $\ell\le-1$ or $\ell'\le-1$, then
\begin{align*}
\big\langle e_{\nu,\ell},B_\nu^\pm e_{\nu,\ell}\big\rangle_{L^2}=\big\langle e_{\nu,\ell},A_\nu^\pm e_{\nu,\ell}\big\rangle_{L^2}.
\end{align*}
In what follows we compute $\langle e_{\nu,\ell},B_\nu^\pm e_{\nu,\ell}\rangle_{L^2}$ for $\ell\ge0$ and $\ell'\ge0$.
\begin{Lemma}\label{lm:matrix-psi-modified}
Let $\nu>0$. Then for all $\ell,\ell'\ge1$,
\begin{align*}
\big\langle e_{\nu,\ell},(\Re B_\nu^\pm)e_{\nu,\ell}\big\rangle_{L^2}=\frac{\delta_{\ell\ell'}}{4\ell(\nu+\ell)}.
\end{align*}
\end{Lemma}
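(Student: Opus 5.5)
By definition \eqref{operator.B}, we have
\[
\Re B_\nu^\pm=\Re A_\nu^\pm-r^{-1}h_\nu^{-1}\mathbf 1_{h_\nu\ge1}r^{-1},
\]
since the subtracted operator is self-adjoint and real. The plan is to compute the matrix elements of this cutoff operator between $e_{\nu,\ell'}$ and $e_{\nu,\ell}$ for $\ell,\ell'\ge1$, and subtract them from Lemma~\ref{lm:matrix-psi}. (The statement is read with $e_{\nu,\ell'}$ on the left.)

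\textbf{Step 1: transfer to the Hankel side.} By \eqref{eq:Hankel5a} with $f(k^2)=k^{-2}\mathbf 1_{k\ge1}$, and since $\mathcal U_\nu e_{\nu,\ell}=c_\ell\hat\psi_{\nu,\ell}$ with $c_\ell=\big(\frac{2\ell+\nu}{2\ell^2}\big)^{1/2}$, we get
\[
\langle \psi_{\nu,\ell'},r^{-1}h_\nu^{-1}\mathbf 1_{h_\nu\ge1}r^{-1}\psi_{\nu,\ell}\rangle
=\int_1^\infty k^{-2}\hat\psi_{\nu,\ell'}(k)\hat\psi_{\nu,\ell}(k)\,\der k .
\]
The supports of $\hat\psi_{\nu,\ell}$ lie in $[1,\infty)$, so the cutoff acts trivially on them.

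\textbf{Step 2: combine the two kernels.} In the proof of Lemma~\ref{lm:matrix-psi}, the real part of the boundary value is $\int_1^\infty (k^2-1)^{-1}\hat\psi_{\nu,\ell'}\hat\psi_{\nu,\ell}\,\der k$; this is the same integral as written there, after expanding the $\hat\psi$'s. Using
\[
\frac1{k^2-1}-\frac1{k^2}=\frac{1}{k^2(k^2-1)},
\]
the difference becomes
\[
\int_1^\infty k^{-2}(k^2-1)^{-1}\hat\psi_{\nu,\ell'}\hat\psi_{\nu,\ell}\,\der k .
\]
This is exactly the weighted integral computed in the proof of Lemma~\ref{lm:orthog-psi}, up to the factor $4\ell(\ell+\nu)$. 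By \eqref{eq:orthog-psi}, it equals
\[
\frac{1}{4\ell(\ell+\nu)}\cdot\frac{2\ell^2}{2\ell+\nu}\,\delta_{\ell\ell'}.
\]

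\textbf{Step 3: normalise.} Multiplying by $c_\ell c_{\ell'}=\frac{2\ell+\nu}{2\ell^2}$ on the diagonal gives $\frac{\delta_{\ell\ell'}}{4\ell(\nu+\ell)}$, which is the claim.

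The \textbf{main obstacle} is justifying Steps 1--2 rigorously. The boundary-value limit $\varepsilon\to0$ requires handling the singularity at $k=1$. Each of $\hat\psi_{\nu,\ell}$ and $\hat\psi_{\nu,\ell'}$ vanishes linearly at $k=1$, through the factor $1-k^{-2}$. Their product therefore cancels the simple pole of $(k^2-1)^{-1}$, and dominated convergence applies as in Lemma~\ref{lm:matrix-psi}; the same cancellation handles the combined kernel $k^{-2}(k^2-1)^{-1}$ in Step 2. Integrability at infinity holds because $\hat\psi_{\nu,\ell}\sim k^{-\nu+1/2}$ with $\nu>0$, which makes the integrand $O(k^{-2\nu-3})$. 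This also shows that $r^{-1}\psi_{\nu,\ell}$ lies in the domain where \eqref{eq:Hankel50} is valid.
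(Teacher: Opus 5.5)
Your argument is correct and is essentially the paper's proof. The paper writes the difference via the resolvent identity as $r^{-1}(h_\nu-(1\pm i0))^{-1}h_\nu^{-1}\mathbf{1}_{h_\nu\ge1}r^{-1}$, i.e.\ with Hankel-side kernel $k^{-2}(k^2-1)^{-1}$. It then uses \eqref{eq:hnu-hatPsil} to turn the integral into $\frac{1}{4\ell(\nu+\ell)}\langle\psi_{\nu,\ell'},\psi_{\nu,\ell}\rangle_{L^2}$ and concludes by Lemma~\ref{lm:orthog-psi}; this is exactly your kernel subtraction followed by \eqref{eq:orthog-psi}.

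One small correction to your closing remark. You have $\|r^{-1}\psi_{\nu,\ell}\|_{L^2}=\|\hat\psi_{\nu,\ell}\|_{L^2}$ and $\hat\psi_{\nu,\ell}\sim c\,k^{-\nu+1/2}$ at infinity, so $r^{-1}\psi_{\nu,\ell}\notin L^2$ when $\nu\le1$. The Hankel-side formulas should therefore be justified by continuity on $\hat{\mathcal Q}_\nu$, not by membership of $r^{-1}\psi_{\nu,\ell}$ in $L^2$. The continuity follows from $\|k^{-1}\mathcal U_\nu\varphi\|_{L^2}\le\nu^{-1}\|\varphi\|_{L^2}$ (Hardy in the $k$ variable) together with $|k^2-z|^{-1}\le C_z k^{-2}$.
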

\begin{proof}
Arguing as in the proof of the Lemma \ref{lm:orthog-pnu2} and using \eqref{eq:hnu-hatPsil} from Lemma \ref{lm:hnu-hatPsil}, we have
\begin{align*}
&\big\langle\psi_{\nu,\ell'},r^{-1} \Re (h_\nu - (1\pm i0))^{-1}h_\nu^{-1}\mathbf{1}_{h_\nu\ge1} r^{-1}\psi_{\nu,\ell}\big\rangle_{L^2}\\
&=\lim_{\varepsilon\to0} \int_1^\infty\frac{\hat\psi_{\nu,\ell'}(k)\hat\psi_{\nu,\ell}(k)}{k^2(k^2-(1\pm i\varepsilon))} \, \mathrm{d}k\\
&=\frac{1}{4\ell(\nu+\ell)} \int_1^\infty \hat\psi_{\nu,\ell'}(k)\big[h_\nu\hat\psi_{\nu,\ell}\big](k) \, \mathrm{d}k = \frac{1}{4\ell(\nu+\ell)}\big\langle\psi_{\nu,\ell'},\psi_{\nu,\ell}\big\rangle_{L^2}.
\end{align*}
The result then follows from Lemma \ref{lm:orthog-psi} and the fact that $e_{\nu,\ell}=\frac{\sqrt{2\ell+\nu}}{\ell\sqrt{2}}\psi_{\nu,\ell}$.
\end{proof}

\begin{Lemma}\label{lm:orthog-pnu3}
Let $\nu>0$ and $\ell\ge1$. Then
\begin{align*}
\big\langle e_{\nu,\ell}, (\Re B_\nu^\pm) e_{\nu,0} \big\rangle_{L^2} = \frac{(-1)^{\ell+1}}{4\ell(\nu+\ell)} \Big(1+\frac{2\ell}{\nu}\Big)^{\frac12} .
\end{align*}
\end{Lemma}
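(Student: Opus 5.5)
Since $B_\nu^\pm = A_\nu^\pm - r^{-1}h_\nu^{-1}\mathbf{1}_{h_\nu\ge1}r^{-1}$ and the subtracted operator is self-adjoint, we have $\Re B_\nu^\pm = \Re A_\nu^\pm - r^{-1}h_\nu^{-1}\mathbf{1}_{h_\nu\ge1}r^{-1}$. Lemma~\ref{lm:orthog-pnu2} already gives the matrix element of $\Re A_\nu^\pm$ between $e_{\nu,\ell}$ and $e_{\nu,0}$. So it suffices to compute
\[
\big\langle e_{\nu,\ell}, r^{-1}h_\nu^{-1}\mathbf{1}_{h_\nu\ge1}r^{-1} e_{\nu,0}\big\rangle_{L^2}
\]
and subtract it. We expect this term to be exactly $-\frac{1}{4\ell(\nu+\ell)}\frac{\ell!}{(\nu)_\ell}(1+\frac{2\ell}{\nu})^{1/2}$, so that the Pochhammer term of Lemma~\ref{lm:orthog-pnu2} cancels and only the $(-1)^\ell$ term survives (with sign $(-1)^{\ell+1}$ after the sign bookkeeping).

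To compute it, we work on the Hankel side using \eqref{eq:Hankel5a}. Since $\mathcal{H}_\nu[r^{-1}\psi_{\nu,\ell}]=\hat\psi_{\nu,\ell}$ is supported in $[1,\infty)$, the spectral cutoff $\mathbf{1}_{k^2\ge1}$ acts trivially. Using \eqref{eq:hankel_r32Jnu} for $\mathcal{H}_\nu[r^{-1}p_\nu]=\frac{1}{2\nu}k^{-\nu+1/2}$ on $(1,\infty)$, we get
\[
\big\langle \psi_{\nu,\ell}, r^{-1}h_\nu^{-1}\mathbf{1}_{h_\nu\ge1}r^{-1}p_\nu\big\rangle
=\frac{1}{2\nu}\int_1^\infty k^{-2}\,k^{-\nu+\frac12}\,\hat\psi_{\nu,\ell}(k)\,\mathrm{d}k
=\frac{1}{2\nu}\int_1^\infty k^{-2\nu-1}(1-k^{-2})P_{\ell-1}^{(\nu,1)}(1-2k^{-2})\,\mathrm{d}k .
\]

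We then make the substitution $x=1-2k^{-2}$, which gives $\mathrm{d}k=2^{-3/2}(1-x)^{-3/2}\,\mathrm{d}x$, $k^{-2\nu-1}=((1-x)/2)^{\nu+1/2}$ and $1-k^{-2}=(1+x)/2$. The integral becomes a constant multiple of $\int_{-1}^1(1-x)^{\nu-1}(1+x)P_{\ell-1}^{(\nu,1)}(x)\,\mathrm{d}x$. This is exactly the $m=1$ case already evaluated in Lemma~\ref{lm:matrix-psi} via \cite[7.391.4]{GR}, with value $2^{\nu+1}\Gamma(\nu)\ell!/\Gamma(\nu+\ell+1)$. After collecting powers of~$2$, this should give
\[
\frac{1}{4\nu}\cdot\frac{\ell!\,\Gamma(\nu)}{\Gamma(\nu+\ell+1)}=\frac{1}{4\nu(\nu+\ell)}\frac{\ell!}{(\nu)_\ell}.
\]

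Finally, we multiply by the normalisations $\sqrt{2\nu}$ for $e_{\nu,0}$ and $\sqrt{2\ell+\nu}/(\ell\sqrt2)$ for $e_{\nu,\ell}$, whose product with $\frac{1}{\nu}$ yields $\frac{1}{\ell}(1+2\ell/\nu)^{1/2}$. This gives the subtracted term $\frac{1}{4\ell(\nu+\ell)}\frac{\ell!}{(\nu)_\ell}(1+\frac{2\ell}{\nu})^{1/2}$. Subtracting it from Lemma~\ref{lm:orthog-pnu2}, taken with the sign convention of Theorem~\ref{Thm.matrix} where $a_{0,\ell}=-\frac{1}{4\ell(\nu+\ell)}[(-1)^\ell-\ell!/(\nu)_\ell](\cdots)$, leaves $\frac{(-1)^{\ell+1}}{4\ell(\nu+\ell)}(1+\frac{2\ell}{\nu})^{1/2}$.

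The main obstacle is the sign consistency: Lemma~\ref{lm:orthog-pnu2} as stated and its proof (which ends with a minus sign) disagree. I would resolve this by following the proof's computation, $-\frac{1}{4\nu(\nu+\ell)}((-1)^\ell-\ell!/(\nu)_\ell)$, which matches Theorem~\ref{Thm.matrix}. A secondary check is the exact power-of-two constant in the Jacobi integral. For this I would use the $\ell=1$ case, where $P_0=1$ and the integral can be done by hand, to confirm the normalisation.
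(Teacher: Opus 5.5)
Your proof is correct. It differs from the paper's only in how the computation is organised.

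\textbf{Comparison with the paper.} You split $\Re B_\nu^\pm=\Re A_\nu^\pm-r^{-1}h_\nu^{-1}\mathbf{1}_{h_\nu\ge1}r^{-1}$ and combine Lemma~\ref{lm:orthog-pnu2} with the $m=1$ integral of Lemma~\ref{lm:matrix-psi}. The paper does not split. On the support $[1,\infty)$ of $\hat\psi_{\nu,\ell}$, the symbol of $B_\nu^\pm$ is $\frac{1}{k^2-1}-\frac{1}{k^2}=\frac{1}{k^2(k^2-1)}$. This cancels the factor $1-k^{-2}$ and leaves a single integral, evaluated by \cite[7.391.2]{GR}:
\[
\frac{1}{2\nu}\int_1^\infty k^{-2\nu-3}P_{\ell-1}^{(\nu,1)}(1-2k^{-2})\,\mathrm{d}k
=\frac{2^{-\nu-3}}{\nu}\int_{-1}^1(1-x)^{\nu}P_{\ell-1}^{(\nu,1)}(x)\,\mathrm{d}x
=\frac{(-1)^{\ell+1}}{4\nu(\nu+\ell)} .
\]
Since $(1-x)^\nu=2(1-x)^{\nu-1}-(1-x)^{\nu-1}(1+x)$, this is exactly your difference of two integrals, prefactors included.

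What each route buys:
\begin{itemize}
\item The paper's route needs one table integral. It is independent of Lemma~\ref{lm:orthog-pnu2}, so the sign misprint in that lemma's statement never enters.
\item Your route reuses integrals already evaluated. It also shows why the Pochhammer term of $a_{\ell,0}$ disappears from $b_{\ell,0}$.
\item Your route does require fixing that sign, and you fix it correctly. At $\ell=1$ one finds directly $\frac{1}{2\nu}\int_1^\infty k^{-2\nu-1}\,\mathrm{d}k=\frac{1}{4\nu^2}>0$. This agrees with the proof's expression $-\frac{1}{4\nu(\nu+\ell)}\big((-1)^\ell-\frac{\ell!}{(\nu)_\ell}\big)$ and with Theorem~\ref{Thm.matrix}.
\end{itemize}

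\textbf{One computational slip.} Since $\mathrm{d}x=4k^{-3}\,\mathrm{d}k$, the Jacobian is $\mathrm{d}k=2^{-1/2}(1-x)^{-3/2}\,\mathrm{d}x$, not $2^{-3/2}(1-x)^{-3/2}\,\mathrm{d}x$. With the correct factor, your integral equals
\[
\frac{2^{-\nu-3}}{\nu}\int_{-1}^1(1-x)^{\nu-1}(1+x)P_{\ell-1}^{(\nu,1)}(x)\,\mathrm{d}x ,
\]
which gives the value $\frac{1}{4\nu(\nu+\ell)}\frac{\ell!}{(\nu)_\ell}$ that you state. With your Jacobian you would get half of it. Your planned $\ell=1$ check, $\frac{1}{2\nu}\int_1^\infty k^{-2\nu-1}(1-k^{-2})\,\mathrm{d}k=\frac{1}{4\nu^2(\nu+1)}$, confirms the correct value and would catch the factor of two.
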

\begin{proof}
The same computations as in the Lemma \ref{lm:orthog-pnu2} give
\begin{align*}
&\lim_{\varepsilon\to0} \big\langle \psi_{\nu,\ell},r^{-1} \Re (h_\nu - (1\pm i\varepsilon))^{-1} h_\nu^{-1} \mathbf{1}_{h_\nu\ge1}  r^{-1} p_{\nu} \big\rangle \\
&= \frac{1}{2\nu} \int_1^\infty k^{-2\nu-3} P_{\ell-1}^{(\nu,1)}(1-2k^{-2})  \, \mathrm{d}k\\
&= \frac{2^{-\nu-3}}{\nu} \int_{-1}^1 (1-x)^{\nu} P_{\ell-1}^{(\nu,1)}(x) \, \der x =\frac{(-1)^{\ell+1}}{4\nu(\nu+\ell)} ,
\end{align*}
where the last equality follows from \cite[7.391.2]{GR}.  From the normalisations $e_{\nu,0}=\sqrt{2\nu} p_{\nu}$ and $e_{\nu,\ell}=\frac{\sqrt{2\ell+\nu}}{\ell\sqrt{2}}\psi_{\nu,\ell}$, the result follows.
\end{proof}
\begin{Lemma}\label{lm:Bnu-e_0}
Let $\nu>0$. Then
\begin{align*}
\big\langle e_{\nu,0} ,r^{-1} h_\nu^{-1}\mathbf{1}_{h_\nu\ge1}  r^{-1} e_{\nu,0} \big\rangle_{L^2} = \frac{1}{4\nu^2}.
\end{align*}
\end{Lemma}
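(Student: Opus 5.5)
We compute the quadratic form of $r^{-1}h_\nu^{-1}\mathbf{1}_{h_\nu\ge1}r^{-1}$ on $e_{\nu,0}$ on the Hankel side, exactly as in the proofs of Lemmas~\ref{lm:orthog-pnu2} and~\ref{lm:orthog-pnu3}. By \eqref{eq:Hankel5a}, applied with the bounded function $f(\lambda)=\lambda^{-1}\mathbf{1}_{\lambda\ge1}$ and to $r^{-1}e_{\nu,0}$ (which lies in $L^2$ since $\nu>0$), we get
\begin{equation*}
\big\langle e_{\nu,0}, r^{-1}h_\nu^{-1}\mathbf{1}_{h_\nu\ge1}r^{-1}e_{\nu,0}\big\rangle_{L^2}
=
\int_1^\infty k^{-2}\,\big|\mathcal{H}_\nu[r^{-1}e_{\nu,0}](k)\big|^2\,\der k .
\end{equation*}

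The key input is the explicit identity \eqref{eq:hankel_r32Jnu}. Since $r^{-1}e_{\nu,0}=(2\nu)^{1/2}r^{-3/2}J_\nu$, it gives, on $(1,\infty)$,
\begin{equation*}
\mathcal{H}_\nu[r^{-1}e_{\nu,0}](k)=\hat e_{\nu,0}(k)=(2\nu)^{-1/2}k^{-\nu+1/2}.
\end{equation*}
Substituting, the integral becomes
\begin{equation*}
\frac{1}{2\nu}\int_1^\infty k^{-2}k^{-2\nu+1}\,\der k=\frac{1}{2\nu}\int_1^\infty k^{-2\nu-1}\,\der k=\frac{1}{2\nu}\cdot\frac{1}{2\nu}=\frac{1}{4\nu^2},
\end{equation*}
which is the claimed value. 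Equivalently, in the notation of $\hat{\mathcal{Q}}_\nu$, this is $\int_1^\infty k^{-2}|\hat e_{\nu,0}|^2\,\der k$.

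The only point needing care is justifying \eqref{eq:Hankel5a} for $f(\lambda)=\lambda^{-1}\mathbf{1}_{\lambda\ge1}$. This is immediate because $f$ is bounded and measurable and $r^{-1}e_{\nu,0}\in L^2$. Its Hankel transform is given by \eqref{eq:hankel_r32Jnu}, which the paper takes from \cite[6.573.2]{GR}, and it is square integrable with $\hat e_{\nu,0}$ normalised as in the construction of $\hat{\mathcal{Q}}_\nu$. I expect no real obstacle beyond this bookkeeping.

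As a consistency check, the result combines with Proposition~\ref{prop:main_Bessel_Real-2} to give $b_{0,0}=a_{0,0}-\frac{1}{4\nu^2}=\pm i\frac{\pi}{4\nu}$, as stated in Proposition~\ref{prop:matrix2}.
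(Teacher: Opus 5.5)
Your computation is the paper's: identify $\mathcal U_\nu e_{\nu,0}=\hat e_{\nu,0}$ through \eqref{eq:hankel_r32Jnu}, then evaluate $\frac{1}{2\nu}\int_1^\infty k^{-2\nu-1}\,\der k=\frac{1}{4\nu^2}$. The value is correct. The gap is in the step you single out as the only delicate one. You justify applying \eqref{eq:Hankel5a} to $r^{-1}e_{\nu,0}$ by asserting $r^{-1}e_{\nu,0}\in L^2$ for all $\nu>0$, and this is false. Near the origin, $r^{-1}e_{\nu,0}=(2\nu)^{1/2}r^{-3/2}J_\nu(r)\sim c_\nu r^{\nu-3/2}$, which is square integrable on $(0,1)$ only if $\nu>1$. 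Correspondingly, $\hat e_{\nu,0}$ has the tail $(2\nu)^{-1/2}k^{-\nu+1/2}$ and lies in $L^2$ only for $\nu>1$. The normalisation $\|\hat e_{\nu,0}\|_{\hat{\mathcal Q}_\nu}=1$ refers to the form norm $\langle\hat\varphi,h_\nu\hat\varphi\rangle^{1/2}$, not to the $L^2$ norm. As written, your argument therefore proves the lemma only for $\nu>1$. It is stated for all $\nu>0$ and is used that way, e.g.\ in Proposition~\ref{prop:matrix2}.

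To close the gap, work through the unitary $\mathcal U_\nu:L^2\to\hat{\mathcal Q}_\nu$, which is the framework the paper's one-line proof tacitly uses. For $\varphi,\psi\in L^2$, define $q_1(\varphi,\psi):=\langle\varphi,r^{-1}h_\nu^{-1}\mathbf{1}_{h_\nu\ge1}r^{-1}\psi\rangle$ and $q_2(\varphi,\psi):=\int_1^\infty k^{-2}\,\overline{(\mathcal U_\nu\varphi)(k)}\,(\mathcal U_\nu\psi)(k)\,\der k$. The form $q_1$ is bounded by $\nu^{-2}$ thanks to \eqref{eq:csq_Hardy-Bessel}. The form $q_2$ is also bounded by $\nu^{-2}$: the Hardy inequality $k^{-2}\le\nu^{-2}h_\nu$ of Lemma~\ref{Lem:csq_Hardy}, read in the variable $k$, gives $\int_0^\infty k^{-2}|\mathcal U_\nu\varphi|^2\,\der k\le\nu^{-2}\|\mathcal U_\nu\varphi\|_{\hat{\mathcal Q}_\nu}^2=\nu^{-2}\|\varphi\|_{L^2}^2$. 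On the dense subspace $\{\varphi\in L^2: r^{-1}\varphi\in L^2\}$, the two forms coincide by \eqref{eq:Hankel5a} applied with the bounded function $\lambda\mapsto\lambda^{-1}\mathbf{1}_{\lambda\ge1}$. By continuity they coincide on all of $L^2$. Evaluate $q_2(e_{\nu,0},e_{\nu,0})$ using $\mathcal U_\nu e_{\nu,0}=\hat e_{\nu,0}$, the identification the paper records right after \eqref{eq:hankel_r32Jnu}. This gives your integral for every $\nu>0$, since $k^{-1}\hat e_{\nu,0}\in L^2$ for all $\nu>0$ even though $\hat e_{\nu,0}$ itself is not in $L^2$ when $\nu\le1$.
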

\begin{proof}
Recall that
\begin{align*}
\mathcal{U}_\nu[e_{\nu,0}]=\mathcal{H}_\nu[r^{-1}e_{\nu,0}] (k)=(2\nu)^{-\frac12} \big[ k^{\nu+\frac12}\mathbf{1}_{(0,1]}(k)+k^{-\nu+\frac12}\mathbf{1}_{(1,\infty)}(k)\big].
\end{align*}
This implies
\begin{align*}
\big\langle e_{\nu,0} ,r^{-1} h_\nu^{-1}\mathbf{1}_{h_\nu\ge1}  r^{-1} e_{\nu,0} \big\rangle = \frac{1}{2\nu}\int_1^\infty k^{-2\nu-1} \mathrm{d}k = \frac{1}{4\nu^2}.
\end{align*}
\end{proof}
We can then conclude the proof of Proposition \ref{prop:matrix2}:
\begin{proof}[Proof of Proposition \ref{prop:matrix2}]
It suffices to use the definition \eqref{operator.B} of $B_\nu^\pm$ together with Theorem~\ref{Thm.matrix} and Lemmata~\ref{lm:matrix-psi-modified}, \ref{lm:orthog-pnu3} and \ref{lm:Bnu-e_0}.
\end{proof}

\section{One-channel estimates}\label{sec:Bessel}
%
Based on the matrix representation of~$A_\nu^\pm$
given by Theorem~\ref{Thm.matrix},
we have already proved parts~(i) and~(iii) of Theorem~\ref{Thm.new}, 
as well as Theorem~\ref{Thm.lower} giving a quantitative lower bound 
to the boundary value of the weighted resolvent of~$h_\nu$.
Now we establish additional uniform bounds,
in particular parts~(ii) and~(iv) of Theorem~\ref{Thm.new}.
The main result of this section is the latter,
which we restate here as follows.
\begin{Theorem}\label{thm:main_Bessel}
Let $\nu>0$. One has
\begin{equation}\label{eq:main_Bessel}
  \|A_\nu^\pm\|_{L^2\to L^2}
  =\sup_{z\in\mathbb{C}\setminus[0,\infty)}\left\|r^{-1} (h_\nu - z)^{-1} r^{-1}\right\|_{L^2 \to L^2} 
  \le 
  \mathcal{C}(\nu),
\end{equation}
where $\mathcal{C}(\nu)$ is defined in~\eqref{eq:def-Cnu}. 
\end{Theorem}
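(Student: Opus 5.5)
The equality in \eqref{eq:main_Bessel} is the maximum principle and scaling reduction of Section~\ref{subsec:prelim}, so the work is the upper bound $\|A_\nu^\pm\|\le\mathcal{C}(\nu)$. I would not try to diagonalise the matrix of Theorem~\ref{Thm.matrix}. Instead I would use a multiplier (virial/Morawetz) argument directly on the one-dimensional equation. Fix $z=\lambda\pm i\varepsilon$, $f\in C_0^\infty((0,\infty))$, and let $u=(h_\nu-z)^{-1}f$, so that $-u''+(\nu^2-\frac14)r^{-2}u-zu=f$. The aim is $\|r^{-1}u\|\le \mathcal{C}(\nu)\|rf\|$; letting $\varepsilon\to0$ then gives the bound for $A_\nu^\pm$.

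\textbf{Step 1: identities.} I would pair the equation with three multipliers and take real or imaginary parts.
\begin{itemize}
\item[(a)] Pairing with $\bar u$ gives $\|u'\|^2+(\nu^2-\frac14)\|r^{-1}u\|^2-\lambda\|u\|^2=\Re\langle f,u\rangle$ and $\varepsilon\|u\|^2=\pm\Im\langle f,u\rangle$.
\item[(b)] Pairing with the dilation multiplier $r\bar u'+\frac12\bar u$ and taking real parts gives a virial identity of the form
\[
\|u'\|^2+(\nu^2-\tfrac14)\|r^{-1}u\|^2=\Re\langle f,2ru'+u\rangle+\text{($\varepsilon$-terms)},
\]
since the $\lambda$-terms cancel by scale invariance.
\item[(c)] A Morawetz-type multiplier $\bar u'$, or $\mathrm{sgn}$-type weights, yields a boundary term at $r=0$.
\end{itemize}
In each case I must check that the $\varepsilon$-terms carry a favourable sign, or that they can be controlled using (a) and vanish in the limit.

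\textbf{Step 2: case-specific estimates.}
\begin{itemize}
\item For $\nu>1/\sqrt3$ the potential coefficient $\nu^2-\frac14$ is positive, and the Hardy inequality $\|u'\|^2\ge\frac14\|r^{-1}u\|^2$ gives $\|u'\|^2+(\nu^2-\frac14)\|r^{-1}u\|^2\ge \nu^2\|r^{-1}u\|^2$. Combining the virial identity (b) with Cauchy--Schwarz, $|\langle f,2ru'+u\rangle|\le\|rf\|\,\|2u'+r^{-1}u\|$, and optimising the resulting quadratic inequality should produce the factor $(\nu^2-\frac14)^{-1/2}$. The threshold $1/\sqrt3$ presumably appears as the point where this optimisation stops being the best choice.
\item For $0<\nu\le\frac12$ the potential is attractive. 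Here I would bound $A_\nu^\pm$ by $r^{-1}h_\nu^{-1}r^{-1}$ in some positivity sense, using Remark~\ref{rk:ess-spectrum}, which gives norm $\nu^{-2}$. Alternatively I would use the ground-state substitution $u=r^{\nu+1/2}w$, which turns $h_\nu$ into a positive weighted operator and makes $\nu^{-2}$ appear as a sharp weighted Hardy constant.
\item For the intermediate range $\frac12<\nu\le1/\sqrt3$ I would interpolate between the two methods, for example by splitting the Hardy term with a parameter and optimising. The claim is that the optimum value is $2/\nu$. As a consistency check, $2/\nu$ matches $(\nu^2-\frac14)^{-1/2}$ at $\nu=1/\sqrt3$.
\end{itemize}

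\textbf{Main obstacle.} The hard part is to obtain the \emph{exact} constants, especially the crossover values at $\frac12$ and $1/\sqrt3$. This requires careful tuning of the multiplier weights and of how the $\varepsilon$-terms are absorbed. My first attempt would be a one-parameter family of multipliers $r\bar u'+\theta\bar u$ combined with a weighted Hardy inequality, optimising over $\theta$ separately in each regime.
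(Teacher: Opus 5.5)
Your proposal has a genuine gap: the positive-energy $\varepsilon$-terms in Step~1(b), which you defer to a later check, are where the whole difficulty lies, and your route cannot handle them.

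\textbf{Why the virial route fails.} For $\lambda>0$ the solution $u=(h_\nu-\lambda\mp i\varepsilon)^{-1}f$ behaves at infinity like $c\,e^{i\sqrt{z}\,r}$ with $\Im\sqrt z\approx\varepsilon/(2\sqrt\lambda)$. Hence $\|u\|^2$ and $\|u'\|^2$ are of order $1/\varepsilon$. The correct dilation identity (yours is off by a factor $2$) reads
\[
2\|u'\|^2+2(\nu^2-\tfrac14)\|r^{-1}u\|^2=\Re\langle f,2ru'+u\rangle-2\,\Im z\,\Im\int_0^\infty r\,u\,\overline{u'} .
\]
Its last term is also of order $1/\varepsilon$, is positive, and cancels the left-hand side to leading order. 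Identity (a) controls only $\varepsilon\|u\|^2\le\|rf\|\,\|r^{-1}u\|$; it controls neither $\varepsilon\int r|u|^2$ nor this term.

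If you drop the term, you are repeating the computation of Proposition~\ref{prop:real}. It gives $\|r^{-1}u\|\le\frac{1}{2\sqrt{\nu^2-1}}\|rf\|$ for $\nu>1$. This is false for $A_\nu^\pm$ once $\nu>\pi/\sqrt{\pi^2-4}$, because $\|A_\nu^\pm\|\ge\pi/(4\nu)$ by Proposition~\ref{prop:main_Bessel_Imaginary}; see the remark following Proposition~\ref{prop:real}. So your Step~2 for $\nu>1/\sqrt3$ cannot produce $(\nu^2-\frac14)^{-1/2}$.

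\textbf{The small-$\nu$ regimes.} For $\nu\le\frac12$, the ground-state substitution and Hardy positivity only cover $\Re z\le0$; that is Lemma~\ref{lem:Hardy-Bessel}. For $\Re z>0$ the term $-\lambda\|u\|^2$ has no sign. ``Bounding $A_\nu^\pm$ by $r^{-1}h_\nu^{-1}r^{-1}$ in some positivity sense'' has no content for a non-self-adjoint operator unless you prove a pointwise domination of the Green kernels, which you neither state nor prove. ``Interpolation'' for $\frac12<\nu\le1/\sqrt3$ is not an argument either.

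\textbf{The missing idea.} Conjugate by the outgoing phase, $u^-:=e^{-i\sqrt{z_1}\sgn(z_2)r}u$ with $z_1=\Re z$, $z_2=\Im z$. Then take the specific combination: the dilation identity, minus the energy identity, plus $\frac{|z_2|}{\sqrt{z_1}}$ times the real part of the $r\bar u$ identity, minus $2\sqrt{z_1}\sgn(z_2)$ times its imaginary part. This is Lemma~\ref{lm:multipliers-Bessel}, and it does three things:
\begin{itemize}
\item the $O(1/\varepsilon)$ contributions cancel;
\item the nonnegative term $\frac{|z_2|}{\sqrt{z_1}}\int r|\partial_ru^-|^2$ appears on the left;
\item every remaining $z_2$-dependent term is bounded, up to $\nu$-dependent constants, by $\sqrt{|z_2|/z_1}\,\bigl(\|rf\|^2+\|h_\nu^{1/2}u^-\|^2\bigr)$, using $|z_2|\|u\|^2\le\|r^{-1}u\|\,\|rf\|$.
\end{itemize}
Along the rays $|z_2|=\varepsilon z_1$ with $\varepsilon\to0$ (Proposition~\ref{prop:epsilon-Bessel}) this leaves
\[
\|h_\nu^{1/2}u^-\|^2=\|\partial_ru^-\|^2+(\nu^2-\tfrac14)\|r^{-1}u\|^2\le 2\|rf\|\,\|\partial_ru^-\| .
\]
Note that $rf$ is paired with $2\partial_ru^-$ alone, not with $2u'+r^{-1}u$.

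\textbf{How all of $\mathcal{C}(\nu)$ follows from this one inequality.}
\begin{itemize}
\item Lemma~\ref{Lem:csq_Hardy} gives $\|\partial_ru^-\|\le\max(1,(2\nu)^{-1})\|h_\nu^{1/2}u^-\|$ and $\|r^{-1}u\|\le\nu^{-1}\|h_\nu^{1/2}u^-\|$. Inserting these yields $\nu^{-2}$ for $\nu\le\frac12$ and $2/\nu$ for $\nu\ge\frac12$.
\item Alternatively, square the inequality and use $\|\partial_ru^-\|^2=\|h_\nu^{1/2}u^-\|^2-(\nu^2-\frac14)\|r^{-1}u\|^2$. Viewing the result as a quadratic in $\|h_\nu^{1/2}u^-\|^2$, the discriminant condition gives $(\nu^2-\frac14)^{-1/2}$ for $\nu>\frac12$.
\end{itemize}
The value $1/\sqrt3$ is simply where $2/\nu$ and $(\nu^2-\frac14)^{-1/2}$ cross. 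No separate small-$\nu$ method or interpolation is needed.
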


The theorem is complemented by uniform lower bounds
established in Section~\ref{Sec.lower}.

\subsection{Basic facts}\label{subsec:basic}
Recall the definition of~$h_\nu$ with $\nu > 0$
given in~\eqref{Bessel}.
It is clear from the Hardy inequality
\begin{equation}\label{Hardy-Bessel}
\forall \psi\in C_0^\infty((0,\infty)), \quad 
\Big\|\frac{\psi}{r}\Big\|_{\sii}^2 
\le 4 \, \|\partial_r\psi\|_{\sii}^2.
\end{equation}
that~$h_\nu$ is non-negative. 
Moreover, its form domain is given by
%
$\mathcal{Q}(h_\nu)=H^1_0((0,\infty))$.
%

If $\nu\ge1$, then the operator domain of~$h_\nu$ is given by
\begin{equation*}
\mathcal{D}(h_\nu)=\Big\{\psi\in L^2 \,:\,
-\partial^2_r\psi+\frac{\nu^2-\frac{1}{4}}{r^2}\psi \in L^2 \Big\},
\end{equation*}
where $-\partial^2_r\psi+(\nu^2-\frac14)r^{-2}\psi$ should be understood in the sense of distributions.
Actually, $h_\nu$ is essentially selfadjoint 
on $C_0^\infty((0,\infty))$ provided that $\nu\ge1$.

If $0<\nu<1$, then
\begin{equation*}
\mathcal{D}(h_\nu)=\Big\{\psi\in L^2 \,:\,-\partial^2_r\psi+\frac{\nu^2-\frac{1}{4}}{r^2}\psi \in L^2 \text{ and } \lim_{r\to0} \mathcal{W}(\psi,\psi_0)(r)=0 \Big\},
\end{equation*}
where $\mathcal{W}$ stands for the Wronskian and $\psi_0(r):=r^{\frac12+\nu}$.
In this case ($0<\nu<1$), 
the operator initially defined on $C_0^\infty((0,\infty))$ 
has several self-adjoint extensions. 

In any case, \ie\ for all $\nu>0$, 
an element of $\mathcal{D}(h_\nu)$ is a continuous function that behaves like $cr^{\frac12+\nu}$ near $0$, for some $c\in\Real$. 
Moreover, for all $\nu>0$, 
the spectrum of $h_\nu$ is $\sigma(h_\nu)=[0,\infty)$ and $h_\nu$ is homogeneous, in the sense that defining, for $\tau>0$, the unitary operator $u_\tau$ in~$L^2$, by
\begin{equation}\label{eq:def-utau}
  (u_\tau \psi)(r) := \tau^{1/2} \, \psi(\tau r) 
  \,,
\end{equation}
we have
\begin{equation}\label{eq:utau-hnu}
u_\tau h_\nu u_\tau^{-1}=\tau^{-2}h_\nu.
\end{equation}
For more details on the family of operators $h_\nu$, we refer to \cite{Derezinski-Richard_2017} and references therein.

\subsection{Preliminaries}\label{subsec:prelim}
The Hardy inequality~\eqref{Hardy-Bessel} 
has the following direct consequences.
\begin{Lemma}\label{Lem:csq_Hardy}
Let $\nu>0$. Then, in the sense of quadratic forms in~$\sii$
defined on $H^1((0,\infty))$,
\begin{equation}\label{eq:csq_Hardy-Bessel0}
r^{-2}\le\nu^{-2} \, h_\nu
\qquad\mbox{and}\qquad 
-\partial_r^2\le\max(1,(4\nu^2)^{-1}) \, h_\nu.
\end{equation}
\end{Lemma}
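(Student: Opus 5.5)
Both inequalities reduce to the Hardy inequality \eqref{Hardy-Bessel}, applied to test functions $\psi\in C_0^\infty((0,\infty))$ and then extended to the form domain by density. For such $\psi$ we write
\begin{equation*}
  \langle\psi,h_\nu\psi\rangle
  = \|\partial_r\psi\|^2 + \Big(\nu^2-\frac14\Big)\Big\|\frac{\psi}{r}\Big\|^2 .
\end{equation*}

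For the first inequality we use $\|\partial_r\psi\|^2\ge\frac14\|\psi/r\|^2$ from \eqref{Hardy-Bessel}. This gives
\begin{equation*}
  \langle\psi,h_\nu\psi\rangle \ge \Big(\frac14+\nu^2-\frac14\Big)\Big\|\frac{\psi}{r}\Big\|^2 = \nu^2\Big\|\frac{\psi}{r}\Big\|^2,
\end{equation*}
which is the claim $r^{-2}\le\nu^{-2}h_\nu$. The argument is valid whatever the sign of $\nu^2-\frac14$, since we only add a nonnegative multiple of the Hardy remainder.

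For the second inequality we split into two cases according to the sign of $\nu^2-\frac14$.
\begin{itemize}
\item If $\nu\ge\frac12$, the potential term is nonnegative. Hence $\langle\psi,h_\nu\psi\rangle\ge\|\partial_r\psi\|^2$, which is the claim with constant $1=\max(1,(4\nu^2)^{-1})$.
\item If $0<\nu<\frac12$, the coefficient $\nu^2-\frac14$ is negative. Using \eqref{Hardy-Bessel} in the form $\|\psi/r\|^2\le4\|\partial_r\psi\|^2$ we get
\begin{equation*}
  \langle\psi,h_\nu\psi\rangle \ge \|\partial_r\psi\|^2 - \Big(\frac14-\nu^2\Big)4\|\partial_r\psi\|^2 = 4\nu^2\|\partial_r\psi\|^2 .
\end{equation*}
This gives $-\partial_r^2\le(4\nu^2)^{-1}h_\nu$, matching $\max(1,(4\nu^2)^{-1})$ in this range.
\end{itemize}

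It remains to pass from $C_0^\infty((0,\infty))$ to the form domain, which is the only step needing any care. Since $h_\nu$ is the Friedrichs extension, $C_0^\infty((0,\infty))$ is a form core and the form domain is $H^1_0((0,\infty))$. Both sides of each inequality are closed forms continuous in the $H^1$ norm, using \eqref{Hardy-Bessel} to control $\|\psi/r\|$, so the inequalities extend by density. The statement's mention of $H^1((0,\infty))$ should be read as the form domain $H^1_0$ on which $h_\nu$ is defined.
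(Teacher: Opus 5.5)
Your proof is correct and follows essentially the paper's argument: both inequalities come from the Hardy inequality \eqref{Hardy-Bessel}. The only cosmetic difference is in the case $0<\nu<\frac12$, where you bound $\|\psi/r\|^2\le 4\|\partial_r\psi\|^2$ directly, whereas the paper reuses the first inequality $r^{-2}\le\nu^{-2}h_\nu$ to control the term $(\frac14-\nu^2)r^{-2}$; both give the same constant $(4\nu^2)^{-1}$, and your density step together with the remark that the forms live on $H^1_0((0,\infty))$ makes explicit what the paper leaves implicit.
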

\begin{proof}
Note that the Hardy inequality~\eqref{Hardy-Bessel} gives $-\partial_r^2-\frac1{4r^2}\ge0$. To prove the first inequality in \eqref{eq:csq_Hardy-Bessel0}, we then write
\begin{equation*}
\frac{1}{r^2}\le\frac{1}{\nu^2}\Big(\frac{\nu^2}{r^2}-\partial_{r}^2-\frac{1}{4r^2}\Big)=\frac{h_\nu}{\nu^2}.
\end{equation*}
To prove the second inequality in \eqref{eq:csq_Hardy-Bessel0}, we distinguish two cases. If $\nu^2-\frac14\ge0$, the inequality $-\partial_r^2\le h_\nu$ is obvious. If $\nu^2-\frac14<0$, it suffices to write, using $r^{-2}\le\nu^{-2}h_\nu$,
\begin{equation*}
-\partial_r^2=h_\nu+\frac{\frac14-\nu^2}{r^2}\le h_\nu+\frac{\frac14-\nu^2}{\nu^2}h_\nu=(4\nu^2)^{-1}h_\nu.
\end{equation*}
This proves the lemma.
\end{proof}

The Hardy inequality~\eqref{Hardy-Bessel} gives the following easy bound for the weighted Bessel resolvent away from the spectrum $[0,\infty)$.
\begin{Lemma}\label{lm:bounded-op-Bessel}
Let $\nu>0$ and $z\in\mathbb{C}\setminus[0,\infty)$. Then
\begin{equation*}
\left\| r^{-1} (h_\nu - z)^{-1} r^{-1}  \right\|_{\sii \to \sii} \le \nu^{-2} \Big( 1 + \frac{|z|}{\dist(z,[0,\infty))} \Big).
 \end{equation*}
\end{Lemma}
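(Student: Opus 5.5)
The plan is to factor the weighted resolvent through $h_\nu^{1/2}$ and control each factor separately. For $z\in\Com\setminus[0,\infty)$ we write
\begin{equation*}
  r^{-1}(h_\nu-z)^{-1}r^{-1}
  =
  \big(r^{-1}h_\nu^{-1/2}\big)
  \big(h_\nu^{1/2}(h_\nu-z)^{-1}h_\nu^{1/2}\big)
  \big(h_\nu^{-1/2}r^{-1}\big),
\end{equation*}
understood first on the dense set of $\varphi$ with $r^{-1}\varphi\in L^2$ and compactly supported in $(0,\infty)$, and then extended by density. Concretely, we will work with the sesquilinear form $\langle r^{-1}\varphi,(h_\nu-z)^{-1}r^{-1}\psi\rangle$ and show that it is bounded by the claimed constant times $\|\varphi\|\,\|\psi\|$.

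\textbf{Outer factors.} The first inequality of Lemma~\ref{Lem:csq_Hardy}, $r^{-2}\le\nu^{-2}h_\nu$, is equivalent to
\begin{equation*}
  \|r^{-1}h_\nu^{-1/2}\|\le\nu^{-1}.
\end{equation*}
By duality, $h_\nu^{-1/2}r^{-1}$ extends to a bounded operator with norm at most $\nu^{-1}$. More precisely, for $\psi$ in the dense class, $\|h_\nu^{-1/2}r^{-1}\psi\|=\sup|\langle h_\nu^{-1/2}\chi,r^{-1}\psi\rangle|$. This pairing is controlled by $\|r^{-1}h_\nu^{-1/2}\chi\|\,\|\psi\|$, so $r^{-1}\psi$ lies in the form-dual space $\mathcal Q(h_\nu)^*$ with the corresponding bound.

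\textbf{Middle factor.} By the spectral theorem,
\begin{equation*}
  \|h_\nu^{1/2}(h_\nu-z)^{-1}h_\nu^{1/2}\|
  =\sup_{\lambda\ge0}\frac{\lambda}{|\lambda-z|}.
\end{equation*}
This supremum is at most $1+\sup_{\lambda\ge0}|z|/|\lambda-z|$, since $\lambda/(\lambda-z)=1+z/(\lambda-z)$. The second term equals $|z|/\dist(z,[0,\infty))$, because $\inf_{\lambda\ge0}|\lambda-z|=\dist(z,[0,\infty))$. Multiplying the three norms gives the bound $\nu^{-2}\big(1+|z|/\dist(z,[0,\infty))\big)$.

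\textbf{Main obstacle.} The only delicate point is justifying the factorisation, since $r^{-1}$ is unbounded. To handle this, we will regard $(h_\nu-z)^{-1}$ as a bounded map $\mathcal Q(h_\nu)^*\to\mathcal Q(h_\nu)$, with $\mathcal Q(h_\nu)=H^1_0$, and use that $h_\nu^{\pm1/2}$ implement the unitary identifications between $L^2$ and these spaces. Alternatively, we can approximate $r^{-1}$ by bounded cut-offs $\min(r^{-1},n)$. The Hardy-type bound holds uniformly in $n$ for the cut-off weights, so letting $n\to\infty$ with dominated or strong convergence on the dense class completes the proof.
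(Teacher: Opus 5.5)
Your proposal is correct and is essentially the paper's argument. The paper likewise uses $r^{-2}\le\nu^{-2}h_\nu$ to get $\|r^{-1}h_\nu^{-1/2}\|=\|h_\nu^{-1/2}r^{-1}\|\le\nu^{-1}$, and bounds the middle factor $h_\nu(h_\nu-z)^{-1}$ (which equals your $h_\nu^{1/2}(h_\nu-z)^{-1}h_\nu^{1/2}$ by functional calculus) by $1+|z|\,\|(h_\nu-z)^{-1}\|\le 1+|z|/\dist(z,[0,\infty))$. Your justification of the factorisation with the unbounded weight, via duality or bounded cut-offs, fills in a step the paper leaves implicit.
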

\begin{proof}
Estimate~\eqref{eq:csq_Hardy-Bessel0} implies that
\begin{equation}\label{eq:csq_Hardy-Bessel}
\big\|r^{-1}h_\nu^{-\frac12}\big\|_{\sii \to \sii}=\big\|h_\nu^{-\frac12}r^{-1}\big\|_{\sii \to \sii}\le\nu^{-1}.
\end{equation}
Together with the estimate
\begin{equation*}
\left\| h_\nu (h_\nu - z)^{-1} \right\|_{\sii \to \sii} \le 1 + |z|\left\|(h_\nu - z)^{-1}\right\|_{\sii \to \sii} \le 1 + \frac{|z|}{\dist(z,[0,\infty))},
\end{equation*}
this proves the lemma.
\end{proof}

For $\Re z\le0$, the same Hardy argument gives the following
sharper uniform estimate.
\begin{Lemma}\label{lem:Hardy-Bessel}
Let $\nu>0$ and $z\in\mathbb{C}$ be such that $\Re z\le0$, $z\neq0$. Then
\begin{equation}
\left\| r^{-1} (h_\nu - z)^{-1} r^{-1}  \right\|_{\sii \to \sii} \le \nu^{-2}.
 \end{equation}
\end{Lemma}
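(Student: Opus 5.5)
The plan is to bound the operator by duality, reducing it to a coercivity estimate for the sesquilinear form of $h_\nu - z$, and then to use the Hardy-type inequality $r^{-2}\le\nu^{-2}h_\nu$ from Lemma~\ref{Lem:csq_Hardy}. Fix $z$ with $\Re z\le0$, $z\neq0$, and set $z=-a+ib$ with $a\ge0$. For $f\in L^2$ with compact support in $(0,\infty)$ (a dense class), put $u:=(h_\nu-z)^{-1}r^{-1}f$. Then $u\in\mathcal{D}(h_\nu)\subset\mathcal{Q}(h_\nu)$, and it suffices to show $\|r^{-1}u\|_{L^2}\le\nu^{-2}\|f\|_{L^2}$.

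First I test the equation $(h_\nu-z)u=r^{-1}f$ against $u$, which gives
\[
  \langle u,h_\nu u\rangle + a\|u\|^2 - ib\|u\|^2 = \langle u, r^{-1}f\rangle = \langle r^{-1}u, f\rangle .
\]
Next I take the modulus. The quadratic form $q[u]:=\langle u,h_\nu u\rangle+a\|u\|^2$ is real and nonnegative, so
\[
  q[u]\le \bigl|q[u]-ib\|u\|^2\bigr| \le \|r^{-1}u\|\,\|f\| .
\]
Then I apply Lemma~\ref{Lem:csq_Hardy}. Since $a\ge0$, it yields $\nu^2\|r^{-1}u\|^2\le\langle u,h_\nu u\rangle\le q[u]$. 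Combining the two inequalities gives $\nu^2\|r^{-1}u\|^2\le\|r^{-1}u\|\,\|f\|$, hence $\|r^{-1}u\|\le\nu^{-2}\|f\|$, which is the claimed bound. Boundedness on a dense set then extends to the whole space by closure.

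The only delicate point is justifying that $r^{-1}f$ lies in $L^2$ and that $u$ is in the form domain, so that the identity $\langle u,h_\nu u\rangle=\|h_\nu^{1/2}u\|^2$ and the Hardy bound apply. Compact support of $f$ away from $0$ handles the first, and $u\in\mathcal{D}(h_\nu)$ handles the second. Here $\|r^{-1}u\|$ is finite because $\mathcal{Q}(h_\nu)=H^1_0$ and the Hardy inequality holds there. The key observation is that $\Re z\le0$ makes $-\Re z$ add a nonnegative term, so no loss arises. I expect no real obstacle beyond this domain bookkeeping.
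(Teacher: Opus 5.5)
Your proof is correct and follows the paper's argument: you test $(h_\nu-z)u=r^{-1}f$ against $u$, use $\Re z\le 0$ together with $r^{-2}\le\nu^{-2}h_\nu$ from Lemma~\ref{Lem:csq_Hardy}, and conclude by Cauchy--Schwarz and density. The only cosmetic difference is that the paper takes the real part of $\langle u,(h_\nu-z)u\rangle$ directly, whereas you bound the nonnegative real part by the modulus of the whole expression, which gives the same inequality.
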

\begin{proof}
Using \eqref{eq:csq_Hardy-Bessel0} and $\Re z\le0$, we have, for all $\psi\in\mathcal{D}(h_\nu)$,
\begin{equation*} 
\begin{aligned} 
  \nu^2\Big\| \frac{\psi}{r}\Big \|_{\sii}^2
  \leq
  \langle\psi,h_\nu\psi\rangle_{\sii}  - \Re z \, \|\psi\|_{\sii}^2
  = \Re\langle\psi,(h_\nu-z)\psi\rangle_{\sii} 
  \leq \Big\|\frac{\psi}{r}\Big\|_{\sii} \, 
  \big\|r (h_\nu-z)\psi\big\|_{\sii}
  \,,
\end{aligned}
\end{equation*}
which yields
\begin{equation*}
\Big\|\frac{\psi}{r}\Big\|_{\sii}\le\nu^{-2}\big\|r (h_\nu-z)\psi\big\|_{\sii}.
\end{equation*}
Let $\varphi\in C_0^\infty((0,\infty))$. Applying the previous inequality to $\psi=(h_\nu-z)^{-1}r^{-1}\varphi$ (which belongs to $\mathcal{D}(h_\nu)$), we obtain 
\begin{equation*}
\left\| r^{-1} (h_\nu - z)^{-1} r^{-1} \varphi \right\|_{\sii}
\le \nu^{-2} \|\varphi\|_{L^2}.
\end{equation*}
Since $C_0^\infty((0,\infty))$ is dense in~$\sii $, this concludes the proof.
\end{proof}

The next proposition is the standard maximum principle reduction
from the resolvent set to the boundary rays of a sector around the
positive half--line.  To keep the one channel argument
self-contained, we spell out the proof after recording the two
scalar input lemmata.

\begin{Proposition}\label{prop:epsilon-Bessel}
Let $\nu>0$. The map
\begin{equation*}
(0,\infty)\ni\varepsilon\mapsto\sup_{|\Im z|=\varepsilon \Re z}\left\| r^{-1} (h_\nu - z)^{-1} r^{-1} \right\|_{\sii \to \sii}
\end{equation*}
is non-increasing and we have
\begin{align*}
&\sup_{z\in\mathbb{C}\setminus[0,\infty)}
\left\|
  r^{-1} (h_\nu - z)^{-1} r^{-1} \right\|_{\sii \to \sii} =\lim_{\varepsilon\to0} \sup_{|\Im z|=\varepsilon \Re z}\left\|
  r^{-1} (h_\nu - z)^{-1} r^{-1}
  \right\|_{\sii \to \sii}.
\end{align*}
\end{Proposition}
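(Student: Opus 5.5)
Fix $\varphi,\psi\in C_0^\infty((0,\infty))$ and set $F(z):=\langle \varphi, r^{-1}(h_\nu-z)^{-1}r^{-1}\psi\rangle_{L^2}$. Since $\|r^{-1}(h_\nu-z)^{-1}r^{-1}\|=\sup|F(z)|$ over unit vectors $\varphi,\psi$ in this dense class, the proposition follows once we control $F$, uniformly in $\varphi,\psi$, on each region $\{|\Im z|\ge \varepsilon\Re z\}\setminus\{0\}$.

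\emph{Scaling and reduction to sector complements.} The function $F$ is analytic on $\Com\setminus[0,\infty)$. By the homogeneity \eqref{eq:utau-hnu}, and because the weight $r^{-1}$ transforms as $u_\tau r^{-1}u_\tau^{-1}=\tau r^{-1}$, we get
\[
r^{-1}(h_\nu-\tau^2 z)^{-1}r^{-1}=u_\tau^{-1}\,r^{-1}(h_\nu-z)^{-1}r^{-1}\,u_\tau .
\]
Hence the operator norm $N(z)$ depends only on $\arg z$. Write $\Omega_\varepsilon:=\{z\neq0:\ |\Im z|>\varepsilon\Re z\}$ for the complement of the closed sector around the positive half-line. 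Its boundary consists of the two rays $\{|\Im z|=\varepsilon\Re z,\ \Re z>0\}$ together with $0$.

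\emph{Maximum principle on $\Omega_\varepsilon$.} We apply the Phragm\'en--Lindel\"of maximum principle to $F$ on $\Omega_\varepsilon$. The two inputs are the two scalar lemmata mentioned before the proposition.
\begin{itemize}
\item[(a)] $F$ is bounded on $\Omega_\varepsilon$. Lemma~\ref{lm:bounded-op-Bessel} gives $|F(z)|\le \nu^{-2}(1+|z|/\dist(z,[0,\infty)))\|\varphi\|\|\psi\|$. On $\Omega_\varepsilon$ the ratio $|z|/\dist(z,[0,\infty))$ is bounded by a constant depending only on $\varepsilon$, so $F$ is bounded there.
\item[(b)] $F$ extends continuously to the boundary rays and behaves well at $0$ and $\infty$. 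On the rays with $\Re z>0$ continuity is clear, since those points lie in the resolvent set. At $z\to0$ and $z\to\infty$ we use that $F$ is bounded; Phragm\'en--Lindel\"of for a bounded analytic function on a sector of opening less than $2\pi$ then needs nothing beyond the boundary values on the rays, off a two-point set.
\end{itemize}
From (a) and (b) we obtain $\sup_{\Omega_\varepsilon}|F|\le \sup_{|\Im z|=\varepsilon\Re z}|F|$. Taking the supremum over unit $\varphi,\psi$ gives
\[
\sup_{\Omega_\varepsilon}N\le \sup_{|\Im z|=\varepsilon\Re z}N=:M(\varepsilon).
\]
Now let $\varepsilon'<\varepsilon$. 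The rays $|\Im z|=\varepsilon\Re z$ lie inside $\Omega_{\varepsilon'}$, so $M(\varepsilon)\le\sup_{\Omega_{\varepsilon'}}N\le M(\varepsilon')$. This proves that $\varepsilon\mapsto M(\varepsilon)$ is non-increasing.

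\emph{Identification of the limit.} Since $\bigcup_{\varepsilon>0}\Omega_\varepsilon=\Com\setminus[0,\infty)$, we have
\[
\sup_{\Com\setminus[0,\infty)}N=\lim_{\varepsilon\to0}\sup_{\Omega_\varepsilon}N\le\lim_{\varepsilon\to0}M(\varepsilon).
\]
The reverse inequality holds because every point of the rays lies in $\Com\setminus[0,\infty)$. The two sides are therefore equal, which is the claimed identity.

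\emph{Main obstacle.} The delicate step is justifying Phragm\'en--Lindel\"of at the corner $z=0$ and at $\infty$. Both are handled by the boundedness in (a), which is uniform on $\Omega_\varepsilon$ thanks to scale invariance. For $\Re z\le 0$ one can alternatively invoke Lemma~\ref{lem:Hardy-Bessel} directly.
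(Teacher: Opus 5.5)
Your argument is correct, but it takes a different route from the paper's.

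\textbf{The paper's route.} The paper applies the classical maximum modulus principle on truncated sectors. For that it needs two extra facts about the scalar function $F$:
\begin{itemize}
\item[(1)] weak continuity at the vertex $z=0$ (Lemma~\ref{Lem:zero-Bessel});
\item[(2)] decay as $|z|\to\infty$ (Lemma~\ref{Lem:decay-Bessel}).
\end{itemize}
Both come from pointwise bounds on the Bessel resolvent kernel quoted from Derezi\'nski--Richard. The paper then proves monotonicity for the supremum over the open region $S_\varepsilon$. It transfers this to the rays through the equality of the two suprema.

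\textbf{Your route.} You observe that the Hardy-based bound of Lemma~\ref{lm:bounded-op-Bessel} depends on $z$ only through $|z|/\dist(z,[0,\infty))$. On $\Omega_\varepsilon$ this ratio is at most $\sqrt{1+\varepsilon^{-2}}$, so $F$ is uniformly bounded there. Lindel\"of's extended maximum principle then finishes the proof. It applies to a bounded analytic function, with boundary control everywhere except at the two points $0$ and $\infty$, on a domain whose complement has interior. You get monotonicity directly, since the $\varepsilon$-rays lie inside $\Omega_{\varepsilon'}$ for $\varepsilon'<\varepsilon$.

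\textbf{What each buys.} Your route needs no information on the resolvent kernel, so Lemmas~\ref{Lem:decay-Bessel} and~\ref{Lem:zero-Bessel} become superfluous. Lemma~\ref{lm:bounded-op-Bessel} is needed anyway, even in the paper, for the density step. Your proof would also work verbatim for any operator with a scale-invariant Hardy bound. For example, in Proposition~\ref{prop:epsilon} it would replace Lemmas~\ref{Lem.decay} and~\ref{Lem:zero} by Lemma~\ref{lm:bounded-op}. The cost is a slightly stronger complex-analytic tool, namely Phragm\'en--Lindel\"of with exceptional boundary points, instead of the elementary maximum modulus principle. The paper's kernel lemmata are exactly what let it avoid that tool.

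\textbf{A small slip.} In fact $u_\tau r^{-1}u_\tau^{-1}=\tau^{-1}r^{-1}$, not $\tau r^{-1}$. The correct identity is therefore $r^{-1}(h_\nu-\tau^2z)^{-1}r^{-1}=u_\tau\,r^{-1}(h_\nu-z)^{-1}r^{-1}\,u_\tau^{-1}$. This does not affect your proof, because the scaling is never used in it.
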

For $\varepsilon>0$, we write
\begin{equation*}
  S_\varepsilon
  :=
  \mathbb{C}\setminus
  \{z\in\mathbb{C}\,:\,|\Im z|\le\varepsilon\Re z\}.
\end{equation*}
The following two lemmata employ an
explicit expression of the integral kernel of $(h_\nu-z)^{-1}$.

\begin{Lemma}\label{Lem:decay-Bessel}  
Let $\nu>0$ and let $\phi,\psi\in C_0^\infty((0,\infty))$. Then 
\begin{equation*}
  \lim_{\stackrel[z \not\in [0,\infty)]{}{|z| \to \infty}}
  \left\langle
  \phi, (h_\nu - z)^{-1} \psi 
  \right\rangle_{L^2} = 0.
\end{equation*}
\end{Lemma}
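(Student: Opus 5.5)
We plan to use the explicit Green kernel of $(h_\nu-z)^{-1}$ for $z\notin[0,\infty)$. Write $z=\kappa^2$ with $\Im\kappa>0$. Then
\begin{equation*}
  (h_\nu-z)^{-1}(r,R)=\frac{i\pi}{2}(rR)^{1/2}J_\nu(\kappa r_<)H^{(1)}_\nu(\kappa r_>),
\end{equation*}
where $r_<=\min(r,R)$ and $r_>=\max(r,R)$. This is the analytic continuation of~\eqref{eq:Green_integral_kernel}, rescaled by the homogeneity~\eqref{eq:utau-hnu}. Fix $\phi,\psi\in C_0^\infty((0,\infty))$ supported in a compact interval $[a,b]\subset(0,\infty)$. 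Then
\begin{equation*}
  \langle\phi,(h_\nu-z)^{-1}\psi\rangle=\iint\overline{\phi(r)}\,G_z(r,R)\,\psi(R)\,\mathrm{d}r\,\mathrm{d}R ,
\end{equation*}
and the task is to show that this double integral tends to $0$ as $|\kappa|\to\infty$ with $\Im\kappa>0$.

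The key step is a uniform large-argument bound. There is a constant $C$, depending on $\nu,a,b$, such that for $|\kappa|\ge 1$, $\Im\kappa\ge0$ and $a\le r_<\le r_>\le b$,
\begin{equation*}
  \big|J_\nu(\kappa r_<)H^{(1)}_\nu(\kappa r_>)\big|\le \frac{C}{|\kappa|}\,e^{-\Im\kappa\,(r_>-r_<)}\le\frac{C}{|\kappa|}.
\end{equation*}
This follows from the Hankel asymptotics in the closed upper half-plane,
\begin{equation*}
  H^{(1)}_\nu(w)=\sqrt{2/(\pi w)}\,e^{i(w-\nu\pi/2-\pi/4)}\big(1+O(|w|^{-1})\big),\qquad |J_\nu(w)|\le C|w|^{-1/2}e^{|\Im w|},
\end{equation*}
both uniform for $0\le\arg w\le\pi$ and $|w|\ge a$. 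Multiplying these, the exponentials combine to $e^{\Im\kappa\, r_<-\Im\kappa\, r_>}\le1$.

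With this bound the kernel is $O(|\kappa|^{-1})$ uniformly on $[a,b]^2$, so
\begin{equation*}
  |\langle\phi,(h_\nu-z)^{-1}\psi\rangle|\le C'|z|^{-1/2}\|\phi\|_{L^1}\|\psi\|_{L^1}\to0 .
\end{equation*}
The main obstacle is justifying the asymptotic estimates uniformly in $\arg\kappa\in(0,\pi)$, including near the real axis $\arg\kappa\to0,\pi$. There $J_\nu$ does not decay exponentially, but it is still $O(|w|^{-1/2})$ from the standard asymptotics valid for $|\arg w|<\pi$. For $\arg w$ near $\pi$ we use the reflection $J_\nu(-w)=e^{i\nu\pi}J_\nu(w)$, or the bound $|J_\nu(w)|\le C|w|^{-1/2}e^{|\Im w|}$, valid uniformly for $|w|\ge a$ in all directions.

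As an alternative that avoids special functions, one could argue abstractly. By the spectral theorem,
\begin{equation*}
  \langle\phi,(h_\nu-z)^{-1}\psi\rangle=\int(\lambda-z)^{-1}\,\mathrm{d}\mu_{\phi,\psi}(\lambda),
\end{equation*}
and one could combine this with the limiting absorption principle. However, near the positive axis this requires Hölder regularity of the density $\mathcal{H}_\nu\phi\,\overline{\mathcal{H}_\nu\psi}$ together with decay at infinity. The kernel route is therefore more direct, and we keep it as the main plan.
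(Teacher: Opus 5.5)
Your argument is correct and is essentially the paper's proof. The paper likewise integrates a resolvent-kernel bound of order $|z|^{-1/2}$ against the test functions, namely $|k_{\nu,z}(r,R)|\le C_\nu|z|^{-1/2}\min(1,|z|^{1/4}r^{1/2})\min(1,|z|^{1/4}R^{1/2})$, which it cites from Derezi\'nski--Richard. You instead derive the needed bound on $[a,b]^2$ yourself from the Hankel and Bessel asymptotics, using the reflection $J_\nu(e^{i\pi}w)=e^{i\nu\pi}J_\nu(w)$ near $\arg w=\pi$, which makes the step self-contained.
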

\begin{proof}
For $z\in\mathbb{C}\setminus[0,\infty)$, let $(r,R)\mapsto k_{\nu,z}(r,R)$ denote the integral kernel of 
$(h_\nu- z)^{-1}$. Then $k_{\nu,z}$ satisfies, for all $r,R>0$,
\begin{equation}\label{eq:bound_kernel}
|k_{\nu,z}(r,R)|\le \frac{C_\nu}{|z|^{1/2}}\min(1,|z|^{\frac14}r^{\frac12})\min(1,|z|^{\frac14}R^{\frac12}),
\end{equation}
for some constant $C_\nu>0$, see, \eg, \cite[Prop. 4.3]{Derezinski-Richard_2017}. One deduces from this that, for any $\phi,\psi\in\mathrm{C}_0^\infty((0,\infty))$ and $|z|$ large enough,
\begin{equation*}
\left|\left\langle\phi,(h_\nu-z)^{-1}\psi\right\rangle\right|\le\frac{C_\nu}{|z|^{1/2}}\|\phi\|_{L^\infty}\|\psi\|_{L^\infty},
\end{equation*}
which proves the lemma.
\end{proof}
\begin{Lemma}\label{Lem:zero-Bessel}  
Let $\varepsilon>0$, $\nu>0$, and let
$\phi,\psi\in C_0^\infty((0,\infty))$. Then 
\begin{equation*}
  \lim_{\stackrel[z \in S_\varepsilon]{}{z \to 0}}
  \left\langle
  \phi, (h_\nu - z)^{-1} \psi 
  \right\rangle_{L^2} =   \left\langle
  \phi, h_\nu^{-1} \psi 
  \right\rangle_{L^2}.
\end{equation*}
\end{Lemma}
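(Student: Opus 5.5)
Fix $\phi,\psi\in C_0^\infty((0,\infty))$ with supports in $[a,b]\subset(0,\infty)$. The plan is to prove $\langle\phi,(h_\nu-z)^{-1}\psi\rangle\to\langle\phi,h_\nu^{-1}\psi\rangle$ by pointwise convergence of integral kernels on the compact set $[a,b]^2$, together with a uniform bound there, and then dominated convergence.

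The explicit kernel is the key input. For $z\in\Com\setminus[0,\infty)$, write $z=\kappa^2$ with $\Im\kappa>0$. Then
\begin{equation*}
k_{\nu,z}(r,R)=\frac{i\pi}{2}(rR)^{1/2}J_\nu(\kappa r_<)H^{(1)}_\nu(\kappa r_>),
\end{equation*}
with $r_<=\min(r,R)$ and $r_>=\max(r,R)$. This is consistent with \eqref{eq:Green_integral_kernel} at $\kappa=1$.

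Next I expand for small arguments, uniformly for $r,R\in[a,b]$ and $|\kappa|\to0$:
\begin{equation*}
J_\nu(w)=\frac{(w/2)^\nu}{\Gamma(\nu+1)}\bigl(1+O(w^2)\bigr),\qquad H^{(1)}_\nu(w)=-\frac{i}{\pi}\Gamma(\nu)(w/2)^{-\nu}\bigl(1+o(1)\bigr).
\end{equation*}
The error in the second expansion is $O(|w|^{2\nu})$ or $O(|w|^{\min(2,2\nu)}\log|w|)$, depending on $\nu$. The product then tends to
\begin{equation*}
\frac{i\pi}{2}\cdot\frac{-i\,\Gamma(\nu)}{\pi\,\Gamma(\nu+1)}(rR)^{1/2}r_<^\nu r_>^{-\nu}=\frac{1}{2\nu}(rR)^{1/2}(r_</r_>)^\nu.
\end{equation*}
Since $(rR)^{1/2}(r_</r_>)^\nu=r_<^{\nu+1/2}r_>^{-\nu+1/2}$, this is exactly the kernel of $h_\nu^{-1}$ recorded in Remark~\ref{rk:ess-spectrum}.

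Because $\kappa r$ stays in a compact region of the closed upper half-plane shrinking to $0$, the convergence is uniform on $[a,b]^2$. In particular $|k_{\nu,z}|$ is bounded there for small $|z|$. Dominated convergence then yields the limit.

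The main obstacle is justifying that the expansions are uniform in $\arg\kappa$. Restricting to $z\in S_\varepsilon$ keeps $\arg\kappa$ in a compact subinterval of $(0,\pi)$. This avoids branch issues at the cut and gives uniform constants. Alternatively, one could argue abstractly: $(h_\nu-z)^{-1}r^{-1}\psi\to h_\nu^{-1}r^{-1}\psi$ weakly, using $h_\nu(h_\nu-z)^{-1}\to I$ strongly on $S_\varepsilon$ (the bound $|\lambda/(\lambda-z)|\le C_\varepsilon$ holds there) together with $\|r^{-1}h_\nu^{-1/2}\|\le\nu^{-1}$ from \eqref{eq:csq_Hardy-Bessel}. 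Writing
\begin{equation*}
\langle\phi,(h_\nu-z)^{-1}\psi\rangle=\langle h_\nu^{-1/2}\phi,\,h_\nu(h_\nu-z)^{-1}h_\nu^{-1/2}\psi\rangle,
\end{equation*}
with $h_\nu^{-1/2}\phi=h_\nu^{-1/2}r^{-1}(r\phi)\in L^2$, the strong convergence $h_\nu(h_\nu-z)^{-1}\to I$ as $z\to0$ in $S_\varepsilon$ follows from spectral calculus and dominated convergence. This is in fact the cleaner route, and I would present it as the main proof.
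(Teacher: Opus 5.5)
Your proposal is correct, and the route you designate as the main proof differs from the paper's. The paper's proof is one line. It uses the global kernel estimate \eqref{eq:bound_kernel}, quoted from Derezi\'nski--Richard, as the dominating function and applies dominated convergence. The pointwise convergence of $k_{\nu,z}$ to the kernel of $h_\nu^{-1}$ is left implicit.

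Your first route is the same argument, with the implicit step carried out through the small-argument expansions. The sector is not actually needed there. Indeed, $(w/2)^{-\nu}J_\nu(w)$ and $(w/2)^{\nu}H^{(1)}_\nu(w)$ converge uniformly as $w\to0$ in the closed upper half-plane. So for compactly supported $\phi,\psi$ the limit holds as $z\to0$ through all of $\Com\setminus[0,\infty)$, and the obstacle you flag is not really one.

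Your spectral route is instead the exact analogue of the paper's proof of Lemma~\ref{Lem:zero} for the Laplacian. The multiplier bound $|\lambda/(\lambda-z)|\le\sqrt{1+\varepsilon^2}/\varepsilon$ on $S_\varepsilon$ plays the role of \eqref{eq:zero2}, and \eqref{eq:csq_Hardy-Bessel} plays the role of the Hardy inequality. This route avoids special-function asymptotics and the external kernel estimate, and it proves more. Since $T:=\overline{h_\nu^{-1/2}r^{-1}}$ is bounded and $r^{-1}(h_\nu-z)^{-1}r^{-1}=T^*h_\nu(h_\nu-z)^{-1}T$, you get strong convergence of $r^{-1}(h_\nu-z)^{-1}r^{-1}$ to $r^{-1}h_\nu^{-1}r^{-1}$ on all of $L^2$.

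The price is twofold.
\begin{itemize}
\item The sector is now used essentially: for $z=\delta+i\delta^2$ the multiplier equals $i/\delta$ at $\lambda=\delta$.
\item You must state that the spectral measure of $h_\nu$ has no atom at $0$. This holds because $\mathcal{H}_\nu$ turns $h_\nu$ into multiplication by $k^2$.
\end{itemize}

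Finally, say explicitly that your limit is $\langle h_\nu^{-1/2}\phi,h_\nu^{-1/2}\psi\rangle$, and that this is how the right-hand side has to be read. For $\nu\le1$, $h_\nu^{-1}\psi$ behaves like $r^{1/2-\nu}$ at infinity and is in general not in $L^2$. So your phrase ``$(h_\nu-z)^{-1}r^{-1}\psi\to h_\nu^{-1}r^{-1}\psi$ weakly'' is only formal. The form expression coincides with the pairing against the kernel of $h_\nu^{-1}$ recorded in Remark~\ref{rk:ess-spectrum}, which is what the paper implicitly uses.
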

\begin{proof}
Estimate~\eqref{eq:bound_kernel} supplies the required domination,
so Lebesgue's dominated convergence theorem applies.
\end{proof}
\begin{proof}[Proof of Proposition \ref{prop:epsilon-Bessel}]
Since $S_{\varepsilon_2}\subset S_{\varepsilon_1}$ whenever
$0<\varepsilon_1<\varepsilon_2$, the map
\begin{equation*}
  \varepsilon\mapsto
  \sup_{z\in S_\varepsilon}
  \left\|r^{-1}(h_\nu-z)^{-1}r^{-1}\right\|_{\sii\to\sii}
\end{equation*}
is non-increasing.  Also every fixed
$z_0\in\mathbb{C}\setminus[0,\infty)$ belongs to
$S_\varepsilon$ for all sufficiently small $\varepsilon>0$.
Hence
\begin{equation*}
  \sup_{z\in\mathbb{C}\setminus[0,\infty)}
  \left\|r^{-1}(h_\nu-z)^{-1}r^{-1}\right\|_{\sii\to\sii}
  =
  \lim_{\varepsilon\to0}
  \sup_{z\in S_\varepsilon}
  \left\|r^{-1}(h_\nu-z)^{-1}r^{-1}\right\|_{\sii\to\sii}.
\end{equation*}

It remains to replace $S_\varepsilon$ by its boundary.  Let
$\phi,\psi\in C_0^\infty((0,\infty))$.  The function
\begin{equation*}
  z\mapsto
  \left\langle
    \phi,r^{-1}(h_\nu-z)^{-1}r^{-1}\psi
  \right\rangle_{L^2}
\end{equation*}
is analytic on $S_\varepsilon$ and continuous up to the boundary;
continuity at the vertex $z=0$ follows from
Lemma~\ref{Lem:zero-Bessel}, applied to $r^{-1}\phi$ and
$r^{-1}\psi$.  By Lemma~\ref{Lem:decay-Bessel}, applied to the
same two test functions, this scalar function tends to zero as
$|z|\to\infty$.  The maximum modulus principle on truncated
sectors, followed by the limits at zero and infinity, gives
\begin{equation*}
  \sup_{z\in S_\varepsilon}
  \left|
  \left\langle
    \phi,r^{-1}(h_\nu-z)^{-1}r^{-1}\psi
  \right\rangle_{L^2}
  \right|
  =
  \sup_{|\Im z|=\varepsilon\Re z}
  \left|
  \left\langle
    \phi,r^{-1}(h_\nu-z)^{-1}r^{-1}\psi
  \right\rangle_{L^2}
  \right|.
\end{equation*}
The density of $C_0^\infty((0,\infty))$ in~$L^2$
then gives the corresponding equality for the operator norms, and
the proposition follows.
\end{proof}
\begin{Remark}\label{rk:scaling-hnu}
It follows from scaling invariance that the norm $\| r^{-1} (h_\nu - z)^{-1} r^{-1}\|_{\sii\to\sii}$ is constant along the ray $e^{i \arg(z)} (0,\infty)$. More precisely, recalling the definition \eqref{eq:def-utau} of the unitary transform $u_\tau$ on $\sii$, 
we have, by \eqref{eq:utau-hnu},
\begin{align*} 
  \left\| r^{-1} (h_\nu -  z)^{-1} r^{-1}\right\|_{\sii\to \sii} &= \left\|
  U_\tau
  r^{-1} (h_\nu - z)^{-1} r^{-1}
  U_\tau^{-1}
  \right\|_{\sii\to \sii} 
 \notag  \\
  &=
  \left\|
  r^{-1} (h_\nu - \tau^2 z)^{-1} r^{-1}
  \right\|_{\sii \to \sii}
  \,,
\end{align*}
for any $\tau>0$. Together with Proposition \ref{prop:epsilon-Bessel}, this justifies the equality in the statement of Theorem \ref{thm:main_Bessel}, namely
\begin{equation*}
\|A_\nu^\pm\|_{L^2\to L^2}=\sup_{z\in\mathbb{C}\setminus[0,\infty)}\left\|r^{-1} (h_\nu - z)^{-1} r^{-1}\right\|_{L^2 \to L^2}.
\end{equation*}
\end{Remark}

\subsection{Proof of Theorem~\ref{thm:main_Bessel}}
Now we are in a position to prove Theorem~\ref{thm:main_Bessel}.
The proof is based on the method of multipliers. An identity obtained from this method in the context of the Bessel operators reads as follows.
\begin{Lemma}\label{lm:multipliers-Bessel}
Let $\nu>0$ and $z\in\mathbb{C}\setminus[0,\infty)$ be such that $\Re z>0$. Let $f\in C_0^\infty((0,\infty))$ and let $\psi :=(h_\nu-z)^{-1}f$. Then
\begin{align}
&\|\partial_r\psi^-\|_{L^2}^2+\frac{|z_2|}{\sqrt{z_1}}\int_0^\infty r\big|\partial_r\psi^-\big|^2\notag\\
&=\Re\int_0^\infty\Big( \frac{|z_2|}{\sqrt{z_1}}rf^-\overline{\psi^-} - \frac{|z_2|}{\sqrt{z_1}}(\nu^2-\mbox{$\frac14$})\frac{|\psi^-|^2}{r}  
+ 2 rf^- \overline{\partial_r\psi^-}-2\frac{\nu^2-\frac14}{r}\psi^-\overline{\partial_r\psi^-}\Big), \label{eq:identity_multipliers-Bessel}
\end{align}
where 
\begin{equation*}
z_1:=\Re z, \quad z_2:= \Im z, \quad \psi^-(r):=e^{-i\sqrt{z_1}\mathrm{sgn}(z_2)r}\psi(r),
\end{equation*}
and likewise for $f^-$.
\end{Lemma}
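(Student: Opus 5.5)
The plan is to write the equation satisfied by $\psi^-$, test it against two multipliers, take real parts, and add. Throughout, write $V(r):=(\nu^2-\frac14)r^{-2}$, $k:=\sqrt{z_1}\,\mathrm{sgn}(z_2)$ and $\mu:=|z_2|/\sqrt{z_1}$; note that $\mu k = z_2$.

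\emph{Conjugated equation.} Since $\psi=(h_\nu-z)^{-1}f$ with $f\in C_0^\infty((0,\infty))$, $\psi$ is smooth on $(0,\infty)$ and solves $-\psi''+V\psi-z\psi=f$ there. Substituting $\psi=e^{ikr}\psi^-$ gives $\psi''=e^{ikr}\big((\psi^-)''+2ik(\psi^-)'-k^2\psi^-\big)$. Because $k^2=z_1$, the real part of $z$ cancels and we get
\begin{equation*}
  -(\psi^-)''-2ik(\psi^-)'+V\psi^--iz_2\psi^-=f^- .
\end{equation*}

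\emph{First multiplier.} Multiply by $2r\,\overline{(\psi^-)'}$, integrate over $(0,\infty)$ and take real parts.
\begin{itemize}
\item $\Re\int-2r(\psi^-)''\overline{(\psi^-)'}=-\int r(|(\psi^-)'|^2)'=\|(\psi^-)'\|^2$ after integrating by parts.
\item $\Re\int -4ikr|(\psi^-)'|^2=0$, since the integrand is purely imaginary.
\item The potential term is $2(\nu^2-\frac14)\Re\int r^{-1}\psi^-\overline{(\psi^-)'}$, which we move to the right-hand side.
\item The last term is $\Re\int(-2iz_2)r\psi^-\overline{(\psi^-)'}=2z_2\,\Im\int r\psi^-\overline{(\psi^-)'}$.
\end{itemize}

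\emph{Second multiplier.} Multiply by $\mu r\overline{\psi^-}$ and take real parts.
\begin{itemize}
\item $\Re\int -r(\psi^-)''\overline{\psi^-}=\int r|(\psi^-)'|^2+\frac12\int(|\psi^-|^2)'$, and the last integral vanishes.
\item $\Re\int-2ikr(\psi^-)'\overline{\psi^-}=2k\,\Im\int r(\psi^-)'\overline{\psi^-}=-2k\,\Im\int r\psi^-\overline{(\psi^-)'}$. After multiplication by $\mu$ this equals $-2z_2\Im\int r\psi^-\overline{(\psi^-)'}$, which exactly cancels the last term of the first multiplier.
\item The potential contributes $\mu(\nu^2-\frac14)\int|\psi^-|^2/r$, which we move to the right-hand side.
\item $\Re\int(-iz_2)r|\psi^-|^2=0$.
\end{itemize}
Adding the two real identities gives \eqref{eq:identity_multipliers-Bessel} exactly.

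\emph{Justification of the integrations by parts (main obstacle).} All the boundary terms are of the form $r|(\psi^-)'|^2$, $|\psi^-|^2$, $r\,\psi^-\overline{(\psi^-)'}$, and we must show they vanish at $0$ and at $\infty$, with all integrals finite.
\begin{itemize}
\item Near $r=0$, $f$ vanishes, so $\psi$ is a multiple of the regular solution $r^{1/2}J_\nu(\sqrt z\,r)\sim c\,r^{1/2+\nu}$. This follows from the domain description in Section~\ref{subsec:basic}. Hence $|\psi|^2\to0$, $r|\psi'|^2\sim r^{2\nu}\to0$, and $r^{-1}|\psi|^2$ and $r^{-1}|\psi\psi'|$ are integrable for $\nu>0$.
\item Near $\infty$, $\psi$ is a multiple of $r^{1/2}H^{(1)}_\nu(\sqrt z\,r)$ with $\Im\sqrt z>0$, so $\psi$ and $\psi'$ decay exponentially. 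The weight $e^{-ikr}$ has modulus one, so $\psi^-$ inherits this decay.
\end{itemize}
If one prefers to avoid explicit kernels, an alternative is to insert a cutoff $\chi(r/R)$, or to integrate over $(\delta,R)$, and let $\delta\to0$, $R\to\infty$ using these asymptotics.
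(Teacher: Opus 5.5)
Your proof is correct and is essentially the paper's argument. The paper tests the unconjugated weak formulation, with the potential absorbed into $\tilde f=f-(\nu^2-\frac14)r^{-2}\psi$, against $r\psi'+\frac12\psi$, $\psi$ and $r\psi$, and then forms (first) $-$ (second) $+\mu\,$(third) $-2\sqrt{z_1}\,\mathrm{sgn}(z_2)\,$(fourth); this is exactly the real part of the equation tested against $2r(\overline{\psi}'+ik\overline{\psi})+\mu r\overline{\psi}$, i.e.\ your multipliers $2r\overline{(\psi^-)'}$ and $\mu r\overline{\psi^-}$ on the conjugated equation, and your justification of the boundary terms ($\psi\sim c\,r^{1/2+\nu}$ at $0$, exponential decay at $\infty$) matches the paper's.
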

\begin{proof}
The identity can be deduced from~\cite[Rem.~3.1]{CK2}.
For completeness, and because the half--line setting involves both
a singular Bessel potential and the boundary point $r=0$, we give
the short derivation in the present context.  

The weak formulation of $(h_\nu-z)\psi = f$ reads
\begin{equation}\label{i0B}
  \int_0^\infty \bar\phi' \psi' = z \int_0^\infty \bar\phi \psi
  + \int_0^\infty \bar\phi \tilde{f}
  \qquad\mbox{with}\qquad  
  \tilde{f} := f - \frac{\nu^2-\frac{1}{4}}{r^2} \, \psi
  \,,
\end{equation}
where~$\phi$ is any function from the form domain of~$h_\nu$.
The test functions used below are admissible because~$\psi$ is
smooth and exponentially decaying at infinity (this follows, \eg,
from the explicit expression of the integral kernel of
$(h_\nu-z)^{-1}$), while
$\psi(r)\sim c r^{\frac12+\nu}$ as $r\to0$ for some constant~$c$.
This asymptotic also eliminates all boundary terms at $r=0$.

Choosing $\phi:=r\psi'+\frac12\psi$ in~\eqref{i0B}, taking twice
the real part and integrating by parts gives
\begin{equation*}
  2\int_0^\infty |\psi'|^2
  =
  -2z_2\,\Im\int_0^\infty \psi\,r\overline{\psi'}
  +2\Re\int_0^\infty \tilde f\,r\overline{\psi'}
  +\Re\int_0^\infty \tilde f\,\overline\psi .
\end{equation*}
Choosing $\phi:=\psi$ and taking the real part gives
\begin{equation*}
  \int_0^\infty |\psi'|^2
  =
  z_1\int_0^\infty |\psi|^2
  +\Re\int_0^\infty \tilde f\,\overline\psi .
\end{equation*}
Finally, choosing $\phi:=r\psi$ and taking respectively the real
and imaginary parts gives
\begin{equation*}
  \int_0^\infty r|\psi'|^2
  =
  z_1\int_0^\infty r|\psi|^2
  +\Re\int_0^\infty r\tilde f\,\overline\psi
\end{equation*}
and
\begin{equation*}
  \Im\int_0^\infty \overline\psi\,\psi'
  =
  z_2\int_0^\infty r|\psi|^2
  +\Im\int_0^\infty r\overline\psi\,\tilde f .
\end{equation*}
No singular term appears on the left-hand side of the third
identity.

We now take the first identity minus the second one, add
$|z_2|z_1^{-1/2}$ times the third one, and subtract
$2z_1^{1/2}\sgn(z_2)$ times the fourth one.  Substituting
$\tilde f=f-(\nu^2-\frac14)r^{-2}\psi$ and using
\begin{equation*}
  \partial_r\psi^-
  =
  e^{-i\sqrt{z_1}\sgn(z_2)r}
  \big(\psi'-i\sqrt{z_1}\sgn(z_2)\psi\big),
  \qquad
  f^-
  =
  e^{-i\sqrt{z_1}\sgn(z_2)r}f,
\end{equation*}
we arrive at the desired identity~\eqref{eq:identity_multipliers-Bessel}.
\end{proof}
\begin{proof}[Proof of Theorem \ref{thm:main_Bessel}]
Let $z\in\mathbb{C}\setminus[0,\infty)$. If $z_1=\Re z\le0$ (and $z\neq0$), we know from Lemma~\ref{lem:Hardy-Bessel} that
\begin{equation*}
\left\| r^{-1} (h_\nu - z)^{-1} r^{-1}  \right\|_{\sii \to \sii} \le \nu^{-2}.
 \end{equation*}
Now we assume that $z_1>0$. 
Let~$f$ and~$\psi$ be as in Lemma~\ref{lm:multipliers-Bessel}.
We see that the
second term on the left-hand side of
\eqref{eq:identity_multipliers-Bessel} is non-negative.  The first
two terms on the right-hand side are controlled by rough Cauchy
bounds, since they only affect the auxiliary constants.  When
$\nu^2-\frac14\ge0$, the second term on the right-hand side of
\eqref{eq:identity_multipliers-Bessel} is non-positive and can be
dropped.  More precisely, using the Cauchy--Schwarz inequality and
\begin{equation*}
|z_2| \, \|\psi\|_{\sii}^2
=\Big|\Im\int_0^\infty\overline{\psi} f\Big|\le\Big\|\frac{\psi}{r}\Big\|_{L^2}\|rf\|_{L^2},
\end{equation*}
we obtain
\begin{align}
&\Big | \int_0^\infty\Big( \frac{|z_2|}{\sqrt{z_1}}rf^-\overline{\psi^-} - \frac{|z_2|}{\sqrt{z_1}}(\nu^2-\mbox{$\frac14$})\frac{|\psi^-|^2}{r} \Big) \Big |  \notag\\
&\le \sqrt{\frac{|z_2|}{z_1}}\Big(\|rf\|_{L^2}^{\frac32}\Big\|\frac{\psi}{r}\Big\|^{\frac12}_{L^2}+\Big|\nu^2-\mbox{$\frac14$}\Big| \Big\|\frac{\psi}{r}\Big\|^{\frac32}_{L^2}\|rf\|_{L^2}^{\frac12}\Big)\notag\\
&\le \sqrt{\frac{|z_2|}{z_1}}\Big(\mbox{$\frac34$}\|rf\|^2_{L^2}
+\mbox{$\frac14$}\Big\|\frac{\psi}{r}\Big\|^{2}_{L^2}+\Big|\nu^2-\mbox{$\frac14$}\Big| \Big( \mbox{$\frac34$} \Big\|\frac{\psi}{r}\Big\|^{2}_{L^2}+\frac14\|rf\|^2_{L^2}\Big)\Big)\notag\\
&= \sqrt{\frac{|z_2|}{z_1}}\Big(\Big(\mbox{$\frac34$}+\mbox{$\frac14$}\Big|\nu^2-\mbox{$\frac14$}\Big|\Big)\|rf\|^2_{L^2}+\Big(\mbox{$\frac14$}+\mbox{$\frac34$}\Big|\nu^2-\mbox{$\frac14$}\Big|\Big)\Big\|\frac{\psi}{r}\Big\|^{2}_{L^2}\Big) \notag\\
&\le \sqrt{\frac{|z_2|}{z_1}}\Big(\Big(\mbox{$\frac34$}+\mbox{$\frac14$}\Big|\nu^2-\mbox{$\frac14$}\Big|\Big)\|rf\|^2_{L^2}+\nu^{-2}\Big(\mbox{$\frac14$}+\mbox{$\frac34$}\Big|\nu^2-\mbox{$\frac14$}\Big|\Big)\big\|h_\nu^{\frac12}\psi^-\big\|^{2}_{L^2}\Big), \label{eq:identity_multipliers-Bessel1}
\end{align}
where we used $\|r^{-1}\psi\|_{L^2}=\|r^{-1}\psi^-\|_{L^2}$ and Lemma \ref{Lem:csq_Hardy} in the last inequality. For the last term on the right-hand side of \eqref{eq:identity_multipliers-Bessel}, we integrate by parts, obtaining
\begin{equation}
2\Re\int_0^\infty\frac{\nu^2-\frac14}{r}\psi^-\overline{\partial_r\psi^-}=\int_0^\infty\frac{\nu^2-\frac14}{r}\partial_r|\psi|^2=\Big(\nu^2-\frac14\Big)\int_0^\infty\frac{|\psi|^2}{r^2}. \label{eq:identity_multipliers-Bessel2}
\end{equation}
Inserting \eqref{eq:identity_multipliers-Bessel1} and \eqref{eq:identity_multipliers-Bessel2} into \eqref{eq:identity_multipliers-Bessel}, and estimating the third term on the right-hand side of \eqref{eq:identity_multipliers-Bessel} by the Cauchy--Schwarz inequality, we obtain
\begin{align*}
&\Big(1-\sqrt{\frac{|z_2|}{z_1}}\nu^{-2}\Big(\mbox{$\frac14$}+\mbox{$\frac34$}\Big|\nu^2-\mbox{$\frac14$}\Big|\Big)\Big) \big\|h_\nu^{\frac12}\psi^-\big\|_{L^2}^2\notag\\
&\le\sqrt{\frac{|z_2|}{z_1}}\Big(\Big(\mbox{$\frac34$}+\mbox{$\frac14$}\Big|\nu^2-\mbox{$\frac14$}\Big|\Big)\|rf\|^2_{L^2}+2\|rf\|_{L^2}\|\partial_r\psi^-\|_{L^2}\Big).
\end{align*}
To simplify the notations, we rewrite the previous inequality as
\begin{equation}
\Big(1-\sqrt{\frac{|z_2|}{z_1}}C_{1,\nu}\Big) \big\|h_\nu^{\frac12}\psi^-\big\|_{L^2}^2-2\|rf\|_{L^2}\big\|\partial_r\psi^-\big\|_{L^2}-\sqrt{\frac{|z_2|}{z_1}}C_{2,\nu}\|rf\|^2_{L^2}\le0, \label{eq:central_estimate_Bessel}
\end{equation}
where $C_{j,\nu}$ are the positive explicit constants depending on $\nu$ following from the equations above.

Assume first that $\nu\le\frac{1}{\sqrt{3}}$. From \eqref{eq:central_estimate_Bessel} and Lemma \ref{Lem:csq_Hardy}, we obtain
\begin{equation*}
\Big(1-\sqrt{\frac{|z_2|}{z_1}}C_{1,\nu}\Big) \big\|h_\nu^{\frac12}\psi^-\big\|_{L^2}^2-2\max(1,(2\nu)^{-1})\|rf\|_{L^2}\big\|h_\nu^{\frac12}\psi^-\big\|_{L^2}-\sqrt{\frac{|z_2|}{z_1}}C_{2,\nu}\|rf\|^2_{L^2}\le0.
\end{equation*}
Computing the roots of the previous quadratic expression in
$\|h_\nu^{\frac12}\psi^-\|_{L^2}$, this yields
\begin{align*}
\big\|h_\nu^{\frac12}\psi^-\big\|_{L^2}\le\frac{2\max(1,(2\nu)^{-1})+\Big[4\max(1,(2\nu)^{-2})+4\Big(1-\sqrt{\frac{|z_2|}{z_1}}C_{1,\nu})\Big)\sqrt{\frac{|z_2|}{z_1}}C_{2,\nu}\Big]^\frac12}{2\Big(1-\sqrt{\frac{|z_2|}{z_1}}C_{1,\nu}\Big)}\|rf\|_{L^2}.
\end{align*}
Using again Lemma \ref{Lem:csq_Hardy}, we arrive at
\begin{align}\label{eq:main_estim1-Bessel}
\Big\|\frac{\psi}{r}\Big\|_{L^2}\le\frac{2\max(1,(2\nu)^{-1})+\Big[2\max(1,(4\nu)^{-2})+4\Big(1-\sqrt{\frac{|z_2|}{z_1}}C_{1,\nu})\Big)\sqrt{\frac{|z_2|}{z_1}}C_{2,\nu}\Big]^\frac12}{2\nu\Big(1-\sqrt{\frac{|z_2|}{z_1}}C_{1,\nu}\Big)}\|rf\|_{L^2}.
\end{align}

This inequality holds for any $\psi$ of the form $\psi=(h_\nu-z)^{-1}f$ with $f\in C_0^\infty((0,\infty))$.  Applying it to $\psi=(h_\nu-z)^{-1}r^{-1}\varphi$ with $\varphi\in C_0^\infty((0,\infty))$ (and recalling from Lemma \ref{lm:bounded-op-Bessel} that $r^{-1}(h_\nu-z)^{-1}r^{-1}$ extends to a bounded operator on $L^2((0,\infty))$), we obtain
\begin{align}
&\big\|r^{-1}(h_\nu-z)^{-1}r^{-1}\varphi\big\|_{L^2}\notag\\
&\le\frac{2\max(1,(2\nu)^{-1})+\Big[4\max(1,(2\nu)^{-2})+4\Big(1-\sqrt{\frac{|z_2|}{z_1}}C_{1,\nu})\Big)\sqrt{\frac{|z_2|}{z_1}}C_{2,\nu}\Big]^\frac12}{2\nu\Big(1-\sqrt{\frac{|z_2|}{z_1}}C_{1,\nu}\Big)}\|\varphi\|_{L^2} \label{eq:main_estim2-Bessel}
\end{align}
for any $\varphi\in C_0^\infty((0,\infty))$. Since $C_0^\infty((0,\infty))$ is dense in~$L^2$, estimate \eqref{eq:main_estim2-Bessel} leads to
\begin{align}\label{eq:main_estim3-Bessel}
&\big\|r^{-1}(h_\nu-z)^{-1}r^{-1}\big\|_{L^2\to L^2}\notag\\
&\le\frac{2\max(1,(2\nu)^{-1})+\Big[4\max(1,(2\nu)^{-2})+4\Big(1-\sqrt{\frac{|z_2|}{z_1}}C_{1,\nu})\Big)\sqrt{\frac{|z_2|}{z_1}}C_{2,\nu}\Big]^\frac12}{2\nu\Big(1-\sqrt{\frac{|z_2|}{z_1}}C_{1,\nu}\Big)}.
\end{align}
Applying now Proposition \ref{prop:epsilon-Bessel}, we deduce that
\begin{align}\label{eq:main_estim4-Bessel}
&\sup_{z_1>0,\, z_2\neq0}\big\|r^{-1}(h_\nu-z)^{-1}r^{-1}\big\|\notag\\
&\le\lim_{\varepsilon\to0}\sup_{z_2=\varepsilon z_1} \frac{2\max(1,(2\nu)^{-1})+\Big[4\max(1,(2\nu)^{-2})+4\Big(1-\sqrt{\varepsilon}\, C_{1,\nu})\Big)\sqrt{\varepsilon}\, C_{2,\nu}\Big]^\frac12}{2\nu\Big(1-\sqrt{\varepsilon}\, C_{1,\nu}\Big)}\notag\\
&=\max\Big(\frac{1}{\nu^2},\frac{2}{\nu}\Big).
\end{align}
This proves \eqref{eq:main_Bessel} in the case where $\nu\le\frac{1}{\sqrt{3}}$.

Suppose now that $\nu>\frac{1}{\sqrt{3}}$. Going back to \eqref{eq:central_estimate_Bessel}, we have
\begin{align*}
&\Big(1-\sqrt{\frac{|z_2|}{z_1}}C_{1,\nu}\Big)^2 \big\|h_\nu^{\frac12}\psi^-\big\|_{L^2}^4\notag\\
&\le\Big(2\|rf\|_{L^2}\big\|\partial_r\psi^-\big\|_{L^2}-\sqrt{\frac{|z_2|}{z_1}}C_{2,\nu}\|rf\|^2_{L^2}\Big)^2\notag\\
&\le4\|rf\|^2_{L^2}\big\|\partial_r\psi^-\big\|^2_{L^2}+\frac{|z_2|}{z_1}C_{2,\nu}^2\|rf\|^4_{L^2}\notag\\
&=4\|rf\|^2_{L^2}\big\|h_\nu^{\frac12}\psi^-\big\|^2_{L^2}-4\Big(\nu^2-\mbox{$\frac14$}\Big)\Big\|\frac{\psi}{r}\Big\|_{L^2}^2\|rf\|_{L^2}^2+\frac{|z_2|}{z_1}C_{2,\nu}^2\|rf\|^4_{L^2}.
\end{align*}
Computing the discriminant of the previous quadratic expression in $\|h_\nu^{\frac12}\psi^-\|_{L^2}^2$, we obtain
\begin{align*}
16\|rf\|^4_{L^2}-4\Big(1-\sqrt{\frac{|z_2|}{z_1}}C_{1,\nu}\Big)^2
\Big(4\Big(\nu^2-\mbox{$\frac14$}\Big)\Big\|\frac{\psi}{r}\Big\|_{L^2}^2\|rf\|_{L^2}^2-\frac{|z_2|}{z_1}C_{2,\nu}^2\|rf\|^4_{L^2}\Big)\ge0,
\end{align*}
which implies
\begin{align}\label{eq:main_estim5-Bessel}
16\Big(1-\sqrt{\frac{|z_2|}{z_1}}C_{1,\nu}\Big)^2\Big(\nu^2-\mbox{$\frac14$}\Big)\Big\|\frac{\psi}{r}\Big\|_{L^2}^2\le\Big(16+4\Big(1-\sqrt{\frac{|z_2|}{z_1}}C_{1,\nu}\Big)^2\frac{|z_2|}{z_1}C_{2,\nu}^2\Big)\|rf\|^2_{L^2}.
\end{align}
We can then proceed exactly as before, using \eqref{eq:main_estim5-Bessel} instead of \eqref{eq:main_estim1-Bessel}, which leads to \eqref{eq:main_Bessel} in the case where $\nu>\frac{1}{\sqrt{3}}$. This concludes the proof of the theorem.
\end{proof}

\subsection{One-channel lower bounds}\label{Sec.lower}
We record the channel lower bounds that complement the upper
estimate in Theorem~\ref{thm:main_Bessel} and the lower bound in Theorem~\ref{Thm.lower}. Using a logarithmic trial state near the origin, the latter can be improved for small values of $\nu$, giving, for any $z\in\mathbb{C}\setminus[0,\infty)$, the lower
bound $\nu^{-2}$ for the norm of the weighted resolvent of $h_\nu$. This in turn yields part~(ii) of Theorem~\ref{Thm.new}.
\begin{Proposition}\label{prop:lower-bound-Bessel-small-nu}
Let $\nu>0$. Then, for all $z\in\mathbb{C}\setminus[0,\infty)$,
\begin{equation}\label{eq:lower_bound_Bessel_small_nu}
\left\|r^{-1} (h_\nu - z)^{-1} r^{-1}\right\|_{L^2 \to L^2} \ge \frac{1}{\nu^2}.
\end{equation}
\end{Proposition}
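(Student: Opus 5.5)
The plan is to exhibit, for each fixed $z\in\Com\setminus[0,\infty)$, a family of trial functions $\varphi_n$ with $\|\varphi_n\|_{L^2}=1$ and $\langle \varphi_n, r^{-1}(h_\nu-z)^{-1}r^{-1}\varphi_n\rangle \to \nu^{-2}$. The idea is to concentrate $\varphi_n$ near the origin, on a logarithmically long range of scales $r\in(\delta,1)$ with $\delta\to0$. At such scales the spectral parameter $z$ is negligible compared with the centrifugal term, so the weighted resolvent behaves like $r^{-1}h_\nu^{-1}r^{-1}$. By Remark~\ref{rk:ess-spectrum}, after the change of variables $\mathcal D$ ($r=e^x$) this operator becomes the convolution by $(2\nu)^{-1}e^{-\nu|x-y|}$, i.e.\ the Fourier multiplier $(\xi^2+\nu^2)^{-1}$. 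Its supremum $\nu^{-2}$ is attained at $\xi=0$, i.e.\ on functions that are nearly constant in $x=\log r$. This suggests the logarithmic trial states
\[
\varphi_n(r):=n^{-1/2}\,r^{-1/2}\,\chi\big(\log r/n\big),
\]
with $\chi\in C_0^\infty((-1,0))$ and $\|\chi\|_{L^2(\Real)}=1$. Equivalently, $\mathcal D\varphi_n(x)=n^{-1/2}\chi(x/n)$, a spreading profile whose Fourier transform concentrates at $\xi=0$.

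The first step is the free computation. Since $\mathcal D\varphi_n$ spreads at scale $n$,
\[
\langle\varphi_n,r^{-1}h_\nu^{-1}r^{-1}\varphi_n\rangle=\int_\Real \frac{|\widehat{\mathcal D\varphi_n}(\xi)|^2}{\xi^2+\nu^2}\,\der\xi\longrightarrow\nu^{-2}.
\]
Equivalently, by the explicit kernel of $h_\nu^{-1}$ and dominated convergence, $(2\nu)^{-1}\int e^{-\nu|x-y|}\,\der y\to \nu^{-2}\cdot\nu^{... }$ up to the correct normalisation, which gives the limit $\nu^{-2}$.

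The second step compares $(h_\nu-z)^{-1}$ with $h_\nu^{-1}$ on these states, using the resolvent identity
\[
r^{-1}(h_\nu-z)^{-1}r^{-1}-r^{-1}h_\nu^{-1}r^{-1}=z\,r^{-1}(h_\nu-z)^{-1}h_\nu^{-1}r^{-1}.
\]
Writing $g_n:=h_\nu^{-1}r^{-1}\varphi_n$, the difference of quadratic forms equals $z\langle r^{-1}\varphi_n,(h_\nu-z)^{-1}g_n\rangle$. This is bounded by $|z|\,\dist(z,[0,\infty))^{-1}\|h_\nu^{-1/2}r^{-1}\varphi_n\|\,\|h_\nu^{-1/2}g_n\|$, or more simply by $|z|\,\dist(z,[0,\infty))^{-1}\,\|h_\nu^{-1}r^{-1}\varphi_n\|^2$ after writing things symmetrically (adjoints being resolvents at $\bar z$). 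It therefore suffices to show $\|h_\nu^{-1}r^{-1}\varphi_n\|_{L^2}\to0$.

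This estimate is the step I expect to be the main obstacle. The function $r^{-1}\varphi_n$ is supported in $(e^{-n},1)$, and the explicit kernel $(2\nu)^{-1}r^{1/2\pm\nu}R^{1/2\mp\nu}$ gives
\[
h_\nu^{-1}r^{-1}\varphi_n(r)\approx r^{3/2}\cdot n^{-1/2}\,\chi(\log r/n)\,\nu^{-2}
\]
inside the support, with tails $\lesssim n^{-1/2}r^{1/2-\nu}$ for $r>1$ and $\lesssim n^{-1/2}r^{1/2+\nu}$ for small $r$. The $L^2$ norm is then $O(n^{-1/2})$: in the bulk, $r^{3/2}$ is square-integrable on $(0,1)$, and the tails for $r>1$ are square-integrable when $\nu>0$. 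I would check the large-$r$ tail carefully. The contribution $\int_1^\infty r^{1-2\nu}\,\der r$ diverges for small $\nu$, so I may instead need an $h_\nu^{-1/2}$ version, or to split $g_n$ using $\|(h_\nu-z)^{-1}h_\nu\|$ bounds. If this fails, an alternative is scaling: by Remark~\ref{rk:scaling-hnu} the norm is invariant along rays, so I can replace $z$ by $\tau^2z$ with $\tau\to0$ and use Lemma~\ref{Lem:zero-Bessel}-type weak convergence $(h_\nu-\tau^2z)^{-1}\to h_\nu^{-1}$ tested on fixed $\varphi_n$. This requires only the kernel bound~\eqref{eq:bound_kernel} and dominated convergence for fixed compactly supported $r^{-1}\varphi_n$.

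The scaling route avoids the tail issue, so I would actually adopt it. Fix $n$ and let $\tau\to0$ to get
\[
\|r^{-1}(h_\nu-z)^{-1}r^{-1}\|\ge\big|\langle\varphi_n,r^{-1}h_\nu^{-1}r^{-1}\varphi_n\rangle\big|,
\]
and then let $n\to\infty$ to obtain the bound $\nu^{-2}$. The remaining care is justifying the limit $\tau\to0$ for every $z\notin[0,\infty)$, not only in a sector. For this I would use the same dominated convergence argument as in Lemma~\ref{Lem:zero-Bessel}, since~\eqref{eq:bound_kernel} is uniform in $\arg z$, combined with pointwise convergence of the kernel as $z\to0$.
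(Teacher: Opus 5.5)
Your adopted argument (the scaling route) is correct, and it differs genuinely from the paper's proof.

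\textbf{The paper's route.} The paper works at fixed $z$ with a quasimode. It takes $u_\sigma=r^{1/2}\chi\eta_\sigma$ and sets $v=r(h_\nu-z)u_\sigma$, so that $r^{-1}(h_\nu-z)^{-1}r^{-1}v=r^{-1}u_\sigma$ exactly. It then uses that $r(h_\nu-z)u_\sigma=\nu^2 r^{-1}u_\sigma+O(1+|z|)$ in $L^2$, while $\|r^{-1}u_\sigma\|^2\ge\ln(1/\sigma)$. Hence $\|r^{-1}u_\sigma\|/\|v\|\to\nu^{-2}$.

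\textbf{Your route.} You use the scale invariance of Remark~\ref{rk:scaling-hnu} to send $\tau^2z\to0$. Lemma~\ref{Lem:zero-Bessel} then replaces the resolvent by $h_\nu^{-1}$ on the fixed test function $r^{-1}\varphi_n\in C_0^\infty((0,\infty))$. Finally, the log-variable Fourier picture of Remark~\ref{rk:ess-spectrum} gives $\langle\varphi_n,r^{-1}h_\nu^{-1}r^{-1}\varphi_n\rangle=\int|\hat\chi(\eta)|^2(\eta^2/n^2+\nu^2)^{-1}\,\der\eta\to\nu^{-2}$.

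The extension you worry about at the end is unnecessary. For $z\notin[0,\infty)$ the whole ray $\{\tau^2 z\}_{\tau>0}$ lies in some sector $S_\varepsilon$, so Lemma~\ref{Lem:zero-Bessel} applies verbatim.

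\textbf{Comparison.} Your route is more structural. It amounts to $\|r^{-1}(h_\nu-z)^{-1}r^{-1}\|\ge\|r^{-1}h_\nu^{-1}r^{-1}\|=\nu^{-2}$, by weak lower semicontinuity of the norm. It identifies $\nu^{-2}$ as the top of the spectrum $[0,\nu^{-2}]$ of the zero-energy operator, and it even bounds the numerical radius from below. The price is that it relies on the kernel bound~\eqref{eq:bound_kernel} imported from the literature and on scaling. The paper's proof uses only the elementary computation of $h_\nu u_\sigma$ and gives an explicit rate at fixed $z$. The trial states are essentially the same in both: after $\mathcal D$, the paper's $r^{-1}u_\sigma=r^{-1/2}\chi\eta_\sigma$ is a plateau of length $\ln(1/\sigma)$ in $x=\log r$, just like your $n^{-1/2}\chi(x/n)$.

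\textbf{Cleanup.} Delete the abandoned resolvent-identity paragraph and the garbled ``$\nu^{-2}\cdot\nu^{\dots}$'' line. That route does not close as written, for two reasons:
\begin{itemize}
\item The symmetric estimate there produces $\|h_\nu^{-1/2}r^{-1}\varphi_n\|^2=\langle\varphi_n,r^{-1}h_\nu^{-1}r^{-1}\varphi_n\rangle\to\nu^{-2}$, not $o(1)$.
\item $h_\nu^{-1}r^{-1}\varphi_n\notin L^2$ when $\nu\le1$, because of its $r^{1/2-\nu}$ tail at infinity.
\end{itemize}
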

\begin{proof}
Let $z\in\mathbb{C}\setminus[0,\infty)$.  Let
$\chi \in C_{c}^{\infty}([0,\infty))$ be such that
$\mathbf{1}_{[0,1]}\le \chi$, and let
$\eta\in C^\infty((0,\infty))$ be such that $\eta=0$ on
$(0,\frac12]$ and $\eta=1$ on $[1,\infty)$.  We set
$\eta_\sigma(r):=\eta(\frac{r}{\sigma})$.  For
$0<\sigma<1$, define
\begin{equation*}
  u_{\sigma}(r):=r^{\frac12}\chi(r)\eta_\sigma(r).
\end{equation*}
Since $\chi\eta_\sigma\in C^\infty_c((0,\infty))$, we have
$u_\sigma\in H^2((0,\infty))$ and
\begin{equation*}
  \partial_r^2u_\sigma
  =
  -\frac{u_\sigma}{4r^2}
  + r^{-\frac12}(\chi\eta_\sigma)'
  + r^{\frac12}(\chi\eta_\sigma)''.
\end{equation*}
This implies that $u_\sigma\in\mathcal{D}(h_\nu)$ and that
$(h_\nu-z)u_\sigma\in\mathcal{D}(r)$ with
\begin{equation}\label{eq:rhnu}
  r(h_\nu-z)u_{\sigma}
  =
  \nu^2\frac{u_\sigma}{r}
  - zru_\sigma
  + r^{\frac12}(\chi\eta_\sigma)'
  + r^{\frac32}(\chi\eta_\sigma)''.
\end{equation}
For $0<\sigma<1$ and $z\in\mathbb{C}\setminus[0,\infty)$, we set
\begin{equation*}
  v_{\sigma}(z):=r(h_\nu-z)u_{\sigma},
  \qquad \text{so that}\qquad
  u_\sigma=(h_\nu-z)^{-1}r^{-1}v_{\sigma}(z).
\end{equation*}
We claim that
  \begin{equation}\label{eq:Oep-Bessel}
    \frac{\|r^{-1}(h_\nu-z)^{-1}r^{-1}v_{\sigma}(z)\|_{L^{2}}}
        {\|v_{\sigma}(z)\|_{L^{2}}} = \frac{\|r^{-1}u_{\sigma}\|_{L^{2}}}
        {\|r(h_\nu-z)u_{\sigma}\|_{L^{2}}}
    \ge
   \frac{1}{\nu^2-C_{\chi,\eta}(1+|z|)(\ln\sigma)^{-1}},
  \end{equation}
for some positive constant $C_{\chi,\eta}$ depending on
$\chi$ and $\eta$ but independent of $\sigma$ and $z$.
To prove \eqref{eq:Oep-Bessel}, we first observe that
\begin{equation}\label{eq:ln-sigma}
  \|r^{-1}u_{\sigma}\|_{L^{2}}^2
  =
  \int_0^\infty r^{-1}(\chi\eta_\sigma)^2
  \ge
  \int_\sigma^1 r^{-1}
  =
  -\ln\sigma>0.
\end{equation}
Next we estimate each term from the right-hand side of
\eqref{eq:rhnu}.  First,
\begin{equation*}
  \|zru_\sigma\|^2_{L^2}
  \le
  |z|^2\int_0^\infty r^3(\chi\eta_\sigma)^2
  \le
  C_{\chi,\eta}|z|^2.
\end{equation*}
Moreover,
\begin{equation*}
  \big\|r^{\frac12}(\chi\eta_\sigma)'\big\|_{L^2}^2
  \le
  C_{\chi,\eta}
  + C_{\chi,\eta}\sigma^{-2}
  \int_{\frac\sigma2}^\sigma r
  \le
  C_{\chi,\eta},
\end{equation*}
and a similar computation gives
\begin{equation*}
\big\|r^{\frac32}(\chi\eta_\sigma)''\big\|_{L^2}^2\le C_{\chi,\eta}.
\end{equation*}
Letting
$A_{\sigma}(z):=zru_\sigma+r^{\frac12}(\chi\eta_\sigma)'
+r^{\frac32}(\chi\eta_\sigma)''$, we have shown that
$\|A_{\sigma}(z)\|_{L^2}\le C_{\chi,\eta}(1+|z|)$.
Using~\eqref{eq:rhnu} and~\eqref{eq:ln-sigma}, we can write
\begin{align*}
  \frac{\|r^{-1}u_{\sigma}\|_{L^{2}}}
    {\|r(h_\nu-z)u_{\sigma}\|_{L^{2}}}
  &\ge
  \frac{\|r^{-1}u_{\sigma}\|_{L^{2}}}
    {\nu^2\|r^{-1}u_\sigma\|_{L^2}
     +\|A_{\sigma}(z)\|_{L^2}} \\
  &=
  \frac{1}
    {\nu^2
     +\|A_{\sigma}(z)\|_{L^2}
      \|r^{-1}u_{\sigma}\|_{L^{2}}^{-1}} \\
  &\ge
  \frac{1}{\nu^2-C_{\chi,\eta}(1+|z|)(\ln\sigma)^{-1}}.
\end{align*}
This establishes \eqref{eq:Oep-Bessel}, which in turn yields that, for all $0<\sigma<1$ and $z\in\mathbb{C}\setminus[0,\infty)$,
\begin{equation*}
  \big\|r^{-1}(h_\nu-z)^{-1}r^{-1}\big\|_{L^2\to L^2}
  \ge
  \frac{1}{\nu^2-C_{\chi,\eta}(1+|z|)(\ln\sigma)^{-1}}.
\end{equation*}
Letting $\sigma\to0$ concludes the proof.
\end{proof}
\begin{Remark}
Together with Lemma~\ref{lem:Hardy-Bessel}, Proposition \ref{prop:lower-bound-Bessel-small-nu} implies that, for all $\nu>0$ and $z\in\mathbb{C}$ such that $\Re z\le0$, $z\neq0$,
\begin{equation}
\left\| r^{-1} (h_\nu - z)^{-1} r^{-1}  \right\|_{\sii \to \sii} 
= \frac{1}{\nu^2}.
 \end{equation}
\end{Remark}
Combining Proposition \ref{prop:lower-bound-Bessel-small-nu} and Theorem~\ref{thm:main_Bessel}, we deduce part~(ii) of Theorem~\ref{Thm.new}:
\begin{Corollary}\label{cor:lower_bound_Bessel}
 Let  $0<\nu\le\frac12$. One has
\begin{equation}\label{eq:equality_Bessel_small_nu}
\|A_\nu^\pm\|_{L^2\to L^2}=\sup_{z\in\mathbb{C}\setminus[0,\infty)}\left\|r^{-1} (h_\nu - z)^{-1} r^{-1}\right\|_{L^2 \to L^2} = \frac{1}{\nu^2}.
\end{equation}
\end{Corollary}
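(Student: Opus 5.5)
The corollary follows by sandwiching the quantity between the upper bound of Theorem~\ref{thm:main_Bessel} and the lower bound of Proposition~\ref{prop:lower-bound-Bessel-small-nu}.

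First, the equality between $\|A_\nu^\pm\|_{L^2\to L^2}$ and the supremum over $z\in\mathbb{C}\setminus[0,\infty)$ is already in the statement of Theorem~\ref{thm:main_Bessel}. Its justification is Remark~\ref{rk:scaling-hnu}: scaling invariance makes the norm constant along rays, and Proposition~\ref{prop:epsilon-Bessel} (maximum principle on the sectors $S_\varepsilon$) reduces the supremum to the boundary rays $|\Im z|=\varepsilon\Re z$ as $\varepsilon\to0$, which is the boundary value at $1\pm i0$. So only the value of this common quantity needs to be identified.

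\emph{Upper bound.} For $0<\nu\le\frac12$, definition~\eqref{eq:def-Cnu} gives $\mathcal{C}(\nu)=\nu^{-2}$, so Theorem~\ref{thm:main_Bessel} yields
\begin{equation*}
\sup_{z\in\mathbb{C}\setminus[0,\infty)}\big\|r^{-1}(h_\nu-z)^{-1}r^{-1}\big\|\le\nu^{-2}.
\end{equation*}
Consistently, the last display in the proof of that theorem gives the bound $\max(\nu^{-2},2/\nu)$, and $\nu^{-2}\ge 2/\nu$ exactly when $\nu\le\frac12$.

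\emph{Lower bound.} Proposition~\ref{prop:lower-bound-Bessel-small-nu} gives $\|r^{-1}(h_\nu-z)^{-1}r^{-1}\|\ge\nu^{-2}$ for every $z\in\mathbb{C}\setminus[0,\infty)$, and in particular for the supremum.

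Combining the two bounds gives the equality $\nu^{-2}$. There is no real obstacle: the only points to check are the arithmetic $\nu^{-2}\ge 2/\nu$ on $(0,\frac12]$ and that the upper bound is uniform in $z$, which the reduction above guarantees.
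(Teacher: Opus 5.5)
Your proposal is correct and is exactly the paper's argument: the upper bound $\mathcal{C}(\nu)=\nu^{-2}$ from Theorem~\ref{thm:main_Bessel} (the relevant case since $\nu^{-2}\ge 2/\nu$ for $\nu\le\frac12$) is matched by the $z$-uniform lower bound $\nu^{-2}$ of Proposition~\ref{prop:lower-bound-Bessel-small-nu}. Identifying $\|A_\nu^\pm\|$ with the supremum over $z$ rests, as you say, on Remark~\ref{rk:scaling-hnu} together with Proposition~\ref{prop:epsilon-Bessel}, which the paper also takes for granted at this point.
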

%

\section{Application to the multidimensional operators}\label{Sec.all}
%
\subsection{The Laplacian}\label{Sec.wave}
We now pass from the one-channel Bessel operators in $\sii((0,\infty))$
to the Laplacian in $\sii(\Real^d)$
by the usual partial wave decomposition.  As a consequence of
Theorem~\ref{thm:main_Bessel}, we obtain
Theorem~\ref{thm:main_Laplacian} for 
the $d$-dimensional Laplacian with $d\ge3$. 
Let us abbreviate
\begin{equation*}
N_d(z):= \left\||x|^{-1} (-\Delta - z)^{-1} |x|^{-1}\right\|_{L^2(\Real^d) \to L^2(\Real^d)}.
\end{equation*}
The main ingredient is the following proposition, which is a direct
consequence of the well-known expression of the Laplacian in
spherical coordinates
$(r,\sigma) \in \mathbb{R}_+ \times \mathbb{S}^{d-1}$,
with $\mathbb{R}_+:=(0,\infty)$.
Note also that~\eqref{eq:nonincreasing_Nd} establishes
Proposition~\ref{Prop.monotonicity} stated in the introduction.

\begin{Proposition}\label{prop:Laplacian->Bessel}
Let $d\ge3$. For all $z\in\mathbb{C}\setminus[0,\infty)$, we have
\begin{align}
N_d(z)=\sup_{\ell\ge0} \big \| r^{-1} \big( h_{\nu_{\ell,d}}- z\big)^{-1} r^{-1} \big\|_{L^2(\mathbb{R}_+) \to L^2(\mathbb{R}_+)} ,
\label{eq:Nd_Lambdanu0}
\end{align}
where $\nu_{\ell,d} := \ell + \frac{d}{2} - 1$.
In particular, the maps
\begin{equation}\label{eq:nonincreasing_Nd}
d\mapsto N_{2d+1}(z) \quad\text{ and }\quad  d\mapsto N_{2d+2}(z) 
\end{equation}
are non-increasing on $\mathbb{N}^*:=\{1,2,3,\dots\}$. 
\end{Proposition}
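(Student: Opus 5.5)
We will prove \eqref{eq:Nd_Lambdanu0} by decomposing $L^2(\Real^d)$ in spherical harmonics and then read off the monotonicity from the structure of the index sets. First we pass to polar coordinates through the unitary map $L^2(\Real^d)\to L^2(\mathbb{R}_+)\otimes L^2(\mathbb{S}^{d-1})$, $u\mapsto r^{(d-1)/2}u(r\sigma)$. Next we use the orthogonal decomposition $L^2(\mathbb{S}^{d-1})=\bigoplus_{\ell\ge0}\mathcal{Y}_\ell$ into spherical harmonics of degree $\ell$, on which $-\Delta_{\mathbb{S}^{d-1}}=\ell(\ell+d-2)$. Under these identifications $-\Delta$ acts on $L^2(\mathbb{R}_+)\otimes\mathcal{Y}_\ell$ as
$$-\partial_r^2+\frac{(d-1)(d-3)/4+\ell(\ell+d-2)}{r^2}\otimes 1 .$$
The constant in the numerator equals $\nu_{\ell,d}^2-\frac14$, so this operator is $h_{\nu_{\ell,d}}\otimes 1$.

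Some care is needed with the self-adjoint realisation. The operator $-\Delta$ on $H^2(\Real^d)$, $d\ge3$, is reduced by each channel. Its restriction to a channel is a non-negative self-adjoint extension of the operator defined on $C_0^\infty$. For $\nu_{\ell,d}\ge1$ this extension is unique. The only remaining case is $d=3$, $\ell=0$, where $\nu=\frac12$. There we check that $H^2$ radial functions satisfy $u(r)\sim c\,r^{1/2+\nu}=c\,r$ near $0$, since $ru(r)$ vanishes at $0$; this matches the Friedrichs domain described in Section~\ref{subsec:basic}. A simpler route is to note that $H^2(\Real^d)$ is the form-domain closure of $C_0^\infty(\Real^d\setminus\{0\})$ for $d\ge3$, so $-\Delta$ is itself the Friedrichs extension and commutes with the channel projections.

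The multiplication operators $|x|^{-1}$ act as $r^{-1}\otimes1$, so they also commute with the decomposition. It follows that $|x|^{-1}(-\Delta-z)^{-1}|x|^{-1}$ is unitarily equivalent to $\bigoplus_\ell r^{-1}(h_{\nu_{\ell,d}}-z)^{-1}r^{-1}\otimes 1_{\mathcal{Y}_\ell}$. The norm of an orthogonal direct sum is the supremum of the norms of the blocks, which gives \eqref{eq:Nd_Lambdanu0}. To make this rigorous we work on the dense set $C_0^\infty(\Real^d\setminus\{0\})$, using the boundedness of each block from Lemma~\ref{lm:bounded-op-Bessel} and the fact that $\nu_{\ell,d}\ge\frac12$.

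For the monotonicity, the key observation is that $\nu_{\ell,d+2}=\ell+\frac d2=\nu_{\ell+1,d}$. Hence $\{\nu_{\ell,d+2}\}_{\ell\ge0}=\{\nu_{\ell,d}\}_{\ell\ge1}\subset\{\nu_{\ell,d}\}_{\ell\ge0}$. The supremum over the smaller set is no larger, so $N_{d+2}(z)\le N_d(z)$, which is exactly the claim for odd and even dimensions separately. The main obstacle is the identification of the operator domains channel by channel, especially the borderline case $d=3$, $\ell=0$; the rest is bookkeeping.
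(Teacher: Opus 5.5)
Your proof is correct and follows essentially the same route as the paper. Both reduce $|x|^{-1}(-\Delta-z)^{-1}|x|^{-1}$ unitarily to the orthogonal sum of the channel operators $r^{-1}(h_{\nu_{\ell,d}}-z)^{-1}r^{-1}\otimes 1$, take the norm as the supremum of the block norms, and get monotonicity from $\nu_{\ell,d+2}=\nu_{\ell+1,d}$; you spell out that step and the choice of self-adjoint realisation in each channel more explicitly than the paper does. One small slip: in your ``simpler route'' the relevant fact is that $C_0^\infty(\Real^d\setminus\{0\})$ is dense in the form domain $H^1(\Real^d)$, not $H^2(\Real^d)$.
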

\begin{proof}
By using the spherical coordinates $(r,\sigma)$, 
let 
\begin{equation*}
U_1:L^2(\mathbb{R}^d)
\to 
L^2(\mathbb{R}_+,r^{d-1}\der r)\otimes L^2(\mathbb{S}^{d-1},\der\sigma)
\end{equation*}
be the unitary operator defined by 
$(U_1f)(r,\sigma) := f(r\sigma)$ and recall that
\begin{equation}\label{spherical}
U_1\Delta U_1^*=\partial_r^2+\frac{d-1}{r}\partial_r+\frac{1}{r^2}\Delta_{\mathbb{S}^{d-1}},
\end{equation}
in the sense of quadratic forms in $\sii(\Real^d)$
defined on $C_0^\infty(\Real^d)$, where $\Delta_{\mathbb{S}^{d-1}}$ stands for the Laplace--Beltrami operator on the sphere $\mathbb{S}^{d-1}$. Denote by $\lambda_\ell:=-\ell^2-\ell(d-2)$, $\ell\ge0$, the eigenvalues of $\Delta_{\mathbb{S}^{d-1}}$ and let $K_\ell$ be the associated eigenspaces. 

Let
\begin{equation}\label{U2}
U_2:L^2(\mathbb{R}_+,r^{d-1} \der r)\otimes L^2(\mathbb{S}^{d-1},\der\sigma)\to\bigoplus_{\ell=0}^{\infty}L^2(\mathbb{R}_+,r^{d-1}\der r)\otimes K_\ell
\end{equation}
be defined by $U_2(f\otimes\varphi)=\bigoplus_{\ell=0}^\infty(f\otimes\pi_{K_\ell}\varphi)$ and extended by linearity, where $\pi_{K_\ell}$ stands for the orthogonal projection onto $K_\ell$ in $L^2(\mathbb{S}^{d-1},\der\sigma)$. Then $U_2$ is a unitary operator and
\begin{equation*}
U_2\Big(\partial_r^2+\frac{d-1}{r}\partial_r+\frac{1}{r^2}\Delta_{\mathbb{S}^{d-1}}\Big)U_2^*=\bigoplus_{\ell=0}^\infty\Big(\partial_r^2+\frac{d-1}{r}\partial_r+\frac{\lambda_\ell}{r^2}\Big)\otimes 1_{K_\ell}.
\end{equation*}

Now let
\begin{equation*}
u_3: L^2(\mathbb{R}_+,r^{d-1}\der r)\to L^2(\mathbb{R}_+,\der r)
=\sii(\Real_+)
\end{equation*}
be the unitary operator defined by $(u_3f)(r)= r^{(d-1)/2}f$. Then
\begin{equation*}
u_3 \Big ( \partial_r^2+\frac{d-1}{r}\partial_r+\frac{\lambda_\ell}{r^2} \Big ) u_3^* = \partial_r^2-\frac{\nu_\ell^2-\frac14}{r^2} , \qquad \nu_{\ell,d} := \ell + \frac{d}{2} - 1,
\end{equation*}
for all $\ell\ge0$.

Finally, let 
\begin{equation}\label{eq:unitary_U}
U:=\Big(\bigoplus_{\ell=0}^\infty u_3\otimes1_{K_\ell}\Big)U_2U_1:L^2(\mathbb{R}^d)\to \bigoplus_{\ell=0}^{\infty}L^2(\mathbb{R}_+)\otimes K_\ell.
\end{equation}

Taking the Friedrichs extensions, we deduce from the previous identities that
\begin{equation*}
U(-\Delta)U^*=\bigoplus_{\ell=0}^\infty h_{\nu_{\ell,d}} \otimes 1_{K_\ell},
\end{equation*}
and therefore
\begin{equation}\label{therefore}
  |x|^{-1} (-\Delta - z)^{-1} |x|^{-1} = U \Big ( \bigoplus_{\ell=0}^\infty r^{-1} \big(h_{\nu_{\ell,d}} - z\big)^{-1} r^{-1} \otimes 1_{K_\ell} \Big)U^*.
\end{equation}
This yields
\begin{align}
N_d(z)&=\sup_{\ell\ge0} \big \| r^{-1} \big( h_{\nu_{\ell,d}} - z \big)^{-1} r^{-1} \otimes 1_{K_\ell} \big\|_{
L^2(\mathbb{R}_+)\otimes K_\ell\to L^2(\mathbb{R}_+) \otimes K_\ell}
\notag\\
&=\sup_{\ell\ge0} \big \| r^{-1} \big( h_{\nu_{\ell,d}} - z \big)^{-1} r^{-1} \big\|_{L^2(\mathbb{R}_+) \to L^2(\mathbb{R}_+)}. \label{eq:Nd_Lambdanu}
\end{align}
The fact that the maps in \eqref{eq:nonincreasing_Nd} are nonincreasing directly follows from the last equality since $\nu_{\ell,d} = \ell + \frac{d}{2} - 1$. 
\end{proof}

Theorem~\ref{thm:main_Laplacian} now follows by combining
Proposition~\ref{prop:Laplacian->Bessel} with
Theorem~\ref{thm:main_Bessel}.
\begin{proof}[Proof of Theorem~\ref{thm:main_Laplacian}]
For any $d\ge3$, we deduce from Proposition \ref{prop:Laplacian->Bessel} that
\begin{equation}\label{deduce}
\left\||x|^{-1} (-\Delta - z)^{-1} |x|^{-1}\right\|_{L^2(\Real^d) \to L^2(\Real^d)}\\
= \sup_{\ell\ge0} \big \| r^{-1} \big( h_{\nu_{\ell,d}} - z \big)^{-1} r^{-1} \big\|_{L^2(\mathbb{R}_+) \to L^2(\mathbb{R}_+)}.
\end{equation}
Now if $d=3$, then $\nu_{0,3}=\frac12$ and $\nu_{\ell,3}\ge\frac32>\frac{1}{\sqrt{3}}$ for $\ell\ge1$. Thus Theorem \ref{thm:main_Bessel} gives
\begin{align*}
 \sup_{\ell\ge0} \big \| r^{-1} \big( h_{\nu_{\ell,d}} - z \big)^{-1} r^{-1} \big\|_{L^2(\Real_+) \to L^2(\Real_+)}\le \max\Big(\frac{2}{\nu_{0,3}},\sup_{\ell\ge1}\frac{1}{\sqrt{\nu_{\ell,3}^2-\frac14}}\Big)=4.
\end{align*}
Likewise, if $d\ge4$, then Theorem \ref{thm:main_Bessel} gives
\begin{align*}
 \sup_{\ell\ge0} \big \| r^{-1} \big( h_{\nu_{\ell,d}} - z \big)^{-1} r^{-1} \big\|_{L^2(\Real_+) \to L^2(\Real_+)}
 \le \sup_{\ell\ge0}\frac{1}{\sqrt{\nu_{\ell,d}^2-\frac14}}=\frac{1}{\sqrt{\nu_{0,d}^2-\frac14}}=\frac{2}{\sqrt{(d-1)(d-3)}}.
\end{align*}
This concludes the proof of Theorem~\ref{thm:main_Laplacian}.
\end{proof}

Similarly, Proposition~\ref{prop:lower-bound-Bessel-small-nu} 
gives the lower bound in the lowest channel for any $z\in\mathbb{C}\setminus[0,\infty)$.
\begin{Proposition}\label{pro:opt3D}
  Let $d\ge3$ and let $z\in\mathbb{C}\setminus[0,\infty)$.
  Then
  \begin{equation}\label{eq:belowb}
    \left\|
      |x|^{-1}(-\Delta-z)^{-1}|x|^{-1}
    \right\|_{L^2(\Real^d)\to L^2(\Real^d)}
    \ge \left(\frac d2-1\right)^{-2}.
  \end{equation}
\end{Proposition}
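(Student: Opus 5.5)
The plan is to combine the partial wave reduction of Proposition~\ref{prop:Laplacian->Bessel} with the one-channel lower bound of Proposition~\ref{prop:lower-bound-Bessel-small-nu}. Fix $d\ge3$ and $z\in\Com\setminus[0,\infty)$. By the identity \eqref{eq:Nd_Lambdanu0}, $N_d(z)$ is the supremum over $\ell\ge0$ of the channel norms $\|r^{-1}(h_{\nu_{\ell,d}}-z)^{-1}r^{-1}\|_{L^2(\Real_+)\to L^2(\Real_+)}$. Hence $N_d(z)$ is bounded below by the $\ell=0$ term, for which $\nu_{0,d}=\frac d2-1>0$ because $d\ge3$.

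We then apply Proposition~\ref{prop:lower-bound-Bessel-small-nu} with $\nu=\nu_{0,d}$. It holds for every $\nu>0$ and every $z\in\Com\setminus[0,\infty)$, and gives
\begin{equation*}
\big\|r^{-1}(h_{\nu_{0,d}}-z)^{-1}r^{-1}\big\|_{L^2(\Real_+)\to L^2(\Real_+)}\ge \nu_{0,d}^{-2}=\Big(\frac d2-1\Big)^{-2}.
\end{equation*}
Combining this with the previous paragraph yields \eqref{eq:belowb}.

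Since both ingredients are already established, there is no real obstacle. The one point to check is that the channel decomposition in the proof of Proposition~\ref{prop:Laplacian->Bessel} really gives the norm as a supremum over channels, and not merely an upper bound. This holds because the operator in \eqref{therefore} is unitarily equivalent to a direct sum, and each $K_\ell$ is nontrivial; in particular $K_0$ consists of the constants. Equivalently, one can argue concretely: take the radial trial state $U^*(r^{-1}\cdot\text{quasimode}\otimes 1)$, where the quasimode is $v_\sigma(z)$ from the proof of Proposition~\ref{prop:lower-bound-Bessel-small-nu}. This transfers the logarithmic trial state directly to $\Real^d$ and reproduces the same ratio, which tends to $\nu_{0,d}^{-2}$ as $\sigma\to0$.

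Specialising to $d=3$ gives $\nu_{0,3}=\frac12$, so the lower bound equals $4$. Together with Theorem~\ref{thm:main_Laplacian}, this yields the sharp identity \eqref{eq:sharpdim3}.
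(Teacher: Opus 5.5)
Your proposal is correct and is essentially the paper's proof. The channel identity \eqref{deduce} is an equality, as you rightly note, so it bounds the weighted resolvent norm below by the $\ell=0$ channel, and Proposition~\ref{prop:lower-bound-Bessel-small-nu} with $\nu=\nu_{0,d}=\frac d2-1$ then gives $(\frac d2-1)^{-2}$. The only slip is in your optional concrete variant: the trial vector in $L^2(\Real^d)$ should be $U^*(v_\sigma(z)\otimes 1)$, without the extra factor $r^{-1}$, but that aside is not needed.
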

\begin{proof}
Let $z\in\mathbb{C}\setminus[0,\infty)$. Using \eqref{deduce} and Proposition~\ref{prop:lower-bound-Bessel-small-nu}, we obtain 
\begin{align*}
\left\||x|^{-1} (-\Delta - z)^{-1} |x|^{-1}\right\|_{L^2(\Real^d) \to L^2(\Real^d)} &= \sup_{\ell\ge0} \big \| r^{-1} \big( h_{\nu_{\ell,d}} - z \big)^{-1} r^{-1} \big\|_{L^2(\mathbb{R}_+) \to L^2(\mathbb{R}_+)}\\
&\ge \sup_{\ell\ge0} \frac{1}{\nu_{\ell,d}^2}=\frac{1}{\nu_{0,d}^2}= \left(\frac d2-1\right)^{-2} .
\end{align*}
This concludes the proof of Proposition~\ref{pro:opt3D}.
\end{proof}

By combining Proposition~\ref{pro:opt3D} and Theorem~\ref{thm:main_Laplacian},
we deduce the sharp result~\eqref{eq:sharpdim3}
in the three-dimensional case.
In dimension $4$ we have the bounds
\begin{equation}\label{eq:sharpdim4}
  1
  \le
  \sup_{z\in\Com\setminus[0,\infty)}
  \left\|
    |x|^{-1}(-\Delta-z)^{-1}|x|^{-1}
  \right\|_{L^2(\Real^4)\to L^2(\Real^4)}
  \le
  \frac{2}{\sqrt3} 
  \approx 1.15.
\end{equation}
A similar consequence of the argument used in the proof of 
Proposition~\ref{prop:Laplacian->Bessel} is the following remark regarding the two dimensional case.
\begin{Remark}
Suppose that $d=2$. Restricting the weighted resolvent to the orthogonal of the vectors with zero angular momentum, we have the following uniform resolvent estimate:
\begin{equation*}
\sup_{z\in\mathbb{C}\setminus[0,\infty)} \left\||x|^{-1} (-\Delta - z)^{-1} |x|^{-1} \Pi^\perp \right\|_{L^2(\Real^2) \to L^2(\Real^2)}\le \frac{2}{\sqrt{3}},
\end{equation*}
where $\Pi$ is the orthogonal projection onto $U^*(L^2(\mathbb{R}_+)\otimes K_0)U$ (see the proof of Proposition \ref{prop:Laplacian->Bessel} for the notations, in particular \eqref{eq:unitary_U} for the definition of $U$) and $\Pi^\perp=1-\Pi$.
\end{Remark}

The lower bound given by Proposition~\ref{pro:opt3D} can be improved for large $d$, by the same argument, using the lower bound for $\|A_\nu^\pm\|_{L^2(\mathbb{R}_+)\to L^2(\mathbb{R}_+)}$ obtained in Theorem \ref{Thm.lower}.
\begin{Proposition}\label{pro:larged}
  Let $d\ge3$. 
  One has
  \begin{equation*}
  \sup_{z\in\mathbb{C}\setminus[0,\infty)}  \left\|
      |x|^{-1}(-\Delta-z)^{-1}|x|^{-1}
    \right\|_{L^2(\Real^d)\to L^2(\Real^d)}
    \ge \left(\frac d2-1\right)^{-1}\left(\frac{\pi^2}{12}+\frac18\left(\frac d2-1\right)^{-2}\right)^{\frac12} .
  \end{equation*}
\end{Proposition}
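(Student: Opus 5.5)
The plan is to run the same argument as in Proposition~\ref{pro:opt3D}, replacing Proposition~\ref{prop:lower-bound-Bessel-small-nu} by Theorem~\ref{Thm.lower} in the lowest channel.

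First, take the supremum over $z\in\Com\setminus[0,\infty)$ in the channel identity~\eqref{deduce} from Proposition~\ref{prop:Laplacian->Bessel}. Since the supremum of a supremum may be interchanged, we get
\begin{equation*}
\sup_{z}\left\||x|^{-1}(-\Delta-z)^{-1}|x|^{-1}\right\|
=\sup_{\ell\ge0}\sup_{z}\big\|r^{-1}(h_{\nu_{\ell,d}}-z)^{-1}r^{-1}\big\|
\ge\sup_{z}\big\|r^{-1}(h_{\nu_{0,d}}-z)^{-1}r^{-1}\big\|.
\end{equation*}

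Second, by Remark~\ref{rk:scaling-hnu}, which combines the maximum principle of Proposition~\ref{prop:epsilon-Bessel} with scaling, the last supremum equals $\|A_{\nu}^\pm\|_{L^2\to L^2}$ with $\nu=\nu_{0,d}=\frac d2-1>0$.

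Third, apply Theorem~\ref{Thm.lower}:
\begin{equation*}
\|A_\nu^\pm\|^2\ge\frac{\pi^2}{12\nu^2}+\frac1{8\nu^4}=\nu^{-2}\Big(\frac{\pi^2}{12}+\frac18\nu^{-2}\Big).
\end{equation*}
Taking square roots and substituting $\nu=\frac d2-1$ gives exactly the stated bound.

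There is no real obstacle here. The only step needing care is justifying that the $z$-supremum of the one-channel norm equals $\|A_\nu^\pm\|$, and this is already recorded in Remark~\ref{rk:scaling-hnu} and in the equality stated in Theorem~\ref{thm:main_Bessel}.
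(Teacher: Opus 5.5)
Your proposal is correct and follows the route the paper indicates: restrict the channel identity \eqref{deduce} to the lowest channel $\nu_{0,d}=\frac d2-1$, identify the $z$-supremum of that channel with $\|A_{\nu}^\pm\|$ via Remark~\ref{rk:scaling-hnu}, and then insert Theorem~\ref{Thm.lower}. For this lower bound you only need the inequality $\|A_\nu^\pm\|\le\sup_{z}\|r^{-1}(h_\nu-z)^{-1}r^{-1}\|$, and that already follows from weak lower semicontinuity of the norm as $\varepsilon\to0$.
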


The estimates of Propositions~\ref{pro:opt3D} or~\ref{pro:larged} 
and Theorem~\ref{thm:main_Laplacian} do not match in the limit 
of large dimensions.  
To obtain the correct asymptotics, one can use the asymptotics of $\|A_\nu^\pm\|_{L^2(\mathbb{R}_+)\to L^2(\mathbb{R}_+)}$ obtained in Theorem \ref{thm:asymptotics}.
\begin{Proposition}\label{prop:asymptotics-d}
One has 
\begin{equation}\label{eq:aympt-d}
\sup_{z\in\Com\setminus[0,\infty)}
  \left\|
    |x|^{-1}(-\Delta-z)^{-1}|x|^{-1}
  \right\|_{L^2(\Real^d)\to L^2(\Real^d)}=\frac{2}{d} + O(d^{-3/2}),\qquad d\to\infty.
\end{equation}
\end{Proposition}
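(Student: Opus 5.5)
We combine the partial wave reduction of Proposition~\ref{prop:Laplacian->Bessel} with the channel asymptotics of Theorem~\ref{thm:asymptotics}, in the quantitative form~\eqref{eq:asymptotics-bound}. By Remark~\ref{rk:scaling-hnu} and Proposition~\ref{prop:epsilon-Bessel}, for each channel the supremum over $z$ of the weighted resolvent norm equals $\|A_\nu^\pm\|_{L^2\to L^2}$. Taking the supremum over $z$ in~\eqref{eq:Nd_Lambdanu0} and interchanging the two suprema (both are suprema, so this is legitimate) gives
\begin{equation*}
\sup_{z\in\Com\setminus[0,\infty)} N_d(z)=\sup_{\ell\ge0}\|A^\pm_{\nu_{\ell,d}}\|_{L^2\to L^2},\qquad \nu_{\ell,d}=\ell+\frac d2-1 .
\end{equation*}

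For the lower bound, we keep only the channel $\ell=0$, where $\nu_{0,d}=\frac d2-1$. Theorem~\ref{thm:asymptotics} then gives $\|A^\pm_{\nu_{0,d}}\|\ge \nu_{0,d}^{-1}-O(\nu_{0,d}^{-3/2})$. Since $\nu_{0,d}^{-1}=\frac{2}{d-2}=\frac2d+O(d^{-2})$ and $\nu_{0,d}^{-3/2}=O(d^{-3/2})$, the lower bound $\frac2d-O(d^{-3/2})$ follows.

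For the upper bound, we need a bound uniform in $\ell$, so the explicit remainder in~\eqref{eq:asymptotics-bound} is used rather than the bare $O$-symbol. For every $\nu>0$ it gives
\begin{equation*}
\|A_\nu^\pm\|\le \frac1\nu+\frac{\sqrt{3\psi^{(1)}(\nu+1)}}{2\sqrt2\,\nu}+\frac1{\nu^2}=:F(\nu).
\end{equation*}
The trigamma function $\psi^{(1)}$ is decreasing on $(0,\infty)$, so $F$ is decreasing in $\nu$. Hence $\sup_{\ell\ge0}F(\nu_{\ell,d})=F(\nu_{0,d})$. Using $\psi^{(1)}(\nu+1)\le 1/\nu$, which follows from $\psi^{(1)}(x)\le \frac1x+\frac1{x^2}$ or from a comparison with $\int_\nu^\infty t^{-2}\,\der t$, we get $F(\nu_{0,d})=\nu_{0,d}^{-1}+O(\nu_{0,d}^{-3/2})=\frac2d+O(d^{-3/2})$. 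Combining this with the lower bound proves~\eqref{eq:aympt-d}.

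The only genuinely delicate point is the uniformity in $\ell$ in the upper bound. It is handled by the monotonicity of the explicit majorant $F$, so that the worst channel is $\ell=0$. It is important not to rely on any monotonicity of $\nu\mapsto\|A_\nu^\pm\|$ itself, which has not been established.
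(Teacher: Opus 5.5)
Your proof is correct. The lower bound is the same as the paper's: keep the channel $\ell=0$ and apply \eqref{eq:asymptotics-bound}.

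For the upper bound you take a different route. The paper simply invokes Theorem~\ref{thm:main_Laplacian}. That gives $\frac{2}{\sqrt{(d-1)(d-3)}}=\frac2d+O(d^{-2})$, with uniformity in $\ell$ coming from the decreasing multiplier majorant $\mathcal{C}(\nu)=1/\sqrt{\nu^2-\frac14}$ of Theorem~\ref{thm:main_Bessel}. You instead get uniformity in $\ell$ from the decreasing explicit majorant $F(\nu)$. It is built from the Hilbert--Schmidt comparison of $B_\nu^\pm$ with $\nu^{-1}C^\pm$ and the cutoff bound \eqref{eq:rk4}.

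What your version buys is self-containedness: the large-$d$ asymptotics then follow entirely from the matrix representation of Section~\ref{sec:matrix}, with no appeal to the multiplier method. What the paper's version buys is brevity, since Theorem~\ref{thm:main_Laplacian} is already available. It also gives a sharper remainder from above, $O(d^{-2})$ instead of $O(d^{-3/2})$, although only $O(d^{-3/2})$ is claimed.

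Your auxiliary estimate $\psi^{(1)}(\nu+1)=\sum_{k\ge1}(\nu+k)^{-2}\le 1/\nu$ is fine, and so is the monotonicity of $F$ as a sum of positive decreasing terms. You are also right to avoid any monotonicity of $\nu\mapsto\|A_\nu^\pm\|$ itself.
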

\begin{proof}
By \eqref{eq:main_Bessel} and \eqref{deduce}, we have
\begin{align*}
\sup_{z\in\Com\setminus[0,\infty)} \left\||x|^{-1} (-\Delta - z)^{-1} |x|^{-1}\right\|_{L^2(\Real^d) \to L^2(\Real^d)}
&= \sup_{\ell\ge0} \big \| A_{\nu_{\ell,d}}^\pm \big\|_{L^2(\mathbb{R}_+) \to L^2(\mathbb{R}_+)} .
\end{align*}
Using Theorem~\ref{thm:main_Laplacian}, we deduce that
\begin{align*}
\big \| A_{\nu_{0,d}}^\pm \big\|_{L^2(\mathbb{R}_+) \to L^2(\mathbb{R}_+)} \le \sup_{z\in\Com\setminus[0,\infty)} \left\||x|^{-1} (-\Delta - z)^{-1} |x|^{-1}\right\|_{L^2(\Real^d) \to L^2(\Real^d)}
&\le \frac{2}{\sqrt{(d-1)(d-3)}},
\end{align*}
and therefore, by \eqref{eq:asymptotics-bound},
\begin{align*}
\frac{1}{\nu_{0,d}} - \frac{C}{\nu_{0,d}^{3/2}} \le \sup_{z\in\Com\setminus[0,\infty)} \left\||x|^{-1} (-\Delta - z)^{-1} |x|^{-1}\right\|_{L^2(\Real^d) \to L^2(\Real^d)} &\le \frac{2}{\sqrt{(d-1)(d-3)}},
\end{align*}
for some positive constant $C$. Since $\nu_{0,d}=\frac{d-1}{2}$, this implies \eqref{eq:aympt-d}.
\end{proof}

Finally, the same argument as in the proof of Proposition \ref{pro:opt3D} 
combined with Proposition~\ref{prop:main_Bessel_Imaginary} also allows us to recover Simon's identity~\eqref{Simon} proved originally in~\cite{Simon_1992}.
\begin{proof}[Proof of~\eqref{Simon}]
One has
\begin{align}\label{eq:sharp_imaginary_Laplaciand}
\sup_{z\in\mathbb{C}\setminus[0,\infty)}\left\| \Im |x|^{-1} (-\Delta - z)^{-1} |x|^{-1} \right\|_{L^2 \to L^2} &=\sup_{z\in\mathbb{C}\setminus[0,\infty)} \sup_{\ell\ge0} 
\big \| r^{-1} \Im \big( h_{\nu_{\ell,d}} - z \big)^{-1} r^{-1} \big\|_{L^2 \to L^2} \notag \\
&= \frac{\pi}{4\nu_{0,d}} = \frac{\pi}{2(d-2)},
\end{align}
which establishes the desired result.
\end{proof}

\subsection{Inverse square potentials}\label{sec:inverse square}

The same partial wave computation applies without change to
the operator~$H_{d,c}$ introduced in~\eqref{inverse}.
The restriction $c > -c_d$
is exactly the condition to guarantee
that the lowest effective Bessel order be positive.

\begin{proof}[Proof of Theorem~\ref{thm:inverse square}]
  We use the notation and the unitary transform $U$ from the proof
  of Proposition~\ref{prop:Laplacian->Bessel}.  Since the
  multiplication operator $|x|^{-2}$ is radial, it preserves the
  spherical harmonic channels.  After the radial conjugation by
  $u_3$, the $\ell$-th channel becomes
  \begin{equation*}
    -\partial_r^2
    +
   \frac{\nu_{\ell,d,c}^2-\frac14}{r^2}
    =
    h_{\nu_{\ell,d,c}},
  \end{equation*}
  where
$   \nu_{\ell,d,c}^2
    =
    \nu_{\ell,d}^2+c.
$    
  Thus, for all $z\in\Com\setminus[0,\infty)$,
  \begin{align*}
    &\left\|
      |x|^{-1}(H_{d,c}-z)^{-1}|x|^{-1}
    \right\|_{L^2(\Real^d)\to L^2(\Real^d)}
    \\
    &\qquad =
    \sup_{\ell\ge0}
    \left\|
      r^{-1}(h_{\nu_{\ell,d,c}}-z)^{-1}r^{-1}
    \right\|_{L^2(\Real_+)\to L^2(\Real_+)} .
  \end{align*}
  The sequence $\ell\mapsto\nu_{\ell,d,c}$ is increasing, and the
  function~$\mathcal{C}$ defined in \eqref{eq:def-Cnu} is decreasing.
  Theorem~\ref{thm:main_Bessel} 
  therefore gives
  the desired bound~\eqref{eq:inverse square-bound}.

  Finally, if $0<\nu_{0,d,c}\le1/2$, the lower bound
  in Proposition~\ref{prop:lower-bound-Bessel-small-nu}, applied to
  the lowest channel, gives
  \begin{equation*}
    \sup_{z\in\Com\setminus[0,\infty)}
    \left\|
      |x|^{-1}(H_{d,c}-z)^{-1}|x|^{-1}
    \right\|
    \ge
    \frac{1}{\nu_{0,d,c}^2}.
  \end{equation*}
  This is exactly $\mathcal{C}(\nu_{0,d,c})$ in the range
  $0<\nu_{0,d,c}\le1/2$.
\end{proof}

Finally, we establish identity~\eqref{eq:inverse square-imaginary}
for the imaginary part of the weighted resolvent 
from Theorem~\ref{thm:imaginary-critical}.

\begin{proof}[Proof of \eqref{eq:inverse square-imaginary}]
  The same channel decomposition gives, for every
  $z\in\Com\setminus[0,\infty)$,
  \begin{align*}
    &|x|^{-1}\Im(H_{d,c}-z)^{-1}|x|^{-1} 
    =
    U\Big(
      \bigoplus_{\ell=0}^\infty
      r^{-1}\Im(h_{\nu_{\ell,d,c}}-z)^{-1}r^{-1}
      \otimes 1_{K_\ell}
    \Big)U^* .
  \end{align*}
  Therefore Proposition~\ref{prop:main_Bessel_Imaginary} yields
  \begin{align*}
    &\sup_{z\in\Com\setminus[0,\infty)}
    \left\|
      |x|^{-1}\Im(H_{d,c}-z)^{-1}|x|^{-1}
    \right\| 
    =
    \sup_{\ell\ge0}
    \frac{\pi}{4\nu_{\ell,d,c}}
    =
    \frac{\pi}{4\nu_{0,d,c}},
  \end{align*}
  because $\ell\mapsto\nu_{\ell,d,c}$ is increasing.
\end{proof}

\subsection{The Aharonov--Bohm operators}\label{sec:AB}
Now we establish Theorems~\ref{Thm.AB} 
and Theorem~\ref{thm:imaginary-critical} 
for the two-dimensional magnetic operator~$-\Delta_\alpha$
introduced in~\eqref{AB.operator}.
\begin{proof}[Proof of Theorem~\ref{Thm.AB}]
We proceed as in the proof of Theorem~\ref{thm:main_Laplacian}
in Section~\ref{Sec.wave}.
In polar coordinates $(r,\sigma) \in \Real_+\times\Sphere^1$,
the Aharonov--Bohm operator reads (\cf~\eqref{spherical})
\begin{equation*}
  U_1(-\Delta_\alpha)U_1^*
  =
  -\partial_r^2
  - \frac{1}{r}\partial_r
  + \frac{1}{r^2}(-i\partial_\sigma-\alpha)^2 .
\end{equation*}
The spectrum of $(-i\partial_\sigma-\alpha)^2$ 
in $\sii(\Sphere^1)$ with domain $H^1(\Sphere^1)$ 
is purely discrete.
The eigenvalues and normalised eigenfunctions read
$\{(m-\alpha)^2\}_{m\in\Int}$
and $\{\phi_m(\sigma) := (2\pi)^{-1/2} e^{im\sigma}\}_{m \in \Int}$.
In the definition~\eqref{U2} of the unitary transform~$U_2$, 
let us replace the eigenspaces~$K_\ell$
by the one dimensional subspaces spanned by $\phi_m$
and let sum over $m \in \Int$ instead of $\ell \in\Nat$.
Similarly, let us perform the same replacements 
in the definition~\eqref{eq:unitary_U}
of the unitary transform~$U$.
Then (\cf~\eqref{therefore})
\begin{equation*} 
  |x|^{-1} (-\Delta_\alpha - z)^{-1} |x|^{-1} 
  = U \Big ( \bigoplus_{m\in\Int} r^{-1} \big(h_{\nu_{m}(\alpha)} - z\big)^{-1} r^{-1} \otimes 1_{\{\phi_m\}} \Big)U^*,
\end{equation*}
where
$$
  \nu_m(\alpha) := |m-\alpha| \,.
$$
Consequently (\cf~\eqref{deduce}),  
\begin{equation}\label{deduce.bis}
\left\||x|^{-1} (-\Delta_\alpha - z)^{-1} |x|^{-1}\right\|_{L^2(\Real^2) \to L^2(\Real^2)}
= \sup_{m \in \Int} \big \| r^{-1} \big( h_{\nu_{m}(\alpha)} - z \big)^{-1} r^{-1} \big\|_{L^2 \to L^2}.
\end{equation}
This identity is an analogue of Proposition~\ref{prop:Laplacian->Bessel}.

To apply Theorem~\ref{thm:main_Bessel}, notice that,
for any $\alpha \not\in \Int$, there exists a unique $m_\alpha \in \Int$
such that $\alpha \in (m_\alpha,m_{\alpha+1})$. 
For all integer $m < m_\alpha$ or $m > m_{\alpha+1}$,
one has $\nu_m(\alpha) > 1 > \frac{1}{\sqrt{3}}$,
so the upper bound of~\eqref{eq:main_Bessel} 
with $\nu := \nu_m(\alpha)$
is less than $\frac{1}{\sqrt{1-\frac{1}{4}}} < 4$. 
If $\dist(\alpha,m_\alpha) < \dist(\alpha,m_{\alpha+1})$,
then $\nu_{m_{\alpha+1}}(\alpha) \in (\frac{1}{2},1)$,
so the upper bound of~\eqref{eq:main_Bessel} 
with $\nu := \nu_{m_{\alpha+1}}(\alpha)$
(respectively, with $\nu := \nu_{m_{\alpha}}(\alpha)$)
is less than
$\frac{2}{\frac{1}{2}} = 4$
or
$\frac{1}{\sqrt{\frac{1}{3}-\frac{1}{4}}} < 4$ 
(respectively, equals 
$
  \frac{1}{\dist(m_\alpha,\alpha)^2} > 4
$).
If $\dist(\alpha,m_\alpha) > \dist(\alpha,m_{\alpha+1})$,
the role of~$m_\alpha$ and $m_{\alpha+1}$ is exchanged
in the previous argument.
Finally, if $\dist(\alpha,m_\alpha) = \dist(\alpha,m_{\alpha+1})$,
then 
$
  \nu_{m_{\alpha}}(\alpha)
  = \frac{1}{2} = \nu_{m_{\alpha+1}}(\alpha)
$,
so the upper bound of~\eqref{eq:main_Bessel} 
with $\nu := \nu_{m_{\alpha+1}}(\alpha) = \nu_{m_{\alpha+1}}(\alpha)$
is given by $\frac{1}{\dist(m_\alpha,\alpha)^2} = 4$.
In summary, applying Theorem~\ref{thm:main_Bessel}, we get
\begin{equation*}
 \sup_{z \in \Com\setminus [0,\infty)} 
 \big \| r^{-1} \big( h_{\nu_{m}(\alpha)} - z \big)^{-1} r^{-1} 
 \big\|_{L^2 \to L^2}
 \leq \frac{1}{\dist(\alpha,\Int)^2}
\end{equation*}
Since this bound is independent of~$m$, 
together with equality~\eqref{deduce.bis} 
it implies the upper bound of Theorem~\ref{Thm.AB}.

To prove the lower bound of Theorem~\ref{Thm.AB},
we combine~\eqref{deduce.bis} and
Proposition~\ref{prop:lower-bound-Bessel-small-nu}
as follows:
\begin{align*} 
\sup_{z \in \Com\setminus [0,\infty)} 
\sup_{m\in\Int} 
\big \| r^{-1} \big( h_{\nu_{m}(\alpha)} - z \big)^{-1} r^{-1} 
\big\|_{L^2 \to L^2} 
&\geq 
\sup_{z \in \Com\setminus [0,\infty)} \big \| r^{-1} \big( h_{\nu_{m_*}(\alpha)} - z \big)^{-1} r^{-1} \big\|_{L^2 \to L^2} 
\\
& \geq 
\frac{1}{\nu_{m_*(\alpha)}^2}
\,,
\end{align*}
where $m_* \in \Int$ is such that $\dist(\alpha,\Int) = |m_*-\alpha|$. 
This together with equality~\eqref{deduce.bis} 
implies the lower bound of Theorem~\ref{Thm.AB}.
\end{proof}

Finally, we establish identity~\eqref{eq:AB-imaginary}
for the imaginary part of the weighted resolvent 
from Theorem~\ref{thm:imaginary-critical}.

\begin{proof}[Proof of \eqref{eq:AB-imaginary}]
Using the decomposition in the proof of Theorem~\ref{Thm.AB},
we have
\begin{align*}
  &\sup_{z\in\Com\setminus[0,\infty)}
  \left\|
    |x|^{-1}\Im(-\Delta_\alpha-z)^{-1}|x|^{-1}
  \right\| 
  =
  \sup_{m\in\Int}
  \sup_{z\in\Com\setminus[0,\infty)}
  \left\|
    r^{-1}\Im(h_{\nu_m(\alpha)}-z)^{-1}r^{-1}
  \right\|_{L^2\to L^2}.
\end{align*}
Proposition~\ref{prop:main_Bessel_Imaginary} gives
\begin{equation*}
  \sup_{m\in\Int}\frac{\pi}{4\nu_m(\alpha)}
  =
  \frac{\pi}{4\,\dist(\alpha,\Int)} ,
\end{equation*}
which proves \eqref{eq:AB-imaginary}.
\end{proof}

\section{The multiplier method for the Laplacian}\label{Sec.multipliers}
%
This final section records a direct route for proving
Theorem~\ref{thm:main_Laplacian}.  
It is kept because it represents an alternative way 
to establish the quantitative uniform resolvent estimate,
which admits a generalisation to potential perturbations 
which are not necessarily radial.

\subsection{Preliminaries}\label{Sec.pre}

We begin by justifying that for any
$z\in\mathbb{C}\setminus[0,\infty)$,
$|x|^{-1} (-\Delta - z)^{-1} |x|^{-1}$ indeed extends to a
bounded operator on $L^2 := \sii(\Real^d)$.
\begin{Lemma}\label{lm:bounded-op}
Let $d\ge3$ and $z\in\mathbb{C}\setminus[0,\infty)$. Then
\begin{equation*}
\left\| |x|^{-1} (-\Delta - z)^{-1} |x|^{-1}  \right\|_{\sii \to \sii} \le \Big( \frac{2}{d-2} \Big )^2\Big( 1 + \frac{|z|}{\dist(z,[0,\infty))} \Big).
 \end{equation*}
\end{Lemma}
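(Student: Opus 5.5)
The plan is to mimic the proof of Lemma~\ref{lm:bounded-op-Bessel} directly in $\Real^d$, factorising the weighted resolvent as
\begin{equation*}
  |x|^{-1}(-\Delta-z)^{-1}|x|^{-1}
  =
  \big(|x|^{-1}(-\Delta)^{-1/2}\big)\,
  \big((-\Delta)(-\Delta-z)^{-1}\big)\,
  \big((-\Delta)^{-1/2}|x|^{-1}\big),
\end{equation*}
first on a dense set and then extending by continuity. The two outer factors will be controlled by the classical Hardy inequality, and the middle factor by the spectral theorem.

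The first step is to record the Hardy inequality in $\Real^d$, $d\ge3$:
\begin{equation*}
  \int_{\Real^d}\frac{|u|^2}{|x|^2}\,\der x \le \Big(\frac{2}{d-2}\Big)^2\int_{\Real^d}|\nabla u|^2\,\der x,
  \qquad u\in C_0^\infty(\Real^d).
\end{equation*}
This is the inequality $-\Delta\ge c_d|x|^{-2}$ quoted in the introduction. Alternatively, it follows from the channel decomposition of Proposition~\ref{prop:Laplacian->Bessel} together with Lemma~\ref{Lem:csq_Hardy}, since $r^{-2}\le\nu_{\ell,d}^{-2}h_{\nu_{\ell,d}}\le\nu_{0,d}^{-2}h_{\nu_{\ell,d}}$ with $\nu_{0,d}=\frac d2-1$. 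By density in the form domain $H^1$, it gives
\begin{equation*}
  \big\||x|^{-1}(-\Delta)^{-1/2}\big\|\le \frac{2}{d-2}.
\end{equation*}
The adjoint bound $\|(-\Delta)^{-1/2}|x|^{-1}\|\le\frac2{d-2}$ follows by taking adjoints. Here $(-\Delta)^{-1/2}$ is defined by functional calculus, and it is injective because $-\Delta$ has no kernel.

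The second step bounds the middle factor. By the spectral theorem,
\begin{equation*}
  \big\|(-\Delta)(-\Delta-z)^{-1}\big\|
  =\sup_{\lambda\ge0}\frac{\lambda}{|\lambda-z|}
  \le 1+\sup_{\lambda\ge0}\frac{|z|}{|\lambda-z|}
  = 1+\frac{|z|}{\dist(z,[0,\infty))}.
\end{equation*}
Multiplying the three bounds gives the stated constant.

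The only delicate point, which I expect to be the main obstacle, is justifying the factorisation rigorously. I would take $\varphi,\phi\in C_0^\infty(\Real^d\setminus\{0\})$, so that $|x|^{-1}\varphi\in L^2$. Then I would write
\begin{equation*}
  \langle\phi,|x|^{-1}(-\Delta-z)^{-1}|x|^{-1}\varphi\rangle
  =
  \langle (-\Delta)^{-1/2}|x|^{-1}\phi,\,(-\Delta)(-\Delta-z)^{-1}(-\Delta)^{-1/2}|x|^{-1}\varphi\rangle.
\end{equation*}
This requires $|x|^{-1}\varphi$ to lie in the domain of $(-\Delta)^{-1/2}$. That membership follows from the duality form of Hardy's inequality: $|\langle |x|^{-1}\varphi,u\rangle|\le \frac{2}{d-2}\|\varphi\|\,\|\nabla u\|$, so $|x|^{-1}\varphi\in \dot H^{-1}$. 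The sesquilinear bound then extends from this dense subspace to all of $L^2$, which yields the lemma.
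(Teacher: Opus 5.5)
Your proposal is correct and follows the paper's proof: you factor the weighted resolvent through $(-\Delta)^{\pm1/2}$, bound the two outer factors by $\frac{2}{d-2}$ using Hardy's inequality, and bound the middle factor $(-\Delta)(-\Delta-z)^{-1}$ by $1+|z|/\dist(z,[0,\infty))$ via the spectral theorem. Your extra justification of the factorisation is a useful addition that the paper leaves implicit: the sesquilinear form on $C_0^\infty(\Real^d\setminus\{0\})$, together with the duality argument placing $|x|^{-1}\varphi$ in the domain of $(-\Delta)^{-1/2}$.
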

\begin{proof}
It suffices to combine Hardy's inequality (valid for every $d \geq 3$)
\begin{equation}\label{Hardy} 
  \forall \psi \in H^1(\Real^d)
  \,, \qquad
  \|\nabla \psi\|_{\sii}^2
  \geq \left( \frac{d-2}{2} \right)^2
  \big\||x|^{-1} \psi\big\|_{\sii}
  \,,
\end{equation}
which implies
\begin{equation}\label{eq:csq_Hardy}
\big\||x|^{-1}(-\Delta)^{-\frac12}\big\|_{\sii \to \sii}=\big\|(-\Delta)^{-\frac12}|x|^{-1}\big\|_{\sii \to \sii}\le\frac{2}{d-2},
\end{equation}
and the estimate
\begin{equation*}
\left\| (-\Delta) (-\Delta - z)^{-1} \right\|_{\sii \to \sii} \le 1 + |z|\left\|(-\Delta - z)^{-1}\right\|_{\sii \to \sii} \le 1 + \frac{|z|}{\dist(z,[0,\infty))}.
\end{equation*}
\end{proof}

Next we establish another easy result showing that if $\Re z\le0$, $z\neq0$, the norm of the weighted resolvent $|x|^{-1} (-\Delta - z)^{-1} |x|^{-1}$ 
is bounded by the inverse of the constant in Hardy's inequality \eqref{Hardy}.
\begin{Lemma}\label{lem:Hardy}
Let $d\ge3$ and $z\in\mathbb{C}$ be such that $\Re z\le0$, $z\neq0$. Then
\begin{equation}\label{lem2.2} 
\left\| |x|^{-1} (-\Delta - z)^{-1} |x|^{-1}  \right\|_{\sii \to \sii} \le \Big( \frac{2}{d-2} \Big )^2.
 \end{equation}
\end{Lemma}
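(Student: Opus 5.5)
The plan is to copy the proof of Lemma~\ref{lem:Hardy-Bessel} in the $d$-dimensional setting, with the Hardy inequality~\eqref{Hardy} in place of its one-dimensional version.

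\emph{Step 1: an a priori bound for $\psi$ in the domain.} Fix $z$ with $\Re z\le0$, $z\neq0$, and take $\psi\in H^2(\Real^d)=\mathcal{D}(-\Delta)$ such that $|x|(-\Delta-z)\psi\in L^2$. By~\eqref{Hardy} and $\Re z\le 0$,
\begin{equation*}
  \Big(\frac{d-2}{2}\Big)^2\big\||x|^{-1}\psi\big\|_{L^2}^2
  \le \|\nabla\psi\|_{L^2}^2-\Re z\,\|\psi\|_{L^2}^2
  =\Re\langle\psi,(-\Delta-z)\psi\rangle_{L^2}.
\end{equation*}
Writing $\langle\psi,(-\Delta-z)\psi\rangle=\langle |x|^{-1}\psi,|x|(-\Delta-z)\psi\rangle$ and applying Cauchy--Schwarz, the right-hand side is at most $\||x|^{-1}\psi\|\,\||x|(-\Delta-z)\psi\|$. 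If $\||x|^{-1}\psi\|\neq0$ we divide by it (otherwise there is nothing to prove) and obtain
\begin{equation*}
  \big\||x|^{-1}\psi\big\|_{L^2}\le\Big(\frac{2}{d-2}\Big)^2\big\||x|(-\Delta-z)\psi\big\|_{L^2}.
\end{equation*}

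\emph{Step 2: apply it to the resolvent.} Take $\varphi\in C_0^\infty(\Real^d\setminus\{0\})$, so that $|x|^{-1}\varphi\in L^2$, and set $\psi:=(-\Delta-z)^{-1}|x|^{-1}\varphi\in H^2$. Then $|x|(-\Delta-z)\psi=\varphi\in L^2$, and Step 1 gives $\||x|^{-1}(-\Delta-z)^{-1}|x|^{-1}\varphi\|\le(2/(d-2))^2\|\varphi\|$. Since $C_0^\infty(\Real^d\setminus\{0\})$ is dense in $L^2$ for $d\ge3$, and the operator is already known to be bounded by Lemma~\ref{lm:bounded-op}, the bound passes to all of $L^2$.

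\emph{Main obstacle.} The only point that needs care is justifying the domain manipulations: that $\psi$ lies in $H^1$, so Hardy applies, and that $\Re\langle\psi,(-\Delta-z)\psi\rangle=\|\nabla\psi\|^2-\Re z\|\psi\|^2$ holds for $\psi\in H^2$. Both are standard facts, so I expect no real difficulty.
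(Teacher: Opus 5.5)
Your proposal is correct and follows essentially the same route as the paper. Both arguments combine Hardy's inequality, $\Re z\le0$ and Cauchy--Schwarz to obtain $\||x|^{-1}\psi\|\le(2/(d-2))^2\||x|(-\Delta-z)\psi\|$ for $\psi\in H^2(\Real^d)$, then apply this to $\psi=(-\Delta-z)^{-1}|x|^{-1}\varphi$ with $\varphi\in C_0^\infty(\Real^d\setminus\{0\})$ and conclude by density.
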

\begin{proof}
Writing, for all $\psi\in H^2(\Real^d)$,
\begin{equation*} 
\begin{aligned} 
  \|\nabla \psi\|_{\sii}^2
  \leq
  \|\nabla \psi\|_{\sii}^2 - \Re z \, \|\psi\|_{\sii}^2
  = \Re\langle\psi,(-\Delta-z)\psi\rangle_{\sii} 
  \leq \Big\|\frac{\psi}{|x|}\Big\|_{\sii} \, 
  \big\||x| (-\Delta-z)\psi\big\|_{\sii}
  \,,	
\end{aligned}
\end{equation*}
we deduce from Hardy's inequality~\eqref{Hardy} that
\begin{equation*}
\Big\|\frac{\psi}{|x|}\Big\|_{\sii}\le\Big( \frac{2}{d-2} \Big )^2\big\||x| (-\Delta-z)\psi\big\|_{\sii}.
\end{equation*}
Let $\varphi\in C_0^\infty(\Real^d\setminus\{0\})$. Applying the previous inequality to $\psi=(-\Delta-z)^{-1}|x|^{-1}\varphi$ (which belongs to $H^2(\Real^d)$), we obtain 
\begin{equation*}
\left\| |x|^{-1} (-\Delta - z)^{-1} |x|^{-1} \varphi \right\|_{\sii\to\sii} \le \Big( \frac{2}{d-2} \Big )^2 \|\varphi\|_{L^2}.
\end{equation*}
Since $C_0^\infty(\Real^d\setminus\{0\})$ is dense in~$\sii$, this concludes the proof.
\end{proof}
In order to estimate the norm of $|x|^{-1} (-\Delta - z)^{-1} |x|^{-1}$ for $\Re z>0$, $\Im z\neq0$, we will use the following proposition which will be proven thanks to the maximum modulus principle. 
\begin{Proposition}\label{prop:epsilon}
Let $d\ge 3$. The map
\begin{equation}\label{eq:map_epsilon0}
(0,\infty)\ni\varepsilon\mapsto\sup_{|\Im z|=\varepsilon \Re z}\left\| |x|^{-1} (-\Delta - z)^{-1} |x|^{-1} \right\|_{\sii \to \sii}
\end{equation}
is non-increasing and we have
\begin{align}\label{eq:sup_epsilon}
&\sup_{z\in\mathbb{C}\setminus[0,\infty)}
\left\|
  |x|^{-1} (-\Delta - z)^{-1} |x|^{-1} \right\|_{\sii \to \sii} =\lim_{\varepsilon\to0} \sup_{|\Im z|=\varepsilon \Re z}\left\|
  |x|^{-1} (-\Delta - z)^{-1} |x|^{-1}
  \right\|_{\sii \to \sii}.
\end{align}
\end{Proposition}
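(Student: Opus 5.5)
The plan is to follow the proof of Proposition~\ref{prop:epsilon-Bessel} almost verbatim, now in $L^2(\Real^d)$ with the sector
$S_\varepsilon := \Com\setminus\{z : |\Im z|\le\varepsilon\Re z\}$.

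\emph{Monotonicity and the limit.} We first note that $S_{\varepsilon_2}\subset S_{\varepsilon_1}$ for $\varepsilon_1<\varepsilon_2$. Hence $\varepsilon\mapsto\sup_{z\in S_\varepsilon}\||x|^{-1}(-\Delta-z)^{-1}|x|^{-1}\|$ is non-increasing. Each $z\in\Com\setminus[0,\infty)$ lies in $S_\varepsilon$ for $\varepsilon$ small, so the supremum over $\Com\setminus[0,\infty)$ equals the limit as $\varepsilon\to0$ of the suprema over $S_\varepsilon$. The main step is then to show that the supremum over $S_\varepsilon$ equals the supremum over its boundary rays $|\Im z|=\varepsilon\Re z$. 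Once this is done, both claims of the proposition follow: the monotonicity statement for \eqref{eq:map_epsilon0}, and \eqref{eq:sup_epsilon}.

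\emph{Reduction to the boundary rays.} Fix $\phi,\psi\in C_0^\infty(\Real^d\setminus\{0\})$ and set
\[
F(z):=\langle |x|^{-1}\phi,(-\Delta-z)^{-1}|x|^{-1}\psi\rangle .
\]
The functions $|x|^{-1}\phi$ and $|x|^{-1}\psi$ are again compactly supported, bounded, and vanish near the origin. The function $F$ is analytic in $S_\varepsilon$, and it is continuous up to the rays away from $z=0$, because the free kernel $c_d\,(\sqrt{-z}/|x-y|)^{d/2-1}K_{d/2-1}(\sqrt{-z}|x-y|)$ extends continuously to $z\in(0,\infty)\pm i0$, locally uniformly in $x\ne y$. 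We will need two facts about the endpoints.

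First, $F(z)\to0$ as $|z|\to\infty$ in $S_\varepsilon$. This is easiest via Fourier: $\widehat{|x|^{-1}\phi}$ is Schwartz, and on $S_\varepsilon$ one has $|\xi|^2-z$ bounded below by $c_\varepsilon(|\xi|^2+|z|)$. Dominated convergence then gives the decay.

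Second, $F(z)\to\langle|x|^{-1}\phi,(-\Delta)^{-1}|x|^{-1}\psi\rangle$ as $z\to0$ in $S_\varepsilon$. We get this from the same Fourier bound $|(|\xi|^2-z)^{-1}|\le C_\varepsilon|\xi|^{-2}$. Since $|\xi|^{-2}$ is locally integrable for $d\ge3$, dominated convergence applies.

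With these, the maximum modulus principle on the truncated sectors $S_\varepsilon\cap\{\delta<|z|<R\}$, followed by $\delta\to0$ and $R\to\infty$, gives $\sup_{S_\varepsilon}|F|=\sup_{\partial S_\varepsilon}|F|$. Here the value at $0$ is attained as a limit of boundary values, so it does not enlarge the supremum. Taking the supremum over normalised $\phi,\psi$ in the dense set $C_0^\infty(\Real^d\setminus\{0\})$ turns this into the corresponding equality of operator norms. This uses that the operators are bounded, which is Lemma~\ref{lm:bounded-op} for interior points.

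\emph{Main obstacle.} On the boundary rays the operators are boundary values. Their norms there should be understood as limits from inside, or equivalently via the partial wave decomposition of Proposition~\ref{prop:Laplacian->Bessel}. In fact, the cleanest route may be to apply Proposition~\ref{prop:epsilon-Bessel} channel by channel. Using \eqref{eq:Nd_Lambdanu0}, we exchange $\sup_\ell$ with $\sup_z$ (both are suprema, so they commute). Monotonicity in $\varepsilon$ and the limit identity then transfer directly from the one-dimensional statement. I would present this channel-wise argument as the proof, and keep the direct maximum-principle argument above as the justification that the sup over $S_\varepsilon$ is attained on the rays.
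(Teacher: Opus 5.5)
Your proof is correct, but the detour at the end rests on a misreading of the geometry. For $\varepsilon>0$, the boundary of $S_\varepsilon$ consists of the vertex $z=0$ and the two rays $\Im z=\pm\varepsilon\Re z$, $\Re z>0$, and these rays lie inside $\mathbb{C}\setminus[0,\infty)$. On them the weighted resolvent is an ordinary bounded operator (Lemma~\ref{lm:bounded-op}), and $F$ is analytic in a neighbourhood of every ray point. No boundary values or limiting absorption are involved, so the continuity of the free kernel up to $(0,\infty)\pm i0$ is beside the point. The only boundary point that needs a limit is the vertex, which you handle exactly as in Lemma~\ref{Lem:zero}.

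Consequently your direct maximum-principle argument is already a complete proof, and it is essentially the paper's proof. Your decay step is in fact leaner than the paper's. On $\overline{S_\varepsilon}\setminus\{0\}$ the bound $\bigl||\xi|^2-z\bigr|\ge c_\varepsilon(|\xi|^2+|z|)$ gives $|F(z)|\le C_\varepsilon|z|^{-1}$ at once. Lemma~\ref{Lem.decay} instead proves decay on all of $\mathbb{C}\setminus[0,\infty)$ via a time-integral representation, which is more than the maximum principle needs.

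The channel-wise argument you propose to present instead is also valid, and not circular:
\begin{itemize}
\item By Proposition~\ref{prop:Laplacian->Bessel}, the ray supremum equals $\sup_\ell g_\ell(\varepsilon)$.
\item By Proposition~\ref{prop:epsilon-Bessel}, each $g_\ell$ is non-increasing, and $\lim_{\varepsilon\to0}g_\ell(\varepsilon)$ is the full channel supremum.
\item A supremum of non-increasing functions is non-increasing.
\item Each limit $\varepsilon\to0$ is a supremum over $\varepsilon$, so it commutes with $\sup_\ell$ and with the $\sup_z$ in \eqref{eq:Nd_Lambdanu0}.
\end{itemize}

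What this route buys is brevity once Section~\ref{sec:Bessel} is available. What it costs is that it imports radial symmetry (the partial wave decomposition) and the Bessel kernel bounds behind Proposition~\ref{prop:epsilon-Bessel}. The point of Section~\ref{Sec.multipliers} is a direct route in $\Real^d$, using only Fourier analysis and Hardy's inequality, that can be adapted to non-radial perturbations where no channel decomposition exists. So the argument you relegated to a supporting role is the one you should present.
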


The proof of Proposition \ref{prop:epsilon} relies on the following well-known lemmata. 
\begin{Lemma}\label{Lem.decay}  
Let $\varphi,\psi \in C_0^\infty(\Real^d)$. Then 
\begin{equation*}
  \lim_{\stackrel[z \not\in [0,\infty)]{}{|z| \to \infty}}
  \left\langle
  \varphi, (-\Delta - z)^{-1} \psi 
  \right\rangle_{L^2} = 0.
\end{equation*}
\end{Lemma}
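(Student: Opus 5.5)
We prove the decay through the explicit integral kernel of the free resolvent, as in Lemma~\ref{Lem:decay-Bessel}. For $z\in\Com\setminus[0,\infty)$ write $z=k^2$ with $\Im k>0$. The kernel of $(-\Delta-z)^{-1}$ is
\begin{equation*}
G_z(x,y)=\frac{i}{4}\Big(\frac{k}{2\pi|x-y|}\Big)^{\frac d2-1}H^{(1)}_{\frac d2-1}(k|x-y|).
\end{equation*}
For $d\ge3$ we will use the pointwise bound
\begin{equation*}
|G_z(x,y)|\le C_d\,|x-y|^{2-d}\,\min\big(1,(|k||x-y|)^{\frac{d-3}{2}}\big)e^{-\Im k\,|x-y|}.
\end{equation*}
It follows from the small-argument behaviour $|H^{(1)}_\mu(w)|\lesssim |w|^{-\mu}$ and the large-argument behaviour $|H^{(1)}_\mu(w)|\lesssim |w|^{-1/2}e^{-\Im w}$, both valid uniformly for $\Im w\ge0$. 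This bound alone does not give decay when $z$ approaches the positive half-line, since there $\Im k$ may stay bounded.

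The cleanest route is therefore Fourier analysis rather than kernel estimates. We write
\begin{equation*}
\langle\varphi,(-\Delta-z)^{-1}\psi\rangle=\int_{\Real^d}\frac{\overline{\hat\varphi(\xi)}\hat\psi(\xi)}{|\xi|^2-z}\,\der\xi,
\end{equation*}
where $\hat\varphi,\hat\psi$ are Schwartz functions. We split $\Real^d$ into the region $\big||\xi|^2-|z|\big|\ge |z|/2$ and its complement.

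On the first region we have $\big||\xi|^2-z\big|\ge c|z|$, since $\big||\xi|^2-z\big|\ge \big||\xi|^2-|z|\big|$ up to a constant. Indeed, $\big||\xi|^2-z\big|\ge \max(|\Im z|,\,|\xi|^2-\Re z)$, and we treat the directions of $z$ separately:
\begin{itemize}
\item if $\Re z\le0$, then $\big||\xi|^2-z\big|\ge |z|/\sqrt2$ everywhere;
\item otherwise, we use $\big||\xi|^2-z\big|\ge \big||\xi|^2-\Re z\big|$ and $\big||\xi|^2-|z|\big|\le \big||\xi|^2-\Re z\big|+|\Im z|$.
\end{itemize}
In either case this part of the integral is $O(|z|^{-1}\|\hat\varphi\|_2\|\hat\psi\|_2)$.

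On the complement the set is the shell $|\xi|^2\in(|z|/2,3|z|/2)$, so $|\xi|\gtrsim|z|^{1/2}$. There the Schwartz decay gives $|\hat\varphi\hat\psi|\le C_N(1+|\xi|)^{-N}$. The difficulty is that $(|\xi|^2-z)^{-1}$ is not integrable uniformly as $z$ approaches $(0,\infty)$. This is the main obstacle.

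To handle it, we use polar coordinates $\rho=|\xi|$ and the radial function $F(\rho)=\int_{\Sphere^{d-1}}\overline{\hat\varphi}\hat\psi(\rho\omega)\rho^{d-1}\der\omega$. This function is smooth, and $F$ and $F'$ are bounded by $C_N\rho^{-N}$. The shell integral becomes a principal-value type integral $\int_I F(\rho)(\rho^2-z)^{-1}\der\rho$ over $I=(\sqrt{|z|/2},\sqrt{3|z|/2})$. Writing $(\rho^2-z)^{-1}=\frac{1}{2\rho}\partial_\rho\log(\rho^2-z)$ with the principal branch, we integrate by parts once. The logarithm is locally integrable uniformly in $z$, with $|\log(\rho^2-z)|\lesssim 1+\log|z|+|\log|\rho^2-z||$, and $\int_I|\log|\rho-\sqrt{\Re z}||\,\der\rho$ grows only like $|z|^{1/2}\log|z|$. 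The boundary terms and the remaining integral are therefore bounded by $C_N|z|^{-N}\log|z|$, which tends to $0$.

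Combining both regions gives the limit $0$ as $|z|\to\infty$, uniformly in the direction of $z$.
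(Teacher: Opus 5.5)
Your argument is correct, but it follows a genuinely different route from the paper.

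\textbf{The paper's route.} It writes $z=k^2$ and uses, on the Fourier side, the time representation $(a^2-k^2)^{-1}=\int_0^\infty a^{-1}\sin(at)\,e^{ikt}\,\der t$. It integrates by parts in $t$ to absorb a factor $k$. It then splits into $t\in(0,1)$ and $t>1$, integrating by parts twice in $|\xi|$ on the second piece. This gives in one stroke $|\langle\varphi,(-\Delta-z)^{-1}\psi\rangle|\le C|z|^{-1/2}$, with $C$ controlled by $H^2$ norms of $\hat\varphi,\hat\psi$. No case distinction on the direction of $z$ is needed.

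\textbf{Your route.} You keep the multiplier $(|\xi|^2-z)^{-1}$ and split $\xi$-space into two regions.
In the non-resonant region you show $\big||\xi|^2-z\big|\ge|z|/4$. Your two-case check is right, using $|z|-\Re z\le|\Im z|$.
On the resonant shell $|\xi|^2\in(|z|/2,3|z|/2)$, the Schwartz decay of $\overline{\hat\varphi}\hat\psi$ at $|\xi|\sim|z|^{1/2}$ gives an arbitrarily small prefactor. The non-uniform integrability of $(\rho^2-z)^{-1}$ is removed by one integration by parts against the principal logarithm. This logarithm is smooth along $\rho\mapsto\rho^2-z$ precisely because $z\notin[0,\infty)$.

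\textbf{Comparison.} Your route is elementary, works in every dimension, and gives the sharper rate $O(|z|^{-1})$. The paper's route is shorter and avoids the case analysis. Either is more than enough for the maximum modulus argument of Proposition~\ref{prop:epsilon}, which only needs the limit.

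\textbf{Two cosmetic points.}
First, your opening kernel bound is never used, and as written it is wrong for $d>3$. The large-argument asymptotics give $|x-y|^{2-d}(|k||x-y|)^{\frac{d-3}{2}}e^{-\Im k\,|x-y|}$, so the $\min$ should be a $\max$. Simply delete that paragraph.

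Second, the logarithm estimate you state is circular. What you actually need, on $I$ with $|z|\ge2$ and $\Re z>0$, are the two bounds $|\rho^2-z|\ge|\rho^2-\Re z|\ge\sqrt{|z|/2}\,|\rho-\sqrt{\Re z}|$ and $|\rho^2-z|\le3|z|$.
From these, $|\log(\rho^2-z)|\le\pi+\log(3|z|)+\big|\log|\rho-\sqrt{\Re z}|\big|$.
When $\Re z\le0$, you have simply $|\log(\rho^2-z)|\le\pi+\log(3|z|)$.
Integrating over $I$, which has length of order $|z|^{1/2}$, then gives the $|z|^{1/2}\log|z|$ growth you claim. The rest of your integration-by-parts bookkeeping (boundary terms where $|\rho^2-z|\ge|z|/2$, and the factor $\sup_I|\partial_\rho(F/2\rho)|\le C_N|z|^{-N/2}$) goes through as you describe.
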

\begin{proof}
If $\dist\big(z,\sigma(-\Delta)\big) \to \infty$,
then the result immediately follows from~\eqref{sa}.   
To cover the other situations at the same time, 
let us consider $z=k^2$ with $\Im k > 0$.
Inspired by \cite[Sec.~3.1.1]{Dyatlov-Zworski},
we employ the representation formula
\begin{equation*} 
  \frac{1}{a^2 - k^2} = \int_0^\infty \frac{\sin(a t)}{a} 
  \, e^{ikt} \, \der t
\end{equation*}
valid with every $a > 0$.
We apply the formula 
with the help of the Fourier transform
as follows: 
\begin{equation*} 
\begin{aligned} 
  |k| \, \left| \left\langle
  \varphi, (-\Delta - z)^{-1} \psi 
  \right\rangle_{L^2}\right|
 & = |k| \, 
  \left| \left\langle\hat\varphi, (|\cdot|^2 - k^2)^{-1} \hat\psi \right\rangle \right|\\
  &= 
  \left|
  \int_{\Real^d} \overline{\hat\varphi(\xi)} \, \hat\psi(\xi)   
  \int_0^\infty \frac{\sin(|\xi| t)}{|\xi|} 
  \, k \, e^{ikt} \, \der t \, \der\xi
  \right|
  \\
  &= 
  \left|
  \int_0^\infty
  \int_{\Real^d} \overline{\hat\varphi(\xi)} \, \hat\psi(\xi) \,   
  \cos(|\xi| t)
  \, e^{ikt} \, \der \xi \, \der t
  \right|
  \,,
\end{aligned}
\end{equation*}
where the second line follows by an integration by parts in~$t$  
and Fubini's theorem.
Note that $\hat\varphi, \hat\psi$ belong to the Schwartz space
$\mathscr{S}(\Real^d)$.
Splitting the integration in~$t$ over $(0,1)$ and $(1,\infty)$,
the first integral is bounded by a constant independent of~$k$:
\begin{equation*} 
\begin{aligned}  
  \left|
  \int_0^1
  \int_{\Real^d} \overline{\hat\varphi(\xi)} \, \hat\psi(\xi) \,  
  \cos(|\xi| t)
  \, e^{ikt} \, \der \xi \, \der t
  \right|
  &\leq \|\hat\varphi\|_{\sii(\Real^d)} \, \|\hat\psi\|_{\sii(\Real^d)} 
  \int_0^1 \der t
  &= \|\varphi\|_{\sii(\Real^d)} \, \|\psi\|_{\sii(\Real^d)} 
  \,.
\end{aligned}
\end{equation*}
For the second integral, we integrate by parts (twice)
with respect to~$|\xi|$ as follows:
\begin{equation*} 
\begin{aligned}  
  \left|
  \int_1^\infty
  \int_{\Real^d} \overline{\hat\varphi(\xi)} \, \hat\psi(\xi) \,  
  \cos(|\xi| t)
  \, e^{ikt} \, \der \xi \, \der t
  \right|
  &= \left|
  \int_1^\infty
  \int_{\Real^d} 
  \left(\frac{\xi}{|\xi|}\cdot\nabla\right)^2
  \left[ \overline{\hat\varphi(\xi)} \, \hat\psi(\xi) \right]  
  \frac{\sin(|\xi| t)}{t^2}
  \, e^{ikt} \, \der \xi \, \der t
  \right|
  \\
  &\leq \|\hat\varphi\|_{H^2(\Real^d)} \, \|\hat\psi\|_{H^2(\Real^d)}
  \int_1^\infty \frac{\der t}{t^2}
  = \|\hat\varphi\|_{H^2(\Real^d)} \, \|\hat\psi\|_{H^2(\Real^d)}
  \,.
\end{aligned}
\end{equation*}
In summary, there exists a positive constant~$C$
independent of~$z$ such that 
\begin{equation*}
   \left| \left\langle
  \varphi, (-\Delta - z)^{-1} \psi 
  \right\rangle_{L^2}\right| 
  \leq \frac{C}{\sqrt{|z|}} 
\end{equation*}
for every $z \in \Com \setminus [0,\infty)$.
(In fact,    
$ C \leq
\|\hat\varphi\|_{H^2(\Real^d)} \, \|\hat\psi\|_{H^2(\Real^d)}
$.)
This, in particular, implies the claim of the lemma.
\end{proof}

For any $0<\varepsilon<1$, we set
\begin{equation}\label{eq:def_Seps}
S_\varepsilon:=\mathbb{C}\setminus\{z\in\mathbb{C}\,|\,|\Im z|\le\varepsilon \, \Re z\}.
\end{equation}
\begin{Lemma}\label{Lem:zero}  
Let $\varphi,\psi \in C_0^\infty(\Real^d)$. Then 
\begin{equation}\label{eq:lemzero}
  \lim_{\stackrel[z\in S_\varepsilon]{}{z \to 0}}
  \left\langle
  \varphi, (-\Delta - z)^{-1} \psi 
  \right\rangle_{L^2} =   \left\langle
  \phi, (-\Delta )^{-1} \psi 
  \right\rangle_{L^2} .
\end{equation}
\end{Lemma}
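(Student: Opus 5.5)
The plan is to use dominated convergence on the Fourier side. For $\varphi,\psi\in C_0^\infty(\Real^d)$, Plancherel gives
\begin{equation*}
  \left\langle \varphi,(-\Delta-z)^{-1}\psi\right\rangle_{L^2}
  =
  \int_{\Real^d}\frac{\overline{\hat\varphi(\xi)}\,\hat\psi(\xi)}{|\xi|^2-z}\,\der\xi ,
\end{equation*}
and the claimed limit is the same integral with $z=0$:
\begin{equation*}
  \int_{\Real^d}|\xi|^{-2}\,\overline{\hat\varphi(\xi)}\,\hat\psi(\xi)\,\der\xi
  =
  \left\langle \varphi,(-\Delta)^{-1}\psi\right\rangle_{L^2} .
\end{equation*}
This right-hand side is finite because $|\xi|^{-2}\in L^1_{\mathrm{loc}}(\Real^d)$ for $d\ge3$ and $\hat\varphi,\hat\psi\in\mathscr S(\Real^d)$. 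Pointwise convergence of the integrand for every $\xi\ne0$ as $z\to0$ is immediate, so everything reduces to finding a dominating function.

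The key step, and the only real obstacle, is a uniform lower bound on $\bigl||\xi|^2-z\bigr|$ for $z\in S_\varepsilon$. I claim
\begin{equation*}
  \bigl||\xi|^2-z\bigr|\ge c_\varepsilon\,|\xi|^2 ,
  \qquad
  c_\varepsilon:=\frac{\varepsilon}{\sqrt{1+\varepsilon^2}} ,
\end{equation*}
for all $\xi\in\Real^d$ and $z\in S_\varepsilon$. Geometrically, $S_\varepsilon$ stays outside the closed sector $\{|\Im z|\le\varepsilon\Re z\}$ around the positive half-line, while $t=|\xi|^2$ lies on that half-line.

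To prove the claim, write $z=z_1+iz_2$ and split into cases.
\begin{itemize}
\item[(a)] If $z_1\le0$, then $|t-z|\ge t-z_1\ge t$.
\item[(b)] If $z_1>0$, then $|z_2|>\varepsilon z_1$, hence $|z|\le|z_2|\sqrt{1+\varepsilon^{-2}}$. Distinguish two sub-cases:
  \begin{itemize}
  \item[(b1)] If $t\le 2|z|$, then $|t-z|\ge|z_2|\ge c_\varepsilon|z|\ge \tfrac{c_\varepsilon}{2}t$.
  \item[(b2)] If $t>2|z|$, then $|t-z|\ge t-|z|\ge t/2$.
  \end{itemize}
\end{itemize}
In every case $|t-z|\ge \tfrac{c_\varepsilon}{2}\,t$. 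Equivalently, the angle between $z$ and the positive axis is bounded below, which gives the bound up to constants; the constant itself is irrelevant.

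With this lower bound, the integrand is dominated by $2c_\varepsilon^{-1}|\xi|^{-2}|\hat\varphi(\xi)||\hat\psi(\xi)|\in L^1(\Real^d)$, uniformly in $z\in S_\varepsilon$. Lebesgue's dominated convergence theorem then yields \eqref{eq:lemzero} (with the obvious typo $\phi=\varphi$ on the right-hand side).
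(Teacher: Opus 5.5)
Your proof is correct and follows the paper's argument closely: Plancherel, a uniform lower bound $\bigl||\xi|^2-z\bigr|\ge C_\varepsilon\,|\xi|^2$ on $S_\varepsilon$ obtained by splitting according to the sign of $\Re z$, and dominated convergence with the integrable majorant $C_\varepsilon^{-1}|\xi|^{-2}|\hat\varphi||\hat\psi|$. The only difference is that for $\Re z>0$ the paper completes the square in $\bigl||\xi|^2-z\bigr|^2$ to obtain the bound with your constant $c_\varepsilon$ itself, whereas your split into $t\le 2|z|$ and $t>2|z|$ loses a harmless factor $2$.
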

\begin{proof}
Let $z\in S_\varepsilon$ and write $z_1:=\Re z$, $z_2:=\Im z$. We have
\begin{align}
\left\langle
  \varphi, (-\Delta - z)^{-1} \psi 
  \right\rangle_{L^2} = \int_{\Real^d}\big(|\xi|^2-z\big)^{-1}\overline{\hat\varphi(\xi)}\hat\psi(\xi)\,\mathrm{d}\xi. \label{eq:zero0}
\end{align}
In order to apply Lebesgue's dominated convergence theorem, we bound $||\xi|^2-z|^{-1}$ as follows: If $z_1\le0$, then
\begin{equation}\label{eq:zero1}
\big||\xi|^2-z\big|^{-1}=\big(\big(|\xi|^2-z_1\big)^2+z_2^2\big)^{-\frac12}\le|\xi|^{-2}.
\end{equation}
If $0<z_1<\varepsilon^{-1}z_2$, we have
\begin{align*}
\big||\xi|^2-z\big|^2&=|\xi|^4-2z_1|\xi|^2+z_1^2+z_2^2\notag\\
&\ge|\xi|^4-2z_1|\xi|^2+(1+\varepsilon^2)z_1^2\notag\\
&=\Big[\big(1+\varepsilon^2\big)^{-\frac12}|\xi|^2-\big(1+\varepsilon^2\big)^{\frac12}z_1\Big]^2 + \varepsilon^2\big(1+\varepsilon^2\big)^{-1}|\xi|^4\notag\\
&\ge\varepsilon^2\big(1+\varepsilon^2\big)^{-1}|\xi|^4,
\end{align*}
and therefore
\begin{equation}
\big||\xi|^2-z\big|^{-1}\le\big(1+\varepsilon^2\big)^{-\frac12}\varepsilon^{-1}|\xi|^{-2}. \label{eq:zero2}
\end{equation}
Using \eqref{eq:zero1}, \eqref{eq:zero2} and the fact that $\xi\mapsto|\xi|^{-1}\hat\varphi(\xi)$, $\xi\mapsto|\xi|^{-1}\hat\psi(\xi)$ are square integrable (by Hardy's inequality~\eqref{Hardy}, since $\hat\varphi$, $\hat\psi$ belong to the Schwartz space $\mathscr{S}(\Real^d)$), we can apply Lebesgue's dominated convergence theorem in the integral in \eqref{eq:zero0}. This gives \eqref{eq:lemzero}.
\end{proof}

Now we are ready to prove Proposition \ref{prop:epsilon}. 
\begin{proof}[Proof of Proposition \ref{prop:epsilon}]
First we show that
\begin{align}\label{eq:sup_epsilon0}
&\sup_{z\in\mathbb{C}\setminus[0,\infty)}
\left\|
  |x|^{-1} (-\Delta - z)^{-1} |x|^{-1} \right\|_{\sii \to \sii} =\lim_{\varepsilon\to0} \sup_{z\in S_\varepsilon}\left\|
  |x|^{-1} (-\Delta - z)^{-1} |x|^{-1}
  \right\|_{\sii \to \sii}.
\end{align}
Since $S_{\varepsilon_2}\subset S_{\varepsilon_1}$ if $0<\varepsilon_1<\varepsilon_2<1$, the map 
\begin{equation}\label{eq:map_epsilon}
(0,\infty)\ni\varepsilon\mapsto\sup_{z\in S_\varepsilon}\left\| |x|^{-1} (-\Delta - z)^{-1} |x|^{-1} \right\|_{\sii \to \sii}
\end{equation}
is non-increasing. This implies
\begin{align*}
&\lim_{\varepsilon\to0} \sup_{z\in S_\varepsilon}\left\|
  |x|^{-1} (-\Delta - z)^{-1} |x|^{-1}
  \right\|_{\sii \to \sii}\le\sup_{z\in\mathbb{C}\setminus[0,\infty)}
\left\|
  |x|^{-1} (-\Delta - z)^{-1} |x|^{-1} \right\|_{\sii \to \sii}.
\end{align*}
To prove the converse inequality, we note that, for all $z_0\in\mathbb{C}\setminus[0,\infty)$ we have $z_0\in S_\varepsilon$ for $\varepsilon>0$ small enough and therefore
\begin{align*}
&\left\|
  |x|^{-1} (-\Delta - z_0)^{-1} |x|^{-1}
  \right\|_{\sii \to \sii}\le\lim_{\varepsilon\to0}\sup_{z\in S_\varepsilon}\left\|
  |x|^{-1} (-\Delta - z)^{-1} |x|^{-1}
  \right\|_{\sii \to \sii}.
\end{align*}
This implies \eqref{eq:sup_epsilon0}.

It remains to establish that, for all $\varepsilon>0$,
\begin{align}\label{eq::>epsilon}
&\sup_{|\Im z|=\varepsilon\Re z}\left\|
  |x|^{-1} (-\Delta - z)^{-1} |x|^{-1}
  \right\|_{\sii \to \sii}=\sup_{z\in S_\varepsilon}\left\|
  |x|^{-1} (-\Delta - z)^{-1} |x|^{-1}
  \right\|_{\sii \to \sii}.
\end{align}
 Let $\varepsilon>0$ and $\varphi,\psi\in C_0^\infty(\Real^d\setminus\{0\})$. The map
\begin{equation}
z\mapsto\left\langle\varphi,|x|^{-1}(-\Delta-z)^{-1}|x|^{-1}\psi\right\rangle_{L^2}
\end{equation}
is analytic on $S_\varepsilon$ and continuous up to the boundary (the continuity at $z=0$ follows from Lemma \ref{Lem:zero}). Moreover since $\varphi,\psi\in C_0^\infty(\Real^d\setminus\{0\})$, it follows from Lemma \ref{Lem.decay} that
\begin{equation}
\left|\left\langle\varphi,|x|^{-1}(-\Delta-z)^{-1}|x|^{-1}\psi\right\rangle_{L^2}\right|
\xrightarrow[|z|\to\infty]{}
0 .
\end{equation}
Therefore, by the maximum modulus principle,
\begin{align*}
\sup_{z\in S_\varepsilon}\left|\left\langle\varphi,|x|^{-1}(-\Delta-z)^{-1}|x|^{-1}\psi\right\rangle_{L^2}\right|=\sup_{|\Im z|=\varepsilon\Re z}\left|\left\langle\varphi,|x|^{-1}(-\Delta-z)^{-1}|x|^{-1}\psi\right\rangle_{L^2}\right|.
\end{align*}
Since $C_0^\infty(\Real^d\setminus\{0\})$ is dense in~$L^2$, 
this implies \eqref{eq::>epsilon}.
\end{proof}
%

\subsection{The alternative proof of Theorem~\ref{thm:main_Laplacian}}
%
{Now we are in a position 
to give an alternative multiplier proof of the
free upper bound~\eqref{eq:main_Laplacian}.
The argument is based on the following identity.
\begin{Lemma}\label{lm:multipliers}
Let $d \geq 2$.
Let $z\in\mathbb{C}\setminus[0,\infty)$ be such that $\Re z>0$. 
Let $\psi\in \mathscr{S}(\mathbb{R}^d)$ and let $f:=(-\Delta-z)\psi$. Then
\begin{align}
&\|\nabla\psi^-\|^2+\frac{|z_2|}{\sqrt{z_1}}\int_{\Real^d}|x|
\Big(
\big|\nabla\psi^-\big|^2-\frac{d-1}{2}\frac{|\psi|^2}{|x|^2}
\Big)
\notag\\
&=\Re\int_{\mathbb{R}^d}\Big((d-1)\bar\psi f + \frac{|z_2|}{\sqrt{z_1}}|x|\bar\psi f+ 2 f^- x\cdot\overline{\nabla\psi^-}\Big), \label{eq:identity_multipliers}
\end{align}
where 
\begin{equation}
z_1:=\Re z, \quad z_2:= \Im z, \quad \psi^-(x):=e^{-i\sqrt{z_1}\mathrm{sgn}(z_2)|x|}\psi(x),
\end{equation}
and likewise for $f^-$.
\end{Lemma}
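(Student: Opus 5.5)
We prove the identity of Lemma~\ref{lm:multipliers} by copying the derivation of the one-dimensional identity in Lemma~\ref{lm:multipliers-Bessel}, with $\partial_r$ replaced by the radial derivative and three multipliers tested against $f=(-\Delta-z)\psi$. Since $\psi\in\mathscr{S}(\mathbb{R}^d)$, all integrations by parts are justified as soon as the weights are locally integrable. The weights $|x|^{-1}$ and $|x|^{-2}|\psi|^2|x|$ are locally integrable for $d\ge2$, and no boundary terms arise at infinity.

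\textbf{Step 1: the three tested identities.} We establish them in the following order.

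First, pair $f$ with the Morawetz-type multiplier $x\cdot\nabla\psi+\frac{d-1}{2}\psi$ and take twice the real part. The standard Rellich/Pohozaev computation uses
\[
2\Re\int(-\Delta\psi)\,x\cdot\overline{\nabla\psi}=(2-d)\|\nabla\psi\|^2
\qquad\text{and}\qquad
2\Re\int\psi\, x\cdot\overline{\nabla\psi}=-d\|\psi\|^2 .
\]
Combining these with the term $\frac{d-1}{2}\psi$, we obtain
\[
\|\nabla\psi\|^2 = 2z_2\,\Im\int\psi\, x\cdot\overline{\nabla\psi}\;+\;2\Re\int f\,x\cdot\overline{\nabla\psi}\;+\;(d-1)\Re\int f\bar\psi\;-\;(d-1)z_1\|\psi\|^2 .
\]
The signs here must be checked carefully.

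Second, pair $f$ with $|x|\psi$. This gives two identities. The real part yields
\[
\int|x||\nabla\psi|^2-\frac{d-1}{2}\int\frac{|\psi|^2}{|x|}=z_1\int|x||\psi|^2+\Re\int|x|f\bar\psi,
\]
where the term $\frac{d-1}{2}\int|\psi|^2/|x|$ comes from $\Re\int\bar\psi\,\frac{x}{|x|}\cdot\nabla\psi=-\frac{d-1}{2}\int|\psi|^2/|x|$. The imaginary part yields
\[
\Im\int\bar\psi\,\frac{x}{|x|}\cdot\nabla\psi=z_2\int|x||\psi|^2+\Im\int|x|\bar\psi f .
\]

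\textbf{Step 2: passing to the conjugated function $\psi^-$.} Write $\omega=\sqrt{z_1}\,\sgn(z_2)$ and $\hat x=x/|x|$. Then
\[
\nabla\psi^-=e^{-i\omega|x|}\big(\nabla\psi-i\omega\hat x\,\psi\big).
\]
Expanding gives
\[
|\nabla\psi^-|^2=|\nabla\psi|^2+z_1|\psi|^2-2\omega\,\Im\big(\bar\psi\,\hat x\cdot\nabla\psi\big),
\]
and also
\[
f^-\,x\cdot\overline{\nabla\psi^-}=f\,x\cdot\overline{\nabla\psi}+i\omega|x|f\bar\psi .
\]

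\textbf{Step 3: the linear combination.} Form
\[
\text{(first identity)}\;+\;\frac{|z_2|}{\sqrt{z_1}}\times\text{(real part of second)}\;-\;2\sqrt{z_1}\sgn(z_2)\times\text{(imaginary part of second)}.
\]
The goal is to assemble the left-hand side into $\|\nabla\psi^-\|^2+\frac{|z_2|}{\sqrt{z_1}}\int|x|\big(|\nabla\psi^-|^2-\frac{d-1}{2}|\psi|^2/|x|^2\big)$. In doing so, the terms $z_1\|\psi\|^2$, $z_1 z_2$-weighted $\int|x||\psi|^2$ and the cross terms $z_2\Im\int\psi\,x\cdot\overline{\nabla\psi}$ should cancel exactly. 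These cancellations are the same as in the proof of Lemma~\ref{lm:multipliers-Bessel}, where $\frac{|z_2|}{\sqrt{z_1}}\cdot z_1=|z_2|$ and $2\sqrt{z_1}\sgn(z_2)\cdot z_2\cdot\frac{1}{?}$ match. On the right-hand side, the $f$-terms regroup into $2\Re\int f^- x\cdot\overline{\nabla\psi^-}$ using the second formula of Step~2, leaving $(d-1)\Re\int\bar\psi f+\frac{|z_2|}{\sqrt{z_1}}\Re\int|x|\bar\psi f$.

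\textbf{Main obstacle.} The main obstacle is bookkeeping. One must fix the precise coefficient of the $\frac{d-1}{2}\psi$ piece, and the sign conventions in $\Im$, so that the $z_1\|\psi\|^2$ term produced by $|\nabla\psi^-|^2$ cancels against the one in the Morawetz identity. Verifying this cancellation is what I would check first. If the residual term $(d-1)z_1\|\psi\|^2$ does not cancel, the multiplier weight $\frac{d-1}{2}$ should be adjusted. A consistency check is $d=1$: there the identity should reduce to the $\nu^2=\frac14$ case of Lemma~\ref{lm:multipliers-Bessel}.
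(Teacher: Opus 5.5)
Your strategy is the paper's. The single multiplier $x\cdot\nabla\psi+\frac{d-1}{2}\psi$ (twice the real part) is exactly the paper's dilation multiplier $x\cdot\nabla\bar\psi+\frac d2\bar\psi$ (twice the real part) minus the energy multiplier $\bar\psi$ (real part). The two $|x|\bar\psi$ identities and your Step~2 formulas are correct, and your Step~3 combination is the paper's final linear combination. The gap is that your first identity is miscomputed, so the combination as written does not close.

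Put $w:=\int\psi\,x\cdot\overline{\nabla\psi}$, so that $2\Re w=-d\|\psi\|^2$. Testing $-z\psi$ against $x\cdot\overline{\nabla\psi}$ contributes $-2\Re(zw)=-2z_1\Re w+2z_2\Im w=dz_1\|\psi\|^2+2z_2\Im w$ to the left-hand side. The $\frac{d-1}{2}\psi$ part contributes $-(d-1)z_1\|\psi\|^2$. The correct identity is therefore
\[
\|\nabla\psi\|^2+z_1\|\psi\|^2+2z_2\,\Im\int\psi\,x\cdot\overline{\nabla\psi}
=2\Re\int f\,x\cdot\overline{\nabla\psi}+(d-1)\Re\int f\bar\psi .
\]
Your version has $-(d-1)z_1\|\psi\|^2$ instead of $-z_1\|\psi\|^2$ on the right-hand side, and the opposite sign on the $z_2$ term. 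This breaks the argument in two places:
\begin{itemize}
\item Your left-hand side carries $\|\nabla\psi\|^2+(d-1)z_1\|\psi\|^2$. Forming $\|\nabla\psi^-\|^2$ leaves a residual $(d-2)z_1\|\psi\|^2$ that nothing cancels.
\item The $z_2$ cross term doubles instead of cancelling.
\end{itemize}
Your fallback of adjusting the weight $\frac{d-1}{2}$ is the wrong remedy. A weight $c$ gives $(2-d+2c)\|\nabla\psi\|^2+(d-2c)z_1\|\psi\|^2$ on the left and $2c\Re\int f\bar\psi$ on the right. So $c=\frac{d-1}{2}$ is forced by both sides of the target identity; the algebra must be fixed, not the multiplier.

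With the corrected identity your Step~3 goes through as planned.
\begin{itemize}
\item By Step~2, $\|\nabla\psi\|^2+z_1\|\psi\|^2=\|\nabla\psi^-\|^2+2\sqrt{z_1}\sgn(z_2)\Im\int\bar\psi\,\frac{x}{|x|}\cdot\nabla\psi$. The last term is removed by subtracting $2\sqrt{z_1}\sgn(z_2)$ times the imaginary-part identity.
\item Rewriting $\frac{|z_2|}{\sqrt{z_1}}\int|x||\nabla\psi|^2$ in terms of $\psi^-$ produces $-|z_2|\sqrt{z_1}\int|x||\psi|^2+2z_2\Im\int\bar\psi\,x\cdot\nabla\psi$. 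The second term cancels $2z_2\Im\int\psi\,x\cdot\overline{\nabla\psi}$.
\item The first term matches the net coefficient $|z_2|\sqrt{z_1}-2|z_2|\sqrt{z_1}$ of $\int|x||\psi|^2$ collected from the right-hand sides of the two weighted identities. This is the coefficient matching you left as ``$\frac{1}{?}$''.
\item The $f$-terms then regroup into $2\Re\int f^-x\cdot\overline{\nabla\psi^-}$, exactly as you described.
\end{itemize}
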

\begin{proof}
The identity~\eqref{eq:identity_multipliers} in a more general 
context of electromagnetic perturbations of the Laplacian
and~$\psi$ being merely $H^1(\mathbb{R}^d)$ is due to~\cite[Lem.~4.1]{CFK} 
(see also \cite[Eq.~(37)]{CK4} for an abridged proof).
We give a proof in the present context to make the paper self-contained.
The painful regularisation procedure of~\cite{CFK}, 
originally due to~\cite{CK2}, is not needed here 
because $\psi\in \mathscr{S}(\mathbb{R}^d)$.

Consider the equation 
\begin{equation}\label{i0}
  -\Delta\psi = z\psi + f
  \,,
\end{equation}
where $\psi,f\in \mathscr{S}(\mathbb{R}^d)$.
First, multiplying~\eqref{i0} by 
$\bar\phi := x\cdot\nabla\bar\psi + \frac{d}{2}\bar\psi$,
integrating over~$\mathbb{R}^d$, 
taking twice the real part of the obtained identity
and integrating by parts, we obtain  
\begin{equation}\label{i1}
  2 \int_{\Real^d} |\nabla \psi|^2 
  = -2 \, z_2 \, \Im \int_{\Real^d} \psi \, x\cdot\nabla\bar\psi 
  + 2 \, \Re \int_{\Real^d} f \, x \cdot\nabla\bar\psi
  + d \, \Re \int_{\Real^d} f \, \bar\psi
  \,.
\end{equation}
Noticing that $\bar\phi = iA\bar\psi$, 
where $A := \frac{1}{2} (x \cdot (-i\nabla) + (-i\nabla) \cdot x)$
is the dilation operator,
\eqref{i1}~can be considered as a consequence 
of the virial identity $2(-\Delta) = i[-\Delta,A]$.
Second, multiplying~\eqref{i0} by~$\bar\psi$,
integrating over~$\mathbb{R}^d$, 
taking the real part of the obtained identity
and integrating by parts, we get
\begin{equation}\label{i2}
  \int_{\Real^d} |\nabla \psi|^2  
  = z_1 \int_{\Real^d} |\psi|^2 
  + \Re \int_{\Real^d} f \, \bar\psi
  \,.
\end{equation}
Third, multiplying~\eqref{i0} by $|x| \bar\psi$,
integrating over~$\mathbb{R}^d$,
taking the real and imaginary part of the obtained identity
and integrating by parts, 
we respectively get
\begin{equation}\label{i3}
  \int_{\Real^d} |x| \, |\nabla\psi|^2 
  - \frac{d-1}{2} \, \int_{\Real^d}\frac{|\psi|^2}{|x|}
  = z_1  \int_{\Real^d} |x| \, |\psi|^2 
  + \Re \int_{\Real^d} |x| \, \bar\psi \, f
\end{equation}
and 
\begin{equation}\label{i4}
  \Im  \int_{\Real^d} \bar\psi \, \frac{x}{|x|} \cdot \nabla\psi
  = z_2 \int_{\Real^d} |x| \, |\psi|^2
  + \Im \int_{\Real^d} |x| \, \bar\psi \, f
  \,.
\end{equation}
Note that the singular term in~\eqref{i3} is well defined 
due to the weighted Hardy inequality
\begin{equation}\label{Hardy.weighted}
  \int_{\Real^d} |x| \, |\nabla\psi|^2
  \geq \left(\frac{d-1}{2}\right)^2 
  \int_{\Real^d} \frac{|\psi|^2}{|x|} 
\end{equation}
valid for $d \geq 2$.
Taking the clever sum
\begin{equation}\label{clever.sum}
  \eqref{i1} - \eqref{i2} + \frac{|z_2|}{\sqrt{z_1}} \, \eqref{i3} 
  - 2 \, \sqrt{z_1} \, \sgn(z_2) \, \eqref{i4}
  \,,
\end{equation}
we arrive at the desired identity~\eqref{eq:identity_multipliers}. 
\end{proof}

We now give the alternative multiplier proof of 
Theorem~\ref{thm:main_Laplacian}.
\begin{proof}[Alternative proof of Theorem~\ref{thm:main_Laplacian}]
Estimate \eqref{eq:main_Laplacian} for $d=3$ follows from the pointwise bound $
  |G_z(x,y)| \leq G_0(x,y)
$,
where~$G_z$ denotes the integral kernel of 
the resolvent $(-\Delta - z)^{-1}$
(see, \eg, \cite[Sec.~2]{FKV}) and the consequence of Hardy's inequality recalled in \eqref{eq:csq_Hardy}.

Now we prove \eqref{eq:main_Laplacian} for $d \geq 4$. 
By Lemma \ref{lem:Hardy} we know that, if $z_1=\Re z\le 0$ (and $z\neq0$), 
then the explicit bound~\eqref{lem2.2} holds.
Now we consider $z\in\mathbb{C}\setminus[0,\infty)$ with $z_1>0$, $\psi\in \mathscr{S}(\mathbb{R}^d)$ and we set $f=(-\Delta-z)\psi$. 
If $f=0$ then $\psi=0$. In what follows we assume that $f\neq0$.
With the notations from Lemma \ref{lm:multipliers}, we deduce from \eqref{eq:identity_multipliers} 
(and the weighted Hardy inequality~\eqref{Hardy.weighted} 
showing that the second term on the left-hand side of \eqref{eq:identity_multipliers} is non-negative) that
\begin{align}
\|\nabla\psi^-\|&\le\Re\int_{\mathbb{R}^d}\Big((d-1)\bar\psi f + \frac{|z_2|}{\sqrt{z_1}}|x|\bar\psi f+ 2 f^- x\cdot\overline{\nabla\psi^-}\Big) \notag\\
&=\Re\int_{\mathbb{R}^d}|x|f^-\Big((d-1)\frac{\overline{\psi^-}}{|x|} + \frac{|z_2|}{\sqrt{z_1}}\overline{\psi^-}+ 2 \frac{x}{|x|}\cdot\overline{\nabla\psi^-}\Big)\notag\\
&\le \||x|f\|_{L^2}\Big\|(d-1)\frac{\overline{\psi^-}}{|x|} + \frac{|z_2|}{\sqrt{z_1}}\overline{\psi^-}+ 2 \frac{x}{|x|}\cdot\overline{\nabla\psi^-}\Big\|_{L^2}. \label{eq:estim_multipliers}
\end{align}
Next we compute
\begin{align}
&\Big\|(d-1)\frac{\overline{\psi^-}}{|x|} + \frac{|z_2|}{\sqrt{z_1}}\overline{\psi^-}+ 2 \frac{x}{|x|}\cdot\overline{\nabla\psi^-}\Big\|_{L^2}^2\notag\\
&=(d-1)^2\Big\|\frac{\psi}{|x|}\Big\|_{L^2}^2+\frac{z_2^2}{z_1}\|\psi\|_{L^2}^2+4\Big\|\frac{x}{|x|}\cdot\overline{\nabla\psi^-}\Big\|_{L^2}^2\notag\\
&\quad+2(d-1)\frac{|z_2|}{\sqrt{z_1}}\int_{\mathbb{R}^d}\frac{|\psi|^2}{|x|}+2(d-1)\int_{\mathbb{R}^d}\frac{x}{|x|^2}\cdot\nabla|\psi|^2+2\frac{|z_2|}{\sqrt{z_1}}\int_{\mathbb{R}^d}\frac{x}{|x|}\cdot\nabla|\psi|^2.
\end{align}
Integrating by parts in the last two integrals 
and using $(d-1)^2-2(d-1)(d-2)=-(d-1)(d-3)$, we obtain
\begin{align*}
&\Big\|(d-1)\frac{\overline{\psi^-}}{|x|} + \frac{|z_2|}{\sqrt{z_1}}\overline{\psi^-}+ 2 \frac{x}{|x|}\cdot\overline{\nabla\psi^-}\Big\|_{L^2}^2=-(d-1)(d-3)\Big\|\frac{\psi}{|x|}\Big\|_{L^2}^2+\frac{z_2^2}{z_1}\|\psi\|_{L^2}^2+4\Big\|\frac{x}{|x|}\cdot\overline{\nabla\psi^-}\Big\|_{L^2}^2.
\end{align*}
Inserting this into \eqref{eq:estim_multipliers} yields
\begin{align*}
\|\nabla\psi^-\|_{L^2}^4&\le \||x|f\|_{L^2}^2\Big(-(d-1)(d-3)\Big\|\frac{\psi}{|x|}\Big\|_{L^2}^2+\frac{z_2^2}{z_1}\|\psi\|_{L^2}^2+4\Big\|\frac{x}{|x|}\cdot\overline{\nabla\psi^-}\Big\|_{L^2}^2\Big).
\end{align*}
Now, we have
\begin{equation*}
|z_2| \, \|\psi\|_{\sii}^2
=\Big|\Im\int_{\mathbb{R}^d}\bar\psi f\Big|\le\Big\|\frac{\psi}{|x|}\Big\|_{L^2}\||x|f\|_{L^2},
\end{equation*}
and therefore we arrive at
\begin{align*}
\|\nabla\psi^-\|_{L^2}^4&\le \||x|f\|_{L^2}^2\Big(-(d-1)(d-3)\Big\|\frac{\psi}{|x|}\Big\|_{L^2}^2+\frac{|z_2|}{z_1}\Big\|\frac{\psi}{|x|}\Big\|_{L^2}\||x|f\|_{L^2}+4\|\nabla\psi^-\|_{L^2}^2\Big).
\end{align*}

Computing the discriminant of the previous quadratic expression in $\|\nabla\psi^-\|_{L^2}^2$, it follows that
\begin{align*}
16\||x|f\|_{L^2}^4+4\Big(-(d-1)(d-3)\Big\|\frac{\psi}{|x|}\Big\|_{L^2}^2+\frac{|z_2|}{z_1}\Big\|\frac{\psi}{|x|}\Big\|_{L^2}\Big)\||x|f\|_{L^2}^2\ge0.
\end{align*}
Since we have assumed that $f\neq0$, this yields
\begin{align}\label{eq:discrim2}
(d-1)(d-3)\Big\|\frac{\psi}{|x|}\Big\|_{L^2}^2-\frac{|z_2|}{z_1}\Big\|\frac{\psi}{|x|}\Big\|_{L^2}\||x|f\|_{L^2}-4\||x|f\|_{L^2}^2\le0.
\end{align}
The discriminant of the previous quadratic expression in $\||x|^{-1}\psi\|_{L^2}$ reads 
\begin{equation*}
(z_1^{-2}z_2^2+16(d-1)(d-3))\||x|f\|_{L^2}^2,
\end{equation*}
and therefore \eqref{eq:discrim2} implies
\begin{align}\label{eq:main_estim1}
\Big\|\frac{\psi}{|x|}\Big\|_{L^2}\le\frac{\frac{|z_2|}{z_1}+\sqrt{\frac{z_2^2}{z_1^2}+16(d-1)(d-3)}}{2(d-1)(d-3)}\||x|f\|_{L^2}.
\end{align}

This inequality holds for any $\psi\in\mathscr{S}(\mathbb{R}^d)$, with $f=(-\Delta-z)\psi$. Let now $\varphi\in\mathscr{S}(\mathbb{R}^d)$ and let $\eta\in C_0^\infty(\mathbb{R}^d\setminus\{0\})$. Observe that $(-\Delta-z)^{-1}|x|^{-1}\eta\varphi\in\mathscr{S}(\mathbb{R}^d)$. Indeed, with $\mathcal{F}$ the usual Fourier transform, and using that $\eta$ is supported away from $0$ and that $z\notin[0,\infty)$, we have
\begin{equation*}
\varphi\in\mathscr{S}(\mathbb{R}^d)\,\Rightarrow\,|x|^{-1}\eta\varphi\in\mathscr{S}(\mathbb{R}^d)\,\Rightarrow\,\mathcal{F}(|x|^{-1}\eta\varphi)\in\mathscr{S}(\mathbb{R}^d)\,\Rightarrow\,(|\xi|^2-z)^{-1}\mathcal{F}(|x|^{-1}\eta\varphi)\in\mathscr{S}(\mathbb{R}^d).
\end{equation*}
Applying then \eqref{eq:main_estim1} to $\psi=(-\Delta-z)^{-1}|x|^{-1}\eta\varphi$ (and recalling from Lemma \ref{lm:bounded-op} that $|x|^{-1}(-\Delta-z)^{-1}|x|^{-1}$ extends to a bounded operator on $L^2$) gives
\begin{align}\label{eq:main_estim2}
\big\||x|^{-1}(-\Delta-z)^{-1}|x|^{-1}\eta\varphi\big\|_{L^2}\le\frac{\frac{|z_2|}{z_1}+\sqrt{\frac{z_2^2}{z_1^2}+16(d-1)(d-3)}}{2(d-1)(d-3)}\|\eta\varphi\|_{L^2},
\end{align}
for any $\varphi\in\mathscr{S}(\mathbb{R}^d)$ and $\eta\in C_0^\infty(\mathbb{R}^d\setminus\{0\})$. Since the set $\{\varphi\in\mathscr{S}(\mathbb{R}^d) \, | \, \exists \eta\in C_0^\infty(\mathbb{R}^d\setminus\{0\}), \, \eta\varphi=\varphi\}$ is dense in $L^2(\mathbb{R}^d)$ (as it contains $C_0^\infty(\mathbb{R}^d\setminus\{0\})$, inequality \eqref{eq:main_estim2} leads to
\begin{align}\label{eq:main_estim3}
\big\||x|^{-1}(-\Delta-z)^{-1}|x|^{-1}\big\|_{L^2\to L^2}\le\frac{\frac{|z_2|}{z_1}+\sqrt{\frac{z_2^2}{z_1^2}+16(d-1)(d-3)}}{2(d-1)(d-3)}.
\end{align}
Applying now Proposition \ref{prop:epsilon}, we deduce that
\begin{align}\label{eq:main_estim4}
\sup_{z_1>0\, , z_2\neq0}\big\||x|^{-1}(-\Delta-z)^{-1}|x|^{-1}\big\|_{L^2\to L^2}&\le\lim_{\varepsilon\to0}\sup_{z_2=\varepsilon z_1}\frac{\varepsilon+\sqrt{\varepsilon^2+16(d-1)(d-3)}}{2(d-1)(d-3)}\notag\\
&=\frac{2}{\sqrt{(d-1)(d-3)}}.
\end{align}
This concludes the alternative proof of \eqref{eq:main_Laplacian}.
\end{proof}
\begin{Remark}\label{rk:scaling}
Similarly as in Remark \ref{rk:scaling-hnu},
by scaling invariance, the norm $\| |x|^{-1} (-\Delta - z)^{-1} |x|^{-1}\|_{\sii\to\sii}$ is constant along the ray $e^{i \arg(z)} (0,\infty)$. More precisely, given any positive number~$\tau$, and defining the unitary transform $U_\tau$ on $\sii$ by 
\begin{equation*}
  (U_\tau \psi)(x) := \tau^{d/2} \, \psi(\tau x) 
  \,,
\end{equation*}
we have
\begin{align*} 
  \left\| |x|^{-1} (-\Delta -  z)^{-1} |x|^{-1}\right\|_{\sii\to \sii} &= \left\|
  U_\tau
  |x|^{-1} (-\Delta - z)^{-1} |x|^{-1}
  U_\tau^{-1}
  \right\|_{\sii\to \sii} 
 \notag  \\
  &=
  \left\|
  |x|^{-1} (-\Delta - \tau^2 z)^{-1} |x|^{-1}
  \right\|_{\sii \to \sii}
  \,,
\end{align*}
for any $\tau>0$. We did not use this property in our proof of 
Theorem~\ref{thm:main_Laplacian}.
\end{Remark}
\begin{Remark}
The map
\begin{equation*}
\mathbb{C}\setminus[0,\infty)\ni z\mapsto |x|^{-1} (-\Delta - z)^{-1} |x|^{-1}
\in \mathcal{B}(L^2)
\end{equation*}
is analytic and, as proved in \cite{Kato-Yajima_1989}, the limits
\begin{equation*}
|x|^{-1} (-\Delta - \lambda \pm i 0)^{-1} |x|^{-1} := \lim_{\varepsilon\to0^+} |x|^{-1} (-\Delta - \lambda \pm i \varepsilon)^{-1} |x|^{-1}
\end{equation*}
exist in the operator norm, for any $\lambda>0$. Moreover, the maps
\begin{equation*}
(0,\infty)\ni \lambda\mapsto |x|^{-1} (-\Delta - \lambda \pm i0)^{-1} |x|^{-1}\in \mathcal{B}(L^2)
\end{equation*}
are continuous. However, $|x|^{-1} (-\Delta - z)^{-1} |x|^{-1}$ does \emph{not} converge to $|x|^{-1} (-\Delta )^{-1} |x|^{-1}$ in $\mathcal{B}(L^2)$ when $z\in\mathbb{C}\setminus[0,\infty)$, $z\to0$. Indeed, if $z$ is any fixed complex number in $\mathbb{C}\setminus[0,\infty)$, then by Remark \ref{rk:scaling},
\begin{equation*}
\begin{aligned}
&  \left\|
  |x|^{-1} (-\Delta-\tau^2 z)^{-1} |x|^{-1} -  |x|^{-1} (-\Delta)^{-1} |x|^{-1}
  \right\|_{\sii \to \sii}  \\
& =  \left\|
  |x|^{-1} (-\Delta- z)^{-1} |x|^{-1} -  |x|^{-1} (-\Delta)^{-1} |x|^{-1}
  \right\|_{\sii \to \sii} .
\end{aligned}  
\end{equation*}
Hence if the left-hand side tends to $0$ as $\tau\to0$, this implies that the right-hand side vanishes for any $z\in\mathbb{C}\setminus[0,\infty)$, which is certainly not true. Similarly, $|x|^{-1} (-\Delta - \lambda \pm i0)^{-1} |x|^{-1}$ does not converge to $|x|^{-1} (-\Delta )^{-1} |x|^{-1}$ in $\mathcal{B}(L^2)$.
\end{Remark}

Applying Theorem \ref{thm:main_Laplacian} to a vector of the form $|x|(-\Delta-z)\psi$ with $\psi$ regular enough 
(\eg, $\psi\in C_0^\infty(\mathbb{R}^d)$) and then letting $\Im z\to0$, we obtain, for instance in the case $d\ge4$, that
\begin{equation}\label{prop:real.pre}
\Big\|\frac{\psi}{|x|}\Big\|_{\sii}\le\frac{2}{\sqrt{(d-1)(d-3)}}
\, \big\||x|(-\Delta-\lambda)\psi\big\|_{L^2} , \quad \lambda>0, \quad \psi\in C_0^\infty(\Real^d).
\end{equation}
For $\lambda\le0$, Hardy's inequality gives a better estimate.
The following proposition shows that even for positive~$\lambda$,
the constant on the right-hand side of~\eqref{prop:real.pre} 
can be improved.
\begin{Proposition}\label{prop:real}
Let $d\ge3$. For every $\lambda>0$ and $\psi\in C_0^\infty(\Real^d)$,
one has
\begin{equation}\label{eq:bound_real_prop}
\Big\|\frac{\psi}{|x|}\Big\|_{\sii(\Real^d)}
\le C_d \, \big\||x|(-\Delta-\lambda)\psi\big\|_{\sii(\Real^d)}  
\qquad \mbox{with} \qquad
C_d :=
\begin{cases}
  4 & \mbox{if} \quad d=3 \,, 
  \\
  1 & \mbox{if} \quad d=4 \,, 
  \\
  \displaystyle
  \frac{1}{\sqrt{d(d-4)}} & \mbox{if} \quad d \geq 5 \,.
\end{cases}
\end{equation}
\end{Proposition}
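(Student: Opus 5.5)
The plan is to use only the dilation (virial) part of the multiplier argument of Lemma~\ref{lm:multipliers}. Because $\lambda$ is real, we can take $z_2=0$, and the terms $\frac{|z_2|}{\sqrt{z_1}}\eqref{i3}$ and $\eqref{i4}$ are then not needed. Write $f:=(-\Delta-\lambda)\psi$ for $\psi\in C_0^\infty(\Real^d)$; then $f$ and $|x|f$ are in $L^2$. Identity~\eqref{i1} with $z_2=0$ (same derivation; for compactly supported smooth $\psi$ all boundary terms vanish since $d\ge3$) reads
\begin{equation*}
  \|\nabla\psi\|_{L^2}^2
  =\Re\int_{\Real^d} f\,\Big(x\cdot\nabla\bar\psi+\frac d2\,\bar\psi\Big)
  =\Re\int_{\Real^d} |x|f\,\Big(\partial_r\bar\psi+\frac{d}{2}\frac{\bar\psi}{|x|}\Big),
\end{equation*}
where $\partial_r=\frac{x}{|x|}\cdot\nabla$. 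The key point is that $\lambda$ has disappeared from this identity. Cauchy--Schwarz then gives $X^2\le F^2\,\|\partial_r\psi+\frac d2\psi/|x|\|^2$, with the notation
\begin{equation*}
X:=\|\nabla\psi\|^2,\qquad Y:=\|\psi/|x|\|^2,\qquad F:=\||x|f\|.
\end{equation*}

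Next I would expand the last norm exactly, as in the alternative proof of Theorem~\ref{thm:main_Laplacian}. The cross term is $d\,\Re\int\bar\psi\,\partial_r\psi/|x|=\frac d2\int\frac{x}{|x|^2}\cdot\nabla|\psi|^2=-\frac{d(d-2)}{2}\,Y$. Together with $\|\partial_r\psi\|^2\le X$ this yields
\begin{equation*}
  X^2\le F^2\Big(X-\frac{d(d-4)}{4}\,Y\Big).
\end{equation*}
The step needing the most care is what to do with this inequality, because the sign of $d(d-4)$ changes with $d$. So I split into three cases.

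\emph{Case $d\ge5$.} Read the inequality as a quadratic in $X$: $X^2-F^2X+F^2\frac{d(d-4)}4Y\le0$. A real solution $X$ exists, so the discriminant is nonnegative: $F^4\ge d(d-4)F^2Y$. If $F=0$ then $X=0$ and hence $\psi=0$; otherwise $Y\le F^2/(d(d-4))$, which is the claim with $C_d=1/\sqrt{d(d-4)}$.

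\emph{Case $d=4$.} Here $X^2\le F^2X$, so $X\le F^2$. Hardy's inequality~\eqref{Hardy} with constant $((d-2)/2)^2=1$ gives $Y\le X\le F^2$, i.e. $C_4=1$.

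\emph{Case $d=3$.} The coefficient is $-\frac{d(d-4)}{4}=+\frac34$, so $X^2\le F^2(X+\frac34Y)$. Hardy with constant $\frac14$ gives $Y\le4X$, hence $X^2\le4F^2X$, so $X\le4F^2$ and $Y\le16F^2$. This is $C_3=4$.

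I expect no real obstacle. The two points to check carefully are, first, the exact constant in the integration by parts of the cross term, $\int\frac{x}{|x|^2}\cdot\nabla|\psi|^2=-(d-2)Y$, since it determines the combination $d(d-4)/4$. Second, that using~\eqref{i1} alone (rather than the full clever sum) is legitimate for complex-valued $\psi$ when $z$ is real; it is, since the only $z$-dependent term in~\eqref{i1} carries the factor $z_2=0$.
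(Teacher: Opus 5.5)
Your proposal is correct and follows the paper's own proof essentially step for step. The shared steps are the virial identity \eqref{virial} in which the real $\lambda$ drops out, Cauchy--Schwarz against $|x|f$, and the exact expansion producing $4\|\nabla\psi\|^2-d(d-4)\|\psi/|x|\|^2$ (you keep the step $\|\partial_r\psi\|\le\|\nabla\psi\|$ as an inequality, which is slightly more careful than the paper's displayed equality there); the case split is also the same, discriminant for $d\ge5$ and Hardy's inequality for $d=3,4$.
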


The proof of Proposition~\ref{prop:real} is based on the following well-known virial identity (see, \eg, \cite[Lem.~4.6]{CFK}).
\begin{Lemma}\label{Lem.virial}
Let $\lambda \in \Real$, $\psi\in C_0^\infty(\Real^d)$ 
and $f:=(-\Delta-\lambda)\psi$. Then
\begin{equation}\label{virial}  
  2 \int_{\Real^d} |\nabla \psi|^2 
  = d \, \Re \int_{\Real^d} f \, \overline{\psi}
  + 2 \, \Re \int_{\Real^d} f \, x \cdot \overline{\nabla \psi}
  \,.
\end{equation}
\end{Lemma}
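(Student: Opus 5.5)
We multiply the equation $-\Delta\psi-\lambda\psi=f$ by the dilation multiplier $\bar\phi:=x\cdot\nabla\bar\psi+\frac d2\bar\psi$, integrate over $\Real^d$, take twice the real part, and integrate by parts. This is exactly the first step in the proof of Lemma~\ref{lm:multipliers} (identity~\eqref{i1}), now with a real spectral parameter. Since $\psi\in C_0^\infty(\Real^d)$, there are no boundary terms and no regularisation is needed.

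For the Laplacian term, integration by parts gives
\begin{equation*}
2\Re\int_{\Real^d}(-\Delta\psi)\,x\cdot\nabla\bar\psi=(2-d)\int_{\Real^d}|\nabla\psi|^2
\qquad\text{and}\qquad
d\,\Re\int_{\Real^d}(-\Delta\psi)\bar\psi=d\int_{\Real^d}|\nabla\psi|^2 .
\end{equation*}
The first identity follows from $\partial_j\psi\,x_k\partial_k\partial_j\bar\psi$ together with the divergence of $x$. Hence the left-hand side contributes $2\|\nabla\psi\|^2$. This is the virial identity $2(-\Delta)=i[-\Delta,A]$, with $A$ the dilation generator.

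For the $\lambda$-term, we use $2\Re\int\psi\,x\cdot\nabla\bar\psi=\int x\cdot\nabla|\psi|^2=-d\int|\psi|^2$. Multiplying by $-\lambda$ gives $+\lambda d\|\psi\|^2$, and adding $-\lambda d\|\psi\|^2$ from the $\frac d2\bar\psi$ part, the two cancel exactly. The cancellation is the reason for the choice of the coefficient $\frac d2$. It uses only that $\lambda$ is real: for complex $z$, the imaginary part survives and produces the $z_2$ term in~\eqref{i1}.

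The right-hand side is then $2\Re\int f\,x\cdot\nabla\bar\psi+d\Re\int f\bar\psi$, which yields~\eqref{virial}. The only delicate point is the bookkeeping of signs in the integration by parts of the second-order term, which is routine.
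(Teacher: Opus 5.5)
Your proof is correct and is essentially the paper's argument. The paper simply notes that \eqref{virial} is the case $z_2=0$ of identity \eqref{i1}, which is derived exactly by multiplying by $x\cdot\nabla\bar\psi+\frac d2\bar\psi$, taking twice the real part and integrating by parts; your check that the $\lambda$-terms cancel because $\lambda$ is real is precisely why the hypothesis $z\notin[0,\infty)$ is not needed there.
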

\begin{proof}
\eqref{virial}~is a special case of the previously established~\eqref{i1}
(for whose validity the hypothesis $z \in \Com \setminus [0,\infty)$
of Lemma~\ref{lm:multipliers} is not needed).  
\end{proof}

Now we are ready to prove Proposition~\ref{prop:real}.
\begin{proof}[Proof of Proposition~\ref{prop:real}]
Using the notations of Lemma \ref{Lem.virial}, and starting with the virial identity \eqref{virial}, we estimate the right-hand side as follows:
\begin{equation*} 
\begin{aligned} 
  \left|
  d \, \Re \int_{\Real^d} f \, \overline{\psi}
  + 2 \, \Re \int_{\Real^d} f \, x \cdot \overline{\nabla \psi}
  \right|
  &=
  \left|
  \Re \int_{\Real^d}
  |x| \, f \left(
  d \, \frac{\overline{\psi}}{|x|}
  + 2 \, \frac{x}{|x|} \cdot \overline{\nabla \psi}
  \right) 
  \right|
  \\
  &\leq 
  \big\| |x| f \big\|_{\sii(\Real^d)} \,
  \left\| d \, \frac{\overline{\psi}}{|x|}
  + 2 \, \frac{x}{|x|} \cdot \overline{\nabla \psi} 
  \right\|_{\sii(\Real^d)} 
  \,.
\end{aligned}  
\end{equation*}
Integrating by parts, we have
\begin{equation*} 
\begin{aligned} 
  \left\| d \, \frac{\overline{\psi}}{|x|}
  + 2 \, \frac{x}{|x|} \cdot \overline{\nabla \psi} 
  \right\|_{\sii(\Real^d)}^2
  &= d^2 \,  \left\| \frac{\psi}{|x|} \right\|_{\sii(\Real^d)}^2
  + 4 \, \left\| \nabla \psi \right\|_{\sii(\Real^d)}^2
  + 2 \, d \int_{\Real^d} \frac{x}{|x|^2} \cdot \nabla |\psi|^2
  \\
  &=
  d^2 \,  \left\| \frac{\psi}{|x|} \right\|_{\sii(\Real^d)}^2
  + 4 \, \left\| \nabla \psi \right\|_{\sii(\Real^d)}^2
  - 2\, d \, (d-2) \left\| \frac{\psi}{|x|} \right\|_{\sii(\Real^d)}^2
  \\
  &= 4 \, \left\| \nabla \psi \right\|_{\sii(\Real^d)}^2
  - d \, (d-4) \left\| \frac{\psi}{|x|} \right\|_{\sii(\Real^d)}^2
  \,.
\end{aligned}  
\end{equation*}
Consequently, \eqref{virial}~implies
\begin{equation}\label{virial.implies}  
  4 \, \left\| \nabla \psi \right\|_{\sii(\Real^d)}^4
  \leq
  \big\| |x| f \big\|_{\sii(\Real^d)}^2
  \left(
  4 \, \left\| \nabla \psi \right\|_{\sii(\Real^d)}^2
  - d \, (d-4) \left\| \frac{\psi}{|x|} \right\|_{\sii(\Real^d)}^2
  \right)  
  \,.
\end{equation}

\medskip
\noindent
\fbox{$d=3$}
Estimate \eqref{eq:bound_real_prop} follows from the pointwise bound $
  |G_z(x,y)| \leq G_0(x,y)
$,
where~$G_z$ denotes the integral kernel of 
the resolvent $(-\Delta - z)^{-1}$
(see, \eg, \cite[Sec.~2]{FKV}) and the consequence of Hardy's inequality recalled in \eqref{eq:csq_Hardy}. 
Anyway, let us show how to deduce it from~\eqref{virial.implies}
with the help of the Hardy inequality~\eqref{Hardy}:
\begin{equation*} 
  4 \, \left\| \nabla \psi \right\|_{\sii(\Real^3)}^4
  \leq
  \big\| |x| f \big\|_{\sii(\Real^3)}^2
  \left(
  4 \, \left\| \nabla \psi \right\|_{\sii(\Real^3)}^2
  + 3 \left\| \frac{\psi}{|x|} \right\|_{\sii(\Real^3)}^2
  \right)  
  \leq 16 \, \big\| |x| f \big\|_{\sii(\Real^3)}^2 \,
  \left\| \nabla \psi \right\|_{\sii(\Real^3)}^2
  \,.
\end{equation*}
Dividing by $\left\| \nabla \psi \right\|_{\sii(\Real^3)}^2$ 
and applying the Hardy inequality~\eqref{Hardy} once more,
we conclude with
\begin{equation*} 
  \left\| \frac{\psi}{|x|} \right\|_{\sii(\Real^3)}^2  
  \leq 16 \, \big\| |x| f \big\|_{\sii(\Real^3)}^2  
  \,,
\end{equation*}
which proves \eqref{eq:bound_real_prop}.

\medskip
\noindent
\fbox{$d=4$}
In four dimensions, \eqref{virial.implies} 
and the Hardy inequality~\eqref{Hardy}
directly imply
\begin{equation*} 
  \left\| \frac{\psi}{|x|} \right\|_{\sii(\Real^4)}^2  
  \leq
  \left\| \nabla \psi \right\|_{\sii(\Real^4)}^2
  \leq
  \big\| |x| f \big\|_{\sii(\Real^4)}^2
  \,,
\end{equation*}
which gives \eqref{eq:bound_real_prop}.

\medskip
\noindent
\fbox{$d \geq 5$}
In these higher dimensions, 
we regard~\eqref{virial.implies} as the quadratic inequality 
\begin{equation*} 
  4 \, A^2 - 4 \, \big\| |x| f \big\|_{\sii(\Real^3)}^2 \, A
  - d \, (d-4) \, \big\| |x| f \big\|_{\sii(\Real^3)}^2    
  \left\| \frac{\psi}{|x|} \right\|_{\sii(\Real^d)}^2
  \leq 0
\end{equation*}
for the unknown $A := \left\| \nabla \psi \right\|_{\sii(\Real^d)}^2$
Since the discriminant reads
\begin{equation*} 
  D :=
  16 \, \big\| |x| f \big\|_{\sii(\Real^d)}^2 
  \left(
  \big\| |x| f \big\|_{\sii(\Real^d)}^2 
  -  d \, (d-4) 
  \left\| \frac{\psi}{|x|} \right\|_{\sii(\Real^d)}^2
  \right)
  \,,
\end{equation*}
estimate \eqref{eq:bound_real_prop}
follows by requiring $D \geq 0$.
\end{proof}

\begin{Remark}
Using in particular the maximum modulus
(or the Phragm\'en--Lindel\"of) principle, one obtains
\begin{equation*}
\sup_{z\in\mathbb{C}\setminus[0,\infty)}\||x|^{-1} (-\Delta - z)^{-1} |x|^{-1} \|_{L^2 \to L^2}=\sup_{\lambda>0}\||x|^{-1} (-\Delta - \lambda\pm i0)^{-1} |x|^{-1} \|_{L^2 \to L^2}.
\end{equation*}
See the proof of Theorem \ref{thm:main_Laplacian} above for a similar argument. It is therefore tempting to think that Proposition~\ref{prop:real} should imply that 
$\||x|^{-1} (-\Delta - \lambda\pm i0)^{-1} |x|^{-1} \|_{L^2 \to L^2}\le C_d$, with $C_d$ the constant given by Proposition~\ref{prop:real}. This is however wrong since, as mentioned in the introduction, we have the lower bound $\||x|^{-1} (-\Delta - \lambda\pm i0)^{-1} |x|^{-1} \|_{L^2 \to L^2}\ge\frac{\pi}{2(d-2)}$ and $\frac{\pi}{2(d-2)}> C_d$ for $d\ge5$. The fact that Proposition~\ref{prop:real} does not imply a bound on $\||x|^{-1} (-\Delta - \lambda\pm i0)^{-1} |x|^{-1} \|_{L^2 \to L^2}$ is due to the lack of integrability of vectors of the form $(-\Delta - \lambda\pm i0)^{-1} |x|^{-1}\varphi$, even if $\varphi$ belongs to $C_0^\infty(\Real^d\setminus\{0\})$.
\end{Remark}


\subsection*{Acknowledgements}
The first author was partially supported
by the Progetto Ricerca Scientifica 2024
``Wave dynamics in heterogeneous media'' of Sapienza University,
and by the Gruppo Nazionale per l'Analisi Matematica, 
la Probabilit\`{a} e le loro Applicazioni (GNAMPA)
The last author was supported
by the grant no.~26-21940S
of the Czech Science Foundation.

%

\begin{thebibliography}{10}

\bibitem{Barcelo-Vega-Zubeldia_2013}
J.~A. Barcel{\'o}, L.~Vega, and M.~Zubeldia, \emph{The forward problem for the
  electromagnetic {H}elmholtz equation with critical singularities}, Adv. Math.
  \textbf{240} (2013), 636--671.

\bibitem{Bouclet-Mizutani_2018}
J.-M. Bouclet and H.~Mizutani, \emph{Uniform resolvent and {S}trichartz
  estimates for {S}chr{\"o}dinger equations with critical singularities},
  Trans. Amer. Math. Soc. \textbf{370} (2018), no.~10, 7293--7333.

\bibitem{Bui-DAncona-Duong-Li-Ly_2017}
T.~A. Bui, P.~D'Ancona, X.~T. Duong, J.~Li, and F.~K. Ly, \emph{Weighted
  estimates for powers and smoothing estimates of {S}chr{\"o}dinger operators
  with inverse square potentials}, J. Differential Equations \textbf{262}
  (2017), no.~3, 2771--2807.

\bibitem{Burq-Planchon-Stalker-TahvildarZadeh_2003}
N.~Burq, F.~Planchon, J.~G. Stalker, and A.~S. Tahvildar-Zadeh,
  \emph{{S}trichartz estimates for the wave and {S}chr{\"o}dinger equations
  with the inverse-square potential}, J. Funct. Anal. \textbf{203}
  (2003), 519--549. 

\bibitem{Burq-Planchon-Stalker-TahvildarZadeh_2004}
N.~Burq, F.~Planchon, J.~G. Stalker, and A.~S. Tahvildar-Zadeh,
  \emph{{S}trichartz estimates for the wave and {S}chr{\"o}dinger equations
  with potentials of critical decay}, Indiana Univ. Math. J. \textbf{53}
  (2004), no.~6, 1665--1680.
  
\bibitem{CK}
C.~Cazacu and D.~Krej{\v{c}}iř{\'{i}}k,
\emph{The Hardy inequality and the heat equation with magnetic field in
  any dimension},
  Comm. Partial Differential Equations \textbf{41} (2016), 1056--1088.   

\bibitem{CFK}
L.~Cossetti, L.~Fanelli, and D.~Krej\v{c}i\v{r}\'ik, \emph{Absence of
  eigenvalues of {D}irac and {P}auli {H}amiltonians via the method of
  multipliers}, Comm. Math. Phys. \textbf{379} (2020), 633--691.

\bibitem{CK2}
L.~Cossetti and D.~Krej\v{c}i\v{r}\'ik, \emph{Absence of eigenvalues of
  non-selfadjoint {R}obin {L}aplacians on the half-space}, Proc. London. Math.
  Soc. \textbf{121} (2020), 584--616.

\bibitem{CK4}
\bysame, \emph{The virial theorem and the method of multipliers in spectral
  theory}, Mathematical Models for Interacting Dynamics on Networks, Trends in
  Mathematics, Birkh{\"a}user Cham, 2025.

\bibitem{DAncona_2015}
P.~D'Ancona, \emph{{K}ato smoothing and {S}trichartz estimates for wave
  equations with magnetic potentials}, Comm. Math. Phys. \textbf{335} (2015),
  1--16.

\bibitem{Derezinski-Richard_2017}
J.~Derezi\'nski and S.~Richard, \emph{On {S}chr{\"o}dinger operators with
  inverse square potentials on the half--line}, Ann. Henri Poincar\'e
  \textbf{18} (2017), 869--928.

\bibitem{Dyatlov-Zworski}
S.~Dyatlov and M.~Zworski, \emph{Mathematical theory of scattering resonances},
  Amer. Math. Soc., 2019.

\bibitem{FKV2}
L.~Fanelli, D.~Krej\v{c}i\v{r}\'{\i}k, and L.~Vega, \emph{Absence of
  eigenvalues of two dimensional magnetic {S}chr{\"o}dinger operators}, J.
  Funct. Anal. \textbf{275} (2018), 2453--2472.

\bibitem{FKV}
\bysame, \emph{Spectral stability of {S}chr{\"o}dinger operators with
  subordinated complex potentials}, J. Spectr. Theory \textbf{8} (2018),
  575--604.

\bibitem{Fanelli-Zhang-Zheng_2023}
L.~Fanelli, J.~Zhang, and J.~Zheng, \emph{Uniform resolvent estimates for
  critical magnetic {S}chr{\"o}dinger operators in 2D}, Int. Math. Res. Not.
  IMRN \textbf{2023}, no.~20, 17656--17703.
  
\bibitem{Frank_2011}
R.~L. Frank, \emph{Eigenvalue bounds for {S}chr{\"o}dinger operators with
  complex potentials}, Bull. Lond. Math. Soc. \textbf{43} (2011), 745--750.  
  
  \bibitem{GR}
 I.~S.~ Gradshteyn and I.~M.~Ryzhik, \emph{Table of integrals, series, and products.}, Amsterdam: Elsevier/Academic Press, 8th edition, 2015.

\bibitem{HK2}
M.~Hansmann and D.~Krej\v{c}i\v{r}{\'\i}k, \emph{The abstract
  {B}irman-{S}chwinger principle and spectral stability}, J. Anal. Math.
  \textbf{148} (2022), 361--398.

\bibitem{Kato_1966}
T.~Kato, \emph{Wave operators and similarity for some non-selfadjoint
  operators}, Math. Ann. \textbf{162} (1966), 258--279.

\bibitem{Kato-Yajima_1989}
T.~Kato and K.~Yajima, \emph{Some examples of smooth operators and the
  associated smoothing effect}, Rev. Math. Phys. \textbf{1} (1989), 481--496.

\bibitem{Kenig-Ruiz-Sogge_1987}
C.~E. Kenig, A.~Ruiz, and C.~D. Sogge, \emph{Uniform sobolev inequalities and
  unique continuation for second order constant coefficient differential
  operators}, Duke Math. J. \textbf{55} (1987), 329--347.

\bibitem{K13}
D.~Krej\v{c}i\v{r}\'{\i}k, \emph{Complex magnetic fields: {A}n improved
  {H}ardy-{L}aptev-{W}eidl inequality and quasi-selfadjointness}, SIAM J.
  Math. Anal. \textbf{51} (2019), 790--807.

\bibitem{Laptev-Weidl_1999}
A.~Laptev and T.~Weidl, \emph{{H}ardy inequalities for magnetic {D}irichlet
  forms}, Oper. Theory Adv. Appl. \textbf{108} (1999), 299--305.

\bibitem{Mizutani-Zhang-Zheng_2020}
H.~Mizutani, J.~Zhang, and J.~Zheng, \emph{Uniform resolvent estimates for
  {S}chr{\"o}dinger operator with an inverse square potential}, J. Funct.
  Anal. \textbf{278} (2020), no.~4, 108350.
  
\bibitem{Mochizuki_2010a}  
K.~Mochizuki, 
\emph{Resolvent estimates for magnetic
Schr\"odinger operators and their
applications to related evolution equations},
Rend. Istit. Mat. Univ. Trieste \textbf{42} (2010), 143--164.

\bibitem{Mochizuki_2010b}  
K.~Mochizuki,
\emph{Uniform resolvent estimates for magnetic
Schr\"odinger operators and smoothing
effects for related evolution equations}
Publ. RIMS Kyoto Univ. \textbf{46} (2010), 741--754.  

\bibitem{Pankrashkin-Richard_2011}
K.~Pankrashkin and S.~Richard, \emph{Spectral and scattering theory for the
  {A}haronov-{B}ohm operators}, Rev. Math. Phys. \textbf{23} (2011), 53--81.

\bibitem{PBM}
A.~P.~Prudnikov, Yu.~A.~Brychkov O.~I.~ Marichev, Integrals and Series, Vol. 2, Special Functions, Gordon \& Breach Sci. Publ., New York, 1986. 

\bibitem{RS1}
M. Reed, B. Simon,
\emph{Methods of modern mathematical physics. IV Functional Analysis},
Academic Press, 
New York-London, 1980.

\bibitem{Szego}
G.~Szeg\"o, Orthogobal Polynomials, 4th edition, 
Amer. Math. Soc., Colloquium Publications, 1975.

\bibitem{Simon_1992}
B.~Simon, \emph{Best constants in some operator smoothness estimates}, J.
  Funct. Anal. \textbf{107} (1992), no.~3, 66--71.
  
\end{thebibliography}
\bibliographystyle{amsplain}
\providecommand{\bysame}{\leavevmode\hbox to3em{\hrulefill}\thinspace}
\providecommand{\MR}{\relax\ifhmode\unskip\space\fi MR }
\providecommand{\MRhref}[2]{%
  \href{http://www.ams.org/mathscinet-getitem?mr=#1}{#2}
}
\providecommand{\href}[2]{#2}

\end{document}